\newcommand{\refcheckize}[1]{%
  \expandafter\let\csname @@\string#1\endcsname#1%
  \expandafter\DeclareRobustCommand\csname relax\string#1\endcsname[1]{%
    \csname @@\string#1\endcsname{##1}\@for\@temp:=##1\do{\wrtusdrf{\@temp}\wrtusdrf{{\@temp}}}}%
  \expandafter\let\expandafter#1\csname relax\string#1\endcsname
}
\newcommand{\refcheckizetwo}[1]{%
  \expandafter\let\csname @@\string#1\endcsname#1%
  \expandafter\DeclareRobustCommand\csname relax\string#1\endcsname[2]{%
    \csname @@\string#1\endcsname{##1}{##2}\wrtusdrf{##1}\wrtusdrf{{##1}}\wrtusdrf{##2}\wrtusdrf{{##2}}}%
  \expandafter\let\expandafter#1\csname relax\string#1\endcsname
}
\newcommand{\ignore}[1]{}{}
\newcommand{\cla}{{\lambda_1}}
\newcommand{\clb}{{\lambda_2}}
\def\cite{\citet}
\numberwithin{equation}{section}
 \theoremstyle{plain}
 \newtheorem{theorem}{Theorem}[section]
 \newtheorem{lemma}[theorem]{Lemma}
 \newtheorem{corollary}[theorem]{Corollary}
 \newtheorem{proposition}[theorem]{Proposition}
 \theoremstyle{definition}
 \newtheorem{remark}{Remark}[section]
\newcommand{\IE}{\mathop{{}\mathbb{E}}\mathopen{}}
\newcommand{\IP}{\mathop{{}\mathbb{P}}\mathopen{}}
\newcommand{\Var}{\mathop{\mathrm{Var}}}
\newcommand{\bigo}{\mathop{{}\mathrm{O}}\mathopen{}}
\newcommand{\lito}{\mathop{{}\mathrm{o}}\mathopen{}}
\newcommand{\IR}{\mathbb R}
\newcommand{\IM}{\mathbb M}
\def\^#1{\relax\ifmmode {\mathaccent"705E #1} \else {\accent94 #1} \fi}
\def\~#1{\relax\ifmmode {\mathaccent"707E #1} \else {\accent"7E #1} \fi}
\def\*#1{\relax#1^\ast}
\edef\-#1{\relax\noexpand\ifmmode {\noexpand\bar{#1}} \noexpand\else \-#1\noexpand\fi}
\def\>#1{\vec{#1}}
\def\.#1{\dot{#1}}
\def\atop{\@@atop}
\renewcommand{\leq}{\leqslant}
\renewcommand{\geq}{\geqslant}
\renewcommand{\phi}{\varphi}
\newcommand{\D}{\Delta}
\newcommand{\eq}{\eqref}
\newcommand{\I}{\mathop{{}\mathrm{I}}\mathopen{}}
\newcommand\indep{\protect\mathpalette{\protect\@indep}{\perp}}
\def\@indep#1#2{\mathrel{\rlap{$#1#2$}\mkern2mu{#1#2}}}
\def\E{\IE}
\def\parsetime#1#2#3#4#5#6{#1#2:#3#4}
\def\parsedate#1:20#2#3#4#5#6#7#8+#9\empty{20#2#3-#4#5-#6#7 \parsetime #8}
\def\moddate{\expandafter\parsedate\pdffilemoddate{\jobname.tex}\empty}
\def\Var{{\rm Var}}
\def\I{\mathds{1}}
\def\arrowp{\stackrel{p}{\rightarrow }}
\def\seta{W\in A^{4\gamma+ \Delta_1 }\setminus A^{4\gamma - \bar\Delta_2}}
\def\setb{\|\xi_i\|\leq 4\gamma}
\def\R{\mathbb{R}}
\def\A{\mathcal{A}}
\def\mbar(s){\bar{M}(s)}
\def\A{\mathcal{A}}
\def\R{\mathbb{R}}
\def\inprod#1{\langle #1 \rangle}
\def\inprodb#1{\bigl\langle #1 \bigr\rangle}
\def\bangle#1{\bigl\langle #1 \bigr\rangle}
\DeclareMathOperator*{\argmin}{arg\,min}
\def\transpose{^{\intercal}}
\def\clc#1{\{ #1 \}}
\def\bclc#1{\bigl\{ #1 \bigr\}}
\def\bclr#1{\bigl( #1 \bigr)}
\newcommand{\ccc}{c_3}
\newcommand{\ccd}{c_4}
\newcommand{\cce}{c_5}
\newcommand{\myphi}{h}
\def\thestar{\theta^{*}}
\def\thebar{\bar\theta}
\def\thetaihat{\theta^{(i)}}
\newcommand{\tpstring}{\texorpdfstring}
\def\D{D_{\Theta}}
\def\tilde{\widetilde}
\def\epsilon{\varepsilon}
\crefname{equation}{}{}
\crefname{page}{p.}{pp.}
\crefname{subsection}{Subsection}{Subsections}
\newenvironment{equ}
{\begin{equation} \begin{aligned}}
{\end{aligned} \end{equation}}
\newlist{condition}{enumerate}{10}
\setlist[condition]{label*=({M}\arabic*)}
\crefname{conditioni}{}{}
\Crefname{conditioni}{}{}
\newlist{conditionB}{enumerate}{10}
\setlist[conditionB]{label*=({B}\arabic*)}
\crefname{conditionBi}{}{}
\Crefname{conditionBi}{}{}
\newlist{conditionC}{enumerate}{10}
\setlist[conditionC]{label*=({C}\arabic*)}
\crefname{conditionCi}{}{}
\Crefname{conditionCi}{}{}
\begin{document}

\begin{frontmatter}
\title{Berry--Esseen Bounds for Multivariate Nonlinear Statistics with Applications to M-estimators and
Stochastic Gradient Descent Algorithms}
\runtitle{Berry--Esseen bounds for multivariate nonlinear statistics}

\begin{aug}

\author[A,B]{\fnms{Qi-Man} \snm{Shao}\ead[label=e1]{shaoqm@sustech.edu.cn}\thanksref{t1}}
\and
\author[C]{\fnms{Zhuo-Song} \snm{Zhang}\ead[label=e3]{zszhang.stat@gmail.com}\thanksref{t2}}
\thankstext{t1}{Research partially supported by NSFC12031005 and Shenzhen Outstanding Talents Training Fund and also by Hong Kong RGC GRF 14302515 and 14304917.}
\thankstext{t2}{Corresponding author. Research supported by Singapore Ministry of Education Academic Research Fund MOE 2018-T2-076.}
\runauthor{Q.-M.~Shao and Z.-S.~Zhang}
\address[A]{
Department of Statistics and Data Scinece,
				Southern University of Science and Technology,
				Shenzhen, Guangdong, P.R. China. 
\printead{e1}}

\address[B]{
	   Department of Statistics,
              The Chinese University of Hong Kong,
              Shatin, N.T. Hong Kong 
}
\address[C]{
Department of Statistics and Applied Probability, 
		 National University of Singapore, 
		 Singapore 117546. \printead{e3}
}
\end{aug}

\begin{abstract}
\noindent
\quad
We establish a Berry--Esseen bound for general multivariate nonlinear statistics by developing a new  multivariate-type
randomized concentration inequality. The bound is the best possible for many known statistics.
As applications, Berry--Esseen bounds for  M-estimators and
averaged stochastic gradient descent
algorithms are obtained.

\end{abstract}

\begin{keyword}[class=MSC]
\kwd[Primary ]{60F05}
\kwd{62E20}
\kwd[; secondary ]{62F12}
\end{keyword}

\begin{keyword}
\kwd{Berry--Esseen bound}
\kwd{Multivariate normal approximation}
\kwd{Randomized concentration inequality}
\kwd{Stein's method}
\kwd{M-estimators}
\kwd{Averaged stochastic gradient descent algorithms}

\end{keyword}

\end{frontmatter}

\section{Introduction}

Let $X_1, \dots, X_n$ be independent random variables taking values on $\mathcal{X}$ and let  $T
\coloneqq T(X_1, \dots, X_n)$ be a
general $d$-dimensional nonlinear statistic. In many cases the nonlinear statistic  can be written as
a linear statistic plus an error term:
\begin{equ}
	\label{eq:t}
	T = W + D,
\end{equ}
where
\begin{equ}
	\label{eq:1}
	W = \sum_{i = 1}^n \xi_i, \quad D \coloneqq D(X_1, \dots, X_n) = T - W,
\end{equ}
$\xi_i \coloneqq   h_i (X_i) \in \IR^d$ and $h_i  \colon \mathcal{X} \mapsto \IR^d$ is a Borel measurable
function.
Assume that
\begin{equ}
	\label{eq:con-xi}
	\IE \xi_i = 0 \text{ for each } 1 \leq i \leq n \text{ and }\sum_{i = 1}^{n} \IE \{\xi_i \xi_i\transpose\} = I_d.
\end{equ}
Let
\begin{align}
	\label{eq:gamma}
	\gamma:= \gamma_n = \sum_{i = 1}^n \IE \lVert \xi_i\rVert^3.
\end{align}
Since \(\xi_i\) is standardized, we remark that \(h_i = h_{n, i }\) and \( \xi_i = \xi_{n, i }\).
If $\lVert D\rVert \arrowp 0$ and $\gamma \to 0$ as $n \to \infty$, then, clearly, $T$ converges in
distribution to a
$d$-dimensional
standard normal distribution $N(0, I_d)$.

The aim of this paper is to provide a Berry--Esseen bound of the multivariate normal approximation
for the nonlinear statistic $T$.
The Berry--Esseen bound for multivariate normal approximation has been well studied in the past
decades.
For the linear statistic $W$, \cite{Ben03b,Ben05} used induction and Taylor's expansion to prove a Berry--Esseen bound of order
$d^{{1/4}}\gamma$, which is the best known result for the dependence on the dimension $d$.
We refer to \cite{Nag76,Sen81,Go91R,Bha10} and \cite{Rai19} for other results for independent random vectors.

In the case where $d = 1$, \citet*{Ch07N} proved a Berry--Esseen bound for $T$ using the
Berry--Esseen bound for $W$ and a randomized-type concentration
inequality approach:
\begin{align}\label{n4-i0}
  & \sup_{z\in \R} | \IP(T \leq z ) - \Phi(z) |  \leq 6.1 \gamma + \E |W D| +
  \sum_{i=1}^n  \E | \xi_i (D - D^{(i)} ) |  ,
\end{align}
where $D^{(i)}$ is any random variable such that $\xi_i$ is independent of $D^{(i)}$ and $\Phi$ is the standard normal distribution function.
\ignore{
The concentration inequality approach was firstly introduced to the Stein's method by
Charles Stein for independent and identically distributed (i.i.d.) random variables (see \cite{Ho78}) and later extended by \citet*{Ch98S}.
Let $\xi_1,\dots,\xi_n$ be independent random variables satisfying $\E \xi_i = 0$ for all
$1 \leq i \leq n$ and $\sum_{i=1}^n \E \xi_i^2  = 1$. Define $W = \sum_{i=1}^n \xi_i$ and $\gamma =
\sum_{i=1}^n \E |\xi_i|^3 $. \cite*{Ch98S} proved that for any $a < b$,
\[ \IP (a \leq W \leq b) \leq 3 (b-a)
+ 7 \gamma.  \]
More generally, \citet*{Ch07N} proved a randomized-type concentration inequality:
let $\Delta_1 : = \Delta_1  (\xi_1,\dots, \xi_n)$ and $\Delta_2 := \Delta_2(\xi_1,\dots,\xi_n)$   be
functions of $(\xi_1,\dots,\xi_n)$ such that $\Delta_1 \leq \Delta_2$,
then
\begin{equ}
	\label{eq:1d.con.inequality}
  \IP (\Delta_1 \leq W \leq \Delta_2) \leq C \biggl( \gamma + \E |W (\Delta_2 - \Delta_1)| +
  \sum_{i=1}^n \sum_{j = 1}^2  \E | \xi_i (\Delta_j - \Delta_j^{(i)} ) |  \biggr), 
\end{equ}
where $\Delta_1^{(i)}$ and $\Delta_2^{(i)}$ are any random variables such that  $\xi_i$ is independent of $(\Delta_1^{(i)}, \Delta_2^{(i)} )$ for each $1 \leq i \leq n$.
Furthermore, they obtained the Berry-Esseen bound for non-linear random variables. Later on,
\cite{Sh16C} used Stein's method to obtain a refined version of \cref{eq:1d.con.inequality} where they replaced
$\IE  \lvert W \Delta\rvert$ by $\IE  \lvert \Delta \rvert$.
}
For the Berry--Esseen bound for multivariate normal approximation, \cite{Che15b} proved a
concentration inequality for $d$-dimensional exchangeable pairs.
We also refer to
\cite{Ba90S,Go91R,Go96M,Ch08M,Re09M,Bha10,Ch11N0,Che15b} and \cite{Rai19}
for the development of Stein's method for multivariate normal approximations.


The main purpose of this paper is to prove a Berry--Esseen bound for nonlinear multivariate
statistics by developing a new randomized multivariate concentration inequality which generalizes the results of \cite{Ch07N}
and \cite{Che15b}.
Our main result can be applied to a large class of non-linear statistics, including M-estimators and
averaged stochastic gradient descent estimators.

Throughout  this paper, we  use the following notations.
Let $d \geq 1$ and  $x=(x_1,\dots,x_d)$ be a vector in $\R^{d}$.
For $x,y\in \IR^d$, denote by $\langle x, y\rangle$ the inner product of $x$ and $y$.
Let $\lVert x\rVert = \sqrt{ \langle x, x\rangle }$ be the $l_2$-norm of $x$.
For a $d\times d$ matrix $A$, and let $\lambda_{\min}(A)$ and $\lambda_{\max}(A)$ be the minimal and
maximal eigenvalue of $A$, respectively. Denote by $A\transpose$ the transpose of $A$ and by $\lVert
A\rVert$ the spectral norm, i.e.,
$
\lVert A \rVert := (\lambda_{\max}(A{\transpose} A)){}^{1/2}.
$
\ignore{For symmetric matrices $A$ and $B$, denote by $A \preccurlyeq (\text{resp.\,}\succcurlyeq) B$ if $A - B$
is non-positive (resp.\ non-negative) definite.}
Let $I_d$ be the $d$-dimensional identity matrix.
For $X \in \IR$ (resp. $\IR^d$) and $p \geq 1$, let
$\lVert X\rVert_p = (\IE \{|X|^p\})^{1/p}$ (resp. $(\IE \{ \lVert X\rVert^p\})^{1/p}$) be the
$L_p$-norm of $X$.

The rest of this paper is organized as follows. In \cref{n4:se2}, we present the Berry--Esseen bound of the multivariate normal approximation for $T$. In
\cref{sec:app}, we apply our main result to M-estimators and averaged stochastic gradient descent
algorithms. In \cref{sec:proof1}, we present a randomized concentration inequality for multivariate linear
statistics and give the proof of the main result. The proofs of theorems in \cref{sec:app} are postponed
to \cref{sec:proof2}.

\section{Main results}
\label{n4:se2}
Let $(X_1, \dots, X_n), (\xi_1, \dots, \xi_n), W, T $ and $D$ be defined as in \cref{eq:t,eq:1}.
Let $\A$ be the collection of all convex sets in
$\R^{d}$. Let $Z \sim N(0, I_d)$.
The following theorem provides a Berry--Esseen bound for $T$.
\begin{theorem} \label{thm2.1}
	Assume that \cref{eq:con-xi} is satisfied.
	Then,
    \begin{equ}\label{n4-t3-1}
    \sup_{A\in \A} \bigl\lvert \IP(T\in A)-\IP(Z\in A)\bigr\rvert  & \leq   259
	d^{1/2} \gamma + 2 \E \bigl\{ \|W\|  \Delta \bigr\} + 2 \sum_{i=1}^n \E
	\bigl\{\|\xi_i\| \lvert \Delta - \Delta^{(i)}\rvert\bigr\},
  \end{equ}
  for any random variables \(\Delta\) and \( (\Delta^{(i)})_{1 \leq i \leq n}\) such that \(\Delta \geq \lVert D \rVert\) and \(\Delta^{(i)}\) is independent of \(X_i\), where \(\gamma\) is as defined in \cref{eq:gamma}. \\
  \label{n4-thm3}
\end{theorem}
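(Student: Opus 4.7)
The plan is to reduce the problem for the nonlinear statistic $T = W+D$ to a Berry--Esseen bound for the linear part $W$ plus a boundary-layer estimate for $W$. First, I would split
$$\lvert \IP(T \in A) - \IP(Z \in A)\rvert \leq \lvert\IP(T \in A) - \IP(W \in A)\rvert + \lvert\IP(W \in A) - \IP(Z \in A)\rvert.$$
The second summand is controlled by the classical multivariate Berry--Esseen bound for sums of independent centred random vectors on the class $\A$ of convex sets (of Bentkus type), which already contributes a term of order $d^{1/2}\gamma$ and thus sits inside the $259\,d^{1/2}\gamma$ piece of \cref{n4-t3-1}.

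For the first summand, the hypothesis $\Delta \geq \lVert D\rVert$ is decisive: for any convex $A$ the enlarged and eroded sets $A^{\pm\Delta}$ satisfy $\{W\in A^{-\Delta}\}\subseteq\{T\in A\}\subseteq\{W\in A^{\Delta}\}$, and hence
$$\lvert\IP(T\in A)-\IP(W\in A)\rvert \leq \IP(W\in A^{\Delta}\setminus A) + \IP(W\in A\setminus A^{-\Delta}).$$
Each piece is the probability that the linear statistic $W$ lies in a one-sided boundary shell of $A$ with random thickness $\Delta$. This is precisely what the multivariate randomized concentration inequality promised in \cref{sec:proof1} is designed to bound, and I would feed it an estimate of the shape
$$C\Bigl(d^{1/2}\gamma + \IE\{\lVert W\rVert\,\Delta\} + \sum_{i=1}^n \IE\{\lVert\xi_i\rVert\,\lvert\Delta-\Delta^{(i)}\rvert\}\Bigr).$$
Summing the two one-sided estimates produces the factor $2$ in front of $\IE\{\lVert W\rVert\Delta\}$ and of the sum in \cref{n4-t3-1}, while combining with the Berry--Esseen bound for $W$ and absorbing all numerical constants yields the final inequality with the stated constant $259$.

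The main obstacle is clearly the multivariate randomized concentration inequality with the stated $d$-dependence. A coordinate-by-coordinate reduction to the one-dimensional inequality of \cite{Ch07N} loses a spurious factor of $d$, so one must work intrinsically in $\IR^d$. The natural route is to build a smooth surrogate for the indicator of the boundary shell $A^{\Delta}\setminus A$ by convolving with a Gaussian kernel at an optimized scale, apply the multivariate Stein equation for $\N(0,I_d)$, and for each $\xi_i$ Taylor expand the Stein solution at $W-\xi_i$. The independence of $\Delta^{(i)}$ from $X_i$ is then used to rewrite the cross terms $\IE\{\langle \xi_i,\nabla f\rangle\,\Delta\}$ as $\IE\{\langle \xi_i,\nabla f\rangle(\Delta-\Delta^{(i)})\}$ plus a decoupled piece, and the first is bounded by $\IE\{\lVert\xi_i\rVert\,\lvert\Delta-\Delta^{(i)}\rvert\}$ times the Lipschitz norm of the Stein solution. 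Convexity of $A$ enters only through Bentkus-type surface-measure estimates which contribute the $d^{1/2}$ factor; this is where the analysis is tightest and where the large numerical constant $259$ is accumulated.
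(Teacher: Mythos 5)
Your overall route is the same as the paper's: control $\lvert \IP(W\in A)-\IP(Z\in A)\rvert$ by the explicit linear bound of order $d^{1/2}\gamma$ (the paper uses the constant $115 d^{1/2}\gamma$ from Chen and Fang, not a Bentkus $d^{1/4}$ bound), and control $\lvert \IP(T\in A)-\IP(W\in A)\rvert$ by the probability that $W$ lands in boundary shells of $A$ of random thickness $\Delta$, estimated by the randomized concentration inequality of \cref{n4-thm1.1}. However, two steps in your plan do not survive contact with the actual tool, and they are exactly where the paper spends most of its effort. First, the erosion side: \cref{n4-thm1.1} only bounds $\IP\bigl(W\in A^{4\gamma+\Delta_1}\setminus A^{4\gamma-\bar\Delta_2}\bigr)$ with the inner thickness capped at the inradius, $\bar\Delta_2=\Delta_2\wedge(r(\bar A)-\gamma)$, and it requires $r(\bar A)>\gamma$; your shell $A\setminus A^{-\Delta}$ can have $A^{-\Delta}=\emptyset$ (random $\Delta$ exceeding the inradius), in which case no estimate of the claimed shape follows directly. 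The paper handles this by treating the case $r(\bar A)<9\gamma$ separately via the Gaussian shell bound \cref{n4-l31-1}, applying the proposition to $A_0=A^{-4\gamma}$ with the truncated thickness $\Delta_0=\Delta\wedge(r-5\gamma)$, and absorbing the overflow event into an extra term $\IP\bigl(W\in A^{5\gamma-r}\bigr)$, itself bounded by the linear bound plus \cref{n4-l31-1}. Second, the constant bookkeeping: each application of \cref{n4-thm1.1} already carries the factor $2$ on $\E\{\|W\|\Delta\}$ and on $\sum_{i}\E\{\|\xi_i\|\lvert\Delta-\Delta^{(i)}\rvert\}$, so summing your two one-sided shell estimates would produce $4$, not the stated $2$. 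The paper avoids this by bounding $\IP(T\in A)-\IP(Z\in A)$ and $\IP(Z\in A)-\IP(T\in A)$ separately, each with a single shell application, and taking the larger one-sided constant (which is how $259=4+115+140$ arises).

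A side remark: your sketch of how the concentration inequality itself would be proved (Gaussian-smoothed Stein solutions, Taylor expansion, Bentkus surface-measure estimates) is not the paper's method — \cref{n4-thm1.1} is proved with the non-smooth projection-based vector field $f_{A,\epsilon}$ of Chen and Fang and a direct second-moment/variance argument, with no smoothing. Since that proposition is established independently of the present theorem, this difference does not affect the reduction above, but as written your smoothing route is a plan, not a proof, and the $d^{1/2}$ dependence with the stated explicit constants would still have to be extracted from it.
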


\begin{remark}
	The choices of $\Delta$ and  $\Delta^{(i)}$ are flexible. For example, let $(X_1', \dots, X_n')$ be an independent
	copy of $(X_1, \dots, X_n)$, one may choose  $\Delta = \lVert D\rVert$ and  $\Delta^{(i)} =
		\lVert D^{(i)} \rVert$, where
	\begin{math}
		D^{(i)} = D (X_1, \dots, X_{i - 1}, X_i', X_{i + 1}, \dots, X_n).
	\end{math}
	One can also choose $	D^{(i)} = D (X_1, \dots, X_{i - 1}, 0 , X_{i + 1}, \dots, X_n)$.
Moreover, the last term in \cref{n4-t3-1} cannot be removed, and we refer to \cite[Section 4]{Ch07N} for a
	counterexample.
\end{remark}

\begin{remark}
	For  $d = 1$, the right hand side of \cref{n4-t3-1} reduces to
	\begin{align*}
		259 \gamma + 2 \E  \{|W|   \Delta \}  +  2 \sum_{i=1}^n \E   \bigl\lvert
				\xi_i (\Delta -
		\Delta^{(i)}) \bigr\rvert,
	\end{align*}
	which differs from \cref{n4-i0} up to a constant factor.

	The Berry--Esseen bound \cref{n4-t3-1} provides an optimal order in terms of $n$ for many applications.  However, the order in $d$ may not be optimal in  \cref{n4-t3-1}. For a linear
	statistic $W$, \cite{Ben05} proved that
	\begin{align*}
		\sup_{A \in \mathcal{A}} \bigl\lvert  \IP (W \in A) - \IP (Z \in A )\bigr\rvert \leq C
		d^{{1/4}} \gamma,
	\end{align*}
	where $C > 0$ is an absolute constant and
	$d^{1/4}$ is believed to be the best possible. Here, $C > 0$ is an absolute constant, and \cite{Rai19} recently
	obtained a bound  with an explicit constant $42 d^{{1/4}} + 16$ by
	using Stein's method.  However, it is not clear how to obtain the order $d^{{1/4}}$ in our result.

\end{remark}

Using the technique of truncation, we obtain the following corollary, which may be useful for applications.
\begin{corollary}
	\label{cor:01}
	Let $O$ be a measurable set and $\Delta$ be a random variable such that $\Delta \geq \lVert D
	\rVert \I(O)$. Under the conditions of \cref{n4-thm3}, we have
	\begin{align*}
		\sup_{A \in \mathcal{A}} \bigl\lvert \IP (T \in A) - \IP (Z \in A) \bigr\rvert
		& \leq 259 d^{ {1/2}} \gamma + 2 \E \bigl\{\|W\| \Delta \bigr\} + 2 \sum_{i=1}^n
		\E \bigl\{\|\xi_i\| \lvert \Delta - \Delta^{(i)}\rvert\bigr\} + \IP (O^c),
	\end{align*}
	where $\Delta^{(i)}$ is any measurable random variable that is independent
	of $X_i$.
\end{corollary}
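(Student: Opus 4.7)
The plan is to deduce the corollary from \cref{n4-thm3} by a simple truncation argument: replace $D$ by a modified remainder $\tilde{D}$ that coincides with $D$ on the good set $O$ and vanishes on $O^c$, so that $\tilde{D}$ is pointwise dominated by $\Delta$, then apply \cref{n4-thm3} to the truncated statistic and charge the discrepancy on $O^c$ to the extra term $\IP(O^c)$.

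Concretely, set $\tilde{D} = D\,\I(O)$ and $\tilde{T} = W + \tilde{D}$. Then $\lVert \tilde{D}\rVert = \lVert D\rVert\,\I(O) \leq \Delta$, so the decomposition $\tilde{T} = W + \tilde{D}$ satisfies the hypothesis of \cref{n4-thm3} with the given $\Delta$ and $\Delta^{(i)}$ (the linear part $W$, the summands $\xi_i$, and hence $\gamma$ are unchanged, and independence of $\Delta^{(i)}$ from $X_i$ is preserved). The triangle inequality gives, for every $A \in \mathcal{A}$,
\begin{align*}
\bigl\lvert \IP(T \in A) - \IP(Z \in A) \bigr\rvert
\leq \bigl\lvert \IP(T \in A) - \IP(\tilde{T} \in A) \bigr\rvert
  + \bigl\lvert \IP(\tilde{T} \in A) - \IP(Z \in A) \bigr\rvert.
\end{align*}
On $O$ we have $T = \tilde{T}$, so $\{T \in A\} \triangle \{\tilde{T} \in A\} \subseteq O^c$ and the first summand is at most $\IP(O^c)$. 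For the second, \cref{n4-thm3} applied to $\tilde{T}$ yields the bound $259 d^{1/2}\gamma + 2\,\E\{\lVert W\rVert \Delta\} + 2\sum_{i=1}^n \E\{\lVert \xi_i\rVert \lvert \Delta - \Delta^{(i)}\rvert\}$. Taking the supremum over $A \in \mathcal{A}$ assembles the claimed inequality.

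No genuine obstacle arises; the only point deserving attention is the legitimacy of invoking \cref{n4-thm3} with the replaced remainder $\tilde{D} \neq D$. This is valid because \cref{n4-thm3} is formulated for an arbitrary decomposition $T = W + D$ and the right-hand side of its bound involves the nonlinear part only through any $\Delta$ that dominates $\lVert D\rVert$. Consequently, no new concentration estimate beyond those supplied by \cref{n4-thm3} itself is required.
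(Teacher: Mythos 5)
Your argument is correct and is essentially the paper's own proof: both define $\widetilde{T} = W + D\,\I(O)$, apply \cref{n4-thm3} to $\widetilde{T}$ with the given $\Delta$ and $\Delta^{(i)}$, and absorb $\lvert \IP(T\in A) - \IP(\widetilde{T}\in A)\rvert \leq \IP(O^c)$ into the extra term. No discrepancies to note.
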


Condition \eq{eq:con-xi} can be extended to a general case. We have the following corollary.
\begin{corollary}
	\label{cor:2.3}
	Let \( T, W, D\) and \( (\xi_1, \dots, \xi_n) \) be defined as in \cref{eq:t,eq:1}.
	Assume that \( (\xi_1, \dots, \xi_n )\) satisfies:
	\begin{align*}
		\IE \{ \xi_i \} = 0 \text{ for \(1 \leq i \leq n\) and }\sum_{i=1}^n \IE \{ \xi_i \xi_i\transpose \} = \Sigma,
	\end{align*}
	where \(\Sigma\) is a positive definite matrix with \(\lambda_{\min } (\Sigma ) \geq \sigma >0\).
Then
    \begin{multline*}
    \sup_{A\in \A} \bigl\lvert \IP(T\in A)-\IP(\Sigma^{1/2} Z\in A)\bigr\rvert   \leq   259
	\sigma^{-3/2 }d^{1/2} \gamma + 2 \sigma^{-1 }\E \bigl\{ \|W\|  \Delta \bigr\}  \\
																				 \quad + 2 \sigma^{-1 }\sum_{i=1}^n \E
	\bigl\{\|\xi_i\| \lvert \Delta - \Delta^{(i)}\rvert\bigr\},
  \end{multline*}
  for any random variables \(\Delta\) and \( (\Delta^{(i)})_{1 \leq i \leq n}\) such that \(\Delta \geq \lVert D \rVert\) and \(\Delta^{(i)}\) is independent of \(X_i\), where \(\gamma\) is as defined in \cref{eq:gamma}. 
\end{corollary}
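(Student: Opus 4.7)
The plan is to reduce the general covariance case to the identity covariance case of \cref{n4-thm3} by a linear change of variables via \(\Sigma^{-1/2}\). Since \(\Sigma\) is positive definite with \(\lambda_{\min}(\Sigma) \geq \sigma > 0\), the matrix \(\Sigma^{-1/2}\) is well defined and satisfies \(\lVert \Sigma^{-1/2}\rVert \leq \sigma^{-1/2}\).

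First I would define \(\widetilde{\xi}_i := \Sigma^{-1/2} \xi_i\), \(\widetilde{W} := \Sigma^{-1/2} W = \sum_{i=1}^n \widetilde{\xi}_i\), \(\widetilde{D} := \Sigma^{-1/2} D\) and \(\widetilde{T} := \widetilde{W} + \widetilde{D} = \Sigma^{-1/2} T\). Then \(\IE\{ \widetilde{\xi}_i\} = 0\) and
\begin{equation*}
  \sum_{i=1}^n \IE\{\widetilde{\xi}_i \widetilde{\xi}_i\transpose\} = \Sigma^{-1/2}\Bigl(\sum_{i=1}^n \IE\{\xi_i\xi_i\transpose\}\Bigr)\Sigma^{-1/2} = I_d,
\end{equation*}
so the standardized triple \((\widetilde{T}, \widetilde{W}, \widetilde{D})\) falls under \cref{n4-thm3}. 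Next, because the linear bijection \(x \mapsto \Sigma^{-1/2} x\) maps convex sets to convex sets, one has
\begin{equation*}
  \sup_{A \in \mathcal{A}} \bigl\lvert \IP(T \in A) - \IP(\Sigma^{1/2} Z \in A)\bigr\rvert = \sup_{A' \in \mathcal{A}} \bigl\lvert \IP(\widetilde{T} \in A') - \IP(Z \in A')\bigr\rvert,
\end{equation*}
and the right-hand side is controlled by \cref{n4-t3-1} applied to \(\widetilde{T}\).

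It remains to translate the bound back in terms of the original quantities. The bound \(\lVert \Sigma^{-1/2}\rVert \leq \sigma^{-1/2}\) yields \(\lVert \widetilde{\xi}_i\rVert \leq \sigma^{-1/2} \lVert \xi_i\rVert\), so \(\widetilde{\gamma} := \sum_i \IE \lVert \widetilde{\xi}_i\rVert^3 \leq \sigma^{-3/2} \gamma\); similarly \(\lVert \widetilde{W}\rVert \leq \sigma^{-1/2} \lVert W\rVert\) and \(\lVert \widetilde{D}\rVert \leq \sigma^{-1/2} \lVert D\rVert\). Given the original \(\Delta \geq \lVert D\rVert\) and \(\Delta^{(i)}\) independent of \(X_i\), I would take \(\widetilde{\Delta} := \sigma^{-1/2} \Delta\) and \(\widetilde{\Delta}^{(i)} := \sigma^{-1/2} \Delta^{(i)}\); then \(\widetilde{\Delta} \geq \lVert \widetilde{D}\rVert\) and \(\widetilde{\Delta}^{(i)}\) inherits independence of \(X_i\). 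Substituting into \cref{n4-t3-1} gives the three terms \(259\sigma^{-3/2} d^{1/2}\gamma\), \(2\sigma^{-1} \IE\{\lVert W\rVert \Delta\}\) and \(2\sigma^{-1}\sum_i \IE\{\lVert \xi_i\rVert \lvert \Delta - \Delta^{(i)}\rvert\}\), which is precisely the asserted bound.

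There is no real obstacle here: the whole argument is a routine rescaling. The only point that deserves a sentence of verification is the convexity-preservation of \(\Sigma^{-1/2}\), which is immediate since it is an invertible linear map. Collecting the constants coming from \(\lVert \Sigma^{-1/2}\rVert \leq \sigma^{-1/2}\) — one factor for \(\widetilde{\xi}_i\), one for \(\widetilde{W}\), and the appropriate powers in the \(\gamma\)-term — produces the \(\sigma^{-3/2}\) and \(\sigma^{-1}\) prefactors stated in the corollary.
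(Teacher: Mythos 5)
Your proposal is correct and follows essentially the same route as the paper: reduce to the standardized case by applying the linear map \(\Sigma^{-1/2}\), note that it preserves convexity so the supremum over convex sets is unchanged, and then apply \cref{n4-thm3} to \(\Sigma^{-1/2}T\), with the factors \(\sigma^{-3/2}\) and \(\sigma^{-1}\) coming from \(\lVert \Sigma^{-1/2}\rVert \leq \sigma^{-1/2}\). Your explicit choice \(\widetilde{\Delta} = \sigma^{-1/2}\Delta\), \(\widetilde{\Delta}^{(i)} = \sigma^{-1/2}\Delta^{(i)}\) correctly supplies the bookkeeping that the paper leaves implicit.
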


\section{Applications}
\label{sec:app}
In this section, we apply \cref{n4-thm3} to M-estimators and stochastic gradient
descent algorithms.

\subsection{M-estimators}

Let $X, X_1, \dots, X_n$ be i.i.d.\ random variables with common probability distribution
$P$ that take values in a measurable space  $(\mathcal{X}, \mathcal{B} (\mathcal{X}))$.
For any function $f: \mathcal{X} \mapsto \mathbb{R}$,
let
\begin{equ}
	\label{eq:m-p}
	\mathbb{P}_n f = \frac{1}{n} \sum_{i = 1}^{n} f(X_i), \quad
	P f = \int_{\mathcal{X}} f(x) P(d x), \quad \mathbb{G}_n f = \sqrt{n} (\mathbb{P}_n - P) f.
\end{equ}
Let  $\Theta \subset \mathbb{R}^d$ be a parameter space. For each  $\theta \in \Theta$, let
$m_{\theta}(\cdot): \mathcal{X} \mapsto \mathbb{R}$ be twice differentiable with respect to $\theta$,
and write
\begin{equ}
	\label{eq:map}
	\IM_n(\theta) = {\IP}_n m_{\theta}, \quad  M (\theta) = P m_{\theta}.
\end{equ}
Following the
notations in \cite{Vaa98},
we briefly write
\begin{equ}
	\label{eq:m-mdot}
	\dot{m}_{\theta} (x)= {\nabla_{\theta}} m_{\theta} (x), \quad  \ddot{m}_{\theta}
	(x)= \nabla_{\theta}^2 m_{\theta}(x ),
\end{equ}
where $\nabla_{\theta} m_{\theta}(x)$ is the gradient with respect to $\theta$.
Let
\begin{equ}
	\label{eq:mstar-true}
	\thestar = \argmin_{\theta \in \Theta } M (\theta)
\end{equ}
and
we say $\hat\theta_n$ is an \textit{M-estimator} of $\thestar$ if 
\begin{equ}
	\label{eq:m-estimator}
	\hat\theta_n = \argmin_{\theta \in \Theta } \IM_n (\theta).
\end{equ}
For any $p\geq 1$ and $Y \in \IR^d$, let
$ \lVert Y\rVert_p = (\IE \{ \lVert Y\rVert^p\})^{1/p}$   be the $L_p$-norm of  $Y$.

The asymptotic properties for M-estimators have been well studied in the literature, and we refer to \cite{van96,Vaa98}
and the references therein for a thorough reference. Under some regularity conditions, one has $\hat\theta_n
\arrowp
\thestar$, and \cite{Pol85}  showed that $\sqrt{n}(\hat\theta_n -
\thestar)$ converges weakly to a $d$-dimensional normal distribution. The convergence rate was also
studied by many authors, for instance, \cite{Pf71T,Pfa72,Pfa73} proved a Berry--Esseen bound of order
$O(n^{-1/2})$ for the  minimum contrast estimates under some regularity conditions.

In this subsection, we provide a Berry--Esseen bound for $\sqrt{n} (\hat \theta_n - \thestar)$ under
some convexity conditions, which are different from those in \cite{Pfa72}.
For symmetric matrices $A$ and $B$, denote by $A \preccurlyeq (\text{resp.\,}\succcurlyeq) B$ if $A - B$
is non-positive (resp.\ non-negative) definite.
We first propose the following two assumptions.
\begin{condition}
\item \label{con:a1}
	The function $m_{\theta}(\cdot)$ is twice differentiable with respect to $\theta$ and there exist constants
	$\mu > 0,  c_1 > 0, c_2 > 0$ and two nonnegative functions $m_1,m_2 : \mathcal{X} \mapsto \IR$ with
$\lVert m_1(X)\rVert_9 \leq c_1$ and $\lVert m_2(X)\rVert_4 \leq c_2$,  such that for any
	$\theta \in \Theta$,
	\begin{gather}
		M(\theta) - M(\thestar) \geq \mu \lVert \theta - \thestar\rVert^2, \label{eq:m-A11}\\
		\lvert m_{\theta}(x) - m_{\thestar} (x)\rvert \leq m_1(x) \lVert \theta - \thestar\rVert,
		\quad \forall\, x \in \mathcal{X}, \label{eq:m-A12}
		\intertext{and}
		\lVert \ddot{m}_{\theta}(x) - \ddot{m}_{\thestar}(x) \rVert \leq m_2(x) \lVert \theta -
		\thestar\rVert, \quad  \forall\, x \in \mathcal{X}. \label{eq:m-A13}
	\end{gather}
	{Moreover, there exists a constant $c_3 \geq 0$ and a nonnegative function $m_3 \colon \mathcal{X} \mapsto \IR$
		such that for any $x\in \mathcal{X}$,
		\begin{equ}
			\label{eq:m-A14}
			\ddot{m}_{\thestar} (x) \preccurlyeq m_3(x) I_d \mbox{ and $\lVert m_3(X)\rVert_4 \leq c_3$.}
		\end{equ}
		
	}

\item \label{con:a2}
	Let $\xi_i = \dot{m}_{\thestar} (X_i) \coloneqq (\xi_{i, 1}, \dots, \xi_{i, d})\transpose$, $\Sigma = \IE \left\{ \xi_i \xi_i\transpose
	\right\}$ and $V = \IE \{ \ddot{m}_{\thestar} (X) \}$.
	Assume that there exist  constants $\lambda_1 > 0$ and  $\lambda_2 > 0$ such that
	\begin{math}
		\lambda_{\min}(\Sigma) \geq \lambda_1 \text{ and } \lambda_{\min}(V) \geq \lambda_2.
	\end{math}
	Moreover, assume that there exists a constant $c_4 > 0$ such that
	\begin{equ}
		\label{eq:m-A22}
		\lVert \xi_1\rVert_4 \leq c_4 d^{1/2}.
	\end{equ}

\end{condition}

The following theorem provides a Berry--Esseen bound for the M-estimators.

\begin{theorem}
	\label{thm:mt1}
	Let \(\thestar\) and \(\hat\theta_n\) be defined as in \cref{eq:mstar-true,eq:m-estimator}.
	Under the conditions \cref{con:a1,con:a2}, we have
	\begin{align*}
		\sup_{A \in \mathcal{A}} \Bigl\lvert \IP \bigl( n^{1/2} \Sigma^{-1/2}V (\hat\theta_n -
		\thestar) \in A \bigr) - \IP (Z \in A)\Bigr\rvert \leq C d^{9/4} n^{ - {1/2}},
	\end{align*}
	where $C > 0$ is a constant depending only on  $c_1, c_2, c_3, c_4, \mu, \lambda_1$ and $\lambda_2$.

\end{theorem}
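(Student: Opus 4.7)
The plan is to apply \cref{cor:01} to the statistic \(T = n^{1/2} \Sigma^{-1/2} V(\hat\theta_n - \thestar)\) with the linear part \(W = \sum_{i=1}^n \zeta_i\), where \(\zeta_i := -n^{-1/2} \Sigma^{-1/2} \dot m_{\thestar}(X_i)\). Since \(\IE \dot m_{\thestar}(X) = \nabla M(\thestar) = 0\), we have \(\IE \zeta_i = 0\) and \(\sum_i \IE \{\zeta_i \zeta_i\transpose\} = I_d\), so \cref{eq:con-xi} holds. Moreover, \(\lambda_{\min}(\Sigma) \geq \lambda_1\) together with H\"older's inequality and \cref{eq:m-A22} yield
\[
\gamma = \sum_{i=1}^n \IE \|\zeta_i\|^3 \leq \lambda_1^{-3/2} n^{-1/2} \IE \|\dot m_{\thestar}(X_1)\|^3 \lesssim d^{3/2} / \sqrt n,
\]
so the leading Stein term \(259 d^{1/2} \gamma\) contributes at most \(O(d^2 / \sqrt n)\).

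I would next introduce the good event \(O = \{ \|\hat\theta_n - \thestar\| \leq r_n \}\) for a suitable \(r_n\) at the scale \(\sqrt{d/n}\) up to polynomial factors in \(d\). Since \(\IM_n(\hat\theta_n) \leq \IM_n(\thestar)\), \cref{eq:m-A11} gives
\[
\mu \|\hat\theta_n - \thestar\|^2 \leq M(\hat\theta_n) - M(\thestar) \leq -(\IM_n - P)(m_{\hat\theta_n} - m_{\thestar}).
\]
Expanding \(m_\theta - m_{\thestar}\) to second order separates a linear piece controlled by the CLT for \(W\) from a quadratic remainder controlled via \cref{eq:m-A14}; this yields a self-bounding inequality of the form \(\mu \|\hat\theta_n - \thestar\|^2 \lesssim \|W\|\, \|\hat\theta_n - \thestar\| / \sqrt n + \|\hat\theta_n - \thestar\|^2 \sqrt{d/n}\), from which a moment-based tail argument delivers \(\IP(O^c) \lesssim d^{9/4} / \sqrt n\). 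On \(O\), the interiority of \(\hat\theta_n\) gives \(\nabla \IM_n(\hat\theta_n) = 0\), and a Taylor expansion of the gradient,
\[
0 = \nabla \IM_n(\thestar) + \bar H_n (\hat\theta_n - \thestar), \qquad \bar H_n := \int_0^1 \nabla^2 \IM_n \bigl( \thestar + t(\hat\theta_n - \thestar) \bigr) \, dt,
\]
produces \(D := T - W = \Sigma^{-1/2}(V - \bar H_n) \bar H_n^{-1} \Sigma^{1/2} W\). Writing \(\hat V := n^{-1} \sum_i \ddot m_{\thestar}(X_i)\) and \(V - \bar H_n = (V - \hat V) + (\hat V - \bar H_n)\), \cref{eq:m-A13} and \cref{con:a2} give \(\|\bar H_n^{-1}\| \leq 2/\lambda_2\) on \(O\) and
\[
\|D\| \I(O) \leq C \Bigl( \|W\| \cdot \|\hat V - V\| + n^{-1/2} \|W\|^2 \cdot n^{-1} \sum_{i=1}^n m_2(X_i) \Bigr).
\]
I would take this upper bound as the definition of \(\Delta\), and define \(\Delta^{(i)}\) as the analogous expression with \(X_i\) replaced by an independent copy \(X_i'\), so that \(\Delta^{(i)}\) is independent of \(X_i\) as required by \cref{cor:01}.

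Finally, the remaining two terms of \cref{n4-t3-1} are controlled as follows. A Frobenius-norm calculation using \cref{eq:m-A14} gives \(\IE \|\hat V - V\|^2 \lesssim c_3^2 d / n\); since \cref{eq:m-A22} implies \(\IE \|W\|^p \lesssim d^{p/2}\) for \(p \leq 4\), Cauchy--Schwarz yields \(\IE \{ \|W\| \Delta \} \lesssim d^{3/2} / \sqrt n\). For the randomized-difference term \(\sum_i \IE \{ \|\zeta_i\| \, |\Delta - \Delta^{(i)}| \}\), swapping \(X_i\) with \(X_i'\) changes \(W\) by \(O(\|\zeta_i\| + \|\zeta_i'\|)\) and each matrix (or scalar) average by \(O(m_3(X_i)/n)\) or \(O(m_2(X_i)/n)\); telescoping each summand of \(\Delta\) and summing over \(i\) contributes at most \(O(d^{9/4} / \sqrt n)\). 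Combined with \(\IP(O^c) \lesssim d^{9/4}/\sqrt n\), the three error terms sum to the claimed rate. The main obstacle is the consistency step with sharp \(d\)-dependence: a naive uniform bound on \(\sup_\theta |(\IM_n - P)(m_\theta - m_{\thestar})|\) would lose in \(d\), so the second-order Taylor decomposition of \(m_\theta\) together with the higher-moment assumptions in \cref{con:a1} is essential for separating the linear CLT contribution from the quadratic remainder.
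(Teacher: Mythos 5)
Your proposal is structurally a different route from the paper's: you truncate on a good event and apply \cref{cor:01} with the representation $D=\Sigma^{-1/2}(V-\bar H_n)\bar H_n^{-1}\Sigma^{1/2}W$, so that $\Delta$ depends only on $W$, $\hat V:=n^{-1}\sum_i\ddot m_{\thestar}(X_i)$ and $H_2=n^{-1}\sum_i m_2(X_i)$, which makes the $\lvert\Delta-\Delta^{(i)}\rvert$ term an easy swap argument; the paper instead applies \cref{n4-thm3} without truncation, takes $\Delta=n^{1/2}\lambda_1^{-1/2}(H_1\lVert\hat\theta_n-\thestar\rVert+H_2\lVert\hat\theta_n-\thestar\rVert^2)$, and therefore needs the stability bound $\IE\lVert\hat\theta_n-\hat\theta_n^{(i)}\rVert^2$ of \cref{lem:m-ti}, which your route avoids. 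However, the step your whole truncation strategy rests on --- the tail bound $\IP(O^c)\lesssim d^{9/4}n^{-1/2}$ --- has a genuine gap. First, the radius scale is wrong: with $r_n\asymp\sqrt{d/n}$ times powers of $d$, $\IP(\lVert\hat\theta_n-\thestar\rVert>r_n)$ does not decay in $n$ at all (heuristically, $\sqrt n(\hat\theta_n-\thestar)$ is asymptotically a nondegenerate normal; concretely, Markov applied to a moment bound of the form $\IE\lVert\hat\theta_n-\thestar\rVert^p\lesssim d^{(p+1)/2}n^{-p/2}$ at such a radius yields a bound with no $n$-decay). To obtain an $n^{-1/2}$ factor from polynomial moments you must take $r_n$ of order $n^{-(p-1)/(2p)}$ (which is harmless for the rest of your argument), and, more importantly, you must first have the moment bound itself.

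Second, your proposed mechanism for that bound is not sound as stated. The ``self-bounding inequality'' $\mu\lVert\hat\theta_n-\thestar\rVert^2\lesssim\lVert W\rVert\lVert\hat\theta_n-\thestar\rVert/\sqrt n+\lVert\hat\theta_n-\thestar\rVert^2\sqrt{d/n}$ is not a pathwise inequality ($\sqrt{d/n}$ is only the typical size of the random quantity $\lVert\hat V-V\rVert$), and the second-order expansion of $m_\theta-m_{\thestar}$ leaves a genuinely cubic remainder of order $(\IP_n m_2+Pm_2)\lVert\hat\theta_n-\thestar\rVert^3$ coming from \cref{eq:m-A13}, which cannot be absorbed into the quadratic left-hand side unless $\lVert\hat\theta_n-\thestar\rVert$ is already known to be small --- the argument is circular without a localization device. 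This is exactly what the paper supplies: \cref{lem:m-t4} proves $\IE\lVert\hat\theta_n-\thestar\rVert^p\leq C d^{(p+1)/2}n^{-p/2}$ by peeling over shells combined with the localized maximal inequality $\bigl\lVert\lVert\mathbb{G}_n\rVert_{\mathcal{M}_\delta}\bigr\rVert_q^*\leq C c_1 d^{1/2}\delta$ of \cref{lem:G}, i.e., precisely the uniform bound over a $\delta$-ball of the Lipschitz class \cref{eq:m-A12} that your closing remark dismisses; it costs only a $d^{1/2}$ and is what makes the $d^{9/4}$ rate reachable, with no second-order expansion of $m_\theta$ needed for consistency. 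A secondary, fixable issue: on $O=\{\lVert\hat\theta_n-\thestar\rVert\leq r_n\}$ alone the claim $\lVert\bar H_n^{-1}\rVert\leq 2/\lambda_2$ is unjustified, since $\lVert\bar H_n-V\rVert\leq H_1+H_2 r_n/2$ with $H_1=\lVert\hat V-V\rVert$, and neither $H_1$ nor $H_2$ is deterministically small; you must enlarge the event to control them (e.g., via the moment bounds of \cref{lem:m-h12}), and the same control is needed even to invert $\bar H_n$ and write $D$ in your form.
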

\begin{remark}
	The assumptions \cref{con:a1,con:a2} are neater than those in \cite{Pfa72}.
	Moreover, \cref{thm:mt1} provides a Berry--Esseen bound
	with the dependence on the dimension.
\end{remark}

\begin{remark}
	Based on the proof of \cref{thm:mt1}, if we further assume that $|m_2(X_i)|\leq c_2$ for each $1
	\leq i \leq n$ almost surely, then
	the assumption for $m_1(x)$ can be replaced by
	\begin{math}
		\lVert m_1(X) \rVert_5 \leq c_1.
	\end{math}
	The condition \cref{eq:m-A22} is satisfied if
	\begin{math}
		\lVert \xi_{ij} \rVert_4 \leq c_4
	\end{math}
	for all $1 \leq i \leq n$ and $1 \leq j \leq d$.
\end{remark}

\begin{remark}
	The  twice differentiability of $m_{\theta}(x)$ holds for many applications. However,  in
	general, $\ddot{m}_{\theta}(x)$ does not necessarily exist. We will discuss this case in the
	next subsection.
\end{remark}


When \(m_{\theta} (\cdot)\) is smooth in \(\theta\), one can compute \(\hat\theta_n\) by solving the score equation
\begin{align*}
	\IP_n \.m_{\theta} = \frac{1}{n} \sum_{i = 1 }^{n} \.m_{\theta} (X_i)  = 0.
\end{align*}
More generally, we can consider the estimating equations of the following type.
Let  $\Theta \subset \IR^d$ be the parameter space and for each $\theta \in \Theta$,  let $\myphi_{\theta}:
\mathcal{X} \mapsto \IR^d$, and let
\begin{align*}
	\Psi_n(\theta) = \frac{1}{n} \sum_{i = 1}^{n} \myphi_{\theta}(X_i), \quad  \Psi(\theta) = \IE \{
	\myphi_{\theta} (X)\}.
\end{align*}
Let $\hat\theta_n$ and  $\thestar$ satisfy
\begin{equ}
	\label{eq:z-theta}
	\Psi_n(\hat\theta_n) = 0, \quad  \Psi(\thestar) = 0.
\end{equ}
The estimator \(\hat\theta_n\) in \cref{eq:z-theta} is often called a \emph{Z-estimator} of \(\thestar\), see e.g., \cite{Vaa98}.
However, although there is no maximization in \cref{eq:z-theta}, the estimator $\hat\theta_n$ is also called an M-estimator of $\thestar$. 
Assume that $\Psi(\theta)$ is differentiable at  $\thestar$ and there exists a $d \times d$ matrix $\.\Psi_0$ satisfying
\begin{align*}
	\Psi (\theta) - \Psi(\thestar) - \.\Psi_{0} (\theta - \thestar) = \lito( \lVert \theta -
	\thestar\rVert) \quad \text{as $\theta \to \thestar$.}
\end{align*}
Under some regularity conditions and the so called ``asymptotic equi-continuity'' condition,
\cite[]{Hub67}  proved that $\sqrt{n} (\hat\theta_n - \thestar)$ converges in distribution to
$\.\Psi_0^{-1} Z$, where  $Z \sim N(0, \IE \{\myphi_{\thestar}(X_i) \myphi_{\thestar} (X_i)\transpose\})$.
\citet*{Ben97f} proved a Berry--Esseen bound of order $\bigo (n^{-1/2})$ for the $1$-dimensional case under some convexity conditions, and \cite{Pau96a} proved a convergence
rate result for the $d$-dimensional case under some \emph{smooth stochastic
differentiability} conditions, which are different from the conditions
\cref{con:b1,con:b2,con:b3,con:b4,con:b5} below.

Let $p \geq 3$ be a fixed number,
and we make the following assumptions.
\begin{conditionB}
\item \label{con:b1}
	There exist  positive constants  $\mu, c_1$ and $\lambda_1$ and a positive definite matrix $\.\Psi_0$
	such that
	\begin{equ}
		\label{eq:z-B11}
		\bigl\langle \Psi(\theta_1) - \Psi(\theta_2), \theta_1 - \theta_2\bigr\rangle \geq \mu
		\lVert \theta_1 - \theta_2\rVert^2,
	\end{equ}
	and
	\begin{equ}
		\label{eq:z-B12}
		\lVert \Psi(\theta) - \Psi(\thestar) - \.\Psi_0 (\theta - \thestar)\rVert \leq c_1 \lVert
		\theta - \thestar\rVert^2, \quad  \lambda_{\min}(\.\Psi_0) \geq \lambda_1.
	\end{equ}
\item \label{con:b2} Let $h_{\theta,j}$ be the  $j$-th element of  $h_{\theta}.$ There exists a
	function $h_0 \colon
	\mathcal{X} \mapsto \IR_+$ and a constant  $ c_2 > 0$ such that for any $\theta, \theta' \in
	\Theta$,
	\begin{equ}
		\label{eq:z-B21}
		\bigl\lvert h_{\theta, j}(X) - h_{\theta', j} (X)\bigr\rvert \leq h_0 (X) \lVert \theta -
		\theta'\rVert.
	\end{equ}
	and
	\begin{equ}
		\label{eq:z-B22}
		\lVert h_0(X)\rVert_p \leq c_2.
	\end{equ}
\item\label{con:b3} Let $\xi_i = \myphi_{\thestar}(X_i) $ and $\Sigma = \IE \{\xi_i \xi_i\transpose\}$. Assume that there
	exist positive constants  $\ccc$ and $\lambda_2$ such that
	\begin{gather}
		\lambda_{\min}(\Sigma) \geq \lambda_2, 	\label{eq:z-B31}
		\intertext{and}
		\lVert \xi_1\rVert_p \leq \ccc d^{1/2}. \label{eq:z-B32}
	\end{gather}
\end{conditionB}
\begin{remark}
	Following notations in Theorem 3.1, we can choose $h_{\theta}(x) = \dot{m}_{\theta}(x)$. Note
	that the assumption \cref{con:b1} is weaker than \cref{con:a1} in the sense of the differentiability of $\myphi_{\theta}$, because we assume that the differentiability only holds for
	$\Psi(\theta)$ rather than $h_{\theta}(x)$.

\end{remark}

\begin{theorem}
	\label{thm:z-1}
	Let \(\hat\theta_n\) and \(\thestar\) be defined as in \cref{eq:z-theta}.
	Let $p \geq 3$ and  $D_{\Theta} \coloneqq \sup_{\theta_1, \theta_2 \in \Theta}
	\lVert \theta_1 - \theta_2\rVert$, the diameter of the parameter space $\Theta$.
	Assume that  conditions \cref{con:b1,con:b2,con:b3} are satisfied. Then,
	\begin{equ}
		\label{eq:thmz-1}
		\sup_{A \in \mathcal{A}} \bigl\lvert \IP \bigl(\sqrt{n} \Sigma^{-1/2} \.\Psi_0 (\hat\theta_n - \thestar) \in A\bigr) - \IP \bigl(Z \in A\bigr)\bigr\rvert \leq C  (\D +1 )^2
				d^{{7/2}}  n^{- {1/2} + \epsilon_p}.
	\end{equ}
	where $\epsilon_p = 1/(2p-2)$ and $C > 0$ is a constant depending on  $p,
	c_1,c_2,c_3, \lambda_1, \lambda_2$ and $\mu$.
\end{theorem}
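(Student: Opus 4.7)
The plan is to reduce \cref{thm:z-1} to \cref{n4-thm3} by linearizing the estimating equation. Using $\Psi_n(\hat\theta_n) = 0$ and $\Psi(\thestar) = 0$, I write $\.\Psi_0(\hat\theta_n - \thestar) = -(\Psi_n - \Psi)(\thestar) - R_1 - R_2$, where $R_1 = (\Psi_n - \Psi)(\hat\theta_n) - (\Psi_n - \Psi)(\thestar)$ is the empirical-process remainder and $R_2 = \Psi(\hat\theta_n) - \Psi(\thestar) - \.\Psi_0(\hat\theta_n - \thestar)$ is the deterministic Taylor-type remainder controlled by \cref{eq:z-B12}. Setting $\xi_i = -n^{-1/2}\Sigma^{-1/2}\myphi_{\thestar}(X_i)$ and $T = n^{1/2}\Sigma^{-1/2}\.\Psi_0(\hat\theta_n - \thestar)$ gives a decomposition of the form \cref{eq:t} with $\sum_i \IE\{\xi_i \xi_i\transpose\} = I_d$, so \cref{n4-thm3} (or its truncated version \cref{cor:01}) applies once $\lVert D\rVert$ is controlled on a high-probability event.

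The next step is a localization rate for $\hat\theta_n$. The monotonicity inequality \cref{eq:z-B11} at $\theta_1 = \hat\theta_n$, $\theta_2 = \thestar$, together with $\Psi_n(\hat\theta_n) = \Psi(\thestar) = 0$, gives $\mu \lVert \hat\theta_n - \thestar\rVert \leq \lVert (\Psi_n - \Psi)(\hat\theta_n)\rVert \leq \lVert (\Psi_n - \Psi)(\thestar)\rVert + \lVert R_1\rVert$. Condition \cref{con:b3} makes the first summand $O(\sqrt{d/n})$ in $L_p$, while the coordinate-wise Lipschitz bound \cref{eq:z-B21} lets me control the local modulus $\sup_{\lVert \theta - \thestar\rVert \leq r} \lVert (\Psi_n - \Psi)(\theta) - (\Psi_n - \Psi)(\thestar)\rVert$ by a constant times $r \sqrt{d}\, (\IP_n h_0 + P h_0)$ via a standard peeling argument. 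Combining with the diameter bound $\lVert \hat\theta_n - \thestar\rVert \leq \D$ and Markov's inequality at the $L_p$ moment, one obtains $\IP(\lVert \hat\theta_n - \thestar\rVert > r_n) \lesssim n^{-1/2}$ for $r_n \asymp (\D+1)\sqrt{d}\, n^{-1/2 + \epsilon_p}$, which is the source of the $n^{\epsilon_p}$ loss.

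On the event $O = \{ \lVert \hat\theta_n - \thestar\rVert \leq r_n \}$, \cref{eq:z-B12} gives $\lVert R_2\rVert \leq c_1 r_n^2$, and the peeling argument again bounds $\lVert R_1\rVert$ in terms of the modulus at radius $r_n$, producing an explicit measurable $\Delta \geq \lVert D\rVert \I(O)$. For the leave-one-out object $\Delta^{(i)}$, I define $\hat\theta_n^{(i)}$ as the solution of the equation obtained by replacing $X_i$ with an independent copy $X_i'$ in $\Psi_n = 0$; then $\hat\theta_n^{(i)}$ is independent of $X_i$, and applying \cref{eq:z-B11} to the two equations gives a stability estimate $\lVert \hat\theta_n - \hat\theta_n^{(i)}\rVert \lesssim n^{-1} (\lVert \myphi_{\thestar}(X_i)\rVert + \lVert \myphi_{\thestar}(X_i')\rVert)$, which is enough to build a coupled $\Delta^{(i)}$ with small $\lvert \Delta - \Delta^{(i)}\rvert$.

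Feeding these ingredients into \cref{cor:01} produces three contributions: the Lyapunov-type term $d^{1/2}\gamma \lesssim d^{2}/\sqrt{n}$ (via \cref{eq:z-B32}), $\IE \{ \lVert W\rVert \Delta \} \leq \sqrt{d}\, \lVert \Delta \rVert_2$ by Cauchy--Schwarz, and $\sum_i \IE \{ \lVert \xi_i \rVert \lvert \Delta - \Delta^{(i)} \rvert \}$ controlled by the leave-one-out stability, plus the residual $\IP(O^c)$; collecting powers of $d$ and $n$ yields the claimed $(\D+1)^2 d^{7/2} n^{-1/2 + \epsilon_p}$. I expect the main obstacle to be the empirical-process step: without any smoothness of $\myphi_\theta$ beyond the coordinate-wise Lipschitz bound \cref{eq:z-B21}, getting the correct dimension dependence in the local modulus of $(\Psi_n - \Psi)$ and simultaneously propagating it through both $R_1$ and the leave-one-out difference $\hat\theta_n - \hat\theta_n^{(i)}$ forces the peeling to be driven by Markov's inequality at the $L_p$ moment, which is precisely what produces the $n^{\epsilon_p}$ loss rather than the optimal $n^{-1/2}$ rate.
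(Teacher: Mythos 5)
Your overall architecture matches the paper's: the same decomposition $\dot\Psi_0(\hat\theta_n-\thestar)=-(\Psi_n-\Psi)(\thestar)-R_1-R_2$, localization of $\hat\theta_n$ at a radius $\delta_n\asymp(\D+1)\,\mathrm{poly}(d)\,n^{-1/2+\epsilon_p}$, leave-one-out quantities built by replacing $X_i$ with an independent copy, and an application of \cref{cor:01} with the same balancing that produces the $n^{\epsilon_p}$ loss. However, there is a genuine gap at the single most important technical step: your control of the local modulus. You bound
\begin{equation*}
\sup_{\lVert \theta - \thestar\rVert \leq r} \bigl\lVert (\Psi_n - \Psi)(\theta) - (\Psi_n - \Psi)(\thestar)\bigr\rVert \;\leq\; C\, r\,\sqrt{d}\,\bigl(\IP_n h_0 + P h_0\bigr),
\end{equation*}
which is just the triangle inequality applied to \cref{eq:z-B21} and carries no $n^{-1/2}$ factor, since $\IP_n h_0 + P h_0 = O_P(1)$. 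The proof needs the empirical-process maximal inequality for the Lipschitz-in-parameter class (symmetrization plus an entropy bound with envelope $\delta h_0$, as in \cref{lem:G,lem:z2-Psi}), which gives the modulus at the $\mathbb{G}_n$ scale, i.e.\ of order $n^{-1/2} c_2\, d\, \delta$ in $L_p$. Without that factor of $n^{-1/2}$: (i) your localization fails --- the term $\sqrt{d}\,\lVert\hat\theta_n-\thestar\rVert(\IP_n h_0+Ph_0)$ cannot be absorbed into $\mu\lVert\hat\theta_n-\thestar\rVert$, and neither peeling nor Markov at the $L_p$ moment yields $\IE\lVert\hat\theta_n-\thestar\rVert^p\lesssim ((\D+1)d)^p n^{-p/2}$ (you would only get $O((\D\sqrt d)^p)$), so the claimed tail bound $\IP(\lVert\hat\theta_n-\thestar\rVert>r_n)\lesssim n^{-1/2}$ does not follow; (ii) the resulting $\Delta_1=\lambda_2^{-1/2}\sqrt n\cdot(\text{modulus at }\delta_n)$ would satisfy only $\lVert\Delta_1\rVert_2\lesssim \sqrt n\,\sqrt d\,\delta_n$, so $\IE\{\lVert W\rVert\Delta_1\}\lesssim d\,\sqrt n\,\delta_n\asymp d\,n^{\epsilon_p}$, which diverges instead of contributing $O(d^{3/2}\delta_n)$ as in \cref{eq:z1-wd.bound}.

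A second, related gap is the leave-one-out stability. From \cref{eq:z-B11} one gets
\begin{equation*}
\mu\lVert\hat\theta_n-\hat\theta_n^{(i)}\rVert \;\leq\; \frac{1}{n}\bigl\lVert \myphi_{\hat\theta_n^{(i)}}(X_i)-\myphi_{\hat\theta_n^{(i)}}(X_i')\bigr\rVert \;+\; \bigl\lVert (\Psi_n-\Psi)(\hat\theta_n)-(\Psi_n-\Psi)(\hat\theta_n^{(i)})\bigr\rVert ,
\end{equation*}
and the second term is again an empirical-process increment; it is not $O(n^{-1})$, but (on the localization event) of order $n^{-1/2} d\,\delta_n$ after applying the same maximal inequality, exactly as in \cref{eq:z-thati} where both a $d^{p/2}n^{-p}$ and a $d^p n^{-p/2}\delta^p$ term appear. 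Your claimed bound $\lVert\hat\theta_n-\hat\theta_n^{(i)}\rVert\lesssim n^{-1}(\lVert\myphi_{\thestar}(X_i)\rVert+\lVert\myphi_{\thestar}(X_i')\rVert)$ silently drops this contribution and is not justified by \cref{eq:z-B11} alone. In short, the proposal is the right skeleton, but the theorem cannot be reached without the uniform-in-$\theta$ CLT-scale bound of \cref{lem:z2-Psi}; once that lemma (or an equivalent chaining/bracketing argument) is inserted, your localization, the bound on $\IE\{\lVert W\rVert(\Delta_1+\Delta_2)\}$, and the leave-one-out differences all recover the orders used in \cref{pro:z-1}, and the rest of your computation goes through.
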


\begin{remark}
	Under some different conditions and assuming that $\IE \lVert \xi_i\rVert^3$ is
	bounded,
	\cite[Theorem 9]{Pau96a} proved a bound of order $n^{-{1/4}} (\log
	n)^{{3/4}}$.
	In \cref{thm:z-1} with $p = 3$, the result \cref{eq:thmz-1} reduces to
	$D_{\Theta}^2 d^{{7/2}} n^{- {1/4}}$, which is of a sharper order than \cite{Pau96a}.
	Moreover, \cref{thm:z-1} provides a result with the dependence on the dimension $d$.
\end{remark}

The order $n^{-1/2 + \epsilon_p}$ can be improved to $n^{-1/2}\log n$ under some stronger
conditions. Let us introduce the so-called {\it Orlicz norm}, one may refer to \cite[Section 2.2]{van96} for more details.
Let $\psi: [0, \infty) \mapsto [0, \infty)$ be a nondecreasing, convex function with  $\psi(0) = 0$.
Let $Y$ be a $\IR^d$-valued
random variable, 
and define the Orlicz norm  of \(Y\) with respect to \(\psi\) to be
\begin{align}
	\label{eq:psi-norm-1}
	\lVert Y \rVert_{\psi} = \inf \Bigl\{ C > 0 \, : \, \IE \Bigl\{\psi \Bigl(\frac{\|Y\|}{C}\Bigr) \Bigr\}
	\leq 1 \Bigr\}.
\end{align}
Specially, if we choose $\psi(x) = x^p$ for $p \geq 1$, then the corresponding Orlicz norm is simply the $L_p$-norm.
Let  $\psi_1 (x) \coloneqq e^{x} - 1$.
Now we propose the following assumptions.


\begin{conditionB}
\setcounter{conditionBi}{3}
	\item \label{con:b4}
		The condition \cref{eq:z-B22} in \cref{con:b2} is replaced by
		\begin{equ}
			\label{eq:z-B42}
			\lVert h_{0}(X) \rVert_{\psi_1} \leq \ccd.
		\end{equ}
		where $\ccd > 0$ is a constant.
	\item \label{con:b5}  The condition \cref{eq:z-B32} in \cref{con:b3} is replaced by
		\begin{equ}
			\label{eq:z-B51}
			\lVert \xi_{1 }\rVert_{\psi_1} \leq \cce,
		\end{equ}
		where $\cce > 0$ is a constant.
\end{conditionB}
\begin{remark}
Let \(Y\) be a random variable. It can be shown (see \cite[(5.14)--(5.16)]{Ver10} for example) that, there exist positive constants \(K_1, K_2, K_3\) that differ from each other by at most an absolute constant factor such that the following are equivalent:
\begin{enumerate}[(a)]
	\item \( \lVert Y \rVert_{\psi_1} \leq K_1\);
	\item \( \IP (|Y| \geq t) \leq \exp \{1 -t/K_2\}\) for all \(t \geq 0\);
	\item \( \lVert Y \rVert_p \leq K_3 p \) for all \(p \geq 1\).
\end{enumerate}

\end{remark}
We have the following theorem.
\begin{theorem}
	\label{thm:z-2}
	Let \(\hat\theta_n, \thestar\) and \(D_{\Theta}\) be defined as in \cref{thm:z-1}.
	Under the assumptions \cref{con:b1,con:b4,con:b5},
	\begin{align*}
		& \sup_{A \in \mathcal{A}} \bigl\lvert \IP \bigl(\sqrt{n} \Sigma^{-1/2} \.\Psi_0
		(\hat\theta_n - \thestar) \in A\bigr) - \IP \bigl(Z \in A\bigr)\bigr\rvert	\leq C  (\D +1)^2 d^4
		n^{- {1/2}} \log n,
	\end{align*}
	where $C > 0$ is a constant depending on  $c_1,c_4,c_5, \lambda_1, \lambda_2$ and $\mu$.

\end{theorem}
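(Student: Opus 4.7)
The plan is to derive Theorem~\ref{thm:z-2} from Theorem~\ref{thm:z-1} by treating the moment order $p$ in (B2)--(B3) as a tunable parameter and optimizing it as a function of $n$, exploiting the equivalence recorded in the remark just before the theorem: the bound $\lVert Y\rVert_{\psi_1} \leq K$ is equivalent (up to absolute constants) to $\lVert Y\rVert_p \leq K' p$ for all $p \geq 1$.

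First I would use this equivalence to extract from the sub-exponential hypotheses (B4) and (B5) a family of $L_p$-bounds. Namely, $\lVert h_0(X)\rVert_{\psi_1} \leq c_4$ yields $\lVert h_0(X)\rVert_p \leq K c_4 p$ for every $p \geq 1$, and $\lVert \xi_1\rVert_{\psi_1} \leq c_5$ yields $\lVert \xi_1\rVert_p \leq K c_5 p$ for every $p \geq 1$. Thus assumptions (B2) and (B3) are satisfied for every $p \geq 3$ with constants $c_2 = K c_4 p$ and $c_3 = K c_5 p / d^{1/2}$ respectively.

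Next I would revisit the proof of Theorem~\ref{thm:z-1} to make explicit how the constant $C$ depends on $c_2$ and $c_3$. Those constants enter only through a few moment inequalities (typically a Rosenthal-type or Marcinkiewicz--Zygmund inequality applied to $\sum_i h_0(X_i)$ and $\sum_i \xi_i$), so the dependence will be polynomial, say $C \propto (c_2 c_3)^{\beta}$ for some explicit $\beta$. After substitution this gives a bound of the form
\begin{equation*}
(D_\Theta + 1)^2 d^{\alpha}\, \mathrm{poly}(p)\, n^{-1/2+\epsilon_p},
\qquad \epsilon_p = \frac{1}{2p-2},
\end{equation*}
with $\alpha$ and the polynomial in $p$ both readable from the proof of Theorem~\ref{thm:z-1}.

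Finally I would optimize by setting $p = \log n$, so that $n^{\epsilon_p} = n^{1/(2\log n - 2)} \leq e$ for all sufficiently large $n$, absorbing the $n^{\epsilon_p}$ factor into an absolute constant. The polynomial in $p$ then contributes at most a factor $(\log n)^{\beta}$. The main obstacle will be to carry out the bookkeeping inside the proof of Theorem~\ref{thm:z-1} tightly enough that this last factor is linear, not polynomial, in $\log n$, and that the enlarged $d$-exponent becomes exactly $4$ rather than something larger. Concretely, at each step where the proof of Theorem~\ref{thm:z-1} invokes an $L_p$-moment of $\xi_i$ or $h_0(X_i)$, one must choose $p = \log n$ from the outset, use the sharper bounds $\lVert \xi_i\rVert_p \leq K c_5 \log n$ and $\lVert h_0(X_i)\rVert_p \leq K c_4 \log n$, and verify by direct inspection that only a single power of $\log n$ survives in the final estimate while the aggregate $d$-dependence is $d^4$. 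Combining these ingredients yields the claimed rate $(D_\Theta + 1)^2 d^{4} n^{-1/2} \log n$.
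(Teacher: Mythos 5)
Your strategy (convert the $\psi_1$ hypotheses into $\lVert h_0(X)\rVert_p \lesssim c_4 p$, $\lVert \xi_1\rVert_p \lesssim c_5 p$, run \cref{thm:z-1} with $p$ as a free parameter, and set $p=\log n$ to kill $n^{\epsilon_p}$) is a reasonable idea, but as written it has a genuine gap: the $p$-dependence of the constant in \cref{thm:z-1} is not a harmless multiplicative $\mathrm{poly}(p)$ factor. Inside that proof the moment hypotheses are used through quantities raised to the $p$-th power: e.g.\ $\IP(\lVert\hat\theta_n-\thestar\rVert>\delta_n)\leq \delta_n^{-p}\,\IE\lVert\hat\theta_n-\thestar\rVert^{p}$, where $\bigl(\IE\lVert\hat\theta_n-\thestar\rVert^{p}\bigr)^{1/p}\lesssim C_p(\D+1)d\,n^{-1/2}$ and $C_p$ grows at least linearly in $p$ (each use of $\lVert h_0\rVert_p$ or $\lVert\xi_1\rVert_p$ contributes a factor $\asymp p$ under the $\psi_1$ hypotheses, and the Hoffmann--J{\o}rgensen constants in \cref{lem:m-W,lem:G} add more). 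With the truncation radius fixed at $\delta_n=(\D+1)d\,n^{-(p-2)/(2p-2)}\approx e(\D+1)d\,n^{-1/2}$ as in the proof of \cref{thm:z-1}, the Chebyshev bound becomes $(C_p/e)^p$, which for $p=\log n$ is of size $n^{\Theta(\log\log n)}$ rather than small; so \cref{thm:z-1} cannot be used as a black box. To fix this you must enlarge $\delta_n$ by the factor $C_p$, but $\delta_n$ feeds back \emph{linearly} into the dominant terms ($\IE\{\lVert W\rVert\Delta_1\}\lesssim d^{3/2}\delta_n$ and $n^{-1/2}\sum_i\IE\{\lVert\xi_i\rVert\lvert\Delta_1-\Delta_1^{(i)}\rvert\}\lesssim d\,\delta_n$), so the final powers of $d$ and $\log n$ are dictated by how sharply you can control $C_p$; a straightforward execution of your route gives something like $(\D+1)^2 d^{7/2}(\log n)^2 n^{-1/2}$, and the assertion that "only a single power of $\log n$ survives while the $d$-dependence is exactly $d^4$" is not established by the proposal and does not come out of this bookkeeping.

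The paper avoids this by never passing through $p$-dependent moment constants: it works directly with Orlicz norms to get an exponential tail, $\IP(\lVert\hat\theta_n-\thestar\rVert>t)\leq 2\exp\bigl(-C'\sqrt{n}\,t/((\D+1)d^{3/2})\bigr)$ (\cref{lem:z2-theta}, via the $\psi_1$-versions of the maximal and empirical-process inequalities in \cref{lem:m-W,lem:z2-Psi}, where the $d^{3/2}$ arises), then reruns the truncation argument of \cref{thm:z-1} (\cref{pro:z2}) and chooses $\delta_n\asymp(\D+1)d^{3/2}n^{-1/2}\log n$; the stated exponents come precisely from $d^{5/2}\cdot d^{3/2}=d^4$ and this single logarithmic factor. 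Your plan is morally the same as an exponential tail bound (sub-exponentiality is equivalent to $L_p$-norms growing linearly in $p$), but to make it rigorous you would have to re-enter the proof of \cref{thm:z-1}, re-optimize $\delta_n$ against the $p$-dependent constants, and track the $d$-powers afresh --- i.e.\ essentially reproduce the paper's argument rather than deduce the theorem from \cref{thm:z-1}.
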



\subsection{Averaged stochastic gradient descent algorithms}
Consider the problem of searching for the minimum point $\thestar$ of a smooth
function $f(\theta), \theta\in \Theta \subset \IR^d$.  The stochastic gradient
descent method provides a direct way to solve the minimization problem. In this subsection, we consider the averaged stochastic
gradient descent algorithm, which is proposed by \cite{Pol90} and \cite{Rup88}. The algorithm is
given as follows: Let $\theta_0\in \IR^d$ be the initial value (might be
random), and for  $n
\geq 1$, we update  $\theta_n$ by
\begin{equ}
	\label{eq:saa}
	\theta_n & = \theta_{n - 1} - \ell_n \bigl( \nabla f (\theta_{n - 1}) + \zeta_n \bigr), \\
	\thebar_n & = \frac{1}{n} \sum_{i = 0}^{n-1} \theta_i.
\end{equ}
where 
$\ell_n>0$ is the so called \emph{learning rate} and
$(\zeta_1, \zeta_2, \dots)$ is a sequence of \(\IR^d\)-valued martingale differences. The convergence rate of $\IE \lVert \theta_n - \thestar\rVert^2$ and $\IE \lVert \bar\theta_n -
\thestar\rVert^2$ was thoroughly studied in the literature, see \cite{Pol90} and \cite{Bac11}. The normality of
$\sqrt{n}(\bar\theta_n - \thestar)$ is also well-known, see \cite{Pol92a}.
Suppose that the learning rate $\ell_n = \ell_0 n^{-\alpha}$ where $\alpha\in
(1/2, 1)$, under some regularity conditions,
\citet*{Pol92a} proved that $\sqrt{n} (\thebar_n - \theta^*)$ converges weakly to a multivariate normal distribution. Recently, \cite{Ana19a} used Stein's method and the techniques of martingales to
prove a convergence rate for a class of smooth test functions, see \cite[Theorem 4]{Ana19a} for more details.

In this subsection, we provide a Berry--Esseen bound for the normal approximation for $\sqrt{n} (\thebar_n - \thestar )$.

We make the following assumptions:
\begin{conditionC}
	\setcounter{conditionCi}{-1}
\item \label{con:c0}
	There exists a constant $\tau_0 >0 $ such that  $\lVert \theta_0 -
	\thestar\rVert_4 \leq
	\tau_0.$
\item \label{con:c1} The sequence $(\zeta_1, \zeta_2, \dots)$ is independent of \(\theta_0\), and  for each
	$n \geq 1$, $\zeta_n$ admits the decomposition
	\begin{align*}
		\zeta_n = \xi_n + \eta_n,
	\end{align*}
	where
	\begin{enumerate}[(i).]
		\item $(\xi_1, \xi_2, \dots)$ is a sequence of independent random variables and  $\IE \left\{
			\xi_i \right\} = 0$ and  $\IE \bigl\{\xi_i \xi_i\transpose\bigr\} = \Sigma_i$; there exist
			positive numbers $\lambda_{1}$ and $\lambda_{2}$ such that for any $i \geq 1$,
			$\lambda_{1} \leq \lambda_{\min}(\Sigma_i) \leq \lambda_{\max}(\Sigma_i) \leq
			\lambda_{2}$;
			moreover, there exists a positive number $\tau $ such that
			\begin{align*}
				\max_{1 \leq i\leq n} \lVert \xi_i\rVert_4\leq \tau;
			\end{align*}
	\item
		let $\mathcal{F}_0 = \sigma \{\theta_0\}$, and for each $n \geq 0$,  $\mathcal{F}_n = \sigma\{\theta_0,
		\xi_1, \ldots, \xi_n\}$;
		let \(g(\cdot, \cdot) : \IR^d \times \IR^d \mapsto \IR^d\), and
	the random variable $\eta_n:= g (\theta_{n - 1}, \xi_n)$ satisfies $\IE \{\eta_n \vert \mathcal{F}_{n - 1}\} = 0$ and
	for any $\theta$ and $\theta'$, there exists a nonnegative number $c_1 \geq 0$ such that
		\begin{equ}
			\label{eq:etan}
			\lVert g(\theta, \xi) - g(\theta', \xi) \rVert \leq c_1 \lVert \theta -
			\theta'\rVert \quad  \text{and} \quad  g(\thestar, \xi) = 0 \quad \text{for } \xi \in \mathbb{R}^d.
		\end{equ}
	\end{enumerate}

\item \label{con:c2} The function $f$ is {\it $L$-smooth} and {\it strongly convex}  with
	convexity constant $\mu > 0$, i.e., $f$ is twice differentiable and there exist two constants \(\mu >0\) and \(L>0\) such that
	\begin{align}
		\label{eq:str_con}
		\mu I_d \preccurlyeq \nabla^2 f(\theta) \preccurlyeq L I_d, \text{  for all  $\theta\in \Theta $}.
	\end{align}
\item\label{con:c3}  There exist positive constants  $c_2$ and $\beta$ such that for all $\theta$ with  $\lVert
	\theta - \thestar\rVert \leq \beta$,
	\begin{align}
		\label{eq:hes}
		 \bigl\lVert \nabla^2 f(\theta ) - \nabla^2
			f(\thestar)\bigr\rVert \leq c_2 \lVert
		\theta - \thestar\rVert.
\end{align}
\end{conditionC}


Let \(G := \nabla^2 f(\thestar)\).
Recall that $(\ell_n)_{n \geq 1}$ is the learning rate sequence in  \cref{eq:saa}, and let
\begin{align*}
	Q_i = \ell_i \prod_{j = i}^{n - 1}\prod_{k = i + 1}^{j} (I_d - \ell_k G).
\end{align*}
Here, for any $n \geq 0$, set
\begin{math}
	\prod_{i = n + 1}^n A_i = I_d, \prod_{i = n + 1}^n a_i = 1,
\end{math}
where $(A_i)_{i \geq 1}$ is a $\IR^{d \times d}$-valued sequence and  $(a_i)_{i \geq 1}$ is a
$\IR$-valued sequence.
Let
\begin{align*}
	\Sigma_n = \frac{1}{n} \sum^{n - 1}_{i=1} Q_i \Sigma_{i} Q_i\transpose.
\end{align*}

We have the following theorem.

\begin{theorem}
	\label{thm:sgd}
	Let $\ell_n = \ell_0 n^{-\alpha}$ where \(\ell_0 >0\) and  $1/2 < \alpha \leq 1$. Under the assumptions
	\cref{con:c0,con:c1,con:c2,con:c3}, 	
	we have
	\begin{enumerate}
		\item [\textup{(1)}]
		if $\alpha \in (1/2,1)$,
		\begin{equ}
			\label{eq:sgd-a}
				 \MoveEqLeft\sup_{A \in \mathcal{A}} \Bigl\lvert \IP \bigl(\sqrt{n} \Sigma_n^{-1/2} (\thebar_n
				 - \thestar) \in A\bigr) - \IP (Z \in A)\Bigr\rvert\\
				& \leq C  \bigl(d^{3/2} + \tau^3 +
			\tau_0^3\bigr) (d^{1/2} n^{-1/2} + n^{-\alpha + 1/2});
		\end{equ}
		\item [\textup{(2)}]
	 if $\ell_n = \ell_0 n^{-1}$ with $\ell_0 \mu \geq 1$, we have
		\begin{equ}
			\MoveEqLeft\sup_{A \in \mathcal{A}} \Bigl\lvert \IP \bigl(\sqrt{n} \Sigma_n^{-1/2} (\thebar_n -\thestar) \in A\bigr) - \IP (Z \in A)\Bigr\rvert \\
			& \leq C n^{-1/2} (d^{3/2} + \tau^3 + \tau_0^3) \times
			\begin{cases}
				\label{eq:sgd-b}
				d^{1/2} + \log n, & \ell_0 \mu > 1; \\
				d^{1/2} (\log n)^3 
				, & \ell_0 \mu = 1.
			\end{cases}
		\end{equ}
\end{enumerate}
	Here, $C>0$ is a constant depending only on  $\ell_0, \lambda_1, \lambda_2, c_1,
c_2, \alpha, \beta, L $ and $\mu$ and independent of $d, \tau$ and  $\tau_0$.
\end{theorem}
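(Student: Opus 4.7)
The plan is to put $T = \sqrt{n}\,\Sigma_n^{-1/2}(\bar\theta_n - \thestar)$ into the canonical form $T = W + D$ of \cref{eq:t} and apply \cref{n4-thm3} together with the truncation device of \cref{cor:01}. Writing $U_k = \theta_k - \thestar$ and using the Hessian-Lipschitz expansion from \cref{eq:hes}, we have $\nabla f(\theta_{k-1}) = G U_{k-1} + R_{k-1}$ with $\lVert R_{k-1}\rVert \leq c_2 \lVert U_{k-1}\rVert^2$ on $\{\lVert U_{k-1}\rVert \leq \beta\}$, so the recursion \cref{eq:saa} becomes $U_k = (I_d - \ell_k G) U_{k-1} - \ell_k (\xi_k + \eta_k + R_{k-1})$. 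Unrolling and averaging over $j = 0,\dots,n-1$, an Abel summation produces
\begin{equation*}
\sqrt{n}\, \bar U_n = -\frac{1}{\sqrt{n}} \sum_{i=1}^{n-1} Q_i \xi_i + \frac{1}{\sqrt{n}} \sum_{j=0}^{n-1} A_j U_0 - \frac{1}{\sqrt{n}} \sum_{i=1}^{n-1} Q_i (\eta_i + R_{i-1}),
\end{equation*}
with $A_j = \prod_{k=1}^j (I_d - \ell_k G)$ and $Q_i$ as defined just before \cref{thm:sgd}. Multiplying by $\Sigma_n^{-1/2}$, the first sum becomes the linear part $W = -\Sigma_n^{-1/2} n^{-1/2}\sum_i Q_i \xi_i$: its summands $\tilde\xi_i := -n^{-1/2}\Sigma_n^{-1/2} Q_i \xi_i$ are independent, mean zero, and satisfy $\sum_i \IE\{\tilde\xi_i \tilde\xi_i\transpose\} = I_d$ by the very definition of $\Sigma_n$; the remaining two pieces together make up $D$.

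I would then apply \cref{n4-thm3} (combined with \cref{cor:01} to truncate to the event $\{\max_k \lVert U_k\rVert \leq \beta\}$ on which the quadratic Taylor bound holds), choosing $\Delta = \lVert D\rVert$ on the truncation event. Since $\eta_i$ and $R_{i-1}$ are functions of $(\xi_1,\dots,\xi_{i-1})$, a valid $\Delta^{(i)}$ is the version of $\Delta$ in which all occurrences of $\xi_i$ (direct and propagated through the later $U_j$'s) are replaced by those of an independent copy $\xi_i'$; this is measurable with respect to $\sigma(\xi_j : j \neq i)$ and hence independent of $X_i = \xi_i$. The three terms on the right of \cref{n4-t3-1} are then estimated via two auxiliary inputs: (i) an $L_4$ moment bound $\IE\lVert U_k\rVert^4 \lesssim \ell_k^2 (d^2 + \tau^4 + \tau_0^4)$ (up to polylog factors in the $\alpha = 1$ case), obtained by iterating the recursion using the contraction $\lVert I_d - \ell_k G\rVert \leq 1 - \ell_k \mu$ from \cref{eq:str_con} and feeding in the Lipschitz estimate of \cref{eq:etan} for $\eta_k$, which is absorbed by the contraction for small $\ell_k$; and (ii) spectral bounds on the matrix products, $\lVert \prod_{k=i+1}^j (I_d - \ell_k G)\rVert \leq \exp(-\mu \sum_{k=i+1}^j \ell_k)$, which give $\lVert Q_i\rVert \lesssim 1$ for $\alpha \in (1/2,1)$, $\lVert Q_i\rVert \lesssim \log n$ when $\alpha = 1$ with $\ell_0\mu > 1$, and one extra logarithmic factor when $\ell_0\mu = 1$. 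Plugging these in: $d^{1/2}\gamma$ produces the $d^{1/2}\tau^3 n^{-1/2}$ contribution; $\IE\{\lVert W\rVert \Delta\}$ is controlled by Cauchy--Schwarz with $\lVert W\rVert_2 \lesssim d^{1/2}$ and $\lVert D\rVert_2$ supplied by (i)--(ii); and the $\Delta$-stabilization term is handled by a one-step perturbation argument using the Lipschitz dependence of $D$ on each $\xi_i$. Collecting contributions reproduces \cref{eq:sgd-a}, with the $n^{-\alpha + 1/2}$ piece arising from the initial-value transport $n^{-1/2}\sum_j A_j U_0$ (whose transient decay is governed by $\alpha$) and the quadratic-error contribution $n^{-1/2}\sum_i Q_i R_{i-1}$; the $\log n$ and $(\log n)^3$ factors in \cref{eq:sgd-b} come from the slower decay of $\lVert Q_i\rVert$ at $\alpha = 1$.

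The main obstacle I anticipate is the state-dependent martingale $\eta_k = g(\theta_{k-1}, \xi_k)$: although it is a martingale difference in $\mathcal{F}_k$, it is \emph{not} independent of $\xi_k$ in the sense required by \cref{n4-thm3}, so it cannot be placed in the linear part $W$ and must instead be hidden in $D$, where its contribution is converted into $\lVert U_{k-1}\rVert$-fluctuations via \cref{eq:etan}; this couples the moment estimate (i) with the bound on $\lVert D\rVert$ and forces one to prove them in tandem. The second delicate point is the boundary case $\alpha = 1$, $\ell_0\mu = 1$ in part (2), where the contraction rate of the matrix products exactly matches the averaging rate: several iterated summations then produce $(\log n)^3$ factors instead of a single $\log n$, and tracking these carefully through both the spectral estimates for $Q_i$ and the $L_4$ estimates for $U_k$ is the technically heaviest step.
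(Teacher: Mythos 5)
Your decomposition is exactly the paper's: the same linear part $W = -n^{-1/2}\Sigma_n^{-1/2}\sum_i Q_i\xi_i$, the same three-part error (initial-value transport, the state-dependent martingale $\sum_i Q_i\eta_i$, and the quadratic remainder), the same construction of $\Delta^{(i)}$ by rerunning the recursion with $\xi_i$ replaced by an independent copy, and the same two auxiliary inputs (spectral bounds on $Q_i,\Sigma_n^{-1}$ and moment bounds on $\lVert\theta_k-\thestar\rVert$). However, there is one genuine gap: your use of \cref{cor:01} to truncate to the event $O=\{\max_{0\leq k\leq n-1}\lVert\theta_k-\thestar\rVert\leq\beta\}$ so that the local Hessian-Lipschitz bound \cref{eq:hes} applies. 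Under the stated assumptions the penalty $\IP(O^c)$ does not vanish at the required rate: $\theta_0$ is only assumed to satisfy a fourth-moment bound (\cref{con:c0}), so $\IP(\lVert\theta_0-\thestar\rVert>\beta)$ can already be of constant order, and even ignoring the initial value, a union bound over the $n$ iterates using $\IE\lVert\theta_k-\thestar\rVert^4\asymp k^{-2\alpha}$ (\cref{lem:lb3}) yields only $\IP(O^c)\lesssim\beta^{-4}(\tau^4+\tau_0^4)\sum_k k^{-2\alpha}=O(1)$, which destroys the Berry--Esseen rate. The paper avoids truncation entirely: combining \cref{con:c3} with the $L$-smoothness in \cref{eq:str_con}, it shows the \emph{global} quadratic bound $\lVert\nabla f(\theta)-G(\theta-\thestar)\rVert\leq L_1\lVert\theta-\thestar\rVert^2$ with $L_1=\max\{c_2,2L/\beta\}$ (for $\lVert\theta-\thestar\rVert>\beta$ the Lipschitz bound $2L\lVert\theta-\thestar\rVert$ is already dominated by $(2L/\beta)\lVert\theta-\thestar\rVert^2$), so the remainder can be majorized unconditionally by $\Delta_3\propto n^{-1/2}\sum_i p_i\lVert\theta_{i-1}-\thestar\rVert^2$ and no truncation event is needed. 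This is the missing idea; without it your plan does not deliver the stated bounds.

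A secondary inaccuracy: your spectral estimates for $\alpha=1$ are each off by a logarithm. The paper's \cref{pro:sdg-02} gives $\lVert Q_i\rVert\leq C$ when $\ell_0\mu>1$ and $\leq C\log n$ when $\ell_0\mu=1$, whereas you claim $\log n$ and $(\log n)^2$ respectively. Since the cube of $\lVert Q_i\rVert$ enters through $\gamma=\sum_i\IE\lVert\zeta_i\rVert^3$, your version would inflate the $\ell_0\mu>1$ bound to order $n^{-1/2}(\log n)^3$ and worsen the $\ell_0\mu=1$ case as well, so as written you would not recover the rates claimed in \cref{eq:sgd-b}; the sharper bound on $Q_i$ (via the explicit computation $Q_i\preccurlyeq C(i+1)^{\ell_0\mu-1}\{\phi_{1-\ell_0\mu}(n-1)-\phi_{1-\ell_0\mu}(i+1)\}I_d$) is needed to get $d^{1/2}+\log n$ when $\ell_0\mu>1$.
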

\begin{remark}
	Typically, $\tau \sim \tau_0 \sim d^{1/2}$.
	Specially, if $\alpha = 1 - \epsilon$ with an arbitrary $0 < \epsilon < 1/2$,
	then the RHS of \cref{eq:sgd-a} reduces to $C (d^2 n^{-1/2} +  d^{3/2} n^{-1/2 +
	\epsilon} ) $.
	If $\alpha = 1$ with $\ell_0 \mu \geq 1$, the Berry--Esseen bound \cref{eq:sgd-b} is of an optimal order up to a
	polynomial of a $(\log n)^3$ factor.


\end{remark}

\begin{remark}
	For $\alpha = 1$,  it has been proved (see \cite[Theorem 2]{Bac11} and also \cref{lem:b1}) that
	\begin{align*}
		\IE \lVert \theta_n - \thestar\rVert^2 \leq
		\begin{cases}
			n^{-1}, & \ell_0 \mu > 1; \\
			n^{-1} (\log n) , & \ell_0 \mu = 1; \\
			n^{-\ell_0 \mu /2}, & 0 < \ell_0 \mu < 1.
		\end{cases}
	\end{align*}
	Therefore, for $\alpha = 1$, the choice of  $\ell_0$ is critical, but the problem is:
	a small $\ell_0$ leads to a very slow convergence rate of order $n^{-\ell_0 \mu /2}$
	while a large $\ell_0$ might lead to explosion due to the initial condition (see, e.g.,
	\citet*{Bac11} and \citet*{Nem09a} for more details). In practice, one prefers to use a learning rate of  order $n^{-\alpha}$
	with $0 < \alpha < 1$.

\end{remark}

\begin{theorem}
Consider the model \cref{eq:map}. Let
\begin{equation*}
	\thestar = \min_{\theta \in \IR^{d}} M(\theta),
\end{equation*}
 and the algorithm
 \begin{equation*}
	 \theta_{n } = \theta_{n - 1} - \ell_{n } \dot{m}_{\theta_{n - 1}}(X_{n}),
 \end{equation*}
 where $\.m_{\theta}$ is as in \cref{eq:m-mdot}, $\ell_{n} = \ell_0 n^{-\alpha}$ is the learning rate, \(\ell_0 > 0\), \(1/2 < \alpha \leq 1\) and $\theta_0$ is the initial value that is independent
 of $(X_1, \dots, X_n)$.
	Let
	\begin{align*}
		\xi_n & =  \dot{m}_{\thestar}(X_n) - \nabla
			M(\thestar)  , \\
		\eta_n &=\dot{m}_{\theta_{n - 1}}(X_n) -  \dot{m}_{\thestar} (X_n) - \nabla
		M(\theta_{n - 1}) +  \nabla M(\thestar) .
	\end{align*}
	Assume that \textup{(C1(i))} is satisfied for $(\xi_1, \dots, \xi_n)$ and  for any $\theta_1, \theta_2 \in \IR^d$,
	\begin{align}
		\sup_{z \in \mathcal{X}} \| \dot{m}_{\theta_1}(z) - \dot{m}_{\theta_2}(z) \| \leq
		L_{F} \|\theta_1 - \theta_2\|.
		\label{eq:t3.5-con}
	\end{align}
	Assume further that \cref{con:c0}, \cref{con:c2} and \cref{con:c3} are satisfied with $f(\theta) = M(\theta)$, and let
	$\bar\theta_n$ be as defined in \cref{eq:saa}. Then, we have
	\cref{eq:sgd-a,eq:sgd-b} hold with \(c_1 = 2 L_F\).

\end{theorem}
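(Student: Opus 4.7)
The plan is to reduce this result directly to Theorem \cref{thm:sgd} by identifying the SGD recursion in the M-estimation setting and verifying each hypothesis \cref{con:c0,con:c1,con:c2,con:c3} with $f = M$.

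First I would rewrite the algorithm in the canonical form of \cref{eq:saa}. Since $\thestar$ minimizes $M$ over $\IR^d$, $\nabla M(\thestar) = 0$, so $\xi_n = \dot{m}_{\thestar}(X_n)$. Setting $\zeta_n = \dot{m}_{\theta_{n-1}}(X_n) - \nabla M(\theta_{n - 1})$, we can write
\begin{equation*}
\theta_n = \theta_{n-1} - \ell_n \dot{m}_{\theta_{n - 1}}(X_n)
		= \theta_{n-1} - \ell_n \bigl( \nabla M (\theta_{n - 1}) + \zeta_n\bigr),
\end{equation*}
and by construction $\zeta_n = \xi_n + \eta_n$ with $\eta_n$ as in the statement. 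Conditions \cref{con:c0}, \cref{con:c2}, \cref{con:c3} are assumed outright with $f = M$, and \cref{con:c1}(i) is assumed for $(\xi_n)$, noting that the $\xi_n$ are i.i.d.\ since the $X_n$ are.

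The remaining step is to verify \cref{con:c1}(ii). I would define $g(\theta, X) := \dot{m}_{\theta}(X) - \dot{m}_{\thestar}(X) - \nabla M(\theta)$, interpreting the second argument as the source of randomness at step $n$ (namely $X_n$, equivalently $\xi_n$ since $\xi_n$ is a deterministic function of $X_n$). The three properties of $g$ used in the proof of \cref{thm:sgd} are then checked: \emph{(a)} $g(\thestar, X) = 0$ is immediate; \emph{(b)} for the martingale property, since $X_n$ is independent of $\mathcal{F}_{n-1}$,
\begin{equation*}
	\IE \{ \eta_n \mid \mathcal{F}_{n-1} \} = \nabla M(\theta_{n - 1}) - \nabla M(\thestar) - \nabla M(\theta_{n-1}) = 0;
\end{equation*}
\emph{(c)} for the Lipschitz bound, using \cref{eq:t3.5-con} pointwise for $\dot{m}_\theta$ and taking expectations to bound $\lVert \nabla M(\theta) - \nabla M(\theta')\rVert \leq L_F \lVert \theta - \theta'\rVert$ via Jensen's inequality, the triangle inequality gives
\begin{equation*}
	\lVert g(\theta, X) - g(\theta', X)\rVert \leq 2 L_F \lVert \theta - \theta'\rVert,
\end{equation*}
so \cref{eq:etan} holds with $c_1 = 2 L_F$. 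With all hypotheses in force, \cref{thm:sgd} applied to $f = M$ yields \cref{eq:sgd-a,eq:sgd-b} verbatim.

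The only real subtlety is the notational mismatch that $g$ in \cref{con:c1}(ii) is written as a function of $(\theta, \xi) \in \IR^d \times \IR^d$, whereas here the natural driving randomness is $X_n \in \mathcal{X}$. I would address this by noting that the proof of \cref{thm:sgd} only uses the three properties listed above (measurability with respect to $\mathcal{F}_n$, conditional centering given $\mathcal{F}_{n-1}$, and the uniform Lipschitz bound in $\theta$), none of which depend on the state space of the second argument of $g$; so the theorem applies with $g(\theta, \cdot)$ regarded as a measurable map on the underlying probability space. This is the only nontrivial point; everything else is a direct verification.
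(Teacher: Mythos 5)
Your proposal is correct and follows essentially the same route as the paper's proof: rewrite the recursion as $\theta_n = \theta_{n-1} - \ell_n(\nabla M(\theta_{n-1}) + \xi_n + \eta_n)$, verify condition (C1(ii)) (conditional centering of $\eta_n$ given $\mathcal{F}_{n-1}$ and the Lipschitz bound with $c_1 = 2L_F$ from \cref{eq:t3.5-con}), and then invoke \cref{thm:sgd}. Your write-up is in fact slightly more careful than the paper's, spelling out the Lipschitz bound on $\nabla M$ and the notational identification of the second argument of $g$ with the driving randomness $X_n$, points the paper passes over in one line.
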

\begin{proof}
We only need to check the condition (C1(ii)) is satisfied.
	 Note that for each $n \geq 1$,
	\begin{align*}
		\dot{m}_{\theta_{n - 1}}(X_n)
		&= \nabla M (\theta_{n - 1}) +  \bigl(\dot{m}_{\theta_{n - 1}}(X_n) - \nabla M(\theta_{n -1})\bigr)\\
		&= \nabla M (\theta_{n - 1}) + \bigl( \dot{m}_{\thestar}(X_n) - \nabla
			M(\thestar) \bigr) \\
		& \quad  + \bigl( \dot{m}_{\theta_{n - 1}}(X_n) -  \dot{m}_{\thestar} (X_n) - \nabla
		M(\theta_{n - 1}) +  \nabla M(\thestar)\bigr) \\
		& = \nabla M (\theta_{n - 1}) + \xi_n + \eta_n .
	\end{align*}
	For $n \geq 0$, let $\mathcal{F}_n = \sigma (\theta_0, X_1, \ldots, X_n)$. Then we have
	\begin{math}
		\IE \{\xi_n\} = 0 \text{ and }\IE \bigl\{ \eta_n \bigm\vert \mathcal{F}_{n - 1}\bigr\} = 0.
	\end{math}
	By \cref{eq:t3.5-con}, it follows that the condition (C1(ii)) in \cref{eq:m-A11} holds with \(c_1 = 2 L_F\).
	Hence, \cref{thm:sgd} implies the desired result.
\end{proof}

%

\section{Proofs of main results}\label{sec:proof1}
\subsection{A randomized concentration inequality}
 To prove \cref{n4-t3-1}, we need to develop a  randomized concentration inequality  for sums of
 multivariate independent random vectors. We use the following notation. For a subset $A$ of $\R^d$, let $d(x,A)=\inf \{ \lVert x - y\rVert: y\in A\}$.
For a given number $\varepsilon >0$,
define
\(
A^{\varepsilon}= \left\{ x \in \IR^d: d(x,A)\leq \varepsilon \right\},
\)
and
\(
    A^{-\varepsilon} = \left\{  x \in A : B(x,\varepsilon) \subset A  \right\},
\)
where $B(x,\varepsilon)$ is the  $d$-dimensional ball centered in $x$ with radius $\varepsilon$.
Specially, for $\epsilon = 0$, let $A^\epsilon = A$. 
Let $\bar{A}$ be the closure of $A$ and let $r(\bar{A}) = \max \{ y\, : \, B(x, y) \subset \bar{A} \text{ for some $x \in \IR^n$} \}$ be the \emph{inradius} of $\bar{A}$.
For $a, b \in \IR$, write $a \wedge b = \min(a, b)$ and $a \vee b = \max(a,b)$. 
Let $\gamma =\sum_{i=1   } ^n \E\{\|\xi_i \|^3\}$ be as in \cref{eq:gamma}.
We have the following proposition. 

\begin{proposition}
Let $W = \sum_{i = 1 }^n \xi_i$, where \( (\xi_i)_{i = 1}^n \) is a sequence of \(\mathbb{R}^d\)-valued independent random vectors satisfying that \( \IE \{ \xi_i \} = 0\) for \(1 \leq i \leq n\) and \(\sum_{i = 1 }^n \IE \{ \xi_i \xi_i \transpose \} = I_d\).
Let $\Delta_1$ and  $\Delta_2$ be nonnegative random variables. Then we have  for all $A\in \A$
  such that $r(\bar{A}) > \gamma$,
  \begin{align}
	  \IP\bigl(W\in A^{4\gamma +  \Delta_1  } \setminus A^{4 \gamma - \bar\Delta_2 } \bigr)
	  & \leq 19 d^{1/2} \gamma +
	  2 \E \bigl\{\| W\| (\Delta_1 + \Delta_2) \bigr\} + 2\sum_{i=1}^n \sum_{j = 1}^2 \E \bigl\{\| \xi_i\| |\Delta_j - \Delta_j^{(i)} |\bigr\}, \label{n4-t1.1-2}
  \end{align}
  where  $\bar\Delta_2 = \Delta_2 \wedge (r(\bar{A}) - \gamma)$ and $\Delta^{(i)}$ is a random variable independent of $\xi_i$.
  \label{n4-thm1.1}
\end{proposition}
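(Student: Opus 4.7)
The plan is to carry out the multivariate analogue of the univariate concentration argument of \cite{Ch07N} by building a vector-valued Stein test function $f \colon \IR^d \to \IR^d$ whose divergence dominates the indicator of the target annulus $A^{4\gamma + \Delta_1} \setminus A^{4\gamma - \bar\Delta_2}$, then comparing the two expressions for $\IE \langle W, f(W) \rangle$. The convexity of $A$ is essential: the nearest-point projection $\pi_A(x)$ onto $\bar A$ is $1$-Lipschitz, the distance function $\rho(x) := d(x, A)$ is convex and $1$-Lipschitz with $\nabla \rho(w) = (w - \pi_A(w))/\|w - \pi_A(w)\|$ outside $\bar A$, and $\Delta \rho \geq 0$ on that complement. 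These geometric facts are what will allow the argument to close in any dimension.

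First I would note that when $\gamma$ is not small the right-hand side already exceeds $1$, so the bound is trivial; in the nontrivial regime the hypothesis $r(\bar A) > \gamma$ provides an interior ball of $\bar A$ (which may be taken centred at the origin by translation) so that the deflated shell $A^{4\gamma - \bar\Delta_2}$ with $\bar\Delta_2 \leq r(\bar A) - \gamma$ remains well-defined. I would then fix the scalar profile
\begin{align*}
	g_{a,b}(t) = \int_{0}^{t} \I\bigl( 4 \gamma - b \leq s \leq 4 \gamma + a \bigr) \, ds, \qquad t \geq 0,
\end{align*}
and define the vector field $f(w) = g_{\Delta_1, \bar\Delta_2}(\rho(w)) \cdot \nabla \rho(w)$ outside $\bar A$, extended by zero inside. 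Because the radial derivative contributes the shell indicator while the tangential divergence $\nabla \cdot (\nabla \rho)$ is nonnegative by convexity, one gets $\nabla \cdot f(w) \geq \I\bigl(w \in A^{4\gamma + \Delta_1} \setminus A^{4\gamma - \bar\Delta_2}\bigr)$, together with $\|f(w)\| \leq \Delta_1 + \bar\Delta_2$ and a Lipschitz bound on $\nabla f$ away from $\partial A$.

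Next I would exploit the independence of $\xi_i$ and $W^{(i)} := W - \xi_i$ via the identity
\begin{align*}
	\IE \bigl\langle W, f(W) \bigr\rangle
	= \sum_{i=1}^n \IE \bigl\langle \xi_i, f(W) - f(W^{(i)}) \bigr\rangle
	= \sum_{i=1}^n \IE \bigl\{ \xi_i\transpose \nabla f(W^{(i)}) \xi_i \bigr\} + R_n,
\end{align*}
where $R_n$ is a Taylor remainder bounded by a multiple of $\sum_i \IE \|\xi_i\|^3 \cdot \|\nabla^2 f\|_\infty$ and should contribute the $d^{1/2} \gamma$ term (the factor $d^{1/2}$ coming from comparing the trace $\sum_i \xi_i\transpose \nabla f \, \xi_i$ against $\nabla \cdot f$ using $\sum_i \IE\{\xi_i \xi_i\transpose\} = I_d$ and Cauchy--Schwarz). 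The summed leading term is close to $\IE \{\nabla \cdot f(W)\}$, which majorises the probability of the shell event, while the left-hand side is at most $\IE\{ \|W\| \cdot \|f(W)\|\} \leq \IE\{\|W\| (\Delta_1 + \Delta_2)\}$; rearranging gives the first two groups of terms in \cref{n4-t1.1-2}.

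The main obstacle is that $\Delta_1, \Delta_2$ themselves depend on every $X_i$, so $f$ is not of the form for which the independence argument of the previous step is clean. To handle this I would, for each index $i$, replace $\Delta_j$ by its $X_i$-free surrogate $\Delta_j^{(i)}$ inside the radial profile $g_{\Delta_1, \bar\Delta_2}$ when evaluating $f(W^{(i)})$, apply the identity to that modified test function (where independence holds), and pay the substitution cost $\sum_{i,j} \IE\{\|\xi_i\| |\Delta_j - \Delta_j^{(i)}|\}$ --- which is affordable because $g_{\cdot,\cdot}(t)$ is $1$-Lipschitz in each of its first two arguments by construction. The hardest part will be arranging this swap so that it contributes only a first-order Lipschitz cost (not a second-order one through $\nabla^2 f$), and calibrating the $4\gamma$ offset in the shell together with the explicit constant $19$ so that they absorb both the Taylor remainder $R_n$ and the discrepancy between $\nabla \cdot f$ and the exact shell indicator on the boundary layer $\{|\rho(W) - 4\gamma| \leq $ (small)$\}$.
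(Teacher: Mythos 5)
Your overall architecture (a projection/distance-based vector field, the identity for $\IE\langle W,f(W)\rangle$ obtained by centering each $\xi_i$, and the swap $\Delta_j\mapsto\Delta_j^{(i)}$ paid for at a Lipschitz cost, which is exactly the paper's $H_2$ term) matches the paper's strategy, but the central analytic step of your plan has a genuine gap: you propose to Taylor-expand and bound the remainder by $\sum_i\IE\|\xi_i\|^3\,\|\nabla^2 f\|_\infty$, and then to pass from $\sum_i\IE\{\xi_i\transpose\nabla f(W^{(i)})\xi_i\}$ to $\IE\{\nabla\cdot f(W)\}$. Neither step is available. Your field $f(w)=g_{\Delta_1,\bar\Delta_2}(\rho(w))\nabla\rho(w)$ is only Lipschitz: $g'$ is an indicator, so $\nabla f$ has jump discontinuities across the level surfaces $\{\rho=4\gamma+\Delta_1\}$ and $\{\rho=4\gamma-\bar\Delta_2\}$ (and $\nabla\rho$ is not uniformly Lipschitz near $\partial A$), hence $\nabla^2 f$ is a measure with a singular part and $\|\nabla^2 f\|_\infty=\infty$; the proposed remainder bound is vacuous. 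Worse, replacing $\nabla f(W^{(i)})$ by $\nabla f(W)$, or ``$\approx\IE\{\nabla\cdot f(W)\}$'', requires controlling the probability that $W$ lies in a thin shell where $\nabla f$ jumps --- which is precisely the anti-concentration statement being proved, so the argument is circular. The paper avoids differentiating $f$ altogether: by \cref{n4-lem1}(ii) the field is monotone, so every summand of $H_1=\sum_i\IE\langle\xi_i,f(W)-f(W-\xi_i)\rangle$ is nonnegative, and \cref{n4-lem1}(iii) gives the quantitative lower bound $\langle x,f(w)-f(w-x)\rangle\geq\tfrac34(x\cdot h_1)^2$ for $\|x\|\leq4\gamma$ on the shell; the resulting quantity $\sum_i\IE\{\langle\xi_i,U\rangle^2\I(\cdot)\}$ is then converted into $\tfrac34\IP(\text{shell})-2\gamma$ by a variance/truncation argument using $\sum_i\IE\{\xi_i\xi_i\transpose\}=I_d$ and $ab\leq\gamma a^2+b^2/(4\gamma)$. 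Also, the factor $d^{1/2}$ does not come from a trace/Cauchy--Schwarz comparison; it comes from $|\IE\langle W,f(W)\rangle|\leq\IE\{\|W\|(\Delta_1+\Delta_2)\}+8d^{1/2}\gamma$, since $\|f\|\leq\Delta_1+\Delta_2+8\gamma$ and $\IE\|W\|\leq d^{1/2}$.

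There is also a construction flaw: when $\bar\Delta_2>4\gamma$ the inner set $A^{4\gamma-\bar\Delta_2}=A^{-(\bar\Delta_2-4\gamma)}$ is a deflation of $A$, so part of the target shell lies inside $A$, where your $f\equiv0$ and $\nabla\cdot f=0$; the claimed pointwise domination $\nabla\cdot f\geq\I(\text{shell})$ then fails on $A\setminus A^{4\gamma-\bar\Delta_2}$ (and, correspondingly, the lower-bound mechanism would miss that region). The paper handles this by basing the field on the deflated set, taking $g_{a,b}=f_{A^{-b},\,8\gamma+a+b}$ as in \cref{eq:function.f}, so that the active region of the field covers the whole shell. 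In short, your handling of the random perturbations $\Delta_j$ is sound and parallels the paper, but the smooth-test-function/Taylor route for the main term does not go through and needs to be replaced by the monotonicity-plus-second-moment argument (or an equivalent device) before the proof can be considered complete.
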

The proof of this proposition is postponed in Subsection 4.3. 
\begin{remark}
    Specially, if $\Delta_1 = \varepsilon$ and \(\Delta_2 = 0\) where \(\epsilon >0\) is a constant, then \cref{n4-t1.1-2} reduces to
  \[
	  P(W\in A^{4 \gamma + \varepsilon } \setminus A^{4\gamma}) \leq 2d^{1/2} \varepsilon + 19 d^{1/2} \gamma,
  \]
  which is equivalent to the result in \cite{Che15b} up to a constant factor.
\end{remark}

\begin{remark}
	When $d = 1$, the right hand side of \cref{n4-t1.1-2} reduces to
	\begin{align*}
		19 \gamma + 2 \IE  |W (\Delta_1 + \Delta_2)|  + 2 \sum_{i = 1}^n \sum_{j = 1}^2 \IE  \lvert \xi_i (\Delta_j -
		\Delta_j^{(i)})\rvert,
	\end{align*}
	which is equivalent to \cite{Ch07N}'s concentration inequality result.
	Recently, \cite{Sh16C} proved that the term $\IE  \lvert W \Delta \rvert$ can
	be improved to be $\IE  | \Delta| $ in \cref{n4-i0}. However, due to some technical difficulty, we
	are not able to remove the $W$ term in our result.  Nevertheless, 
	the order
	in $n$ is optimal in many applications.

\end{remark}

\ignore{
\begin{remark}
	The bound of \(d\) is not optimal.
	It is known that for \(Z \sim N(0, I_d)\), and for \(\epsilon_1 , \epsilon_2 \geq 0\),
	\begin{align*}
		\IP ( Z \in A^{\epsilon_1} \setminus A^{-\epsilon_2}) \leq 4 d^{{1/4}} (\epsilon_1 +
		\epsilon_2),
	\end{align*}
which is the best known order in terms of $d$. We refer to \cite{Ba90S} and \cite{Ben03b} for reference.
\end{remark}
}

\subsection{Proofs of \texorpdfstring{\cref{thm2.1} and \cref{cor:01,cor:2.3}}{Theorem 2.1 and Corollary
2.2}}
We first give the proof of \cref{thm2.1}.
\begin{proof}[Proof of \cref{n4-thm3}]
Without loss of generality, let $A$ be an arbitrary nonempty convex subset of $\IR^d$. 
Let $Z \sim N(0, I_d)$ be independent of all others.
It has been shown in  \cite[Proposition 2.5 and Theorem 3.5]{Che15b} that for \(\epsilon_1 , \epsilon_2 \geq 0\),  
\begin{gather}
	\sup_{A\in \A} \bigl\lvert \IP(W\in A)-\IP(Z\in A) \bigr\rvert\leq 115d^{1/2}\gamma,
	\label{eq:linear.bound}
	\\
	\IP ( Z \in A^{\epsilon_1}\setminus A^{-\epsilon_2} ) \leq d^{1/2} (\epsilon_1 + \epsilon_2).
    \label{n4-l31-1}
\end{gather}
	For each $1 \leq i \leq n$, let $\Delta^{(i)}$ be any random variable that is independent of $\xi_i$.
   Note that $\|T-W\|\leq \Delta $ and that $r(\bar{A}^{2\gamma}) > \gamma$. 
   Applying
   \cref{n4-thm1.1} to $A^{2\gamma}$ with $\Delta_1 = \Delta$ and $\Delta_2 = 0$, and by \cref{eq:linear.bound,n4-l31-1}, we have
  \begin{align}\label{n4-t3-00}
	\MoveEqLeft {\IP(T\in A)-\IP(Z\in A)} \nonumber\\
	  \leq {} & \IP( T \in A^{6\gamma}) - \IP(W \in A^{6\gamma}) + \IP(W \in A^{6\gamma} )  - \IP(Z \in A^{6\gamma}) + \IP(Z \in A^{6\gamma} \setminus A)\nonumber \\
	  \leq {} & \IP(W \in (A^{2\gamma})^{\Delta + 4\gamma} \setminus (A^{2\gamma})^{4\gamma}) + 121 d^{1/2} \gamma \nonumber\\
      \leq {} & 140 d^{1/2} \gamma + 2 \E \bigl\{\| W \| \Delta \bigr\}+ 2 \sum_{i=1}^n \E
	  \bigl\{\|X_i\| |\Delta - \Delta^{(i)} |\bigr\}.
  \end{align}

  This proves the upper bound of \(\IP (T \in A) - \IP (Z\in A)\). For the upper bound of $\IP (Z \in A) - \IP (T \in A)$, we introduce the following notation. 
  Recall that $\bar{A}$ is the closure of $A$ and  $r:=r(\bar{A})$ is the inradius of $\bar{A}$. We consider the following two cases. 

  If $r < 9 \gamma$, then $A^{-9\gamma} = \emptyset$. 
By \cref{n4-l31-1},
  \begin{align}
      \label{n4-t3-01}
      \IP(Z \in A) - \IP(T \in A ) \leq \IP(Z \in A\setminus A^{-9\gamma}) \leq 9 \,d^{1/2}
	  \gamma .
  \end{align}

  Now we consider the case where $r \geq 9\gamma$. Let $A_0 = A^{-4\gamma}$ and it follows that $A_0 \neq \emptyset$ and $r(\bar A_0) = r - 4\gamma$. Let $\Delta_0 = \Delta \wedge (r - 5\gamma) = \Delta \wedge (r(\bar A_0) - \gamma)$.   
  Since $A_0^{4\gamma} = (A^{-4\gamma})^{4\gamma} \subset A$, we have 
\begin{align*}
	\IP \bigl( Z \in A \bigr) - \IP \bigl( T \in A \bigr) 
	& \leq \IP \bigl( Z \in A \bigr) - \IP \bigl( T \in A_0^{4\gamma} \bigr) = Q_1 + Q_2 + Q_3 
	\intertext{where}
	Q_1 &=  \IP \bigl( Z \in A \bigr) - \IP \bigl( Z \in A_0^{4\gamma} \bigr), \\
	Q_2 & =  \IP \bigl( Z \in A_0^{4\gamma} \bigr) - \IP \bigl( W \in A_0^{4\gamma} \bigr), \\
	Q_3 & =  \IP \bigl( W \in A_0^{4\gamma} \bigr) - \IP \bigl( T \in A_0^{4\gamma} \bigr) .
\end{align*}
For $Q_1$, by \cref{n4-l31-1}, we have 
\begin{align*}
	|Q_1| \leq \IP \bigl( Z \in A \setminus A_0 \bigr) \leq 4 d^{1/2} \gamma. 
\end{align*}
For $Q_2$, noting that $A_0^{4\gamma}$ is also convex, by \cref{eq:linear.bound}, we have 
\begin{align*}
	\lvert Q_2 \rvert \leq 115 d^{1/2} \gamma. 
\end{align*}
We now move to give an upper bound of $Q_3$.    
If $0 \leq \Delta \leq  r - 5\gamma$, 
\begin{align}
	\mathds{1} \{ w \in A_0^{4\gamma} \} - \mathds{1} \{ w + D \in A_0^{4\gamma} \} 
	& \leq \mathds{1} \{ w \in A_0^{4\gamma} \setminus A_0^{4\gamma - \Delta} \}. 
    \label{eq:9}
\end{align}
If $\Delta > r - 5\gamma$, then
\begin{equ}
	& \mathds{1} \{ w \in A_0^{4\gamma} \} - \mathds{1} \{ w + D \in A_0^{4\gamma} \} \\
	& \leq \mathds{1} \{ w \in A_0^{4\gamma} \} \\
	& \leq \mathds{1} \{ w \in A_0^{4\gamma} \setminus A_0^{9\gamma - r} \} + \mathds{1} \{ w \in A_0^{9\gamma - r} \}\\
	& \leq \mathds{1} \{ w \in A_0^{4\gamma} \setminus A_0^{4\gamma - (r - 5\gamma)} \} + \mathds{1} \{ w\in A^{5\gamma - r} \}, 
    \label{eq:10}
\end{equ}
where the last line follows from the fact that $(A^{-4\gamma})^{9\gamma - r} \subset A^{5\gamma - r}$. 
Equations \cref{eq:9,eq:10} yield 
\begin{equ}
	Q_3 & =  \IP \bigl( W \in A_0^{4\gamma}  \bigr) - \IP \bigl( W + D \in A_0^{4\gamma} \bigr) \\
	& \leq \IP \bigl( W \in A_0^{4\gamma} \setminus A_0^{4\gamma - \Delta_0} \bigr) + \IP \bigl( W \in A^{5\gamma - r} \bigr).
    \label{eq:11}
\end{equ}
For each $1 \leq i \leq n$, let $\Delta_0^{(i)} = \Delta^{(i)} \wedge (r(\bar A_0) - \gamma)$. 
For the first term of \cref{eq:11}, by \cref{n4-thm1.1}, we have 
\begin{align*}
	\IP \bigl( W \in A_0^{4\gamma} \setminus A_0^{4\gamma - \Delta_0}  \bigr)
	& \leq 19 d^{1/2} \gamma + 2 \IE \{ \lVert W\rVert \Delta_0 \} + 2 \sum_{i=1}^n \IE \{ \lVert \xi_i \rVert \lvert \Delta_0 - \Delta_0^{(i)}\rvert \}\\
	& \leq 19 d^{1/2} \gamma + 2 \IE \{ \lVert W\rVert \Delta \} + 2 \sum_{i=1}^n \IE \{ \lVert \xi_i \rVert \lvert \Delta - \Delta^{(i)}\rvert \}.
\end{align*}

For the second term of \cref{eq:11}, since $A^{-r - \gamma} = \emptyset$ and $A^{5\gamma - r}$ is convex and nonempty, by \cref{eq:linear.bound,n4-l31-1}, we have 
\begin{align*}
	\IP ( W \in A^{5\gamma - r} ) 
	& \leq \lvert \IP (W \in A^{5\gamma -r }) - \IP (Z \in A^{5\gamma - r})	\rvert + \IP (Z \in A^{5\gamma - r} \setminus A^{-\gamma - r}) \\
	& \leq 115 d^{1/2} \gamma + 6 d^{1/2} \gamma \leq 121 d^{1/2} \gamma. 
\end{align*}
Then it follows that 
\begin{align*}
	Q_3 \leq 140 d^{1/2} \gamma +  2 \IE \{ \lVert W\rVert \Delta \} + 2 \sum_{i=1}^n \IE \{ \lVert \xi_i \rVert \lvert \Delta - \Delta^{(i)}\rvert \}.
\end{align*}
  Combining the upper bounds of $Q_1, Q_2$ and $Q_3$, we have  
  \begin{equ}
      \label{n4-t3-02}
       \MoveEqLeft 
	   \IP(Z \in A) - \IP(T \in A)	   \leq  259 d^{1/2} \gamma + 2 \E \bigl\{\| W \| \Delta \bigr\} + 2 \sum_{i=1}^n \E
	   \bigl\{\|X_i\| \lvert \Delta - \Delta^{(i)}\rvert\bigr\}.
  \end{equ}
  By \cref{n4-t3-00,n4-t3-01,n4-t3-02}, we have
  \begin{align*}
	  \sup_{A \in \A} | \IP(T\in A) - \IP(W \in A)| \leq {} &  259 d^{1/2} \gamma + 2 \E \{\|W\| 
	  \Delta\}    + 2 \sum_{i=1}^n \E \{\|X_i\|
															\lvert \Delta - \Delta^{(i)}\rvert\},
  \end{align*}
  as desired.
\end{proof}

\begin{proof}
[Proof of \cref{cor:01}]
Let $\widetilde{T} = W + D \I(O)$. For any $A \in \mathcal{A}$,
\begin{align*}
	\lvert \IP (T \in A) - \IP (\tilde{T} \in A )\rvert \leq \IP (O^c).
\end{align*}
Applying \cref{n4-thm3} to $\tilde{T}$ yields
\begin{align*}
	\sup_{A \in \mathcal{A}} \bigl\lvert \IP (\tilde{T} \in A) - \IP (Z \in A) \bigr\rvert
		& \leq 259 d^{1/2} \gamma + 2 \E \bigl\{\|W\| \Delta \bigr\} + 2 \sum_{i=1}^n
		\E \bigl\{\|\xi_i\| \lvert \Delta - \Delta^{(i)}\rvert\bigr\}. 	
\end{align*}
Combining the foregoing inequalities we obtain the desired result.
\end{proof}

\begin{proof}
[Proof of \cref{cor:2.3}]
For any convex set \(A \subset \IR^d\), we have \(\Sigma^{-1/2 } A := \{  y \in \IR^d: y = \Sigma^{-1/2 } x, x \in A \}\) is also a convex subset of \(\IR^d\).
To see this, it suffices to show that for any \(y_1 , y_2 \in \Sigma^{-1/2 } A\) and for any \(0 \leq t \leq 1\),
\begin{equ}
	\label{eq:t-convex}
	t y_1 +(1 - t) y_2 \in \Sigma^{-1/2 } A.
\end{equ}
Since \(y_1, y_2 \in \Sigma^{-1/2 } A\), it follows that there exist \(x_1, x_2 \in A\) such that
\begin{align*}
	y_1 = \Sigma^{-1/2 } x_1, \quad y_2 = \Sigma^{-1/2 } x_2.
\end{align*}
Moreover, as \(A\) is convex, we have
for any \(0 \leq t \leq 1\),
\begin{align*}
	t x_1 + (1 - t) x_2 \in A,
\end{align*}
and thus
\begin{align*}
	t y_1 + (1 - t) y_2
	& = t \Sigma^{-1/2 } x_1 + (1 - t ) \Sigma^{-1/2 } x_2 \\
	& = \Sigma^{-1/2 } ( t x_1 + (1 - t) x_2 ) \in \Sigma^{-1/2 } A.
\end{align*}
This proves \cref{eq:t-convex} and hence \(\Sigma^{-1/2 } A\) is convex.
Note that
\begin{align*}
	\IP ( T\in A) - P(\Sigma^{1/2 } Z \in A)
	& = \IP (\Sigma^{-1/2 } T \in \Sigma^{-1/2 }A) - \IP( Z \in \Sigma^{-1/2 } A),
\end{align*}
and we have
\begin{align*}
	\sup_{A \in \mathcal{A}} \bigl\lvert \IP(T \in A) - \IP (\Sigma^{-1/2 } A \in A)\bigr\rvert
	& = \sup_{A \in \mathcal{A}} \bigl\lvert \IP (\Sigma^{-1/2 } T \in A ) - \IP(Z\in A)\bigr\rvert.
\end{align*}
Applying \cref{n4-thm3} yields the desired result.
\end{proof}

\subsection{Proof of \texorpdfstring{\cref{n4-thm1.1}}{Proposition 4.1}}
We apply the ideas in \cite{Ch07N} and \cite{Che15b} to prove \cref{n4-thm1.1} in this subsection.
Before the proof, we first introduce some definitions and lemmas.

Given $A \in \mathcal{A}$ and  $\varepsilon \geq 0$, we construct
$  f_{A,\varepsilon} \colon \R^d\rightarrow \R^d$ as follows.
Let $\mathcal{P}_A$ be the projection operator on $A$, that is, for any $x \in \IR^d$, let
\begin{align*}
	\mathcal{P}_A (x) := \argmin_{y \in A} \lVert x - y\rVert.
\end{align*}
Therefore, $\mathcal{P}_A(x)$ is the nearest point of $x$ in the set $A$.

Let $\bar A$ be the closure of $A$, and
\begin{equ}
	f_{A, \epsilon} (x) =
	\begin{cases}
		\label{eq:function.f}
		0, & x \in \bar A, \\
		x - \mathcal{P}_{\bar A}(x), & x \in A^{\epsilon } \setminus \bar A, \\
		\mathcal{P}_{(\bar A)^{\epsilon}} (x) - \mathcal{P}_{\bar A} (x), & x \in \IR^d \setminus
		A^{\epsilon}.
	\end{cases}
\end{equ}
Let $r(\bar{A}) = \max \{ y: B(x,y) \subset \bar{A} \text{ for some $x \in \IR^d$}\}$ be the inradius of $\bar{A}$.  
We introduce the following lemma, whose proof can be found in \citet*[Lemmas 2.1, 2.2 and Proposition 2.7]{Che15b}.
\begin{lemma}
	Let $\epsilon > 0$ and $\gamma > 0$ and  $f:=f_{A, \epsilon + 8 \gamma}$ be as in \cref{eq:function.f}.
  We have
  \begin{enumerate}
	  \item [\textup{(i)}] $\|f\|\leq  \epsilon + 8 \gamma $;
	  \item [\textup{(ii)}] for all $\xi,\eta \in \R^d$, $\langle{\xi , f(\eta+\xi)-f(\eta)}\rangle\geq 0$;
	  \item [\textup{(iii)}] for
	  $w\in A^{4\gamma +\varepsilon}\setminus A^{4\gamma}$ and $\|x\|\leq 4 \gamma,$ we have
	  \[
		  \inprod{x, f(w)-f(w-x)}\geq \frac{3}{4} (x\cdot h_1)^2,
	  \]
	  where $h_1=(w_0-w)/\|w_0-w\|$ and $w_0 = \mathcal{P}_{\bar A}(w)$.
  \end{enumerate}
  \label{n4-lem1}
\end{lemma}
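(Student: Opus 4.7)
The plan is to begin by writing $f := f_{A, \epsilon + 8\gamma}$ uniformly as
\[
 f(x) = \mathcal{P}_{(\bar A)^{\epsilon + 8\gamma}}(x) - \mathcal{P}_{\bar A}(x),
\]
which one checks reduces to the three cases in \cref{eq:function.f} by noting that $\mathcal{P}_{(\bar A)^{\epsilon + 8\gamma}}(x)=x$ inside $A^{\epsilon + 8\gamma}$ and otherwise lies on the segment from $x$ to $\mathcal{P}_{\bar A}(x)$. From this representation $\|f(x)\| = \min\{d(x,\bar A),\, \epsilon + 8\gamma\}$, which immediately gives part (i). Throughout I will rely on two standard facts about projection onto a closed convex set: nonexpansiveness, and the supporting hyperplane inequality $\langle z - \mathcal{P}_{\bar A}(x),\, x - \mathcal{P}_{\bar A}(x)\rangle \leq 0$ for every $z \in \bar A$.

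For part (ii), my preferred route is to exhibit $f$ as the gradient of a convex function. Setting
\[
 h(x) = \varphi\bigl(d(x,\bar A)\bigr), \qquad \varphi(t) = \tfrac{1}{2}t^{2} \wedge \bigl((\epsilon + 8\gamma) t - \tfrac{1}{2}(\epsilon+8\gamma)^{2}\bigr),
\]
a direct differentiation in each of the three regimes confirms $\nabla h = f$. Because $d(\cdot, \bar A)$ is convex (as $\bar A$ is convex) and $\varphi$ is nondecreasing and convex on $[0,\infty)$, the composition $h$ is convex, so $f = \nabla h$ is a monotone operator, which is precisely the inequality in (ii).

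Part (iii) is where the substantive work lies. Under the hypotheses, $d(w,\bar A)\in(4\gamma,4\gamma+\epsilon]$, and since $\|x\|\leq 4\gamma$ the triangle inequality gives $0 < d(w-x,\bar A) < \epsilon + 8\gamma$, so both $w$ and $w-x$ sit in the middle regime of \cref{eq:function.f}. Writing $w_0' := \mathcal{P}_{\bar A}(w-x)$, $u := w_0 - w_0'$, and $r := \|w - w_0\|$, the target collapses to showing $\|x\|^{2} - \langle x, u\rangle \geq \tfrac{3}{4}(x\cdot h_1)^{2}$. The plan is to feed two supporting hyperplane inequalities into this estimate: at $w_0$ applied to $w_0' \in \bar A$, yielding $\langle u, h_1\rangle \leq 0$; and at $w_0'$ applied to $w_0 \in \bar A$, yielding $\langle u, x\rangle \geq \|u\|^{2} + r\,|\langle u, h_1\rangle|$ after substituting $w - w_0 = -r h_1$. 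Decomposing $x = \alpha h_1 + x_\perp$ and $u = -\beta h_1 + u_\perp$ with $\beta \geq 0$, and combining these with nonexpansiveness $\|u\|\leq\|x\|$ and a Cauchy--Schwarz estimate on the cross-term $\langle x_\perp, u_\perp\rangle$, should deliver a bound of the required form. The main obstacle is producing the explicit constant $\tfrac{3}{4}$: in the flat half-space model the analogous decomposition yields the cleaner constant $1$, and for general convex $\bar A$ one must exploit $r > \|x\|$ carefully so that the linearized geometry near $\partial\bar A$ dominates the curvature corrections and at least three quarters of $(x\cdot h_1)^{2}$ survives.
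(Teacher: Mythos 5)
The paper itself offers no proof of \cref{n4-lem1}; it simply cites \cite{Che15b}, so your proposal is measured against a complete external argument rather than an in-paper one. Your parts (i) and (ii) are correct in substance. One slip in (ii): with $\lambda=\epsilon+8\gamma$, the expression $\tfrac12 t^2\wedge(\lambda t-\tfrac12\lambda^2)$ equals $\lambda t-\tfrac12\lambda^2$ for \emph{every} $t\ge0$, because the quadratic dominates its tangent line at $t=\lambda$; what you need is the Huber function, $\varphi(t)=\tfrac12 t^2$ for $t\le\lambda$ and $\varphi(t)=\lambda t-\tfrac12\lambda^2$ for $t>\lambda$, and with that choice $h=\varphi(d(\cdot,\bar A))$ is indeed convex and $C^1$ with $\nabla h=f$, so the monotonicity argument for (ii) goes through.

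The genuine gap is in (iii), which is the substantive content of the lemma. Your reduction to $\|x\|^2-\langle x,u\rangle\ge\tfrac34\langle x,h_1\rangle^2$ with $u=w_0-w_0'$ is right, and so are your inequalities (a) $\langle u,h_1\rangle\le0$ and (b) $\langle u,x\rangle\ge\|u\|^2+r\lvert\langle u,h_1\rangle\rvert$; but the step you defer (``should deliver a bound of the required form'', ``the main obstacle is producing the constant $3/4$'') is exactly the proof, and the combination you point to --- nonexpansiveness $\|u\|\le\|x\|$ plus Cauchy--Schwarz on the cross term --- cannot produce it: together with (a) it only gives $\langle x,u\rangle\le\|x\|\,\|u\|\le\|x\|^2$, which loses all of $\langle x,h_1\rangle^2$ rather than a quarter of it. The missing idea is to use (b) quantitatively instead of nonexpansiveness. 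Write $x=ah_1+x_\perp$ and $u=-ch_1-z$ with $c\ge0$ (by (a)) and $x_\perp,z\perp h_1$; then (b) reads $c(r+a)+c^2+\|z\|^2+\langle x_\perp,z\rangle\le0$. Substituting the resulting bound on $\|z\|^2$ into $-\langle x_\perp,z\rangle\le\|x_\perp\|\,\|z\|\le\tfrac12\|x_\perp\|^2+\tfrac12\|z\|^2$ yields $-\langle x_\perp,z\rangle\le\|x_\perp\|^2-c(r+a)-c^2$, hence $\langle x,u\rangle=-ac-\langle x_\perp,z\rangle\le\|x_\perp\|^2-c(r+2a)-c^2$. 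Since $r>\|x\|\ge\lvert a\rvert$ one has $r+2a\ge-\lvert a\rvert$, so $-c(r+2a)-c^2\le\tfrac14 a^2$ for all $c\ge0$, and therefore $\|x\|^2-\langle x,u\rangle\ge a^2+\|x_\perp\|^2-\|x_\perp\|^2-\tfrac14a^2=\tfrac34\langle x,h_1\rangle^2$. So your ingredient list is in fact sufficient and the constant $3/4$ comes precisely from $d(w,\bar A)>\|x\|$, but as submitted the decisive estimate is absent, and (iii) is not proved.
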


\def\f{g_{\Delta_1, \bar\Delta_2}}
\def\fa{g_{\Delta_1^{(i)}, \bar\Delta_2}}
\def\fii{g_{\Delta_1^{(i)}, \bar\Delta_2^{(i)}}}
Now we are ready to give the proof of \cref{n4-thm1.1}.
\begin{proof}[Proof of  \cref{n4-thm1.1}]
Let $A \in \mathcal{A}$ be nonempty such that $r:=r(\bar{A}) > \gamma$. 
Set $\bar\Delta_{2} = \Delta_2 \wedge (r - \gamma)$. 
Let $\Delta_1^{(i)}$ and $\Delta_2^{(i)}$ be any random variables that are independent of $\xi_i$ and let $\bar\Delta_{2}^{(i)} = \Delta_2^{(i)} \wedge (r - \gamma)$.
For any $a\geq 0$ and $0 \leq b \leq r - \gamma$,   
define $g_{a,b} = f_{A^{-b}, 8\gamma + a + b}.$
Noting that $\IE \xi_i= 0$ and observing that $\Delta_1^{(i)}$ and $\bar\Delta_2^{(i)}$ are independent of $\xi_i$, we have
\begin{align*}
	\IE \bigl\{ \inprod{ \xi_i, \fii(W - \xi_i) }\bigr\} = 0,
\end{align*}
and thus,
\begin{equ}
  \label{eq:wgw.bound}
  \E \{\inprod{W,  \f (W)}\}
  = {} & \sum_{i=1}^n \Bigl( \E \{\inprod{ \xi_i,  \f(W)}\} - \IE \bigl\{ \inprod{ \xi_i, \fii(W - \xi_i) }\bigr\} \Bigr)\\
  = {} & H_1+ H_2,
\end{equ}
where
\begin{align*}
  H_1 &= \sum_{i=1}^n \E \bigl\{\inprodb{ \xi_i,  \f (W)-\f(W-
  \xi_i)}\bigr\} ,\\
  H_2 & =\sum_{i=1}^n \E \bigl\{\inprod{ \xi_i,  \f(W-\xi_i)-\fii(W-\xi_i)}\bigr\}.
\end{align*}

For the upper bound of $H_2$, by the definition of $f$, we have
\begin{equ}
	\label{eq:h20}
	\| \f(w) - \fii(w)\| & \leq
	\bigl\lVert \f(w) - \fa (w) \bigr\rVert   + \bigl\lVert \fa (w) - \fii (w) \bigr\rVert .
\end{equ}
Without loss of generality, assume that $\Delta_1^{(i)} \leq \Delta_1$. Let $A_2 = (\bar A)^{-\bar\Delta_2}, A_3 = A^{8 \gamma + \Delta_1^{(i)}}, A_4 = A^{8\gamma +
\Delta_1}$ and  $w_j = \mathcal{P}_{A_j}(w)$ for $j = 2, 3, 4$.

If $w \in A_3\subset A_4$, then
$$\f(w) = \fa(w); $$
if
$w \in A_4 \setminus A_3$,
then
$$\f(w) = w - w_2
, \quad
\fa (w) = w_3 - w_2, $$
and
\begin{align*}
	\lVert w - w_3\rVert = \lVert w_4 - w_3\rVert , \quad \text{for } w \in A_4 \setminus A_3;
\end{align*}
if $w \in A_4^c$,
then
\begin{align*}
	\f(w) = w_4 - w_2
\text{ and }
\fa (w) = w_3 - w_2.
\end{align*}
By the definition of \(w_3\) and \(w_4\), it follows that $\lVert w_4 - w_3\rVert  \leq \lvert \Delta_1 - \Delta_1^{(i)}\rvert$. Hence,
\begin{equ}
	\label{eq:h21}
	\bigl\lVert \f(w) - \fa (w) \bigr\rVert \leq \lvert \Delta_1 - \Delta_1^{(i)}\rvert.
\end{equ}
Similarly,
\begin{equ}
	\label{eq:h22}
	\bigl\lVert \fa(w) - \fii (w) \bigr\rVert \leq \lvert \bar\Delta_2 - \bar\Delta_2^{(i)}\rvert \leq \lvert \Delta_2 - \Delta_2^{(i)}\rvert.
\end{equ}
By \cref{eq:h20,eq:h21,eq:h22},
  \begin{equation}
    \label{n4-h2}
	H_2 \leq  \sum_{i=1}^n \IE \{\|\xi_i\| (| \Delta_1 - \Delta_1^{(i)}| +  \lvert \Delta_2 -
	\Delta_2^{(i)}\rvert )\}.
  \end{equation}

  We next estimate the lower bound of $H_1$.
  By \cref{n4-lem1}, we have
  \begin{align}
	  \label{eq:h1.lower.bound}
    H_1  = {} & \sum_{i=1}^n \E \bigl\{\inprodb{ \xi_i,  \f(W)-\f(W-\xi_i)}\bigr\} \nonumber\\
	\geq {} & \sum_{i=1}^n \E \Bigl\{\inprodb{\xi_i,  \f(W)-\f(W-\xi_i)}  \I(|\xi_i|\leq 4\gamma)
	\I(\seta)\Bigr\}\nonumber \\
     \geq {} & \frac{3}{4} \sum_{i=1}^n \E \Bigl\{\inprod{\xi_i,  U}^2 \I(\setb)\I(\seta)\Bigr\}
	 \coloneqq
	 {}  \frac{3}{4} R
  \end{align}
  where $U \coloneqq (W_0-W)/\|W_0-W \| = (U_1, \dots, U_d)$ and $W_0 = \mathcal{P}_{\bar A} (W)$.  Observe that by \cref{eq:h1.lower.bound},
  \begin{align*}
    R = {} & \sum_{i=1}^n \sum_{j=1}^d\E \bigl\{\xi_{ij}^2 U_j^2 \I(\setb)\I(\seta)\bigr\}\\
     &+\sum_{i=1}^n \sum_{j\neq j'}\E \bigl\{\xi_{ij}\xi_{ij'} U_j U_{j'} \I(\setb)\I(\seta)\bigr\}\\
     \coloneqq {} & R_1+R_2.
  \end{align*}
  For $R_1$, rearranging the summations yields
  \begin{align*}
    R_1 = {} &
    \sum_{j=1}^d \E \biggl\{\I(\seta)U_j^2 \sum_{i=1}^n \xi_{ij}^2\I(\setb)\biggr\}\\
     = {} &
	 \sum_{j=1}^d \E \biggl\{\I(\seta)U_j^2\biggl( \sum_{i=1}^n \Bigl(\xi_{ij}^2\I(\setb)-\IE  \xi_{ij}^2
	 \I(\setb)\Bigr)\biggr)\biggr\}\\
     & +\sum_{j=1}^d \Bigl(\E \{\I(\seta)U_j^2\}\Bigr) \biggl(\sum_{i=1}^n \IE\{ \xi_{ij}^2 \I(\setb)\}\biggr)\\
     := {} & R_{11}+R_{12}.
  \end{align*}
  By the basic inequality that \( ab \leq \gamma a^2 + (1/ 4\gamma) b^2\) for \(a , b \geq 0\), it follows that with
  \begin{align*}
  	a = U_j^2 \quad \text{and} \quad b = \biggl\lvert \sum_{i=1}^n \Bigl(\xi_{ij}^2\I(\setb)-\IE  \xi_{ij}^2
	 \I(\setb)\Bigr)\biggr\rvert,
  \end{align*}
  we have
  \begin{equ}
	  \label{eq:r11.bound}
	  |R_{11}| \leq {} & \sum_{j=1}^d \E \biggl\{ U_j^2 \biggl\lvert \sum_{i=1}^n \Bigl(\xi_{ij}^2\I(\setb)-\IE  \xi_{ij}^2
	 \I(\setb)\Bigr)\biggr\rvert \biggr\} \\
	\leq {} & \gamma\sum_{j=1}^d \E\{U_j^4\} +\frac{1}{ 4\gamma} \sum_{j=1}^d \Var\biggl(\sum_{i=1}^n
    \xi_{ij}^2\I(\setb)\biggr)\\
     \leq {} & \gamma\sum_{j=1}^d \E\{U_j^4\} +\frac{1}{4 \gamma} \sum_{j=1}^d \sum_{i=1}^n \E \bigl\{\xi_{ij}^4 \I(\setb)\bigr\}.
  \end{equ}
  As for $R_{12}$,
   recalling that \(\sum_{j = 1 }^d U_i^2 = 1\) and $\sum_{i = 1}^n\IE \{\xi_i \xi_i\transpose\} = I_d$, we have  $\sum_{i = 1}^n \IE
   \{\xi_{ij}^2\} = 1$ for each $1 \leq j \leq d$,  and
  \begin{align}
	  \label{eq:r12.bound}
    R_{12} = {} & \sum_{j=1}^d \E \Bigl\{\I(\seta) U_j^2\Bigr\} \biggl( \sum_{i=1}^n \IE
		\{\xi_{ij}^2 \}  - \sum_{i=1}^n
	\IE \{ \xi_{ij}^2\I(\|\xi_i\|>4\gamma) \} \biggr) \nonumber \\
		= {} & \IP(\seta) \\
			 & \qquad - \IE \biggl\{\I (\seta)\sum_{j = 1}^d  U_j^2 \biggl( \sum_{i = 1}^n \IE
		\{\xi_{ij}^2 \I( \lVert \xi_i\rVert > 4 \gamma)\}\biggr)\biggr\}. \nonumber
  \end{align}
  By \cref{eq:r11.bound,eq:r12.bound}, it follows that
  \begin{align}
	  \label{eq:r1.bound}
	  R_1\geq {} &
	  \begin{multlined}[t]
		  \IP(\seta)-\gamma\sum_{j=1}^d \E\{U_j^4\}  - \frac{1}{4\gamma}\sum_{i = 1}^n \sum_{j=1}^d
		  \E \bigl\{\xi_{ij}^4 \I(\setb)\bigr\}  \\
		  -\IE \biggl\{\I (\seta) \sum_{j = 1}^d  U_j^2 \biggl( \sum_{i = 1}^n \IE \{\xi_{ij}^2 \I( \lVert \xi_i\rVert > 4 \gamma)\}\biggr)\biggr\}.
	  \end{multlined}
  \end{align}
  Similarly, noting that \(\sum_{i = 1}^n \IE \{ \xi_{ij} \xi_{ij'}\} = 0 \) for \(j \neq j'\), we have
  \begin{equ}
	  \label{eq:r2.bound}
	  R_2
    & \geq -\gamma \sum_{j\neq j'} \E\bigl\{U_j^2 U_{j'}^2\bigr\}-\frac{1}{4\gamma} \sum_{i = 1}^n
	\smashoperator[r]{\sum_{1 \leq j\neq j' \leq d}} \E\bigl\{(\xi_{ij}\xi_{ij'})^2 \I(\setb)\bigr\} \\
	& \quad  - \sum_{i = 1}^n \sum_{1 \leq j \neq j' \leq d} \IE \Bigl\{ \I(\seta) U_{j} U_{j'} \IE \{ \xi_{ij} \xi_{ij'}
	\I( \lVert \xi_i\rVert \leq  4\gamma) \} \Bigr\}\\
    & = -\gamma \sum_{j\neq j'} \E\bigl\{U_j^2 U_{j'}^2\bigr\}-\frac{1}{4\gamma} \sum_{i = 1}^n
	\smashoperator[r]{\sum_{1 \leq j\neq j' \leq d}} \E\bigl\{(\xi_{ij}\xi_{ij'})^2 \I(\setb)\bigr\} \\
	& \quad  - \sum_{i = 1}^n \sum_{1 \leq j \neq j' \leq d} \IE \Bigl\{ \I(\seta) U_{j} U_{j'} \IE \{ \xi_{ij} \xi_{ij'} \} \Bigr\}\\
	& \quad  - \sum_{i = 1}^n \sum_{1 \leq j \neq j' \leq d} \IE \Bigl\{ \I(\seta) U_{j} U_{j'} \IE \{ \xi_{ij} \xi_{ij'}
	\I( \lVert \xi_i\rVert >  4\gamma) \} \Bigr\}\\
    & = -\gamma \sum_{j\neq j'} \E\bigl\{U_j^2 U_{j'}^2\bigr\}-\frac{1}{4\gamma} \sum_{i = 1}^n
	\smashoperator[r]{\sum_{1 \leq j\neq j' \leq d}} \E\bigl\{(\xi_{ij}\xi_{ij'})^2 \I(\setb)\bigr\} \\
	& \quad  - \sum_{i = 1}^n \sum_{1 \leq j \neq j' \leq d} \IE \Bigl\{ \I(\seta) U_{j} U_{j'} \IE \{ \xi_{ij} \xi_{ij'}
	\I( \lVert \xi_i\rVert >  4\gamma) \} \Bigr\}.
  \end{equ}
	Observe that
	\begin{equ}
		\label{eq:r3-1}
		\sum_{i = 1}^{n} \IE \{ \lVert \xi_i \rVert^4 \I (\setb) \} \leq 4 \gamma \sum_{i = 1}^n \IE \lVert \xi_i \rVert^3 \leq 4 \gamma^2
	\end{equ}
	and by the Markov inequality,
	\begin{align}
		\label{eq:r3-2}
		\sum_{i = 1}^n \IE \{ \lVert \xi_i\rVert^2 \I ( \lVert \xi_i\rVert > 4\gamma ) \}
		& \leq \frac{1}{4 \gamma} \sum_{i = 1}^n \IE \lVert \xi_i \rVert^3 = \frac{1}{4}.
	\end{align}
  Recall that $\lVert U \rVert = 1$ and $\gamma = \sum_{i = 1}^n \IE \{ \lVert \xi_i\rVert^3\}$, and
  thus \cref{eq:r1.bound,eq:r2.bound,eq:r3-1,eq:r3-2} yield
  \begin{equ}\label{n4-eq:R}
	  R & \geq \IP (\seta) - \gamma \IE \{ \lVert U \rVert^4 \} -
	  \frac{1}{4\gamma} \sum_{i = 1}^n \IE \{\lVert \xi_i\rVert^4 \I(\setb)\}  \\
		& \qquad   - \sum_{i = 1}^n \IE \Bigl\{ \I(\seta) \IE \Bigl\{ \Bigl( \sum_{j = 1 }^d U_j \xi_{ij}\Bigr)^2 \I(\lVert
	  \xi_i\rVert > 4 \gamma)\Bigr\} \Bigr\} \\
		& \geq \IP (\seta) - \gamma \IE \{ \lVert U \rVert^4 \} -
	  \frac{1}{4\gamma} \sum_{i = 1}^n \IE \{\lVert \xi_i\rVert^4 \I(\setb)\}  \\
& \qquad   - \sum_{i = 1}^n \IE \Bigl\{ \I(\seta) \IE \{ \lVert U \rVert^2 \lVert \xi_i \rVert^2 \I(\lVert
	  \xi_i\rVert > 4 \gamma)\} \Bigr\} \\
		& = \IP (\seta) - \gamma \IE \{ \lVert U \rVert^4 \} -
	  \frac{1}{4\gamma} \sum_{i = 1}^n \IE \{\lVert \xi_i\rVert^4 \I(\setb)\}  \\
& \qquad   - \sum_{i = 1}^n \IE \Bigl\{ \I(\seta) \IE \{ \lVert \xi_i \rVert^2 \I(\lVert
	  \xi_i\rVert > 4 \gamma)\} \Bigr\} \\
		& \geq
		\IP(\seta)-{2\gamma}  - \frac{1}{4} \IP (\seta) \\
	  & = \frac{3}{4} \IP (\seta) - 2 \gamma.
  \end{equ}
  By \cref{eq:h1.lower.bound,n4-eq:R}, we have
  \begin{equ}
	  \label{eq:h1.final.bound}
	  H_1 \geq \frac{9}{16} \IP (\seta) - \frac{3}{2} \gamma.
  \end{equ}
  On the other hand, note that \(\IE \lVert W \rVert^2 = d\) and by \cref{n4-lem1}, $\|\f(W)\|\leq (\Delta_1 + \Delta_2 +8 \gamma)$. Thus,
  \begin{align}
	  \label{eq:h.wgbound}
  	|\E \inprodb{W, \f (W)}|\leq \E\|W\| (\Delta_1 + \Delta_2)   + 8d^{1/2}\gamma.
  \end{align}
  Combining  inequalities
  \cref{n4-h2,eq:h1.final.bound,eq:wgw.bound,eq:h.wgbound} yields
  \begin{align*}
	  \MoveEqLeft
	  \IP(\seta) \\
	  \leq {} & 2 \Bigl\lvert \E \bigl\langle W, g_{\Delta_1, \Delta_2 } (W) \bigr\rangle\Bigr\rvert + 2 H_2 + 3 \gamma  \\
	  \leq {} & 2 \E \bigl\{\|W\| (\Delta_1 + \Delta_2) \bigr\}+16 d^{1/2} \gamma+
	  3\gamma + 2\sum_{i=1}^n \sum_{j = 1}^2 \E\|\xi_i\||\Delta_j-\Delta_j^{(i)} |\\
	  \leq {}           & 2 \E\|W\| (\Delta_1 + \Delta_2) + 19 d^{1/2}\gamma + 2\sum_{i=1}^n
	  \sum_{j = 1}^2
	  \E\|\xi_i\|\big| \Delta_j - \Delta_j^{(i)}\big|.
  \end{align*}
  This proves \eqref{n4-t1.1-2}.
\end{proof}

\section{Proofs of other results}
\label{sec:proof2}
In this section, we give the proofs of the theorems in Section 3.
\subsection{Proof of \tpstring{\cref{thm:mt1}}{Theorem 3.1}}
Note that $\hat\theta_n$ minimizes  $\mathbb{M}_n(\theta)$, and  $m_\theta$ is smooth for  $\theta$. By the Taylor expansion, it follows that
\begin{align*}
	0 = \frac{1}{n} \sum_{i = 1}^n \dot{m}_{\hat\theta_n} (X_i)
	= \frac{1}{n} \sum^{n}_{i=1} \dot{m}_{\theta^*} (X_i) +  \frac{1}{n} \sum_{i = 1}^n \int_{ 0 }^{ 1 }  \bigl(\ddot{m}_{\theta_t} (X_i)\bigr) \bigl(\hat\theta_n - \thestar\bigr) d t   ,
\end{align*}
where
\begin{math}
	\theta_t = \thestar + t (\hat \theta_n - \thestar).
\end{math}
Therefore, recalling that \(V = \IE \{ \ddot{m}_{\thestar} (X) \}\) and \(\xi_i = \dot{m}_{\thestar} (X_i)\),
\begin{align*}
	V \bigl(\hat\theta_n - \thestar\bigr) &=
	- \frac{1}{n}\sum_{i = 1}^{n} \xi_i - \Bigl(\frac{1}{n} \sum_{i = 1}^{n} \ddot{m}_{\thestar}
	(X_i) - V \Bigr) \bigl(\hat \theta_n - \thestar\bigr) \\
	& \quad  -  \frac{1}{n} \sum_{i = 1}^{n} \int_{ 0 }^{ 1 } \bigl( \ddot{m}_{\theta_t}
	(X_i) - \ddot{m}_{\thestar} (X_i) \bigr) \bigl(\hat\theta_n - \thestar\bigr)d t   ,
\end{align*}
Let
\begin{align*}
	W = - \frac{1}{\sqrt{n}} \sum_{i = 1}^n \Sigma^{-1/2} \xi_i,
\end{align*}
and
\begin{align*}
	D & = - \sqrt{n} \Sigma^{-1/2}  \Bigl(\frac{1}{n} \sum_{i = 1}^{n} \ddot{m}_{\thestar}
	(X_i) - V \Bigr) \bigl(\hat \theta_n - \thestar\bigr) \\
	& \quad  - \sqrt{n} \Sigma^{-1/2}  \Bigl( \frac{1}{n} \sum_{i = 1}^{n} \int_{ 0 }^{ 1 } \bigl( \ddot{m}_{\theta(t)}
	(X_i) - \ddot{m}_{\thestar} (X_i) \bigr) d t  \Bigr) \bigl(\hat\theta_n - \thestar\bigr).
\end{align*}
Then, we have
\begin{equ}
	\label{eq:m-T}
	T\coloneqq \sqrt{n} \Sigma^{-1/2} V (\hat\theta_n - \thestar) = W + D.
\end{equ}
By \cref{con:a1,con:a2}, we have
\begin{equ}
	\label{eq:m-D}
	\lVert D\rVert \leq n^{1/2} \cla^{-1/2} \bigl( H_1 \lVert \hat \theta_n -
	\thestar \rVert + H_2 \lVert \hat\theta_n - \thestar \rVert^2\bigr),
\end{equ}
where
\begin{align*}
	H_1 &=  \Bigl\lVert \frac{1}{n} \sum_{i = 1}^{n} \bigl( \ddot{m}_{\thestar} (X_i) - \IE \{
	\ddot{m}_{\thestar} (X_i) \} \bigr)\Bigr\rVert,
		& H_2 = \frac{1}{n} \sum_{i = 1}^n \lVert m_2(X_i)\rVert.
\end{align*}
Let
\begin{math}
	\Delta = n^{1/2} \cla^{-1/2} \bigl( H_1 \lVert \hat\theta_n - \thestar \rVert + H_2 \lVert
	\hat\theta_n - \thestar\rVert^2\bigr),
\end{math}
and it follows that $\lVert T - W\rVert \leq \Delta$.
Let $ (X_1', \dots, X_n' )$ be an
independent copy of $ (X_1, \dots, X_n )$, and define
\begin{align*}
	X_j^{(i)} =
	\begin{cases}
		X_j, & j \neq i; \\
		X_i', & j = i.
	\end{cases}
\end{align*}
Moreover, let
\begin{align*}
	H_1^{(i)} & =\Bigl\lVert  \frac{1}{n} \sum_{j = 1}^n \bigl( \ddot{m}_{\thestar} (X_j^{(i)}) - \IE \left\{ \ddot{m}_{\thestar} (X_j)
	\right\}\bigr) \Bigr\rVert, \quad
	H_2^{(i)}  = \frac{1}{n} \sum_{j =1}^n \lVert m_2(X_j^{(i)})\rVert \\
	\hat{\theta}_n^{(i)} & = \argmin_{\theta \in \Theta} \frac{1}{n} \sum_{j = 1}^n m_{\theta}
	(X_j^{(i)}) , \quad
						  \Delta^{(i)}  = n^{1/2} \cla^{-1/2} \bigl(H_1^{(i)} \lVert \hat\theta_n^{(i)} - \thestar\rVert +
	H_2^{(i)} \lVert \hat\theta_n^{(i)} - \thestar\rVert^2\bigr).
\end{align*}
Then, $\Delta^{(i)}$ is independent of  $X_i$ and  $\xi_i$.
\cref{thm:mt1} follows directly from \cref{n4-thm3} and  the following proposition.
\begin{proposition}
	\label{pro:m-1}
	Assume that conditions \cref{con:a1,con:a2} are satisfied.
	Then, we have
	\begin{gather}
		\sum_{i= 1}^n \IE \lVert \Sigma^{-1/2} \xi_i\rVert^3 \leq C d^{{3/2}}n, \label{eq:pm-0} \\
		\IE \{ \lVert W\rVert \Delta\} \leq Cd^{{13}/{8}} n^{-{1/2}}, \label{eq:pm-1}\\
		\sum_{i = 1}^n \IE \lVert \Sigma^{-1/2}\xi_i\rVert \lvert \Delta - \Delta^{(i)}\rvert \leq C
	d^{9/4}  ,
		\label{eq:pm-2}
	\end{gather}
	where $C > 0$ is a constant
	depending only on  $\cla, \clb, c_1, c_2,
	c_3,c_4$ and $\mu$.

\end{proposition}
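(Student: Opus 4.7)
The plan is to prove the three bounds \cref{eq:pm-0,eq:pm-1,eq:pm-2} in order, the common technical core being an $L^q$-moment control of $\|\hat\theta_n - \thestar\|$. For \cref{eq:pm-0}, since $\lambda_{\min}(\Sigma) \geq \lambda_1$ we have $\|\Sigma^{-1/2}\xi_i\| \leq \lambda_1^{-1/2}\|\xi_i\|$; combining this with $\|\xi_1\|_4 \leq c_4 d^{1/2}$ from \cref{con:a2} and Jensen's inequality $\IE\|\xi_1\|^3 \leq \|\xi_1\|_4^3$ yields the claimed $\bigo(d^{3/2}n)$ bound after summing over the $n$ i.i.d.\ indices.

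The crucial preliminary for \cref{eq:pm-1,eq:pm-2} is to establish $\|\hat\theta_n - \thestar\|_q \leq C_q(d/n)^{1/2}$ for every fixed $q$ in a suitable range. I would derive this from the first-order optimality condition $\mathbb{P}_n \dot m_{\hat\theta_n} = 0$: expanding by Taylor around $\thestar$ gives
\begin{equation*}
\hat\theta_n - \thestar = -\Bigl(\mathbb{P}_n \int_0^1 \ddot m_{\theta_t}\,dt\Bigr)^{-1}\mathbb{P}_n\dot m_{\thestar}, \qquad \theta_t = \thestar + t(\hat\theta_n - \thestar).
\end{equation*}
The Lipschitz and semidefinite estimates \cref{eq:m-A13,eq:m-A14}, together with $\lambda_{\min}(V) \geq \lambda_2$, keep the random Hessian close to $V$ and hence invertible with uniformly bounded inverse (with controllable failure probability). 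This reduces the problem to estimating $\|\mathbb{P}_n \dot m_{\thestar}\|_q = \|n^{-1}\sum \xi_i\|_q$, which is of order $(d/n)^{1/2}$ by a Rosenthal / Marcinkiewicz--Zygmund inequality applied with $\|\xi_1\|_4 \leq c_4 d^{1/2}$.

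For \cref{eq:pm-1}, Cauchy--Schwarz gives $\IE\{\|W\|\Delta\} \leq \|W\|_2\|\Delta\|_2 = d^{1/2}\|\Delta\|_2$ since $W$ has covariance $I_d$. I would then split each of the two summands in $\Delta$ by iterated H\"older. For the first, $\|H_1\|\hat\theta_n - \thestar\|\|_2 \leq \|H_1\|_p\|\hat\theta_n - \thestar\|_{p'}$, where $\|H_1\|_p \lesssim (d/n)^{1/2}$ comes from a Frobenius-norm/variance calculation on the centered matrix sum $n^{-1}\sum_i(\ddot m_{\thestar}(X_i) - V)$ via \cref{eq:m-A14} and $\|m_3(X)\|_4 \leq c_3$. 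For the second, $\|H_2\|\hat\theta_n - \thestar\|^2\|_2 \leq \|H_2\|_r\|\hat\theta_n - \thestar\|_{r'}^2$ with $\|H_2\|_4 \leq c_2$ coming from \cref{con:a1}. The H\"older exponents must be matched against the available moment orders, which fixes the final power of $d$. For \cref{eq:pm-2} I would decompose
\begin{equation*}
\Delta - \Delta^{(i)} = n^{1/2}\lambda_1^{-1/2}\bigl[(H_1 - H_1^{(i)})\|\hat\theta_n - \thestar\| + H_1^{(i)}(\|\hat\theta_n - \thestar\| - \|\hat\theta_n^{(i)} - \thestar\|) + \text{analogous } H_2\text{-terms}\bigr],
\end{equation*}
bound the leave-one-out differences by $|H_j - H_j^{(i)}| = \bigo(n^{-1})$ and by a stability estimate $\|\hat\theta_n - \hat\theta_n^{(i)}\|_q \lesssim n^{-1}(d/n)^{1/2}$ obtained by applying the same convexity argument to the perturbed problem with $X_i$ replaced by $X_i'$; then use $\|\Sigma^{-1/2}\xi_i\|_q \lesssim d^{1/2}$ and a final H\"older to sum the $n$ terms.

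The main obstacle will be tracking the precise $d$-dependence through these nested H\"older splits. The tension between the moment orders available in \cref{con:a1,con:a2} ($\|m_1\|_9, \|m_2\|_4, \|m_3\|_4, \|\xi_1\|_4$) and the powers of $\|\hat\theta_n - \thestar\|$ appearing in $\Delta$, especially the quadratic term $H_2\|\hat\theta_n - \thestar\|^2$, is what forces the specific exponents $d^{13/8}$ and $d^{9/4}$; obtaining them rather than cruder $d^2$-type bounds requires carefully chosen splits, particularly in estimating $\|H_1\|_q$, where the spectral-to-Frobenius conversion carries the full $d$ penalty unless balanced against the variance gain from the $L^4$-moment on $m_3(X)$.
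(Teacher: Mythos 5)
Your treatment of \cref{eq:pm-0} and the general H\"older-splitting structure for \cref{eq:pm-1,eq:pm-2} match the paper, but the central preliminary — the moment bound $\lVert \hat\theta_n - \thestar\rVert_q \lesssim (d/n)^{1/2}$ — is where your route breaks down, and it is obtained by a genuinely different (and here unworkable) mechanism than the paper's. You propose to invert the Taylor-expanded score equation, writing $\hat\theta_n - \thestar = -(\mathbb{P}_n \int_0^1 \ddot m_{\theta_t}\,dt)^{-1}\mathbb{P}_n \dot m_{\thestar}$ and arguing the random Hessian is invertible ``with controllable failure probability.'' That step cannot be closed under \cref{con:a1,con:a2}: first, the deviation of the averaged Hessian from $V$ is controlled via \cref{eq:m-A13} by $\bigl(n^{-1}\sum_i m_2(X_i)\bigr)\lVert \hat\theta_n - \thestar\rVert$, i.e.\ it depends on the very quantity you are trying to bound, so the argument is circular without an a priori estimate; second, on the failure event you must still integrate $\lVert\hat\theta_n-\thestar\rVert^q$, but \cref{thm:mt1} assumes no bound on the diameter of $\Theta$, and with only $\lVert m_2(X)\rVert_4\le c_2$, $\lVert m_3(X)\rVert_4\le c_3$ the failure probability decays only polynomially, so the product cannot be controlled. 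A telltale sign of the gap is that the hypothesis $\lVert m_1(X)\rVert_9\le c_1$ in \cref{eq:m-A12} never enters your argument: in the paper it is used precisely for this moment bound, via \cref{lem:m-t4} (a peeling argument over annuli $\{2^{j-1}<\sqrt n\lVert\theta-\thestar\rVert\le 2^j\}$ combining the global separation condition \cref{eq:m-A11} with an empirical-process maximal inequality for the class $\{m_\theta-m_{\thestar}:\lVert\theta-\thestar\rVert\le\delta\}$ with envelope $\delta m_1$, applied with $p=8$, hence the ninth moment). No local expansion or Hessian inversion is needed there; $V$ only enters afterwards, through the exact decomposition $T=W+D$ and \cref{lem:m-h12,lem:m-ti}.

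A secondary inaccuracy: your leave-one-out stability claim $\lVert\hat\theta_n-\hat\theta_n^{(i)}\rVert_q\lesssim n^{-1}(d/n)^{1/2}$ is too strong. The two objectives differ by $n^{-1}(m_\theta(X_i)-m_\theta(X_i'))$, and the resulting perturbation of the score is of size $n^{-1}\lVert\xi_i-\xi_i'\rVert\sim n^{-1}d^{1/2}$, so strong convexity (or the paper's identity $\lVert V(\hat\theta_n-\hat\theta_n^{(i)})\rVert\le n^{-1}\lVert\xi_i-\xi_i'\rVert+H_1\lVert\hat\theta_n-\thestar\rVert+H_1^{(i)}\lVert\hat\theta_n^{(i)}-\thestar\rVert+\cdots$ in \cref{lem:m-ti}) yields only $O(n^{-1}d^{1/2})$ plus higher-order cross terms in $d$; this weaker rate is still sufficient for \cref{eq:pm-2}, but the bound you stated would not follow. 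To repair your proposal you would essentially have to import the paper's \cref{lem:m-t4}-type argument (or otherwise exploit \cref{eq:m-A11,eq:m-A12} globally) before any Taylor/inversion step.
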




In order to prove \cref{pro:m-1}, we first need to prove three useful lemmas, whose proofs are postponed to \cref{sec:appendix}. 

The following lemma provides an upper bound for the $p$-th moment of $\lVert \hat\theta_n - \thestar \rVert$, whose proof (see \cref{sub:proof_of_lemma_5_2}) is based on the ideas in \cite[Theorem 3.2.5]{van96}. 
\begin{lemma}
	\label{lem:m-t4}
	Assume that there exist $p \geq 2$ and  $a_6 > 0$ such that  \cref{eq:m-A11,eq:m-A12} are satisfied with
	\begin{math}
		\lVert m_1(X)\rVert_{p + 1} \leq a_6.
	\end{math}
	Then,
	we have
	\begin{align*}
		\IE \lVert \hat\theta_n - \thestar\rVert^{p} \leq C \mu^{-p-1}a_6^{p + 1} d^{\frac{p + 1}{2}} n^{-\frac{p}{2}},
	\end{align*}
	where $C > 0$ is a constant depending only on  $p$.

\end{lemma}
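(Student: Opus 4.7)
The plan is to adapt the classical M-estimator rate-of-convergence argument of \cite[Theorem 3.2.5]{van96} to a $p$-th moment bound, by running the usual peeling (shelling) argument with an $L^{p+1}$ maximal inequality in place of the standard $L^1$ one. The starting point is the basic inequality: since $\hat\theta_n$ minimizes $\IM_n$, we have $\IP_n(m_{\hat\theta_n} - m_{\thestar}) \leq 0$, so adding and subtracting $P(m_{\hat\theta_n} - m_{\thestar})$ and invoking the curvature bound \cref{eq:m-A11} gives
\begin{equation*}
\mu \lVert \hat\theta_n - \thestar\rVert^2 \leq M(\hat\theta_n) - M(\thestar) \leq \bigl|(\IP_n - P)(m_{\hat\theta_n} - m_{\thestar})\bigr|.
\end{equation*}

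Next I would control the empirical-process supremum
\begin{equation*}
\Phi_n(\delta) := \Bigl\lVert \sup_{\theta:\, \lVert \theta - \thestar\rVert \leq \delta} \bigl|(\IP_n - P)(m_\theta - m_{\thestar})\bigr| \Bigr\rVert_{p+1}.
\end{equation*}
By the Lipschitz bound \cref{eq:m-A12}, the class $\{m_\theta - m_{\thestar}:\lVert \theta-\thestar\rVert \leq \delta\}$ has envelope $\delta\, m_1(X)$ with $L^{p+1}$-norm at most $a_6\delta$, and its $L^2(P)$-covering numbers are dominated by those of a $\delta$-ball in $\IR^d$ in the rescaled metric, so $\log N(\eps) \leq d \log(C a_6 \delta/\eps)$. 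Dudley's chaining yields $\IE \sup \lesssim a_6 d^{1/2} \delta n^{-1/2}$, and a Talagrand-type $L^{p+1}$ moment inequality for suprema promotes this to $\Phi_n(\delta) \leq C_p a_6 d^{1/2} \delta n^{-1/2}$. I would then peel: for $t > 0$, decompose the event $\{\lVert \hat\theta_n - \thestar\rVert > t\}$ into shells $\{2^j t < \lVert \hat\theta_n - \thestar\rVert \leq 2^{j+1} t\}$ for $j \geq 0$; on the $j$-th shell the basic inequality forces
\begin{equation*}
\mu (2^j t)^2 \leq \sup_{\lVert \theta - \thestar\rVert \leq 2^{j+1} t} \bigl|(\IP_n - P)(m_\theta - m_{\thestar})\bigr|,
\end{equation*}
and Markov in $L^{p+1}$ with the $\Phi_n$ bound yields shell probability at most $C_p a_6^{p+1} d^{(p+1)/2} 2^{-j(p+1)}/(\mu^{p+1} n^{(p+1)/2} t^{p+1})$, so summing the geometric series gives
\begin{equation*}
\IP(\lVert \hat\theta_n - \thestar\rVert > t) \leq \frac{C'_p \, a_6^{p+1} d^{(p+1)/2}}{\mu^{p+1} n^{(p+1)/2} t^{p+1}}.
\end{equation*}

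To conclude, I would use $\IE \lVert \hat\theta_n - \thestar\rVert^p = p \int_0^\infty t^{p-1} \IP(\lVert \hat\theta_n - \thestar\rVert > t) \, dt$, splitting the integral at $t_0 = n^{-1/2}$: the trivial contribution below $t_0$ is $t_0^p = n^{-p/2}$, and the tail integral above $t_0$ is bounded by $C_p a_6^{p+1} d^{(p+1)/2}/(\mu^{p+1} n^{p/2})$, so together (absorbing the trivial $n^{-p/2}$ piece into the main term) we obtain the claimed bound. The main technical obstacle is the second step: the standard $L^1$ chaining bound must be promoted to an $L^{p+1}$ bound of the same scale, which is delicate because the envelope $\delta m_1(X)$ has only $p+1$ moments rather than being bounded. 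This forces appeal to a moment version of Talagrand's concentration inequality (e.g.\ of Boucheron--Lugosi--Massart or Adamczak type) and verification that the envelope contribution $\lVert \delta m_1\rVert_{p+1}/\sqrt n$ matches, rather than dominates, the chaining contribution; both in fact end up of order $a_6 d^{1/2} \delta/\sqrt n$, the $\sqrt d$ arising naturally from the metric entropy of a $d$-ball, so the pieces combine cleanly.
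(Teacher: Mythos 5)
Your proposal is correct and follows essentially the same route as the paper's proof: the basic inequality from \cref{eq:m-A11,eq:m-A12}, a dyadic peeling argument, and an $L^{p+1}$ bound of order $a_6 d^{1/2}\delta n^{-1/2}$ on the localized empirical-process supremum (envelope $\delta m_1$, $\sqrt{d}$ from the entropy of the Lipschitz-in-$\theta$ class), applied through Markov's inequality on each shell and then summed/integrated. The only real difference is how the mean of the supremum is promoted to its $(p+1)$-th moment: where you appeal to a Talagrand/Adamczak-type moment inequality and flag the unbounded envelope as the main obstacle, the paper's \cref{lem:G} does this more simply via the Hoffmann--Jørgensen inequality, which requires only the $L^{p+1}$ norm of the envelope and thus handles that issue directly.
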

The next lemma gives upper bounds for the fourth moments of $H_1$ and $H_2$. 
The proof can be found in \cref{sub:proof_of_lemma_5_3}, where we use the Rosenthal-type inequality for random matrices (see, e.g, \cite[Theorem A.1]{Che12a}).
\begin{lemma}
	\label{lem:m-h12}
	Under the assumption \cref{con:a1},
	we have
	\begin{align}
		\IE \left\{ H_1^4 \right\} & \leq C c_3^4n^{-2}, \label{eq:m-h18} \\
		\IE \left\{ H_2^4 \right\} & \leq C c_2^4, \label{eq:m-h28}
	\end{align}
	where $C > 0$ is an absolute constant.

\end{lemma}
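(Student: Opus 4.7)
The plan is to treat the two estimates independently: \cref{eq:m-h28} is a routine convexity computation, while \cref{eq:m-h18} is the nontrivial piece and will be attacked via a matrix concentration inequality.

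First I would dispose of the $H_2$ bound. Since $H_2 = n^{-1}\sum_{i=1}^n \lVert m_2(X_i)\rVert$ is a convex combination of nonnegative numbers, Jensen's inequality applied to $x \mapsto x^4$ gives $H_2^4 \leq n^{-1}\sum_{i=1}^n \lVert m_2(X_i)\rVert^4$. Taking expectations and invoking the hypothesis $\lVert m_2(X)\rVert_4 \leq c_2$ from \cref{con:a1} would yield $\IE\{H_2^4\} \leq c_2^4$ immediately, with an absolute constant.

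Next, for the $H_1$ bound, I would set $Y_i = \ddot{m}_{\thestar}(X_i) - V$, so that $H_1 = n^{-1}\lVert\sum_{i=1}^n Y_i\rVert$ is the spectral norm of a centered sum of i.i.d.\ random symmetric matrices. The plan is to invoke the matrix Rosenthal-type inequality \cite[Theorem A.1]{Che12a} with $p = 4$, which in this setting reads
\[
\IE\Bigl\lVert\sum_{i=1}^n Y_i\Bigr\rVert^4 \leq C\Bigl(\Bigl\lVert\sum_{i=1}^n \IE\{Y_i^2\}\Bigr\rVert^2 + \sum_{i=1}^n \IE\lVert Y_i\rVert^4\Bigr).
\]
To feed this machine, I would first extract the pointwise operator-norm bound $\lVert\ddot{m}_{\thestar}(x)\rVert \leq m_3(x)$ from \cref{eq:m-A14}, which gives $\lVert Y_i\rVert \leq m_3(X_i) + \IE\{m_3(X)\}$ and hence $\sum_{i=1}^n \IE\lVert Y_i\rVert^4 \leq C n c_3^4$ by Minkowski together with $\lVert m_3(X)\rVert_4 \leq c_3$. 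For the matrix-variance term, the domination $\IE\{Y_i^2\} \preccurlyeq \IE\{\ddot{m}_{\thestar}(X_i)^2\} \preccurlyeq \IE\{m_3(X_i)^2\}\, I_d$ combined with monotonicity of the operator norm yields $\lVert\sum_i \IE\{Y_i^2\}\rVert \leq n c_3^2$. Assembling these into the Rosenthal bound and dividing by $n^4$ delivers $\IE\{H_1^4\} \leq C c_3^4 n^{-2}$, as claimed.

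The main obstacle I anticipate is the translation of the one-sided semidefinite assumption $\ddot{m}_{\thestar}(x) \preccurlyeq m_3(x) I_d$ in \cref{eq:m-A14} into the two-sided spectral control $\lVert\ddot{m}_{\thestar}(x)\rVert \leq m_3(x)$ required by the matrix Rosenthal inequality. This is legitimate once one observes that $\ddot{m}_{\thestar}(x)$ is positive semidefinite in the convex M-estimation setting implicit here (so that its spectral norm coincides with its largest eigenvalue), but it is the one interpretive step in the proof that deserves care. Every other estimate is then a mechanical consequence of matrix Rosenthal and the moment hypotheses collected in \cref{con:a1}.
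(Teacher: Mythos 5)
Your proposal is correct and follows essentially the same route as the paper: the matrix Rosenthal inequality of \cite[Theorem A.1]{Che12a} applied to $Y_i=\ddot{m}_{\thestar}(X_i)-V$, with the variance and fourth-moment terms controlled through $m_3$, and Jensen's inequality for $H_2$. The one interpretive step you flag---reading the one-sided condition \cref{eq:m-A14} as the spectral bound $\lVert \ddot{m}_{\thestar}(x)\rVert \leq m_3(x)$---is precisely the reading the paper's own proof uses implicitly, so it creates no gap relative to the paper's argument.
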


The following lemma gives an upper bound of $\mathbb{E}\lVert \hat\theta_n -
\hat\theta_n^{(i)}\rVert^2$, whose proof is given in \cref{sub:proof_of_lemma_5_4}.
\begin{lemma}
	\label{lem:m-ti}
	Under the assumptions \cref{con:a1,con:a2}, we have
	\begin{equ}
		\label{eq:m-thi}
		\IE \lVert \hat\theta_n - \hat\theta_n^{(i)}\rVert^2 \leq
		C  \lambda_2^{-2} n^{-2} \Bigl( c_4^2 d + \mu^{-9/4}c_1^{9/4} c_3^2 d^{{9/8}} + \mu^{-9/2}c_1^{9/2} c_2^2 d^{{9/4}} \Bigr)
	\end{equ}
	where $C > 0$ is an absolute constant. 
\end{lemma}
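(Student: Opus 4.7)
The idea is to convert the leave-one-out difference into a linearized equation driven by the $i$-th score, then bound the inverse Hessian and the score separately on a good event where the empirical Hessian is well-conditioned.

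\textbf{Step 1 (identity from the score equations).} Since $m_\theta$ is $C^2$ in $\theta$ and $\hat\theta_n, \hat\theta_n^{(i)}$ are interior minimizers of $\mathbb{M}_n$ and $\mathbb{M}_n^{(i)}(\theta):=\frac{1}{n}\sum_j m_\theta(X_j^{(i)})$, the first-order conditions give $\sum_j \dot m_{\hat\theta_n}(X_j) = 0 = \sum_j \dot m_{\hat\theta_n^{(i)}}(X_j^{(i)})$. Subtracting, isolating the $j=i$ terms (where the samples differ), and applying Taylor's theorem to the remaining $n-1$ terms along the segment between $\hat\theta_n^{(i)}$ and $\hat\theta_n$ yields
\[
	A_i\,(\hat\theta_n - \hat\theta_n^{(i)}) \;=\; \tfrac{1}{n}\bigl[\dot m_{\hat\theta_n^{(i)}}(X_i') - \dot m_{\hat\theta_n}(X_i)\bigr],
\]
where $A_i := \tfrac{1}{n}\sum_{j\neq i}\int_0^1 \ddot m_{\theta_t^{(i)}}(X_j)\,dt$ and $\theta_t^{(i)} = (1-t)\hat\theta_n^{(i)} + t\hat\theta_n$.

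\textbf{Step 2 (decomposition on a well-conditioned event).} Introduce $E_i := \{\lambda_{\min}(A_i) \geq \lambda_2/2\}$. On $E_i$, invert $A_i$ to obtain
\[
	\lVert\hat\theta_n - \hat\theta_n^{(i)}\rVert \;\leq\; \tfrac{2}{\lambda_2 n}\bigl(\lVert \xi_i\rVert + \lVert\xi_i'\rVert + R_i + R_i'\bigr),
\]
where, by writing $\dot m_{\hat\theta_n}(X_i) - \xi_i = \int_0^1 \ddot m_{\theta_s}(X_i)(\hat\theta_n-\thestar)\,ds$ and applying \cref{eq:m-A13,eq:m-A14}, we bound $R_i \leq (m_3(X_i) + m_2(X_i)\lVert\hat\theta_n - \thestar\rVert)\lVert\hat\theta_n - \thestar\rVert$, and similarly $R_i'$ in terms of $X_i'$ and $\hat\theta_n^{(i)}$.

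\textbf{Step 3 (probability of the bad event).} Decompose $A_i - V$ as $\tfrac{1}{n}\sum_{j\neq i}\int_0^1[\ddot m_{\theta_t^{(i)}}(X_j) - \ddot m_{\thestar}(X_j)]\,dt + \tfrac{1}{n}\sum_{j\neq i}[\ddot m_{\thestar}(X_j) - V] - \tfrac{1}{n}V$. The first piece is dominated by $H_2(\lVert\hat\theta_n-\thestar\rVert + \lVert\hat\theta_n^{(i)}-\thestar\rVert)$ via \cref{eq:m-A13}, the second by $H_1 + O(n^{-1})$. Hence $E_i^c$ is contained (up to absolute constants) in $\{H_1 \gtrsim \lambda_2\} \cup \{H_2(\lVert\hat\theta_n-\thestar\rVert + \lVert\hat\theta_n^{(i)}-\thestar\rVert)\gtrsim \lambda_2\}$. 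Applying Markov's inequality with the fourth-moment bounds of \cref{lem:m-h12} and the $L^p$ bounds of \cref{lem:m-t4} gives $\IP(E_i^c) \lesssim n^{-2}$ up to polynomial factors in $d$.

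\textbf{Step 4 (assembly).} Square the bound from Step~2 and take expectations. The contribution from $\lVert\xi_i\rVert^2 + \lVert\xi_i'\rVert^2$ yields the $c_4^2 d/n^2$ term via \cref{eq:m-A22}. For $\IE R_i^2$, apply Cauchy--Schwarz/Hölder combined with the fourth-moment bounds $\IE\{m_3(X)^4\} \leq c_3^4$ and $\IE\{m_2(X)^4\} \leq c_2^4$ together with \cref{lem:m-t4} applied at $p = 9/2$ and $p=9$; the two pieces produce the $\mu^{-9/4} c_1^{9/4} c_3^2 d^{9/8}/n^2$ and $\mu^{-9/2} c_1^{9/2} c_2^2 d^{9/4}/n^2$ terms respectively. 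The bad-event piece is handled by the crude bound $\lVert\hat\theta_n-\hat\theta_n^{(i)}\rVert^2 \leq 2\lVert\hat\theta_n-\thestar\rVert^2 + 2\lVert\hat\theta_n^{(i)}-\thestar\rVert^2$, Cauchy--Schwarz, and the probability estimate from Step~3, which makes that contribution dominated by the good-event terms.

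\textbf{Main obstacles.} The delicate point is the two-event decomposition forced by the fact that $A_i$ depends nonlinearly on both $\hat\theta_n$ and $\hat\theta_n^{(i)}$ through the interpolation $\theta_t^{(i)}$: one cannot directly invert $A_i$ in expectation. The second delicate point is matching the exponents of $d$. The specific powers $d^{9/8}$ and $d^{9/4}$ dictate the Hölder split ($p = 9/2$ and $p = 9$ in \cref{lem:m-t4}), and this is precisely why condition \cref{con:a1} assumes $\lVert m_1(X)\rVert_9 \leq c_1$; with any weaker moment the $d$-dependence in \cref{eq:m-thi} would worsen.
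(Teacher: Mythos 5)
Your Steps 1--2 are a sound (and genuinely different) way to start: you linearize the leave-one-out difference through the empirical Hessian $A_i$ along the segment between $\hat\theta_n$ and $\hat\theta_n^{(i)}$, which is why your good-event bound carries an explicit $1/n$ factor. The paper avoids the random matrix $A_i$ altogether: it Taylor-expands \emph{both} $\hat\theta_n$ and $\hat\theta_n^{(i)}$ around $\thestar$ with the deterministic $V$, differences the two expansions, and gets $\lVert V(\hat\theta_n-\hat\theta_n^{(i)})\rVert \leq n^{-1}\lVert \xi_i-\xi_i'\rVert + H_1\lVert\hat\theta_n-\thestar\rVert + H_1^{(i)}\lVert\hat\theta_n^{(i)}-\thestar\rVert + H_2\lVert\hat\theta_n-\thestar\rVert^2 + H_2^{(i)}\lVert\hat\theta_n^{(i)}-\thestar\rVert^2$; since $\lambda_{\min}(V)\geq\lambda_2$, no inversion of a random matrix and no good/bad event is needed, and the bound follows from H\"older with $\lVert H_1\rVert_4,\lVert H_2\rVert_4$ (\cref{lem:m-h12}) and $\lVert\hat\theta_n-\thestar\rVert_8$ (\cref{lem:m-t4} with $p=8$, which is what actually produces $d^{9/8}$ and $d^{9/4}$ --- not applications at $p=9/2$ and $p=9$; indeed $p=9$ would require $\lVert m_1(X)\rVert_{10}$, which is not assumed).

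The genuine gap in your route is Step 3 combined with Step 4. The claim $\IP(E_i^c)\lesssim n^{-2}$ does not follow from the cited lemmas: $H_2$ is an average of nonnegative variables, so it does not concentrate near $0$, and the only smallness comes from $\lVert\hat\theta_n-\thestar\rVert$ and $\lVert\hat\theta_n^{(i)}-\thestar\rVert$. With only $\IE H_2^4\leq C c_2^4$ and $\IE\lVert\hat\theta_n-\thestar\rVert^p$ available for $p\leq 8$, Markov/H\"older on the product $H_2\lVert\hat\theta_n-\thestar\rVert$ can use at most the exponent $q=8/3$, giving $\IP(E_i^c)\lesssim d^{3/2}n^{-4/3}$, not $n^{-2}$. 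Feeding this true rate into your Step 4 as written (Cauchy--Schwarz, i.e.\ $\lVert\hat\theta_n-\thestar\rVert_4^2\,\IP(E_i^c)^{1/2}$) yields a bad-event contribution of order $d^{2}n^{-5/3}$, which dominates $n^{-2}$ and breaks the claimed bound. The step is repairable --- use H\"older as $\lVert\hat\theta_n-\thestar\rVert_8^{2}\,\IP(E_i^c)^{3/4}$, which with the $n^{-4/3}$ rate recovers order $d^{9/4}n^{-2}$ --- but then the constant in \cref{eq:m-thi} acquires extra dependence on $\lambda_2,c_2,c_3,\mu$ through the bad-event estimate, so you would not obtain the statement with an absolute $C$ and exactly the displayed combination of constants, whereas the paper's $V$-based differencing does.
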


With \cref{lem:m-ti,lem:m-t4,lem:m-h12}, we are ready to give the proof of \cref{pro:m-1}.
\begin{proof}
[Proof of \cref{pro:m-1}]
In this proof, we denote by $C$ a general positive constant depending on $\cla,\,\clb,\,c_1,\\ c_2, c_3,c_4$ and $\mu$, and the value of $C$ might be different in different places.
The bound \cref{eq:pm-0} follows from \cref{con:a2}.
For \cref{eq:pm-1},
since $$\Delta = n^{1/2} \cla^{-1/2} (H_1 \lVert \hat\theta_n - \thestar\rVert + H_2 \lVert
\hat\theta_n - \thestar\rVert^2), $$ it follows from \cref{lem:m-t4,lem:m-h12} and the H\"older inequality that
\begin{equ}
	\label{eq:pm11}
	\IE \bigl\{\Delta \lVert W \rVert \bigr\}
	& \leq C n^{1/2} \cla^{-1/2} \Bigl( \IE \bigl\{ H_1 \lVert W \rVert \lVert \hat\theta_n - \thestar\rVert + H_2 \lVert W \rVert \lVert \hat\theta_n - \thestar\rVert^2 \bigr\} \Bigr) \\
	& \leq C n^{1/2} \cla^{-1/2} \Bigl( \lVert H_1 \rVert_2 \lVert W \rVert_4 \lVert
	\hat\theta_n - \thestar\rVert_2 + \lVert H_2\rVert_2 \lVert W \rVert_4 \lVert \hat\theta_n -
\thestar\rVert_4 \Bigr) \\
	& \leq C d^{\frac{13}{8}} n^{-1/2},
\end{equ}
and
\begin{equ}
	\label{eq:pm12}
	\IE \Delta \leq C d^{{9/8}} n^{-1/2}.
\end{equ}
Therefore \cref{eq:pm11,eq:pm12} yield \cref{eq:pm-1}.

For \cref{eq:pm-2}, we have
\begin{align*}
	| \Delta - \Delta^{(i)} |
	& \leq n^{1/2} \cla^{-1/2} \Bigl( H_1 \lVert \hat\theta_n - \hat\theta_n^{(i)} \rVert +
	| H_1 - H_1^{(i)} | \lVert \hat\theta_n - \theta_n^{(i)}\rVert+ | H_2 - H_2^{(i)}| \lVert
\hat\theta_n^{(i)} - \thestar \rVert^2 \\
	& \hspace{2.4cm}  + H_2 (\lVert \hat\theta_n  - \thestar\rVert + \lVert \hat\theta_n^{(i)} -
	\thestar\rVert)  \lVert \hat\theta_n - \hat\theta_n^{(i)} \rVert \Bigr).
\end{align*}
By the assumption \cref{con:a1},
\begin{align*}
	\lVert H_1 - H_1^{(i)}\rVert_4 \leq \frac{1}{n} \lVert \ddot{m}_{\thestar} (X_i) - \ddot{m}_{\thestar} (X_i^{(i) })\rVert_4 \leq  C n^{-1}
\end{align*}
and
\begin{align*}
	\lVert H_2 - H_2^{(i)}\rVert_4 \leq \frac{1}{n} \Bigl( \lVert m_2 (X_i)\rVert_4 + \lVert m_2 (X_i^{(i) }) \rVert_4 \Bigr) \leq  C n^{-1}.
\end{align*}
By \cref{lem:m-ti,lem:m-t4,lem:m-h12} and the H\"older inequality, we have
\begin{align*}
	\IE \{\lVert \Sigma^{-1/2} \xi_i \rVert \lvert \Delta - \Delta^{(i)}\rvert\} \leq C d^{{13/8}} n^{-1},
\end{align*}
which yields \cref{eq:pm-2}.
\end{proof}

\subsection{Proof of \tpstring{\cref{thm:z-1}}{Theorem 3.2}}
\def\willignore#1{}
Let $\delta_n = \willignore{\color{red}D_1} (D_{\Theta} + 1) d n^{-{(p -2) }/{ (2p - 2)}}$, where \(D_{\Theta}\) is the diameter of the parameter space \(\Theta\). As \(p \geq 3\), it follows that $ \delta_n \geq n^{-1/2}$.
In this subsection, we denote by \(C >0\)  a constant depending only on \(p, c_1, c_2, c_3, \lambda_1, \lambda_2 \) and \(\mu\), which might be different in different places.
The main idea is to rewrite $\sqrt{n} \Sigma^{-1/2} \.\Psi_0 (\hat\theta_n - \thestar) $ as a summation of a linear statistic plus an error term,
and then apply \cref{cor:01} to prove \cref{eq:z-000}.
To this end, by \cref{eq:z-theta},
\begin{equ}
	\label{eq:z-phi.0}
	& \sqrt{n} \bigl( \Psi(\hat\theta_n) - \Psi(\thestar)\bigr)\\
	&= \sqrt{n} \Bigl( \Psi(\hat\theta_n) - \Psi_n(\hat\theta_n)\Bigr) \\
	&= -\sqrt{n} \bigl(\Psi_n(\thestar) - \Psi(\thestar)\bigr)
	- \Bigl( \sqrt{n} (\Psi_n(\hat\theta_n) - \Psi(\hat\theta_n)) - \sqrt{n} (\Psi_n(\thestar) -
	\Psi(\thestar))\Bigr).
\end{equ}
By \cref{eq:z-phi.0}, we obtain
\begin{align*}
	T \coloneqq \sqrt{n} \Sigma^{-1/2} \.\Psi_0 (\hat\theta_n - \thestar)
	= W + D,
\end{align*}
where
\begin{align*}
	W &=  - \frac{1}{\sqrt{n}} \sum_{i = 1}^n
	\Sigma^{-1/2} \xi_i, \\
	D &= - \Sigma^{-1/2} \Bigl( \sqrt{n} (\Psi_n(\hat\theta_n) - \Psi(\hat\theta_n)) - \sqrt{n} (\Psi_n(\thestar) -
	\Psi(\thestar))\Bigr)\\
	  & \quad - \sqrt{n} \Sigma^{-1/2}\Bigl( \Psi(\hat\theta_n ) - \Psi(\thestar) - \.\Psi_0 (\hat\theta_n - \thestar
	  ) \Bigr).
\end{align*}
By \cref{eq:z-B12} and \cref{eq:z-B31},
\begin{align*}
	\|D\| \I ( \lVert \hat\theta_n - \thestar\rVert \leq \delta_n ) &\leq \Delta_1 + \Delta_2,
\end{align*}
where
\begin{align*}
	\Delta_1 & =  \lambda_2^{-1/2}\sqrt{n}\sup_{\theta: \lVert \theta - \thestar\rVert \leq \delta_n}  \Bigl\lVert
	\bigl(\Psi_n(\theta) - \Psi (\theta)\bigr) - \bigl(\Psi_n(\thestar) - \Psi (\thestar)\bigr)
\Bigr\rVert \\
	\Delta_2 & =  c_1 \lambda_2^{-1/2}\sqrt{n} \lVert \hat\theta_n - \thestar\rVert^2\mathds{1} \bigl( \lVert
		  \hat\theta_n - \thestar\rVert \leq \delta_n\bigr).
\end{align*}

Now we construct random variables $\Delta_1 ^{(i)}$ and  $\Delta_2^{(i)}$ that are independent of
$\xi_i$.
Let $(X_1', \\ \dots, X_n')$ be an independent copy of $(X_1, \dots, X_n)$ and let
\begin{align*}
	\Psi_n^{(i)} (\theta) = \Psi_n (\theta) - \frac{1}{n} \bigl(\myphi_{\theta} (X_i) - \myphi_{\theta}
	(X_i')\bigr), \quad 1 \leq i \leq n.
\end{align*}
Let $\hat\theta_n^{(i)}$ be the minimizer of  $\Psi_n^{(i)}$, and let
\begin{align*}
	\Delta_1^{(i)} &= \lambda_2^{-1/2}\sqrt{n}\sup_{\theta: \lVert \theta - \thestar\rVert \leq \delta_n}  \Bigl\lVert
		\bigl(\Psi_n^{(i)}(\theta) - \Psi (\theta)\bigr) - \bigl(\Psi_n^{(i)}(\thestar) - \Psi (\thestar)\bigr)
\Bigr\rVert \\
		\Delta_2^{(i)} &= c_1 \lambda_2^{-1/2}\sqrt{n} \lVert \hat\theta_n^{(i)} - \thestar\rVert \mathds{1} \bigl(
		\lVert \hat\theta_n^{(i)}  - \thestar\rVert \leq \delta_n\bigr).
\end{align*}
To apply \cref{cor:01}, we need to develop the following proposition.
\begin{proposition}
	\label{pro:z-1}
	Let  $B_{\delta} = \{\theta \in \Theta: \lVert \theta -
	\thestar\rVert \leq \delta\}$.
	Under the conditions \cref{con:b1,con:b2,con:b3},
	\begin{align}
		\IP \bigl( \hat\theta_n \in B_{\delta_n}^c\bigr)  & \leq C (D_{\Theta} +1 )^p d^p\delta_n^{-p} n^{-p/2}, \label{eq:z-t2-01}\\
		\sum_{i = 1}^n \IE \lVert \Sigma^{-1/2} \xi_i\rVert^3  & \leq C d^{{3/2}} n ,
		\label{eq:z1-w.bound}\\
		\IE \bigl\{ \lVert W \rVert  (\Delta_1 + \Delta_2)\bigr\}  & \leq C d^{{3/2}} \delta_n + C (D_{\Theta} +1 )^2d^{{5/2}}  n^{-1/2}, \label{eq:z1-wd.bound}\\
		\label{eq:z1-xid.bound}
		\sum_{i = 1}^n \sum_{j = 1}^2 \IE \bigl\{ \lVert \xi_i\rVert \bigl\lvert \Delta_j  -
\Delta_j^{(i)} \bigr\rvert\bigr\}  & \leq
C \Bigl( (D_{\Theta} +1 )^p d^{p + {1/2}} \delta_{n}^{-p + 2} n^{- \frac{p - 3}{2}}  \\
								   & \qquad \quad     + (D_{\Theta} +1 ) d^2 + (D_{\Theta} +1 ) d^{{5/2}}
							   n^{{1/2}}\delta_n\Bigr).  \nonumber
	\end{align}
\end{proposition}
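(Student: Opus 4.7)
The plan is to prove the four bounds in turn, with the tail bound \cref{eq:z-t2-01} on $\lVert \hat\theta_n - \thestar\rVert$ serving as the workhorse for \cref{eq:z1-wd.bound,eq:z1-xid.bound}. For \cref{eq:z-t2-01} I would combine $\Psi_n(\hat\theta_n) = 0 = \Psi(\thestar)$ with \cref{eq:z-B11} and Cauchy--Schwarz to obtain the basic inequality $\mu \lVert \hat\theta_n - \thestar\rVert \leq \lVert (\Psi_n - \Psi)(\hat\theta_n)\rVert$. A dyadic peeling argument on the shells $S_k = \{\theta \in \Theta : 2^k \delta_n < \lVert \theta - \thestar\rVert \leq 2^{k+1}\delta_n\}$ then reduces the tail to controlling $\IP(\sup_{\theta \in S_k} \lVert (\Psi_n - \Psi)(\theta)\rVert \geq \mu 2^k \delta_n)$ for each $k$. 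The Lipschitz envelope $h_0$ from \cref{con:b2} together with the base-point moment from \cref{eq:z-B32} feeds a componentwise Rosenthal-type $L_p$ maximal inequality, producing a moment of order $(r\sqrt{d})^p n^{-p/2}$ for the supremum of the component processes over a ball of radius $r$; summing Markov tails geometrically over $k$ yields \cref{eq:z-t2-01}. The bound \cref{eq:z1-w.bound} is immediate from $\lVert \Sigma^{-1/2}\xi_i\rVert^3 \leq \lambda_2^{-3/2}\lVert \xi_i\rVert^3$ and \cref{eq:z-B32}.

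For \cref{eq:z1-wd.bound}, Cauchy--Schwarz splits $\IE\{\lVert W\rVert (\Delta_1 + \Delta_2)\}$ as $\lVert W\rVert_2 \cdot \lVert \Delta_1 + \Delta_2\rVert_2$; the factor $\lVert W\rVert_2 = \sqrt{d}$ comes from $\IE \lVert W\rVert^2 = d$. The $L_2$ bound on $\Delta_1$ follows from the same maximal-inequality machinery as in Step~1, restricted to the ball $B_{\delta_n}$, which yields $\lVert \Delta_1\rVert_2 \leq C d\,\delta_n$ after chaining over $B_{\delta_n} \subset \IR^d$. For $\Delta_2$, the truncation inside the indicator gives $\Delta_2 \leq c_1 \lambda_2^{-1/2}\sqrt{n}\,\delta_n \lVert \hat\theta_n - \thestar\rVert \I(\lVert \hat\theta_n - \thestar\rVert \leq \delta_n)$, and combining the tail bound \cref{eq:z-t2-01} with integration by parts produces a truncated $L_2$ norm of $\lVert \hat\theta_n - \thestar\rVert$ of order $C(D_\Theta + 1)\sqrt{d}/\sqrt{n}$, which in turn yields the $(D_\Theta + 1)^2 d^{5/2} n^{-1/2}$ contribution.

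For \cref{eq:z1-xid.bound} I would use the coupling $\hat\theta_n^{(i)}$ that solves $\Psi_n^{(i)}(\hat\theta_n^{(i)}) = 0$ after replacing $X_i$ by an independent copy $X_i'$, so that each $\Delta_j^{(i)}$ is independent of $\xi_i$. The difference $|\Delta_1 - \Delta_1^{(i)}|$ depends only on the two summands involving $X_i$ and $X_i'$, and by \cref{con:b2} can be bounded pointwise by $\lambda_2^{-1/2}\sqrt{d}\,\delta_n (h_0(X_i) + h_0(X_i'))/\sqrt{n}$ and then refined via chaining over $B_{\delta_n}$. For $|\Delta_2 - \Delta_2^{(i)}|$ the coupling bound $\lVert \hat\theta_n - \hat\theta_n^{(i)}\rVert = O(1/n)$ in $L_2$ is obtained by applying \cref{eq:z-B11} to the pair $(\hat\theta_n, \hat\theta_n^{(i)})$ and using $\Psi_n(\hat\theta_n) = \Psi_n^{(i)}(\hat\theta_n^{(i)}) = 0$, on the good event where both estimators lie in $B_{\delta_n}$. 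Summing over $i$ together with $\IE \lVert \xi_i\rVert$ produces the middle and last terms of \cref{eq:z1-xid.bound}, while the probability of the complementary event $\{\hat\theta_n \notin B_{\delta_n}\}$ is absorbed via \cref{eq:z-t2-01} and gives the first term.

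The main obstacle is the empirical-process $L_p$ moment bound used in Step~1 (and reused in Steps~3 and~4): with $p$ potentially as small as $3$ and only an envelope condition from \cref{eq:z-B22}, extracting simultaneously the optimal $n^{-p/2}$ rate and the correct polynomial dependence on $d$ requires careful componentwise Rosenthal bookkeeping combined with a chaining argument over balls in $\IR^d$, rather than a direct application of a black-box maximal inequality. A secondary difficulty in Step~4 is ensuring that the coupling inequality for $\lVert \hat\theta_n - \hat\theta_n^{(i)}\rVert$ is applied only on the joint event $\{\hat\theta_n, \hat\theta_n^{(i)} \in B_{\delta_n}\}$, with the complementary tail matching the first term of \cref{eq:z1-xid.bound}.
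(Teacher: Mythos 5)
Your Steps 1 and 2 are fine: the peeling argument for \cref{eq:z-t2-01} is a valid variant of what the paper does (the paper simply proves the global moment bound \cref{eq:z-that} via the basic inequality from \cref{eq:z-B11} plus an empirical-process moment bound over the whole of $\Theta$, and then applies Chebyshev), and both routes rest on the same maximal-inequality machinery over balls that the paper packages in \cref{lem:z2-Psi}. The problems are in Steps 3 and 4. For the $\Delta_2$ part of \cref{eq:z1-wd.bound}, replacing one factor of $\lVert\hat\theta_n-\thestar\rVert$ by $\delta_n$ and then using Cauchy--Schwarz cannot produce the stated $C(D_{\Theta}+1)^2d^{5/2}n^{-1/2}$ term. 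First, the truncated $L_2$ norm of $\lVert\hat\theta_n-\thestar\rVert$ you quote, $(D_{\Theta}+1)d^{1/2}n^{-1/2}$, is not available: the moment and tail bounds obtainable under \cref{con:b2,con:b3} scale like $(D_{\Theta}+1)d\,n^{-1/2}$ (see \cref{eq:z-that}), and nothing in the assumptions buys the extra $d^{1/2}$. Second, even with the correct value, your bound becomes $C\sqrt{n}\,\delta_n\cdot d^{1/2}\cdot(D_{\Theta}+1)d\,n^{-1/2}=C(D_{\Theta}+1)^2d^{5/2}n^{-1/2+\epsilon_p}$, which is off the stated inequality by a factor $n^{\epsilon_p}$ (equivalently, by $(D_{\Theta}+1)$ relative to the first term $Cd^{3/2}\delta_n$); the loss is intrinsic because $\delta_n$ exceeds the typical size of $\lVert\hat\theta_n-\thestar\rVert$ by exactly $n^{\epsilon_p}$. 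The paper avoids any truncation here and uses H\"older with third moments, $\IE\{\lVert W\rVert\Delta_2\}\leq C\sqrt{n}\,\lVert W\rVert_3\,\lVert\hat\theta_n-\thestar\rVert_3^2$, together with \cref{eq:z-that} for $p\geq 3$.

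The more serious gap is your coupling claim in Step 4 that $\lVert\hat\theta_n-\hat\theta_n^{(i)}\rVert=O(1/n)$ in $L_2$ on the joint good event. Applying \cref{eq:z-B11} to the pair controls $\langle\Psi(\hat\theta_n)-\Psi(\hat\theta_n^{(i)}),\hat\theta_n-\hat\theta_n^{(i)}\rangle$, but after using $\Psi_n(\hat\theta_n)=\Psi_n^{(i)}(\hat\theta_n^{(i)})=0$ you are left with the increment $(\Psi_n-\Psi)(\hat\theta_n)-(\Psi_n-\Psi)(\hat\theta_n^{(i)})$ in addition to the genuinely $O(1/n)$ term $n^{-1}(h_{\theta}(X_i)-h_{\theta}(X_i'))$. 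Under \cref{eq:z-B21} the only pointwise control of that increment is $\sqrt{d}\,(\mathbb{P}_nh_0+Ph_0)\lVert\hat\theta_n-\hat\theta_n^{(i)}\rVert$, and $\mathbb{P}_nh_0+Ph_0$ is $O_P(1)$, not $o_P(1)$, so it cannot be absorbed into the $\mu$ term. What is provable (and what the paper proves in \cref{eq:z-thati}) is $\lVert\hat\theta_n-\hat\theta_n^{(i)}\rVert\leq\mu^{-1}\bigl\{n^{-1}\lVert\xi_i-\xi_i'\rVert+2\sup_{\lVert\theta-\thestar\rVert\leq\delta_n}\lVert(\Psi_n-\Psi)(\theta)-(\Psi_n-\Psi)(\thestar)\rVert\bigr\}$, whose second term has $L_p$ norm of order $d\,\delta_n n^{-1/2}$, which dominates $1/n$. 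That term is precisely what generates the $(D_{\Theta}+1)d^{5/2}n^{1/2}\delta_n$ contribution in \cref{eq:z1-xid.bound}; under your $O(1/n)$ coupling this term would disappear, which is a signal that the claimed coupling rate is too strong to hold under \cref{con:b1,con:b2,con:b3}. The cross-event terms and the use of \cref{eq:z-t2-01} for them are handled correctly in spirit (cf.\ \cref{lem:z-th2}).
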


By \cref{cor:01} with $O = \{ \lVert \hat\theta_n - \thestar\rVert \leq \delta_n\}$ and
\cref{pro:z-1},
\begin{equ}
	\label{eq:z-000}
	\MoveEqLeft \sup_{A \in \mathcal{A}} \bigl\lvert  \IP \bigl( \sqrt{n} \Sigma^{-1/2} \.\Psi_0 (\hat\theta_n - \thestar) \in A\bigr) - \IP \bigl(Z \in A \bigr)\bigr\rvert
	\\
	& \leq C d^{{1/2}}n^{-3/2} \sum_{i = 1}^n \IE \lVert \Sigma^{-1/2} \xi_i\rVert^3 + C \IE \{ \lVert W \rVert
	(\Delta_1 + \Delta_2)\} \\
	& \quad  + C n^{-1/2} \sum_{i = 1}^n \IE \bigl\{ \lVert \xi_i\rVert (\Delta_1 +
	\Delta_2 - \Delta_1^{(i)} - \Delta_2^{(i)})\bigr\} + \IP \bigl( \lVert \hat\theta_n -
\thestar\rVert > \delta_n\bigr) \\
	& \leq C n^{-1/2} \bigl(d^{2} + (D_{\Theta} +1 )^2 d^{{5/2}} \willignore{\color{red}D_1^{2}} \bigr) + C
	(D_{\Theta } + 1)^2d^{{5/2}} \willignore{\color{red}D_1}
	\delta_n \\
	& \quad + C (D_{\Theta} +1 )^p d^{p + {1/2}} \willignore{\color{red}D_1^p} \delta_n^{-p + 2}
	n^{-(p-2)/2} + C (D_{\Theta} +1 )^p d^p \willignore{\color{red}D_1^p} \delta_n^{-p} n^{-p/2}.
\end{equ}
Recall that $p \geq 3$, and then $\delta_n^2 n \geq 1$. Therefore,
\begin{align*}
	\text{RHS of \cref{eq:z-000}}
	& \leq C n^{-1/2} (D_{\Theta} +1 )^2 d^{{5/2}} \willignore{\color{red}D_1^{2}}
	 + C (D_{\Theta} +1 )^2 d^{{5/2}} \willignore{\color{red}D_1}
	\delta_n  \\
	& \quad + C (D_{\Theta} +1 )^p d^{p + {1/2}} \willignore{\color{red}D_1^p} \delta_n^{-p + 2} n^{-(p-2)/2}\\
	& \leq C (D_{\Theta} +1 )^2  \Bigl(d^{{5/2}} n^{-1/2}   + d^{{7/2}} 	n^{- \frac{p-2}{2(p -1 )}}\Bigr)\\
	& \leq C (D_{\Theta} +1 )^2 d^{{7/2}} n^{- {1/2} + \epsilon_p},
\end{align*}
where $\epsilon_p = 1/(2p - 2)$. This proves \cref{thm:z-1}.

It suffices to prove \cref{pro:z-1}, and we need to apply some preliminary lemmas, whose proofs are put in \cref{sec:appendix}.
\begin{lemma}
	\label{lem:z-thp}
	Let $B_\delta $ be as in \cref{pro:z-1}.
	Under the assumptions \cref{con:b1,con:b2,con:b3}, we have
\begin{gather}
	\label{eq:m-w23bound}
	\lVert W\rVert_2 \leq \lambda_2^{-1/2} c_3 d^{1/2 }, \quad \lVert W \rVert_3 \leq C \lambda_2^{-1/2} c_3 d^{1/2 }, \\
	\label{eq:Ed12}
	\mathbb{E} \Delta_1^2 \leq  C \lambda_2^{-1} c_2^2 d^2 \delta_n^2, 
\end{gather}
where 
$C > 0$ is an absolute constant. 
Moreover, 
	\begin{align}
		\label{eq:z-that}
		\IE \bigl\{\lVert \hat\theta_n - \thestar\rVert^p\bigr\} & \leq C (D_{\Theta} +1)^p d^p	n^{-p/2} ,\\
		\label{eq:z-thati}
		\IE \Bigl\{\lVert \hat\theta_n - \hat\theta_n^{(i)}\rVert^p  \mathds{1} \bigl(\hat\theta_n \in
		B_{\delta}, \hat\theta_n^{(i)} \in B_{\delta}\bigr)\Bigr\}
																 & \leq C \bigl( d^{p/2} n^{-p} + d^p n^{-p/2} \delta^{p} \bigr),
	\end{align}
	where $C > 0$ is a constant depending only on $c_2, c_3, \mu$ and $p$.

\end{lemma}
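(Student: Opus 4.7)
The plan is to treat the four claims separately. They split naturally into two groups: the first two are moment / empirical-process bounds for the linear statistic and its centered fluctuation, while the last two are stability bounds that invoke the monotonicity condition \cref{eq:z-B11}.

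For the moment bounds on $W = -n^{-1/2}\sum_{i=1}^{n}\Sigma^{-1/2}\xi_i$, I would first note $\lVert\Sigma^{-1/2}\xi_i\rVert^2 = \xi_i\transpose\Sigma^{-1}\xi_i \leq \lambda_2^{-1}\lVert\xi_i\rVert^2$, which combined with \cref{eq:z-B32} yields $\IE\lVert\Sigma^{-1/2}\xi_i\rVert^2 \leq \lambda_2^{-1}c_3^2 d$; independence and zero mean then give $\lVert W\rVert_2^2 = n^{-1}\sum_{i=1}^{n}\IE\lVert\Sigma^{-1/2}\xi_i\rVert^2 \leq \lambda_2^{-1}c_3^2 d$. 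A Rosenthal-type moment inequality produces $\lVert W\rVert_3^3 \leq C(\lVert W\rVert_2^3 + n^{-1/2}\IE\lVert\Sigma^{-1/2}\xi_1\rVert^3)$, and invoking \cref{eq:z-B32} with $p=3$ gives the stated bound on $\lVert W\rVert_3$. For $\IE\Delta_1^2$, introduce the centered process $G_n(\theta) = \sqrt{n}\bigl[(\Psi_n - \Psi)(\theta) - (\Psi_n - \Psi)(\thestar)\bigr]$ so that $\Delta_1 = \lambda_2^{-1/2}\sup_{\theta\in B_{\delta_n}}\lVert G_n(\theta)\rVert$. Decomposing componentwise, $\sup_{\theta}\lVert G_n(\theta)\rVert^2 \leq \sum_{j=1}^{d}\sup_{\theta}G_{n,j}(\theta)^2$, and each $G_{n,j}$ is a one-dimensional centered empirical process over $B_{\delta_n}$ with Lipschitz sample increments governed by $h_0$ through \cref{eq:z-B21,eq:z-B22}. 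A chaining / Dudley inequality with the covering estimate $N(\varepsilon,B_{\delta_n}) \lesssim (\delta_n/\varepsilon)^d$ yields $\IE\sup_{\theta}G_{n,j}(\theta)^2 \lesssim c_2^2 d\delta_n^2$; summing over the $d$ coordinates gives the $d^2$ factor claimed in \cref{eq:Ed12}.

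For the $p$-th moment bound on $\hat\theta_n - \thestar$, I would start from the monotonicity \cref{eq:z-B11} together with $\Psi_n(\hat\theta_n) = 0 = \Psi(\thestar)$ to obtain
\begin{align*}
\mu\lVert\hat\theta_n - \thestar\rVert \leq \lVert\Psi_n(\thestar)\rVert + \bigl\lVert(\Psi_n - \Psi)(\hat\theta_n) - (\Psi_n - \Psi)(\thestar)\bigr\rVert.
\end{align*}
The first right-hand term has $L^p$ norm of order $d^{1/2}n^{-1/2}$ by \cref{eq:z-B32} and Rosenthal. The second is a centered empirical process evaluated at the random point $\hat\theta_n$, which I would control through a peeling / slicing argument in the spirit of \cite[Theorem 3.2.5]{van96}: dyadically decompose the range of $\lVert\hat\theta_n - \thestar\rVert$ and dominate the supremum on each shell by the chaining estimate from the $\Delta_1$ step; the trivial bound $\lVert\hat\theta_n - \thestar\rVert \leq D_{\Theta}$ terminates the peeling at the outermost scale and accounts for the factor $(D_{\Theta}+1)^p$.

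For the leave-one-out bound, I would use $\Psi_n - \Psi_n^{(i)} = n^{-1}[\myphi_{\cdot}(X_i) - \myphi_{\cdot}(X_i')]$ together with $\Psi_n(\hat\theta_n) = 0 = \Psi_n^{(i)}(\hat\theta_n^{(i)})$ and monotonicity to derive
\begin{align*}
\mu\lVert\hat\theta_n - \hat\theta_n^{(i)}\rVert & \leq \bigl\lVert(\Psi_n - \Psi)(\hat\theta_n) - (\Psi_n - \Psi)(\hat\theta_n^{(i)})\bigr\rVert \\
& \quad + n^{-1}\bigl\lVert\myphi_{\hat\theta_n^{(i)}}(X_i) - \myphi_{\hat\theta_n^{(i)}}(X_i')\bigr\rVert.
\end{align*}
On the event $\{\hat\theta_n,\hat\theta_n^{(i)} \in B_{\delta}\}$, the second term is dominated via \cref{eq:z-B21} by $n^{-1}(\lVert\xi_i\rVert + \lVert\xi_i'\rVert + \sqrt{d}\,\delta(h_0(X_i) + h_0(X_i')))$; taking $p$-th moments using \cref{eq:z-B22,eq:z-B32} produces the contribution $Cd^{p/2}n^{-p}$. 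The first term is a centered empirical-process fluctuation between two random points of $B_{\delta}$; a chaining bound over $B_{\delta}$ that exploits the pointwise variance of the centered increments being of order $d\lVert\theta-\theta'\rVert^2/n$ (rather than $d\lVert\theta-\theta'\rVert^2$ from the crude Lipschitz estimate) yields an $L^p$ norm of order $dn^{-1/2}\delta$ on the event, producing the remaining $d^p n^{-p/2}\delta^p$ contribution. The main obstacle throughout is obtaining these chaining / maximal inequalities for centered empirical processes over $d$-dimensional balls with the correct power of $d$; a secondary delicacy is the peeling argument in the third step, where the $D_{\Theta}$ factor has to be confined to the terminating peel so as not to propagate through the inner scales.
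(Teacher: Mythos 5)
Your proposal is correct, and its skeleton coincides with the paper's: the same monotonicity inequality $\mu\lVert\hat\theta_n-\thestar\rVert \leq \lVert\Psi_n(\thestar)-\Psi(\thestar)\rVert + \lVert(\Psi_n-\Psi)(\hat\theta_n)-(\Psi_n-\Psi)(\thestar)\rVert$ (the paper's \cref{eq:z-th-01}), the same leave-one-out identity, and the same type of maximal inequality for the centered empirical process over a ball, of size $c_2 d^{1/2}\delta$ per coordinate and $c_2 d\,n^{-1/2}\delta$ for the vector norm. The differences are in the supporting machinery rather than the route. For the maximal inequalities the paper does not chain explicitly: it works coordinatewise with the Lipschitz envelope $\delta h_0$ and cites the van der Vaart--Wellner/Wellner bound $\IE^*\lVert\mathbb{G}_n\rVert_{\mathcal{F}_{\delta,j}}\lesssim c_2 d^{1/2}\delta$ plus Hoffmann--J{\o}rgensen for higher moments (\cref{lem:m-W,lem:z2-Psi}), which is the same estimate your Dudley/covering-number argument delivers. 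For \cref{eq:z-that} the paper uses no peeling at all: it simply bounds the fluctuation term by the supremum over the whole parameter set, i.e.\ applies the maximal inequality with $\delta=D_{\Theta}$, and that crude step is precisely where the factor $(D_{\Theta}+1)^p$ comes from; your dyadic peeling terminated at scale $D_{\Theta}$ collapses to the same bound because the shell sum is dominated by the outermost scale, so it is harmless but buys nothing here (the genuine peeling in the paper appears only in \cref{lem:m-t4}, for the M-estimator theorem). For \cref{eq:z-thati} the paper recenters both evaluations at $\thestar$ and bounds the difference by twice the supremum over $B_\delta$ of the $\thestar$-centered process, reusing \cref{lem:z2-Psi}, instead of chaining the two-point increment directly as you do; both give the $d^p n^{-p/2}\delta^p$ term, and your handling of the $n^{-1}\lVert\myphi_{\hat\theta_n^{(i)}}(X_i)-\myphi_{\hat\theta_n^{(i)}}(X_i')\rVert$ term via \cref{eq:z-B21} produces an extra $d^{p/2}\delta^p n^{-p}$ contribution that is absorbed by the stated bound. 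The only point to mind if you write this out in full is the measurability of the suprema (the paper systematically uses outer expectations), which is a technicality rather than a gap.
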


\begin{lemma}
	\label{lem:z-th2}
	We have
	\begin{align}
		 \IE \bigl\{\lVert \xi_i \rVert \lVert \hat\theta_n - \thestar\rVert^2 \mathds{1}
		 \bigl(\hat\theta_n \in B_{\delta}, \hat\theta_n^{(i)} \in B_\delta^c\bigr)\bigr\} & \leq C
		 (D_{\Theta} +1)^p
		  d^{p + {1/2}}
		\delta^{-p + 2} n^{-p/2},\label{eq:z-t2-02}\\
		\IE \bigl\{\lVert \xi_i \rVert \lVert \hat\theta_n^{(i)} - \thestar\rVert^2 \mathds{1}
	 \bigl(\hat\theta_n \in B_{\delta}^c, \hat\theta_n^{(i)} \in B_\delta\bigr)\bigr\} & \leq C
	 (D_{\Theta} + 1)^p d^{p + {1/2}} \delta^{-p + 2} n^{-p/2},
		\label{eq:z-t2-03}
	\end{align}
	and
	\begin{equ}
		\label{eq:z-t2-04}
		\MoveEqLeft
		\IE \Bigl\{\lVert \xi_i\rVert (\lVert \hat\theta_n - \thestar\rVert + \lVert
			\hat\theta_n^{(i)} - \thestar\rVert ) \lVert \hat\theta_n - \hat\theta_n^{(i)}\rVert
		\mathds{1} \bigl( \hat\theta_n \in B_{\delta}, \hat\theta_n^{(i)} \in B_\delta\bigr)\Bigr\} \\
		& \leq C (\D + 1)  \willignore{\color{red}D_1} \Bigl(d^{2}n^{-3/2} + d^{5/2 } n^{-1} \delta\Bigr) .
	\end{equ}
\end{lemma}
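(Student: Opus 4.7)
The plan is to prove all three bounds of \cref{lem:z-th2} by exploiting the independence of $\xi_i = \myphi_{\thestar}(X_i)$ and $\hat\theta_n^{(i)}$ (which is a function of $(X_1,\dots,X_{i-1},X_i',X_{i+1},\dots,X_n)$ and hence does not depend on $X_i$), combined with the moment estimates provided by \cref{lem:z-thp} and Markov's inequality. Crucially, $\hat\theta_n$ itself does depend on $X_i$, which is what makes the second bound the subtlest of the three.

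For \cref{eq:z-t2-02}, on the event $\{\hat\theta_n\in B_\delta\}$ the factor $\lVert\hat\theta_n-\thestar\rVert^2$ is bounded by $\delta^2$. What remains is $\delta^2\,\IE\{\lVert\xi_i\rVert\mathds{1}(\hat\theta_n^{(i)}\in B_\delta^c)\}$, which factorises as $\delta^2\,\IE\lVert\xi_i\rVert\cdot\IP(\hat\theta_n^{(i)}\in B_\delta^c)$ by independence. Bounding $\IE\lVert\xi_i\rVert\leq c_3 d^{1/2}$ via \cref{con:b3}, and applying Markov together with \cref{eq:z-that} to get $\IP(\hat\theta_n^{(i)}\in B_\delta^c)\leq C(\D+1)^p d^p\delta^{-p}n^{-p/2}$, yields the claim. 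For \cref{eq:z-t2-03}, the naive argument of bounding $\lVert\hat\theta_n^{(i)}-\thestar\rVert^2\leq \delta^2$ fails because $\mathds{1}(\hat\theta_n\in B_\delta^c)$ stays coupled to $\xi_i$. The resolution is to peel off only one of the two $\delta$-factors, keeping one copy of $\lVert\hat\theta_n^{(i)}-\thestar\rVert$ as a random factor, and then apply H\"older's inequality with exponents $p$ and $p/(p-1)$:
\begin{align*}
\delta\,\IE\bigl\{\lVert\xi_i\rVert\lVert\hat\theta_n^{(i)}-\thestar\rVert\mathds{1}(\hat\theta_n\in B_\delta^c)\bigr\}\leq \delta\,\bigl\lVert\lVert\xi_i\rVert\lVert\hat\theta_n^{(i)}-\thestar\rVert\bigr\rVert_p\,\IP(\hat\theta_n\in B_\delta^c)^{(p-1)/p}.
\end{align*}
Independence now decouples the $L_p$-norm into $\lVert\xi_i\rVert_p\cdot\lVert\hat\theta_n^{(i)}-\thestar\rVert_p$, and both factors as well as the tail probability are controlled by \cref{con:b3} and \cref{eq:z-that}, producing the target $C(\D+1)^p d^{p+1/2}\delta^{-p+2}n^{-p/2}$.

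For \cref{eq:z-t2-04}, I will apply a three-fold H\"older inequality with exponents $(3,3,3)$, which is admissible since $p\geq 3$, to bound the expectation by $\lVert\xi_i\rVert_3\cdot\lVert\lVert\hat\theta_n-\thestar\rVert+\lVert\hat\theta_n^{(i)}-\thestar\rVert\rVert_3\cdot(\IE\{\lVert\hat\theta_n-\hat\theta_n^{(i)}\rVert^3\mathds{1}(\cdot)\})^{1/3}$. The first factor is $\leq c_3 d^{1/2}$; the second is $\leq 2\lVert\hat\theta_n-\thestar\rVert_3\leq C(\D+1)d\,n^{-1/2}$ by Minkowski and \cref{eq:z-that} (noting that $\hat\theta_n$ and $\hat\theta_n^{(i)}$ have the same distribution); and the third is $\leq C(d^{1/2}n^{-1}+d\,n^{-1/2}\delta)$ by \cref{eq:z-thati} with $p=3$. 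Multiplying these three estimates yields precisely $C(\D+1)(d^2n^{-3/2}+d^{5/2}n^{-1}\delta)$.

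The main obstacle will be the asymmetric H\"older split for \cref{eq:z-t2-03}: one must trade off how much of the $\delta$-budget on the event $\{\hat\theta_n^{(i)}\in B_\delta\}$ is spent prior to H\"older versus saved for the $L_p$-moment. A direct Markov on $\{\hat\theta_n\in B_\delta^c\}$ together with $\lVert\xi_i\rVert$ cannot be factored because of the dependence on $X_i$, and simply using $\lVert\hat\theta_n^{(i)}-\thestar\rVert^2\leq\delta^2$ wastes the coupling available via independence of $\xi_i$ and $\hat\theta_n^{(i)}$; the asymmetric split is what recovers the intended rate.
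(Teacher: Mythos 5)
Your proposal is correct and follows essentially the same route as the paper's proof: for \cref{eq:z-t2-02} bound $\lVert\hat\theta_n-\thestar\rVert^2$ by $\delta^2$ and factor via the independence of $\xi_i$ and $\hat\theta_n^{(i)}$ together with Markov and \cref{eq:z-that}; for \cref{eq:z-t2-03} use H\"older to decouple the indicator $\mathds{1}(\hat\theta_n\in B_\delta^c)$ (which cannot be factored) plus Markov; for \cref{eq:z-t2-04} use a three-fold H\"older with \cref{eq:z-that} and \cref{eq:z-thati}. The only immaterial difference is the exponent split in \cref{eq:z-t2-03} — you peel one factor of $\delta$ and apply H\"older with exponents $(p, p/(p-1))$, while the paper keeps $\lVert\hat\theta_n^{(i)}-\thestar\rVert^2$ intact and uses $(p/2, p/(p-2))$ — and both choices yield the stated bound $C(\D+1)^p d^{p+1/2}\delta^{-p+2}n^{-p/2}$.
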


Now we are ready to give the proof of \cref{pro:z-1}.
\begin{proof}[Proof of \cref{pro:z-1}]

The inequality \cref{eq:z-t2-01} follows directly from \cref{eq:z-that} and the Chebyshev
inequality.

For \cref{eq:z1-w.bound}, by \cref{con:b3}, we have
\begin{align*}
	\IE  \lVert \Sigma^{-1/2} \xi_i \rVert^3  \leq \lambda_2^{-1/2 } c_3 d^{{3/2}},
\end{align*}
and thus \cref{eq:z1-w.bound} holds.

For \cref{eq:z1-wd.bound}, 
it suffices to give the bounds for the moments of $\lVert W \rVert,
\Delta_1$ and $\Delta_2$.
By \cref{eq:m-w23bound,eq:Ed12} and the Cauchy inequality, it follows that
\begin{equ}
	\label{eq:z1-wd.bound1}
	\IE \{ \lVert W \rVert   \Delta_1\} \leq C d^{{3/2}} \delta_n.
\end{equ}
Recall that $p \geq 3$, $\Delta_2 \leq c_1 \lambda_2^{-1/2}n^{1/2} \lVert \hat\theta_n - \theta_0\rVert^2 $ and by \cref{eq:z-that},
\begin{equ}
	\label{eq:m-hat3bound}
	\IE \lVert \hat\theta_n - \thestar\rVert^3 \leq C  (D_{\Theta} +1)^{3}d^{3} n^{-3/2},
\end{equ}
and then by \cref{eq:m-w23bound}, \cref{eq:m-hat3bound} and the H\"older inequality,
\begin{equ}
	\label{eq:z1-wd.bound2}
	\IE \{ \lVert W \rVert  \Delta_2\}
	\leq C n^{1/2}\ \bigl\{ \lVert W \rVert_3  \lVert \hat\theta_n - \thestar\rVert_3^2 \bigr\}
	\leq C (D_{\Theta} +1)^2d^{{5/2}}  n^{-1/2}.
\end{equ}
Combining \cref{eq:z1-wd.bound1,eq:z1-wd.bound2} yields \cref{eq:z1-wd.bound}.

It suffices to prove \cref{eq:z1-xid.bound}.
By the definition of $\Delta_1$ and $\Delta_1^{(i)}$,
\begin{align*}
	\bigl\lvert \Delta_1 - \Delta_1^{(i)} \bigr\rvert
	& \leq 2 \lambda_2^{-1/2}n^{-1/2} \sup_{\theta: \lVert \theta - \thestar\rVert \leq \delta_n} \bigl\lVert
	\myphi_{\theta} (X_i) - \myphi_{\theta} (X_i') \bigr\rVert \leq 2 d^{{1/2}} \lambda_2^{-1/2} n^{-1/2} \delta_n \lvert h_0(X)\rvert,
\end{align*}
and by \cref{eq:z-B22},
\begin{align*}
	\IE \bigl\lvert \Delta_1 - \Delta_1^{(i)}\bigr\rvert^2 \leq C  d n^{-1}\delta_n^2.
\end{align*}
Thus, by \cref{eq:z-B32} and the Cauchy inequality,
\begin{equ}
	\label{eq:z1-xid.bound1}
	\sum_{i = 1}^n \IE \Bigl\{\lVert \xi_i \rVert \bigl( \lvert \Delta_1 - \Delta_1^{(i)}\rvert\bigr)\Bigr\} \leq
	C d^{} n^{1/2} \delta_n .
\end{equ}

For $\Delta_2 - \Delta_2^{(i)}$, we have
\begin{align*}
	& \bigl\lvert \Delta_2 - \Delta_2^{(i)}\bigr\rvert \\
	& \leq c_2 \lambda_2^{-1/2}\sqrt{n} \Bigl( \lVert \hat\theta_n - \thestar \rVert^2 \mathds{1} \bigl( \lVert
	\hat\theta_n - \thestar\rVert \leq \delta_n , \lVert
\hat\theta_n^{(i)} - \thestar\rVert > \delta_n\bigr) \\
	& \qquad \qquad + \lVert \hat\theta_n^{(i)} - \thestar \rVert^2 \mathds{1} \bigl( \lVert
	\hat\theta_n - \thestar\rVert > \delta_n , \lVert
\hat\theta_n^{(i)} - \thestar\rVert \leq \delta_n\bigr) \\
	& \qquad \qquad  + (\lVert \hat\theta_n - \thestar\rVert + \lVert \hat\theta_n^{(i)} -
	\thestar\rVert) \lVert \hat\theta_n - \hat\theta_n^{(i)}\rVert \mathds{1} \bigl( \lVert
	\hat\theta_n - \thestar\rVert \leq \delta_n , \lVert
\hat\theta_n^{(i)} - \thestar\rVert \leq \delta_n\bigr)\Bigr).
\end{align*}
By \cref{lem:z-thp,lem:z-th2}, it follows that
\begin{equ}
	\label{eq:z1-xid.bound2}
	\sum_{i = 1}^n \IE \Bigl\{\lVert \xi_i \rVert \bigl( \bigl\lvert \Delta_2 -
	\Delta_2^{(i)}\bigr\rvert\bigr)\Bigr\}
	& \leq
	C (D_{\Theta} + 1)^p d^{ p + {1/2}}  \delta^{-p + 2} n^{-(p-3)/2} \\
	& \quad  + C (D_{\Theta} + 1) d^2   + C
	(D_{\Theta} + 1)d^{{5/2}} n^{1/2} \delta_n
	.
\end{equ}
Together with \cref{eq:z1-xid.bound1,eq:z1-xid.bound2}, we obtain \cref{eq:z1-xid.bound}.
\end{proof}

\subsection{Proof of \tpstring{\cref{thm:z-2}}{Theorem 3.3}}


\cref{thm:z-2} follows from the proof of \cref{thm:z-1} and the following proposition.

\begin{proposition}
	\label{pro:z2}
	Under the conditions \cref{con:b1}, \cref{con:b4} and \cref{con:b5}, we have
	\begin{align*}	
		\IP \bigl( \hat\theta_n \in B_\delta^c\bigr) & \leq C \exp \Bigl( - \frac{C' \sqrt{n}
		\delta }{ (D_{\Theta} +1 ) d^{3/2}}\Bigr), \label{eq:z-t2-01}\\
		\sum_{i = 1}^n \IE \lVert \Sigma^{-1/2} \xi_i\rVert^3 & \leq C d^{{3/2}} n \\
		\IE \bigl\{ \lVert W \rVert  (\Delta_1 + \Delta_2)\bigr\} & \leq C d^{{3/2}} \delta_n +
		C (D_{\Theta} +1 )^2
		d^{{5/2}}  n^{-1/2}, \\
		\sum_{i = 1}^n \IE \bigl\{ \lVert \xi_i\rVert \bigl\lvert \Delta_1 + \Delta_2 - \Delta_1^{(i)} -
		\Delta_2^{(i)}\bigr\rvert\bigr\} & \leq C  (D_{\Theta} +1 )^2
		d^{{5/2}}  \exp \Bigl( - \frac{C'
		\sqrt{n} \delta_n }{4 (D_{\Theta} +1 ) d^{{3/2}}}\Bigr) \\
										 & \quad  + C  (D_{\Theta} +1 )^2
			d^{2}   + C (D_{\Theta} +1 ) d^{{5/2}} n^{1/2} \delta_n,
	\end{align*}
	where $C' > 0$ is a constant depending only on $c_4,c_5$ and $\mu$ and $C > 0$ is a constant
	depending only on $c_1,c_4,c_5,\mu,\lambda_1$ and $\lambda_2$.
\end{proposition}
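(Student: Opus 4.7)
The strategy is to parallel the proof of Proposition~\ref{pro:z-1}, but upgrade every polynomial tail or moment bound that was obtained from Lemmas~\ref{lem:z-thp}--\ref{lem:z-th2} via Chebyshev/Markov or via $L_p$-norm control into sub-exponential bounds using the Orlicz conditions (B4) and (B5). Specifically, from (B5) and the remark following it, for every $p \geq 1$ we have $\lVert \xi_1 \rVert_p \leq C p$ for some $C$ depending on $c_5$, and similarly for $h_0(X)$ from (B4) with constant depending on $c_4$. Therefore the moment bounds $\sum_i \IE \lVert\Sigma^{-1/2}\xi_i\rVert^3 \leq C d^{3/2} n$, $\lVert W\rVert_2 \leq C d^{1/2}$, $\lVert W\rVert_3 \leq C d^{1/2}$, and $\IE \Delta_1^2 \leq C d \delta_n^2$ all follow exactly as in Lemma~\ref{lem:z-thp}, since only finitely many moments of $\xi_i$ and $h_0(X)$ are used. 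The claim $\IE \{\lVert W \rVert (\Delta_1 + \Delta_2)\} \leq C d^{3/2} \delta_n + C(D_{\Theta}+1)^2 d^{5/2} n^{-1/2}$ then follows by the same Cauchy--Schwarz/H\"older argument used in \cref{eq:z1-wd.bound1,eq:z1-wd.bound2}, once we verify that the analogue of \cref{eq:z-that} still holds with constants depending only on $c_4, c_5$.

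For the exponential tail bound $\IP(\hat\theta_n \in B_\delta^c)$, I would start from the basic inequality obtained via (B1): since $\Psi_n(\hat\theta_n) = 0 = \Psi(\thestar)$, the strong monotonicity of $\Psi$ gives
\begin{equation*}
\mu \lVert \hat\theta_n - \thestar \rVert \leq \lVert (\Psi_n - \Psi)(\hat\theta_n) - (\Psi_n - \Psi)(\thestar)\rVert + \lVert \Psi_n(\thestar) \rVert.
\end{equation*}
The second term is a sum of $n$ centered sub-exponential vectors with $\lVert\xi_i\rVert_{\psi_1}\leq c_5$; a standard Bernstein-type inequality for sums of sub-exponential random variables (applied coordinate-wise and union-bounded over $d$ coordinates) shows that it exceeds $\tfrac{\mu}{2}\delta$ with probability at most $2 d \exp(-C'\sqrt{n}\delta/d^{1/2})$. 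The first term is the modulus of continuity of the empirical process; by (B4) each summand has sub-exponential increments with envelope $h_0$, so by a peeling argument over dyadic shells $\{\lVert\theta - \thestar\rVert \in [2^{k-1}\delta, 2^k \delta]\}$ combined with a sub-exponential chaining/Bernstein bound over $B_{2^k\delta} \cap \Theta$, one obtains the same type of exponential tail, with the extra factor $(D_\Theta+1)d^{3/2}$ coming from the diameter of $\Theta$ and the covering number. This yields claim~(1).

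For claim~(4), the bound on $|\Delta_1 - \Delta_1^{(i)}|$ is identical to that in the proof of Proposition~\ref{pro:z-1}, producing $\lesssim d\, n^{1/2}\delta_n$ after summing and applying Cauchy--Schwarz. For $|\Delta_2 - \Delta_2^{(i)}|$, decompose exactly as in Lemma~\ref{lem:z-th2} into the three events corresponding to whether $\hat\theta_n$ and $\hat\theta_n^{(i)}$ each lie in $B_{\delta_n}$. On the mixed events $\{\hat\theta_n \in B_{\delta_n}, \hat\theta_n^{(i)}\in B_{\delta_n}^c\}$ (and its symmetric counterpart), the estimates \cref{eq:z-t2-02,eq:z-t2-03} are replaced by the analogous bound in which the polynomial factor $\delta^{-p+2} n^{-p/2}$ is swapped for the exponential factor $\exp(-C'\sqrt{n}\delta_n/(4(D_\Theta+1)d^{3/2}))$ coming from claim~(1), yielding the term $C (D_\Theta+1)^2 d^{5/2}\exp(\cdot)$. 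On $\{\hat\theta_n, \hat\theta_n^{(i)}\in B_{\delta_n}\}$, the oscillation bound is handled exactly as in \cref{eq:z-t2-04}, reusing the $\IE \lVert \hat\theta_n - \hat\theta_n^{(i)}\rVert^p$ estimate which transfers unchanged.

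The main obstacle is Step~(1), the exponential tail for $\lVert \hat\theta_n - \thestar\rVert$. The delicate point is that we cannot a~priori localize to $B_{\delta_n}$, so the modulus-of-continuity bound for $(\Psi_n - \Psi)(\theta) - (\Psi_n - \Psi)(\thestar)$ must either be established globally over $\Theta$ (explaining the appearance of the diameter $D_\Theta$ in the exponent) or via a careful peeling argument. Once this tail bound is in hand, the rest of the proposition follows by routine modifications of the proof of Proposition~\ref{pro:z-1}, with $L_p$ arguments replaced by sub-exponential (Bernstein) arguments where needed.
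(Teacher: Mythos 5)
Your outline reproduces the paper's architecture for everything except the crucial tail bound, and there it takes a genuinely different route. The paper's proof of this proposition is short precisely because all the work is delegated to \cref{lem:z2-theta}: starting from the same monotonicity inequality \cref{eq:z-th-01} that you use, it controls the linear term by the Orlicz bound $\lVert \Psi_n(\thestar)-\Psi(\thestar)\rVert_{\psi_1}\leq C c_5 d^{1/2}n^{-1/2}$ from \cref{lem:m-W}, and the modulus-of-continuity term by the single global bound \cref{eq:z2-psi.psinorm} over all of $\Theta$ (obtained from the Hoffmann--J\o rgensen inequality in $\psi_1$, \cite[Theorem 2.14.5]{van96}); the elementary inequality $\IP(\lVert Y\rVert>t)\leq 2e^{-t/\lVert Y\rVert_{\psi_1}}$ then yields \cref{eq:z2-tail.prob} in a few lines, with no chaining, no peeling, and no union bound over coordinates --- this is exactly the ``global over $\Theta$'' alternative you mention in passing, and it is where the factor $(D_{\Theta}+1)d^{3/2}$ comes from (sum of $d$ coordinatewise bounds of size $d^{1/2}D_\Theta$, not a covering-number count). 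Your proposed substitute, coordinatewise Bernstein plus a union bound and a dyadic peeling/chaining argument, is plausible but is left entirely unexecuted even though you identify it as the main obstacle, and it has a wrinkle you would need to address: the union bound produces a prefactor $d$ (your $2d\exp(-C'\sqrt n\,\delta/d^{1/2})$), whereas the constant $C$ in the proposition may not depend on $d$; absorbing that prefactor into the exponent requires a separate (easy but necessary) case analysis in $\delta$. Two smaller points: the mixed-event terms in the fourth inequality are not obtained by simply ``swapping'' the polynomial factor for the tail probability --- the paper's \cref{eq:z2-t2-02,eq:z2-t2-03} apply H\"older with exponents $(4,2,4)$ together with the all-$p$ moment bound \cref{eq:z-that}, which is what produces both the prefactor $(D_\Theta+1)^2d^{5/2}n^{-1}$ and the $1/4$ inside the exponential; and your claim that \cref{eq:z-that,eq:z-thati} ``transfer unchanged'' does need the observation (made via \cref{lem:z2-Psi}(ii)) that the $\psi_1$ hypotheses give the $L_p$ bounds for every $p\geq 1$, which you correctly note. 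In short: same skeleton, but at the decisive step the paper's Orlicz-norm route is both shorter and cleaner than the Bernstein-plus-chaining program you sketch, and your version would still need that program carried out (or replaced by the paper's argument) to be a complete proof.
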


Similar to the proof of \cref{thm:z-1}, and by \cref{pro:z2}, we have
\begin{align*}
	\MoveEqLeft \sup_{A \in \mathcal{A}} \bigl\lvert  \IP \bigl( \sqrt{n} \Sigma^{-1/2} \.\Psi_0 (\hat\theta_n - \thestar) \in A\bigr) - \IP \bigl(Z \in A \bigr)\bigr\rvert \\
	& \leq C (D_{\Theta} +1 )^2 d^{{5/2}} n^{-1/2}
	+ C (D_{\Theta} +1 ) d^{{5/2}} \delta_n + C (D_{\Theta} +1 )^2 d^{{5/2}}\exp \Bigl( - \frac{C' \sqrt{n}
	\delta_n}{ 4 (D_{\Theta} +1 ) d^{{3/2}} }\Bigr).
\end{align*}
Choosing $\delta_n = {(C')^{-1} (D_{\Theta} + 1)
	d^{{3/2}} n^{-1/2} \log n}$, we completes the proof of \cref{thm:z-2}. It suffices to prove \cref{pro:z2}.

The following lemma is a modification of \cref{lem:z-thp,lem:z-th2}, whose proof is given in \cref{sec:appendix}.
\begin{lemma}
	\label{lem:z2-theta}
	Let $B_\delta$ be as in \cref{pro:z-1}.
	Under the assumptions \cref{con:b1}, \cref{con:b4} and \cref{con:b5}, we have
	\cref{eq:z-that,eq:z-thati} hold for each $p \geq 1$ with a positive constant $C$ depending on
	$c_1, c_4,c_5,\mu,\lambda_1, \lambda_2$ and $p$. Moreover, we have
	there exists a
	constant  $C' > 0$ depending only on $c_4,c_5$ and $\mu$ such that
	\begin{equ}
		\label{eq:z2-tail.prob}
		\IP \bigl( \lVert \hat\theta_n - \thestar\rVert > t\bigr) \leq 2 \exp \Bigl( - \frac{C'
		n^{1/2} t}{  (D_{\Theta} +1 ) d^{3/2} }\Bigr), \quad \text{ for  } t >0,
	\end{equ}
	and
		\begin{align}
		 \IE \Bigl\{\lVert \xi_i \rVert \lVert \hat\theta_n - \thestar\rVert^2 \mathds{1}
		\bigl(\hat\theta_n \in B_{\delta}, \hat\theta_n^{(i)} \in B_\delta^c\bigr)\Bigr\} &\leq C
	(D_{\Theta} +1 )^2 d^{{5/2}}
	n^{-1} \exp \Bigl( - \frac{C' n^{1/2} \delta}{4 (D_{\Theta} +1 ) d^{3/2}} \Bigr),
		\label{eq:z2-t2-02}\\	
		 \IE \Bigl\{\lVert \xi_i \rVert \lVert \hat\theta_n^{(i)} - \thestar\rVert^2 \mathds{1}
		\bigl(\hat\theta_n \in B_{\delta}^c, \hat\theta_n^{(i)} \in B_\delta\bigr)\Bigr\} &\leq C
	(D_{\Theta} +1 )^2 d^{{5/2}}
	n^{-1} \exp \Bigl( - \frac{C' n^{1/2} \delta}{4 (D_{\Theta} +1 ) d^{3/2}} \Bigr),
		\label{eq:z2-t2-03}
	\end{align}
	where $C > 0$ depending only on $c_1,c_4,c_5,\mu,\lambda_1$ and $\lambda_2$.

\end{lemma}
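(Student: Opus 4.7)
The plan is to mirror the structure of the $L_p$-based Lemmas 5.4 and 5.5, replacing the polynomial-moment tools with Bernstein-type sub-exponential concentration adapted to the Orlicz-norm conditions \cref{con:b4} and \cref{con:b5}. The central input is the exponential tail bound \cref{eq:z2-tail.prob}; the arbitrary-$p$ moment bounds \cref{eq:z-that,eq:z-thati} and the truncated expectations \cref{eq:z2-t2-02,eq:z2-t2-03} then follow as corollaries.

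First I would establish \cref{eq:z2-tail.prob}. Since $\Psi_n(\hat\theta_n) = 0$ and $\Psi(\thestar) = 0$, the strong monotonicity \cref{eq:z-B11} together with Cauchy--Schwarz yield
\[
	\mu \|\hat\theta_n - \thestar\| \leq \|\Psi_n(\hat\theta_n) - \Psi(\hat\theta_n)\| \leq \sup_{\theta \in \Theta}\|\Psi_n(\theta) - \Psi(\theta)\|,
\]
so $\{\|\hat\theta_n - \thestar\| > t\} \subset \{\sup_{\theta \in \Theta}\|\Psi_n(\theta) - \Psi(\theta)\| \geq \mu t\}$. The key step is bounding this supremum with sub-exponential tails. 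Using the Lipschitz condition \cref{eq:z-B21} with sub-exponential slope $h_0(X)$ by \cref{eq:z-B42} and the sub-exponential bound \cref{eq:z-B51} on $\xi_1 = \myphi_{\thestar}(X_1)$, I would apply a Bernstein-type inequality for sub-exponential $\IR^d$-valued summands pointwise on a $\delta$-net of $\Theta$ of cardinality $(C D_{\Theta}/\delta)^d$, couple it with the Lipschitz chaining to pass to the full supremum, and optimise the mesh $\delta$ to obtain
\[
	\bigl\lVert \sup_{\theta \in \Theta}\|\Psi_n(\theta) - \Psi(\theta)\| \bigr\rVert_{\psi_1} \leq C(D_{\Theta} + 1) d^{3/2} n^{-1/2}.
\]
Converting this $\psi_1$-bound to a tail bound via $\IP(|Y| \geq t) \leq 2 \exp(-t/\|Y\|_{\psi_1})$ yields \cref{eq:z2-tail.prob}.

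The moment bound \cref{eq:z-that} then follows by integrating $\IE \|\hat\theta_n - \thestar\|^p = \int_0^\infty p t^{p-1} \IP(\|\hat\theta_n - \thestar\| > t)\, dt$ against \cref{eq:z2-tail.prob} and evaluating the resulting Gamma integral. For \cref{eq:z-thati}, I would subtract $\Psi_n(\hat\theta_n) = 0$ from $\Psi_n^{(i)}(\hat\theta_n^{(i)}) = 0$, linearise $\Psi$ using \cref{eq:z-B12}, and apply the Lipschitz control \cref{eq:z-B21} on the event $\{\hat\theta_n, \hat\theta_n^{(i)} \in B_\delta\}$ to obtain
\[
	\.\Psi_0 (\hat\theta_n - \hat\theta_n^{(i)}) = -\tfrac{1}{n}\bigl(\myphi_{\hat\theta_n^{(i)}}(X_i) - \myphi_{\hat\theta_n^{(i)}}(X_i')\bigr) + R,
\]
where $\|R\| = O(\delta \|\hat\theta_n - \hat\theta_n^{(i)}\|)$; inverting $\.\Psi_0$ (using $\lambda_{\min}(\.\Psi_0) \geq \lambda_1$), absorbing the remainder for $\delta$ small, and taking $p$-th moments produces the $d^{p/2} n^{-p}$ contribution from the single-data increment and the $d^p n^{-p/2}\delta^p$ contribution from the remainder.

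Finally, \cref{eq:z2-t2-02,eq:z2-t2-03} follow by H\"older's inequality: splitting a term such as $\IE\{\|\xi_i\|\,\|\hat\theta_n - \thestar\|^2\mathds{1}(\hat\theta_n \in B_\delta,\, \hat\theta_n^{(i)} \in B_\delta^c)\}$ as $\|\xi_i\|_q \cdot \|\hat\theta_n - \thestar\|^2_{2q} \cdot \IP(\hat\theta_n^{(i)} \in B_\delta^c)^{1 - 3/q}$ for large $q$, inserting \cref{eq:z2-tail.prob} for the probability factor together with \cref{eq:z-that} and the bound $\|\xi_i\|_q \leq Cq\,d^{1/2}$ from \cref{eq:z-B51}, produces the exponential factor $\exp(-C' n^{1/2}\delta/(4(D_{\Theta} + 1) d^{3/2}))$ times the prefactor $C (D_{\Theta} + 1)^2 d^{5/2} n^{-1}$, after $q$ is optimised. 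The main obstacle is the chaining step: obtaining the correct scaling $(D_{\Theta} + 1) d^{3/2}$ in the $\psi_1$-norm of the supremum requires a careful balance between the covering entropy of $\Theta$, the pointwise Bernstein tail, and the Lipschitz increment across the net, via the sub-exponential maximal inequality of van der Vaart and Wellner, Section 2.2.
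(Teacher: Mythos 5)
Your treatment of the tail bound and of \cref{eq:z2-t2-02,eq:z2-t2-03} is essentially the paper's route: reduce $\lVert\hat\theta_n-\thestar\rVert$ to the supremum of the empirical process via the strong monotonicity \cref{eq:z-B11}, bound that supremum in $\psi_1$-norm at scale $(\D+1)d^{3/2}n^{-1/2}$ (the paper gets this from \cref{lem:z2-Psi}(ii), i.e.\ the known expectation bound for Lipschitz-in-parameter classes upgraded by Hoffmann--J{\o}rgensen, rather than by an explicit net-plus-chaining argument; your chaining must be done with the $\psi_1$-Orlicz entropy integral, since a plain union bound over a net costs an extra $\log n$), and then conclude \cref{eq:z2-t2-02,eq:z2-t2-03} by a three-factor H\"older split exactly as the paper does with exponents $(4,2,4)$, which is where the factor $4$ in the exponential comes from. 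However, there are two genuine gaps. First, you cannot obtain \cref{eq:z-that} by integrating the tail bound \cref{eq:z2-tail.prob}: that integration yields $\IE\lVert\hat\theta_n-\thestar\rVert^p\leq C_p\bigl((\D+1)d^{3/2}n^{-1/2}\bigr)^p$, i.e.\ $d^{3p/2}$, whereas \cref{eq:z-that} asserts $d^{p}$. The paper instead reruns the $L_p$ argument of \cref{lem:z-thp} (via \cref{eq:z-th-01} and the $L_p$ bound \cref{eq:z2-psi02}, which carries $d$ rather than $d^{3/2}$ because it works coordinatewise in $L_p$ before assembling the vector norm), using \cref{lem:z2-Psi}(ii) only to guarantee that these $L_p$ bounds are available for every $p\geq 1$ under the $\psi_1$ hypotheses. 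The dimension loss is not cosmetic: the $d^p$ rate of \cref{eq:z-that} feeds into \cref{pro:z2} and hence into the $d^4$ of \cref{thm:z-2}.

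Second, your argument for \cref{eq:z-thati} does not work as described. After subtracting the two estimating equations, the term $(\Psi_n-\Psi)(\hat\theta_n)-(\Psi_n-\Psi)(\hat\theta_n^{(i)})$ is \emph{not} of size $O(\delta\lVert\hat\theta_n-\hat\theta_n^{(i)}\rVert)$: the Lipschitz control \cref{eq:z-B21} only bounds it by the random Lipschitz constant $n^{-1}\sum_j h_0(X_j)\cdot\sqrt{d}\,\lVert\hat\theta_n-\hat\theta_n^{(i)}\rVert$, which is of constant order (comparable to $\sqrt{d}\,\IE h_0$) and therefore cannot be absorbed into $\lambda_1\lVert\hat\theta_n-\hat\theta_n^{(i)}\rVert$ no matter how small $\delta$ is; your proposal is also internally inconsistent in claiming the remainder is simultaneously absorbed and the source of the $d^pn^{-p/2}\delta^p$ term. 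The correct mechanism, as in the proof of \cref{lem:z-thp}, is to apply the monotonicity \cref{eq:z-B11} to $\Psi(\hat\theta_n)-\Psi(\hat\theta_n^{(i)})$ and dominate the two empirical fluctuations by $2\sup_{\lVert\theta-\thestar\rVert\leq\delta}\lVert(\Psi_n-\Psi)(\theta)-(\Psi_n-\Psi)(\thestar)\rVert$, whose $L_p$ norm is $O(d\delta n^{-1/2})$ by \cref{eq:z2-psi02} because the localized class has envelope proportional to $\delta$; this is precisely what produces the $d^pn^{-p/2}\delta^p$ contribution alongside the $d^{p/2}n^{-p}$ single-observation term.
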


\begin{proof}
[Proof of \cref{pro:z2}]
The first inequality follows directly from \cref{eq:z2-tail.prob},
and the last three inequalities follow from \cref{eq:z2-t2-02,eq:z2-t2-03} and from the proof of \cref{pro:z-1}.
\end{proof}





\subsection{Proof of \texorpdfstring{\cref{thm:sgd}}{Theorem 3.4}}
Without loss of generality, we assume that $n \geq 4 \{(2 L \ell_0)^{\alpha} + 1\}$; otherwise, the
bound \cref{eq:sgd-a} is trivial.

In this subsection, we denote by $C, C_1, C_2, \dots$ a sequence of general positive constants depending only on  $\ell_0, \lambda_1, \lambda_2, c_1,
c_2, \alpha,\beta, L $ and $\mu$ and independent of $\tau$ and  $\tau_0$.
Let $L_1 := \max \{c_2, 2 L / \beta\}$ and $L_2 := c_1 + L. $
We introduce the following family of functions:
Let $\phi_{\beta} \colon \IR_+ \setminus \{0\} \mapsto \IR$ be given by
\begin{equ}
	\label{eq:phi.function}
	\phi_{\beta} (t) =
	\begin{cases}
		\frac{t^\beta - 1}{\beta}, & \text{ if } \beta \neq 0, \\
		\log t, & \text{ if } \beta = 0.
	\end{cases}
\end{equ}
\begin{proof}
[Proof of \cref{thm:sgd}]
		Note that $\thestar$ is the minimum point of  $f$ and by the differentiability and convexity of  $f$, we have
		$\nabla f(\thestar) = 0$.  By \cref{eq:hes},
		\begin{equ}
			\label{eq:sgd-00}
			\nabla f(\theta) = \nabla^2 f(\thestar) (\theta - \thestar ) + H (\theta),
		\end{equ}
		where
		\begin{align*}
			H(\theta) &= \nabla f(\theta) - \nabla^2 f(\thestar) (\theta - \thestar ) \\
					  &= \nabla f(\theta) - \nabla f(\thestar) - \nabla^2 f(\thestar) (\theta -
					  \thestar ) \\
					  &= \int_0^1 \{ \nabla^2 f(\thestar + t (\theta - \thestar) ) - \nabla^2 f(\thestar) \} (\theta -
					  \thestar) d t.
		\end{align*}
		By \cref{con:c2} and \cref{con:c3}, it follows that
		\begin{align*}
			\lVert H(\theta) \rVert \I ( \lVert \theta - \thestar\rVert \leq \beta) \leq c_2 \lVert
			\theta - \thestar\rVert^2,
		\end{align*}
		and
		\begin{align*}
			\lVert H(\theta) \rVert \I ( \lVert \theta - \thestar\rVert > \beta)
			& \leq 2 L \lVert \theta - \thestar\rVert \I( \lVert \theta - \thestar \rVert > \beta) \leq \frac{2 L }{\beta} \lVert \theta - \thestar\rVert^2.
		\end{align*}
		Hence, with $L_1 := \max \{ c_2 , 2 L / \beta\}$, we have
		\begin{equ}
			\label{eq:sgd-l1}
			\|H(\theta)\| \leq L_1 \lVert \theta - \thestar\rVert^2.
		\end{equ}

		Recall that $G:= \nabla^2 f(\thestar)$, and it follows from \cref{eq:saa,eq:sgd-00} that for any $n \geq 1$,
		\begin{equ}
			\label{eq:sgd.re.sys}
			\theta_n &=  \theta_{n - 1} - \ell_n \bigl(\nabla f(\theta_{n - 1}) + \zeta_n\bigr)\\
			&= \theta_{n - 1} - \ell_n \bigl(G (\theta_{n - 1} - \thestar) +
			\xi_n + \eta_{n} + H(\theta_{n - 1})\bigr).
		\end{equ}
		By definition,  $(\bar\theta_n - \thestar) = n^{-1} \sum_{i = 0}^{n - 1} (\theta_i -
		\thestar)$.
		Solving the recursive system \cref{eq:sgd.re.sys} yields
		\begin{align*}
			\sqrt{n} (\bar\theta_n - \thestar) = \frac{1}{\sqrt{n} \ell_0} Q_0 (\theta_0 -
			\thestar) +
			\frac{1}{\sqrt{n}} \sum_{i = 1}^{n-1} Q_i \bigl(\xi_i + \eta_i + H(\theta_{i - 1})\bigr),
		\end{align*}
		where
		\begin{math}
			Q_i = \ell_i \sum^{n - 1}_{j = i} \prod_{k = i + 1}^{j} (I - \ell_k G)  .
		\end{math}
		
		Recall that $\Sigma_n := n^{-1} \sum^{n -
			1}_{i=1} Q_i \Sigma_i Q_i\transpose. $
			Let
			$$T_n = n^{-{1/2}} \Sigma_n^{-{1/2}} \sum_{i = 0}^{n - 1} (\theta_n - \thestar),\quad  \zeta_i =
			\frac{1}{\sqrt{n}} \Sigma_n^{-1/2} Q_i \xi_i, \quad W_n = \sum_{i = 1}^{n - 1} \zeta_i $$ and
		\begin{align*}
			D_n & \phantom{:} = \frac{1}{\sqrt{n} \ell_0} \Sigma_{n}^{-1/2}Q_0 (\theta_0 -
			\thestar)+
			 \frac{1}{\sqrt{n}} \Sigma_n^{-1/2}\sum^{n -1}_{i=1} Q_i
			\eta_i+ \frac{1}{\sqrt{n}} \Sigma_n^{-1/2}\sum^{n -1}_{i=1}
		Q_i H(\theta_{i - 1}) \\
				& := D_{1, n} + D_{2, n} + D_{3,n}.
		\end{align*}
		It is easy to show that
		\begin{align*}
			\IE W_n = 0, \quad  \Var (W_n) = I_d.
		\end{align*}
		and
		\begin{align*}
			T_n = W_n + D_n.
		\end{align*}
		Also,
		\begin{align*}
			\lVert D_n\rVert &\leq    n^{-1/2} \ell_0^{-1} \lVert \Sigma_n^{-1/2} \rVert \cdot \|Q_0\| \cdot  \lVert \theta_0 - \thestar\rVert
			\\
				   & \quad  + n^{-1/2} \lVert \Sigma_{n}^{-1/2} \rVert
				   \biggl\lVert \sum^{n - 1}_{i=1} Q_i \eta_{i}\biggr\rVert + n^{-1/2}
				   \lVert \Sigma_n^{-1/2} \rVert
				   \sum_{i = 1}^{n - 1} \lVert Q_iH(\theta_{i - 1})\rVert^2 
		\end{align*}

		The following proposition  provides  the bounds of $Q_j$ and  $\Sigma_n^{-1}$.

		\begin{proposition}
			\label{pro:sdg-02}
			Suppose that $n \geq 4 \{(2 L \ell_0)^{\alpha} + 1\}$.
				 If $\ell_i = \ell_0 i^{-\alpha}$ with  $1/2 < \alpha \leq 1$, then there exists
				 a sequence  $(p_i)_{i \geq 1}$, and two positive
				 constants $C_1$ and $C_2$ depending on $\ell_0, \lambda_1, \lambda_2, c_1,
c_2, \alpha,\beta, L $ and $\mu$ such that for each $0 \leq i \leq n - 1$,
					\begin{equ}
						\label{eq:tibound}
							& \quad \quad \Sigma_n^{-1} \preccurlyeq C_1 I_d, \\
							&-p_i I_d  \preccurlyeq Q_i  \preccurlyeq  p_{i} I_d,
					\end{equ}
					where
					\begin{align*}
						p_{i} \leq
						\begin{cases}
							C_2, & \text{if } (\alpha = 1, \ell_0 \mu > 1) \text{ or } (\alpha \in (1/2,1)); \\
							C_2\log n , & \text{if } (\alpha = 1, \ell_0 \mu = 1).
							\end{cases}
					\end{align*}

		\end{proposition}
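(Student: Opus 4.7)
The plan is to reduce everything to scalar analysis by simultaneously diagonalizing $G$. Since $G = \nabla^2 f(\thestar)$ is symmetric with eigenvalues $\mu_1, \dots, \mu_d \in [\mu, L]$, and every $Q_i$ is a polynomial in $G$, there exist orthonormal $v_1, \ldots, v_d$ such that
\[
  Q_i = \sum_{j=1}^{d} q_i^{(j)} v_j v_j\transpose, \qquad q_i^{(j)} = \ell_i \sum_{k=i}^{n-1} \phi_{i,k}^{(j)}, \quad \phi_{i,k}^{(j)} := \prod_{l=i+1}^{k} (1 - \ell_l \mu_j).
\]
The symmetric operator bound $-p_i I_d \preccurlyeq Q_i \preccurlyeq p_i I_d$ is then equivalent to the scalar bound $\max_j |q_i^{(j)}| \leq p_i$, and, using symmetry of $Q_i$ together with $\Sigma_i \succcurlyeq \lambda_1 I_d$,
\[
  \Sigma_n \succcurlyeq \frac{\lambda_1}{n} \sum_{i=1}^{n-1} Q_i^2,
\]
so that $\Sigma_n^{-1} \preccurlyeq C_1 I_d$ reduces to a uniform lower bound $n^{-1} \sum_{i=1}^{n-1} (q_i^{(j)})^2 \geq c > 0$ across $j$.

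For the upper bound on $|q_i^{(j)}|$, the assumption $n \geq 4\{(2L\ell_0)^\alpha + 1\}$ isolates a fixed number of initial indices where $\ell_l \mu_j$ may exceed $1$ (their contribution to $q_i^{(j)}$ is bounded by an absolute constant depending only on $\ell_0, L$) from the tail $l \geq l_0$ where $1 - \ell_l \mu_j \in (1/2, 1)$. On the tail, $1 - x \leq e^{-x}$ yields $\phi_{i,k}^{(j)} \leq \exp(-\mu_j S_{i,k})$ with $S_{i,k} := \sum_{l=i+1}^{k} \ell_l$. In the case $\alpha \in (1/2, 1)$, an integral comparison bounds $S_{i,k}$ below by a multiple of $\phi_{1-\alpha}(k+1) - \phi_{1-\alpha}(i+1)$ (see \cref{eq:phi.function}), and I will split $\sum_k$ into a near-diagonal window $[i, 2i]$ (where $S_{i,k}$ grows linearly in $k-i$ with slope at least $c\ell_i$, so a geometric series bound gives $\ell_i \sum_{k \leq 2i} \phi_{i,k}^{(j)} \leq C/\mu$) and a tail $k > 2i$ (where the convergence of $\int \exp(-c x^{1-\alpha}) dx$ produces an absolute bound). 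In the case $\alpha = 1$, $\phi_{i,k}^{(j)} \leq (i/k)^{\ell_0 \mu_j}$, so $q_i^{(j)} \leq \ell_0 i^{\ell_0 \mu_j - 1} \sum_{k=i}^{n-1} k^{-\ell_0 \mu_j}$ is bounded by $\ell_0/(\ell_0 \mu - 1)$ when $\ell_0 \mu > 1$, and by $\ell_0 \log n$ when $\ell_0 \mu = 1$, producing the stated $p_i$.

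For the lower bound on $\Sigma_n$, I will exploit the telescoping identity
\[
  \sum_{k=i}^{n-1} \mu_j \ell_{k+1} \phi_{i,k}^{(j)} = 1 - \phi_{i,n}^{(j)}, \qquad \phi_{i,n}^{(j)} := \prod_{l=i+1}^{n} (1 - \ell_l \mu_j),
\]
combined with the monotonicity $\ell_{k+1} \leq \ell_i$ for $k \geq i$, which gives $q_i^{(j)} \geq (1 - \phi_{i,n}^{(j)})/\mu_j \geq (1 - \phi_{i,n}^{(j)})/L$ whenever the factors stay non-negative. It then remains to show $\phi_{i,n}^{(j)} \leq 1/2$ on a set $\mathcal{I}_n \subset \{1,\dots,n-1\}$ of cardinality at least $c n$: for $\alpha \in (1/2, 1)$ any $i \leq n/2$ works since $\sum_{l=i+1}^{n} \ell_l \geq c n^{1-\alpha} \to \infty$, while for $\alpha = 1$ with $\ell_0 \mu \geq 1$ the choice $i \leq 2^{-1/(\ell_0 L)} n$ gives $\phi_{i,n}^{(j)} \leq (i/n)^{\ell_0 \mu_j} \leq 1/2$. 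Summing $(q_i^{(j)})^2 \geq 1/(4L^2)$ over $i \in \mathcal{I}_n$ yields $n^{-1}\sum_i (q_i^{(j)})^2 \geq c/(4L^2)$, hence $\Sigma_n \succcurlyeq (\lambda_1 c/(4L^2)) I_d$.

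The main difficulty lies in the upper-bound analysis for $\alpha \in (1/2, 1)$: executing the near/tail split so that every resulting constant depends only on $(\ell_0, \mu, L, \alpha)$ and not on $i, n$ or $d$ requires a careful change of variables of the form $u = \mu_j S_{i,x}$ to reduce the tail sum to a convergent integral independent of $i$, together with an explicit separate treatment of the finitely many initial indices for which $\ell_0 L > 1$ allows $1 - \ell_l \mu_j$ to be negative. The lower-bound argument is more robust but also requires verifying that the ``good'' set $\mathcal{I}_n$ has positive density under the weakest assumption $\ell_0 \mu \geq 1$ in the critical case $\alpha = 1$.
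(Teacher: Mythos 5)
Your proposal is correct in substance but takes a genuinely different route from the paper. The paper bounds $Q_i$ through the Polyak--Juditsky Lyapunov recursion: with $V_i^m=\prod_{k=i}^{m-1}(I-\ell_kG)$ and $U_i^m=(V_i^m)\transpose G^{-1}V_i^m$ it derives $U_i^{m+1}\preccurlyeq(1-2\ell_m\mu+\ell_m^2)U_i^m$, hence an exponential decay bound for $V_i^j$, and then sums over $j$; for $\Sigma_n^{-1}\preccurlyeq C_1I_d$ it replaces $\ell_k$ by $\ell_i$ inside the products so that $Q_i$ dominates an explicit geometric series, giving $Q_i\succcurlyeq L^{-1}\{1-(1-\ell_i\mu)^{n-i}\}I_d\succcurlyeq c_GL^{-1}I_d$ for $i_0<i\le n/2$ with $i_0=\lfloor(2L\ell_0)^\alpha+1\rfloor$ and $c_G=1-e^{-\mu\ell_0/2}$, the hypothesis on $n$ guaranteeing $i_0\le n/4$. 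You instead diagonalize $G$ once (legitimate, since every $Q_i$ is a polynomial in $G$), reduce the two-sided bound on $Q_i$ to scalar estimates on $q_i^{(j)}$ via $1-x\le e^{-x}$ with a near-diagonal/tail split of the sum, and get the lower bound from the exact telescoping identity $\sum_k\mu_j\ell_{k+1}\phi_{i,k}^{(j)}=1-\phi_{i,n}^{(j)}$ together with $\ell_{k+1}\le\ell_i$; your step $Q_i\Sigma_iQ_i\succcurlyeq\lambda_1Q_i^2$ and the positive-density set of good indices play the same role as the paper's restriction to $i_0<i\le n/2$. The scalar route is more elementary and slightly sharper at the lower-bound step (no replacement of $\ell_k$ by $\ell_i$ inside the product); the paper's matrix formulation avoids eigenvalue bookkeeping and its intermediate decay estimate for $V_i^j$ is reused verbatim later in Section 5.

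Two constants in your sketch need patching, though neither is structural. First, the threshold $\phi_{i,n}^{(j)}\le 1/2$ is not what your estimates actually give: for $\alpha\in(1/2,1)$ and $i\le n/2$ you only get $\phi_{i,n}^{(j)}\le e^{-\mu\ell_0/2}$, which exceeds $1/2$ when $\mu\ell_0$ is small, and for $\alpha=1$ the cutoff $i\le 2^{-1/(\ell_0L)}n$ yields $(i/n)^{\ell_0\mu_j}\le 2^{-\mu_j/L}$, which is at least $1/2$. But all the argument requires is $1-\phi_{i,n}^{(j)}\ge c>0$ with $c$ depending only on $\mu,\ell_0,L$, and both estimates deliver that (this is precisely the paper's constant $c_G$), so only the stated numerical threshold should be replaced. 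Second, in the case $\alpha=1$, $\ell_0\mu=1$, eigenvalues with $s:=\ell_0\mu_j$ slightly above $1$ make the bound $\ell_0/(s-1)$ from your first case blow up; use instead the uniform estimate $i^{s-1}\sum_{k=i}^{n-1}k^{-s}\le\sum_{k=i}^{n-1}k^{-1}\le\log n+1$, valid for every $s\ge1$, which gives the claimed $C_2\log n$ bound uniformly over $j$.
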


		Let \( (\xi_1', \dots, \xi_n' )\) be an independent copy of \( (\xi_1, \dots, \xi_n )\).
		For each $1 \leq i \leq n - 1$,  we now construct $D_{2, n}^{(i)}$ and  $D_{3,n}^{(i)}$ which are independent of
		$\xi_i$.
Firstly, for each $i$, we construct  $\thetaihat_1, \dots, \thetaihat_n$ as follows:
\begin{enumerate}[(a)]
	\item
	If $j < i$, let $\thetaihat_j = \theta_j$.
	\item
	If  $j = i$, let  $\thetaihat_j = \thetaihat_{j - 1} - \ell_j (\nabla
	f(\thetaihat_{j - 1}) + \xi_j' + \eta_j^{(i)})$,
	where $\eta_j^{(i)} = g(\thetaihat_{j - 1},
	\xi_i')$.
	\item
	If  $j > i$, let $\thetaihat_{j} = \thetaihat_{j - 1} - \ell_j (\nabla
	f(\thetaihat_{j - 1}) + \xi_j + \eta_j^{(i)})$,
	where
	$\eta_j^ {(i)} = g(\thetaihat_{j - 1}, \xi_j)$.
\end{enumerate}
Secondly, let
\begin{align*}
	D_{2,n}^{(i)} &=n^{-1/2}  \Sigma_n^{-1/2}\sum^{n - 1}_{j=1} Q_i \eta_j^{(i)} , \\
	D_{3,n}^{(i)} &=n^{-1/2} \Sigma_n^{-1/2}\sum^{n - 1}_{j=1}  Q_i H(\thetaihat_{j - 1}) .
\end{align*}
Then, we have for each \(1 \leq i \leq n - 1\), \(D_{2,n}^{(i)}\) and \(D_{3,n}^{(i)}\) is independent of \(\xi_i\).
Let
\begin{align*}
	\Delta = \Delta_1 + \Delta_2 + \Delta_3,
\end{align*}
where
\begin{math}
	 \Delta_1 =  \lVert D_{1,n}\rVert, \Delta_2 = \lVert D_{2,n}\rVert \text{ and }
	 \Delta_3 = C_1 L_1 n^{-1/2} \sum_{i = 1}^{n - 1} p_i \bigl\lVert \theta_{i - 1} - \thestar
	\bigr\rVert^2,
\end{math}
where \(C_1\) is given as in \cref{eq:sgd002}.
By \cref{eq:sgd-l1}, it follows that
\begin{math}
	\lVert D_{3,n}\rVert \leq \Delta_3.
\end{math}
Also, for each $1 \leq i \leq n - 1$, define
\begin{align*}
	\Delta_1^{(i)} & =  \lVert D_{1,n}\rVert, \\
	\Delta_2^{(i)} & =  \lVert D_{2,n}^{(i)} \rVert, \\
		\Delta_3^{(i)} & =  C_1 L_1 n^{-1/2} \sum_{j = 1}^{n - 1} p_j \bigl\lVert \thetaihat_{j - 1} - \thestar
		\bigr\rVert^2.
\end{align*}
Clearly, \(\Delta_1^{(i)}\), \(\Delta_2^{(i)}\) and \(\Delta_3^{(i)}\) are independent of \(\xi_i\) for each \(1 \leq i \leq n - 1\).
The following proposition provides the bounds of the moments for $\Delta_j$ and $\Delta_j -
\Delta_j^{(i)}$, $j = 1, 2, 3$.
		\begin{proposition}
			\label{lem:sgd01}
			We have $\Delta_1$ is independent of  $(\xi_1, \dots, \xi_n)$ and
			\begin{align*}
				\IE \clc{\Delta_1 \lVert W\rVert } & \leq C (\tau^2 +  \tau_0^2) n^{-1/2}.
			\end{align*}
			\begin{enumerate}
				\item
				For $\alpha \in (1/2,1)$,
				\begin{align*}
					\IE \clc{ \Delta_2 \lVert W \rVert } & \leq C d^{1/2}(\tau + \tau_0) n^{-\alpha/2}, \\
					\IE \clc{ \Delta_3  \lVert W\rVert } & \leq C d^{1/2} (\tau^2 + \tau_0^2)n^{-\alpha + 1/2}.
				\end{align*}
				and 	
				\begin{align*}
					\sum_{i = 1}^{n - 1} \IE \clc{ \lvert \Delta_2 - \Delta_2^{(i)}\rvert
					\lVert \zeta_i\rVert} & \leq C (\tau^2 + \tau_0^2)n^{-\alpha + 1/2},\\
					\sum_{i = 1}^{n - 1} \IE \clc{ \lvert \Delta_3 - \Delta_3^{(i)}\rvert
					\lVert \zeta_i\rVert} & \leq C (\tau^3 + \tau_0^3)n^{-\alpha/2} .
				\end{align*}
			\item For $\alpha = 1$,

\begin{align*}
	\IE \{\Delta_2 \lVert W\rVert \} & \leq
	\begin{cases}
		C d^{1/2}(\tau + \tau_0) n^{-1/2} (\log n)^{1/2}, & \ell_0 \mu > 1; \\
		C d^{1/2}(\tau + \tau_0) n^{-1/2} (\log n)^2, & \ell_0\mu = 1.
	\end{cases} \\
	\IE \{\Delta_3 \lVert W\rVert\} & \leq
	\begin{cases}
		C  d^{1/2}(\tau^2 + \tau_0^2) n^{-1/2} (\log n), & \ell_0 \mu > 1; \\
		C d^{1/2}(\tau^2 + \tau_0^2) n^{-1/2} (\log n)^{5/2}, & \ell_0 \mu = 1,
	\end{cases}\\
	\sum_{i = 1}^{n - 1} \IE \bigl\{\bigl\lvert \Delta_2 - \Delta_2^{(i)}\bigr\rvert \lVert \zeta_i\rVert\bigr\}
	& \leq C (\tau^2 + \tau_0^2) \times
	\begin{cases}
		n^{-1/2}, & \ell_0 \mu > 1; \\
		n^{-1/2} (\log n)^{5/2}, & \ell_0 \mu = 1.
	\end{cases}
\end{align*}
and
\begin{align*}
	\sum_{i = 1}^{n - 1}\IE \bigl\{\bigl\lvert \Delta_3 - \Delta_3^{(i)}\bigr\rvert \lVert \zeta_i\rVert\bigr\}
	& \leq C (\tau^3 + \tau_0^3) \times
	\begin{cases}
		n^{-1/2}, & \mu \ell_0 > 1; \\
		n^{-1/2} (\log n)^{5/2}, & \mu \ell_0 = 1.
	\end{cases}
\end{align*}

			\end{enumerate}

		\end{proposition}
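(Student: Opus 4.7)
The plan is to bound each of $\Delta_1, \Delta_2, \Delta_3$ together with the corresponding differences $\Delta_j - \Delta_j^{(i)}$, using three ingredients: the spectral bounds $\|\Sigma_n^{-1}\| \leq C_1$ and $\|Q_i\| \leq p_i$ from \cref{pro:sdg-02}; the Bach--Moulines-type moment estimates $\IE \|\theta_k - \thestar\|^{2q} \lesssim (\tau^{2q} + \tau_0^{2q}) \ell_k^{q}$ supplied by \cref{lem:b1}; and $\IE \|W\|^2 = d$ applied through Cauchy--Schwarz. The bounds on each $\Delta_j$ then reduce to evaluating certain deterministic sums whose rates depend on the regime of $\alpha$ and $\ell_0\mu$.

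The term $\Delta_1$ is independent of $(\xi_1,\dots,\xi_n)$ since $\theta_0$ is, so $\Delta_1^{(i)} = \Delta_1$ and there is nothing to estimate in the difference. Combining $\|\Sigma_n^{-1/2}\| \leq \sqrt{C_1}$, $\|Q_0\| \leq p_0$, $\|\theta_0 - \thestar\|_2 \leq \tau_0$, Cauchy--Schwarz and $\sqrt{d}\,\tau_0 \leq d + \tau_0^2 \leq C(\tau^2 + \tau_0^2)$ (using $d \lesssim \tau^2$ from (C1(i))) give the stated bound. For $\Delta_2 = \|D_{2,n}\|$, the key observation is that $(\eta_j)$ is a martingale difference array with $\|\eta_j\| \leq c_1 \|\theta_{j-1} - \thestar\|$ by (C1(ii)), so orthogonality gives
\begin{equation*}
\IE \|D_{2,n}\|^2 \leq C n^{-1} \sum_{j=1}^{n-1} p_j^2\, \IE \|\theta_{j-1} - \thestar\|^2,
\end{equation*}
after which \cref{lem:b1} and a Cauchy--Schwarz pairing with $\|W\|$ yield the desired rate. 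The bound on $\Delta_3 = C_1 L_1 n^{-1/2} \sum_j p_j \|\theta_{j-1} - \thestar\|^2$ is obtained analogously: Cauchy--Schwarz, the fourth-moment version of \cref{lem:b1}, and evaluating $\sum_j p_j^2 j^{-2\alpha}$ produce the bound.

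For the perturbation terms $\Delta_2 - \Delta_2^{(i)}$ and $\Delta_3 - \Delta_3^{(i)}$, the coupling construction guarantees $\thetaihat_j = \theta_j$ for $j < i$, so only $j \geq i$ contributes. Subtracting the two recursions, linearizing $\nabla f$ at $\thestar$, and using (C1(ii)) produces a linear contractive recursion for $\thetaihat_j - \theta_j$ driven by the one-step kick $-\ell_i(\xi_i' - \xi_i)$ at step $i$, leading to
\begin{equation*}
\IE \|\thetaihat_j - \theta_j\|^2 \leq C \tau^2 \ell_i^2 \prod_{k=i+1}^{j}(1 - c\mu\ell_k)^2
\end{equation*}
and its fourth-moment analogue with $\tau$ replaced by $\tau + \tau_0$. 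Plugging this into the telescoping identity $\Delta_3 - \Delta_3^{(i)} = C_1 L_1 n^{-1/2} \sum_{j \geq i} p_j (\|\theta_{j-1} - \thestar\|^2 - \|\thetaihat_{j-1} - \thestar\|^2)$ and the analogous expression for $\Delta_2 - \Delta_2^{(i)}$, followed by Cauchy--Schwarz against $\|\zeta_i\|_2 \lesssim n^{-1/2} p_i \tau$ and summation over $i$, yields the advertised bounds.

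The main obstacle is the sharp asymptotic analysis of the deterministic sums $\sum_j p_j^2 \ell_j$, $\sum_{i \leq j} p_i p_j \ell_i^2 \prod_{k=i+1}^j (1 - c\mu \ell_k)^2$, and their variants, across the three regimes $\alpha \in (1/2,1)$, $(\alpha, \ell_0\mu) = (1, >1)$, and $(\alpha, \ell_0\mu) = (1, 1)$. The boundary case $\ell_0\mu = 1$ is the most delicate: $p_j$ itself carries a $\log n$ factor, and further logarithmic corrections arise from the partial sums and from the failure of $\prod_{k=i+1}^j(1 - \ell_k \mu)$ to decay polynomially, together producing the $(\log n)^{5/2}$ factor in the final rate.
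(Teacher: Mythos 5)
Your plan is essentially the paper's own proof: the same use of the spectral bounds of \cref{pro:sdg-02}, martingale orthogonality for $\Delta_2$, fourth-moment state bounds (\cref{lem:lb3}) for $\Delta_3$, a contraction estimate for the coupled difference $\theta_j-\thetaihat_j$, and Cauchy--Schwarz against $\IE\lVert W\rVert^2=d$ resp.\ $\lVert \zeta_i\rVert_2\leq C n^{-1/2}p_i\tau$, followed by the regime-by-regime evaluation of the deterministic sums (the $(\log n)$ factors for $\ell_0\mu=1$ arising exactly as you say). Two deviations are worth recording. First, the coupling estimate you re-derive is \cref{lem:lb2} of the paper, except that the one-step kick at $j=i$ is $-\ell_i(\xi_i-\xi_i'+\eta_i-\eta_i^{(i)})$, not only $-\ell_i(\xi_i'-\xi_i)$; since $\lVert \eta_i-\eta_i^{(i)}\rVert\leq 2c_1\lVert\theta_{i-1}-\thestar\rVert$, the second-moment bound should read $C(\tau^2+\tau_0^2)\ell_i^2\prod_{k=i+1}^{j}(1-c\mu\ell_k)^2$ rather than $C\tau^2\ell_i^2\prod(\cdot)$ (harmless, since the stated conclusions carry $\tau_0$), and the contraction for $j>i$ comes from strong monotonicity $\langle\nabla f(x)-\nabla f(y),x-y\rangle\geq\mu\lVert x-y\rVert^2$ together with the Lipschitz bound in \cref{con:c1}, not from linearizing $\nabla f$ at $\thestar$. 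Second, for $\sum_i\IE\{\lvert\Delta_3-\Delta_3^{(i)}\rvert\lVert\zeta_i\rVert\}$ you pair the factors by Cauchy--Schwarz in $L_2$, which forces you to prove a fourth-moment analogue of the coupling bound; the paper instead uses the three-factor H\"older estimate $\IE\{\lVert\xi_i\rVert\,\lVert\theta_j-\thestar\rVert\,\lVert\theta_j-\thetaihat_j\rVert\}\leq(\IE\lVert\xi_i\rVert^4)^{1/4}(\IE\lVert\theta_j-\thestar\rVert^4)^{1/4}(\IE\lVert\theta_j-\thetaihat_j\rVert^2)^{1/2}$, so that only the second-moment coupling bound of \cref{lem:lb2} and the state bounds of \cref{lem:lb3} are needed. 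Your variant is workable (for $j>i$ the difference recursion has no additive noise, so a fourth-moment contraction can be proved as in \cref{lem:lb3}), but it costs an extra lemma, whereas the paper's H\"older step delivers the same rates from the lemmas already available.
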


		We apply \cref{n4-thm3} to prove the Berry--Esseen bound for $\sqrt{n}
		\Sigma_n^{-1/2} (\bar{\theta}_n - \thestar)$.
		

		\textbf{(1). For $1/2 < \alpha < 1$.}
		Firstly, by \cref{pro:sdg-02} and \cref{con:c1}, we have
		\begin{equ}
			\label{eq:sgd001}
			\sum_{i = 1}^{n - 1} \IE \lVert \zeta_i\rVert^3 \leq C n^{-3/2}\sum_{i = 1}^{n - 1} \IE
			\lVert \xi_i\rVert^3 \leq C n^{-1/2} \tau^3.
		\end{equ}
		By \cref{lem:sgd01}, we have
		\begin{equ}
			\label{eq:sgd002}
			&\IE \{\|W\|  \Delta \}\leq C (d^{3/2} + \tau^{3} + \tau_0^{3})n^{-\alpha + 1/2}, \\
			&\sum_{i = 1}^{n - 1} \IE \{\lVert \zeta_i\rVert \cdot \lvert \Delta - \Delta^{(i)}\rvert\} \leq
			C (d^{3/2} + \tau^{3} + \tau_0^{3})n^{-\alpha/2}.
		\end{equ}
		Substituting  \cref{eq:sgd001,eq:sgd002} to \cref{n4-thm3} yields \cref{eq:sgd-a}.

		\textbf{(2). For $\alpha = 1$.}
		By the definition of $\zeta_i$ and by \cref{eq:tibound},
		\begin{equ}
			\label{eq:sgd0011}
			\sum_{i = 1}^{n - 1} \IE \lVert \zeta_i\rVert^3 & \leq C n^{-3/2}\sum_{i = 1}^{n - 1} p_i^3 \IE
			\lVert \xi_i\rVert^3 \\
			& \leq
			\begin{cases}
				C \tau^{3} n^{-1/2} , & \text{if }  \ell_0 \mu >
				1, \\
				C \tau^3 n^{-1/2} (\log n)^3, & \text{if } \ell_0 \mu = 1.
			\end{cases}
		\end{equ}
		By \cref{lem:sgd01}, we have
		\begin{equ}
			\label{eq:sgd003}
			\IE \{\Delta \lVert W\rVert \} & \leq
			C (d + \tau^2 + \tau_0^2) n^{-1/2} \\& \quad   + C (d^{3/2} + \tau^3 + \tau_0^3) \times
			\begin{cases}
				n^{-1/2} (\log n) , & \ell_0 \mu > 1; \\
				n^{-1/2} (\log n)^{5/2}, & \ell_0 \mu = 1.
			\end{cases}
		\end{equ}
		Define $\Delta^{(i)} = \Delta_1 + \Delta_2^{(i)} + \Delta_3^{(i)}$, then we have
		$\Delta^{(i)}$ is independent of  $\zeta_i$. Also, $\Delta - \Delta^{(i)} = \Delta_2 -
		\Delta_2^{(i)} + \Delta_3 - \Delta_3^{(i)}$.  By  \cref{lem:sgd01}, we have
		\begin{equ}
			\label{eq:sgd004}
			& \sum_{i = 1}^{n - 1} \IE \{ \lvert \Delta - \Delta^{(i)}\rvert \lVert \zeta_i\rVert\}\\
			& \leq C (d^{3/2} + \tau^3 + \tau_0^3) \times
			\begin{cases}
				n^{-1/2}, & \ell_0 \mu > 1; \\
				n^{-1/2} (\log n)^{5/2}, & \ell_0 \mu = 1.
			\end{cases}
		\end{equ}
		Then the bound \cref{eq:sgd-b} follows from \cref{n4-thm3} and \cref{eq:sgd0011,eq:sgd003,eq:sgd004}.
\end{proof}


Now we are ready to give the proofs of \cref{lem:sgd01,pro:sdg-02}.
We first prove \cref{lem:sgd01}.
To prove \cref{lem:sgd01}, we need to apply some preliminary lemmas, which provide the bounds for
$\IE \lVert \theta_n - \thestar\rVert^2, \IE \lVert \theta_n - \theta_n^{(i)}\rVert^2$ and $\IE
\lVert \theta_n - \theta_0\rVert^4$. The proofs of the lemmas can be found in \cref{sec:appendix}.
\begin{lemma}
	\label{lem:b1}
	For $\alpha \in (1/2, 1)$, we have
	\begin{equ}
		\label{eq:lb1-1}
		\IE \lVert \theta_n - \thestar\rVert^2 \leq C n^{-\alpha} (\tau^2 + \tau_0^2), \quad \text{for  } n \geq 1.
	\end{equ}
	For $\alpha = 1$, we have
	\begin{equ}
		\label{eq:lb1-2}
		\IE \lVert \theta_n - \thestar\rVert^2
			\leq
			\begin{cases}
				C n^{-1} (\tau^2 + \tau_0^2), & \mu \ell_0 > 1, \\
				C n^{-1}(\log n )(\tau^2 + \tau_0^2), & \mu \ell_0 = 1, \\
				C n^{-\mu \ell_0}(\tau^2 + \tau_0^2), & 0 < \mu \ell_0 < 1.
			\end{cases}
	\end{equ}
\end{lemma}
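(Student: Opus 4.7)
The plan is to derive a one-step recursion for $A_n := \IE \lVert \theta_n - \thestar \rVert^2$ and then solve it by standard summation arguments. Expanding the square in $\theta_n - \thestar = (\theta_{n-1} - \thestar) - \ell_n (\nabla f(\theta_{n-1}) + \zeta_n)$ gives
$$\lVert \theta_n - \thestar \rVert^2 = \lVert \theta_{n-1} - \thestar\rVert^2 - 2 \ell_n \langle \nabla f(\theta_{n-1}) + \zeta_n, \theta_{n-1} - \thestar\rangle + \ell_n^2 \lVert \nabla f(\theta_{n-1}) + \zeta_n\rVert^2.$$
Conditioning on $\mathcal{F}_{n-1}$, the inner product term involving $\zeta_n$ vanishes because $\IE[\xi_n] = 0$ (independence) and $\IE[\eta_n \vert \mathcal{F}_{n-1}] = 0$ by (C1(ii)). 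Strong convexity (C2) yields $\langle \nabla f(\theta_{n-1}), \theta_{n-1} - \thestar\rangle \geq \mu \lVert \theta_{n-1} - \thestar\rVert^2$; $L$-smoothness combined with $\nabla f(\thestar) = 0$ gives $\lVert \nabla f(\theta_{n-1})\rVert \leq L \lVert \theta_{n-1} - \thestar\rVert$; and the bound $\lVert \eta_n\rVert \leq c_1 \lVert \theta_{n-1} - \thestar\rVert$ from \cref{eq:etan} yields $\IE[\lVert \zeta_n\rVert^2 \vert \mathcal{F}_{n-1}] \leq 2 \tau^2 + 2 c_1^2 \lVert \theta_{n-1} - \thestar\rVert^2$. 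Taking full expectation we obtain
$$A_n \leq (1 - 2\mu \ell_n + C_0 \ell_n^2) A_{n-1} + 2 \tau^2 \ell_n^2,$$
where $C_0 = L^2 + 2 c_1^2$. For $n$ exceeding a threshold $n_0$ depending only on $\ell_0, \mu, L, c_1$, we have $C_0 \ell_n \leq \mu$, hence $A_n \leq (1 - \mu \ell_n) A_{n-1} + 2 \tau^2 \ell_n^2$.

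Iterating from $n_0$ to $n$ produces
$$A_n \leq A_{n_0} \prod_{k = n_0 + 1}^{n} (1 - \mu \ell_k) + 2 \tau^2 \sum_{k = n_0 + 1}^n \ell_k^2 \prod_{j = k + 1}^{n} (1 - \mu \ell_j),$$
where $A_{n_0}$ is controlled by $\tau^2 + \tau_0^2$ using the initial condition $A_0 \leq \tau_0^2$ from (C0) together with the trivial recursion for $n \leq n_0$. For $\alpha \in (1/2, 1)$ and $\ell_k = \ell_0 k^{-\alpha}$, comparing $\log \prod_{j=k+1}^n(1 - \mu \ell_j)$ with the integral $-\mu \ell_0 \int_k^n t^{-\alpha} dt$ gives an upper bound of the form $\exp(-c(n^{1-\alpha} - k^{1-\alpha}))$; substituting and bounding the resulting convolution sum against $\ell_n$ (a standard computation, e.g.\ via Lemma~1 of Bach--Moulines) yields $A_n \leq C(\tau^2 + \tau_0^2) n^{-\alpha}$, proving \cref{eq:lb1-1}.

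For $\alpha = 1$ with $\ell_k = \ell_0/k$, the product $\prod_{j = k + 1}^n (1 - \mu \ell_0/j)$ is of the exact polynomial order $(k/n)^{\mu \ell_0}$ (up to constants depending on $n_0$). The initial-condition term is therefore bounded by $C(\tau_0^2 + \tau^2) n^{-\mu \ell_0}$, while the noise contribution becomes $2\tau^2 \ell_0^2 n^{-\mu \ell_0} \sum_{k=n_0+1}^n k^{\mu \ell_0 - 2}$. Evaluating this sum splits into three cases: it is $O(n^{\mu\ell_0 - 1})$ when $\mu \ell_0 > 1$ (giving $A_n \lesssim n^{-1}$), $O(\log n)$ when $\mu \ell_0 = 1$ (giving $A_n \lesssim n^{-1}\log n$), and $O(1)$ when $0 < \mu \ell_0 < 1$ (giving $A_n \lesssim n^{-\mu \ell_0}$), matching all three branches of \cref{eq:lb1-2}.

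The main technical obstacle is the tight two-sided control of the finite product $\prod_{j=k+1}^n (1 - \mu \ell_j)$ in the $\alpha=1$ regime: one needs to extract the correct power $(k/n)^{\mu \ell_0}$ rather than the crude exponential $\exp(-\mu \ell_0 \log(n/k))$ up to multiplicative constants, which is what distinguishes the three subcases. A secondary nuisance is handling the initial transient $k \leq n_0$ so that the non-contractive regime is absorbed into a constant factor depending only on $\ell_0, \mu, L, c_1$; this is where we use the assumption that $n$ in the statement is arbitrary while the constants may depend on the model parameters. Throughout, the $d$-dependence is invisible because the bound on $\IE \lVert \xi_n\rVert^2 \leq \tau^2$ from (C1(i)) already absorbs any dimensional growth into $\tau$.
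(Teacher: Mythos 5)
Your proposal is correct and follows essentially the same route as the paper: expand the SGD update to get the one-step recursion $\IE\lVert\theta_n-\thestar\rVert^2 \leq (1-2\mu\ell_n+C\ell_n^2)\IE\lVert\theta_{n-1}-\thestar\rVert^2 + C\ell_n^2\tau^2$ via strong convexity, the martingale property of $\zeta_n$ and the Lipschitz bounds on $\nabla f$ and $\eta_n$, then unroll it and estimate the products and the convolution sum, splitting into the three $\mu\ell_0$ cases for $\alpha=1$ exactly as in the paper. The only cosmetic difference is that you absorb the non-contractive transient into a threshold $n_0$, whereas the paper keeps the full quadratic coefficient and invokes the corresponding estimates from Bach and Moulines (2011).
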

\begin{lemma}
	\label{lem:lb2}
	For $\alpha \in (1/2, 1)$, we have
	\begin{align}
		\IE \lVert \theta_j - \thetaihat_j\rVert^2
	& \leq C (\tau^2 + \tau_0^2)i^{-2\alpha} \exp \Bigl\{  - \mu
	\bigl(\phi_{1 - \alpha}(j) - \phi_{1 - \alpha} (i) \bigr)\Bigr\} \label{eq:lb2-01}\\
   & \leq
   C(\tau^2 + \tau_0^2) j^{-2\alpha}. \label{eq:lb2-02}
	\end{align}
	For $\alpha = 1$, we have
	\begin{equ}
		\label{eq:lb2-03}
		\IE \lVert \theta_j - \thetaihat_j\rVert^2 \leq C (\tau^2 + \tau_0^2) i^{-2} \Bigl(\frac{i}{j}\Bigr)^{2\mu \ell_0} .
	\end{equ}
	Here, $\phi_{1 - \alpha}$ is as given in \cref{eq:phi.function}.
\end{lemma}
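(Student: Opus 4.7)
The plan is to set up a one-step recursion for $D_j \coloneqq \theta_j - \thetaihat_j$ and iterate it. Since $\thetaihat_j = \theta_j$ for $j < i$ by construction, only $j \geq i$ contributes. At $j = i$, using $\theta_{i-1} = \thetaihat_{i-1}$ reduces the difference to the noise discrepancy
$D_i = -\ell_i(\xi_i - \xi_i') - \ell_i\bigl(g(\theta_{i-1},\xi_i) - g(\theta_{i-1},\xi_i')\bigr)$.
Combining the fourth-moment bound on $\xi_i, \xi_i'$ from \cref{con:c1}(i) with the Lipschitz bound $\|g(\theta,\xi)\| \leq c_1 \|\theta - \thestar\|$ that follows from \cref{eq:etan}, and applying \cref{lem:b1} to control $\IE \|\theta_{i-1}-\thestar\|^2$, we obtain $\IE \|D_i\|^2 \leq C \ell_i^2(\tau^2+\tau_0^2) \leq C i^{-2\alpha}(\tau^2+\tau_0^2)$, which matches the claimed bound at $j=i$ in both regimes.

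For $j > i$, I subtract the two update rules and write $\nabla f(\theta_{j-1}) - \nabla f(\thetaihat_{j-1}) = H_{j-1} D_{j-1}$ via the mean-value form, where $\mu I_d \preccurlyeq H_{j-1} \preccurlyeq L I_d$ by \cref{con:c2}. This yields
$D_j = (I_d - \ell_j H_{j-1}) D_{j-1} - \ell_j \bigl(g(\theta_{j-1},\xi_j) - g(\thetaihat_{j-1},\xi_j)\bigr)$.
Squaring and taking conditional expectation with respect to the enlarged filtration generated by $\theta_0, \xi_1, \ldots, \xi_{j-1}$ together with $\xi_i'$, the cross term vanishes: $\xi_j$ is independent of this filtration while both $\theta_{j-1}$ and $\thetaihat_{j-1}$ are measurable with respect to it, and \cref{con:c1}(ii) gives $\IE\{g(\theta,\xi_j)\} = 0$ for every deterministic $\theta$. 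Combining the spectral bounds on $H_{j-1}$ with the Lipschitz bound on $g$ produces the scalar recursion
$\IE \|D_j\|^2 \leq \bigl(1 - 2\mu \ell_j + (L^2 + c_1^2) \ell_j^2\bigr) \, \IE \|D_{j-1}\|^2$.

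Iterating from $i+1$ to $j$ and using $\log(1 + x) \leq x$ gives
$\IE \|D_j\|^2 \leq \IE \|D_i\|^2 \cdot \exp\bigl\{-2\mu \textstyle\sum_{k=i+1}^{j} \ell_k + C \sum_{k=i+1}^{j} \ell_k^2\bigr\}$.
For $\alpha \in (1/2, 1)$, $\sum_k \ell_k^2$ is bounded uniformly in $n$ and is absorbed into $C$, and an integral comparison yields $\sum_{k=i+1}^{j} \ell_k \geq c\bigl(\phi_{1-\alpha}(j) - \phi_{1-\alpha}(i)\bigr)$, delivering \cref{eq:lb2-01}. The second inequality \cref{eq:lb2-02} follows because $j^{1-\alpha} - i^{1-\alpha}$ is of order $j^{1-\alpha}$ whenever $i \leq j/2$, so the exponential decay dominates the polynomial factor $i^{-2\alpha}$ and the supremum over $1 \leq i \leq j$ of the product is at most $C j^{-2\alpha}$. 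For $\alpha = 1$, $\sum_{k=i+1}^{j} \ell_k$ is of order $\ell_0 \log(j/i)$, which turns the exponential into $(i/j)^{2\mu \ell_0}$; together with $\IE \|D_i\|^2 \leq C i^{-2} (\tau^2 + \tau_0^2)$ this gives \cref{eq:lb2-03}.

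The main technical care is in the conditional-expectation step used to kill the cross term: one has to verify that the enlarged filtration simultaneously makes $\theta_{j-1}$ and $\thetaihat_{j-1}$ measurable while keeping $\xi_j$ independent of it, which requires folding $\xi_i'$ into the filtration from the outset. A secondary nuisance is that $(I_d - \ell_j H_{j-1})$ is only contractive in the required sense once $\ell_j L$ is sufficiently small; for the small initial indices where this fails, the corresponding finitely many factors in the product are absorbed into $C$ without altering the asymptotic rate.
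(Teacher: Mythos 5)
Your proposal is correct and follows essentially the same route as the paper's proof: the same case decomposition of $\theta_j - \thetaihat_j$ (zero for $j<i$, noise discrepancy at $j=i$ controlled via \cref{lem:b1} and the Lipschitz bound on $g$, a one-step recursion for $j>i$ whose cross term vanishes because $\eta_j-\eta_j^{(i)}$ is a martingale difference with respect to the filtration enlarged by $\xi_i'$), followed by the same iteration to an exponential factor and the same $i\le j/2$ versus $i>j/2$ split for \cref{eq:lb2-02} and the $\log(j/i)$ computation for \cref{eq:lb2-03}. The only cosmetic difference is that you obtain the contraction through the mean-value matrix $(I_d-\ell_j H_{j-1})$ and its spectral bounds (hence your extra remark about the finitely many indices with $\ell_j L>1$), whereas the paper expands the square directly and uses strong convexity, $\bigl\langle \theta_{j-1}-\thetaihat_{j-1},\nabla f(\theta_{j-1})-\nabla f(\thetaihat_{j-1})\bigr\rangle \ge \mu\lVert \theta_{j-1}-\thetaihat_{j-1}\rVert^2$, giving the factor $1-2\mu\ell_j+2L_2^2\ell_j^2$ valid for all $j$ without that caveat.
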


\begin{lemma}
	\label{lem:lb3}
	For $\alpha \in (0, 1)$,
\begin{equ}
	\label{eq:lb3-01}
	\IE \lVert \theta_{j } - \thestar \rVert^4 \leq C j^{-2 \alpha} (\tau^4 + \tau_0^4).
\end{equ}
For  $\alpha = 1$,
\begin{equ}
	\label{eq:lb3-02}
	\IE \lVert \theta_j - \thestar\rVert^4
	& \leq
	\begin{dcases}
		C j^{-2   } , & \ell_0 \mu >1 , \\
		C j^{-2   } \log j, & \ell_0 \mu = 1.
	\end{dcases}
\end{equ}
\end{lemma}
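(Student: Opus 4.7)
The strategy is to derive a one-step contraction inequality for $u_n := \IE \lVert \theta_n - \thestar \rVert^4$ and iterate it against the second-moment bound from \cref{lem:b1}. Writing $\Delta_n := \theta_n - \thestar$, the recursion \cref{eq:sgd.re.sys} reads $\Delta_n = A_n - \ell_n \zeta_n$, where $A_n := \Delta_{n-1} - \ell_n \nabla f(\theta_{n-1})$ is $\mathcal{F}_{n-1}$-measurable and $\zeta_n = \xi_n + \eta_n$ is a martingale difference under \cref{con:c1}. The $\mu$-strong convexity and $L$-smoothness in \cref{con:c2} yield $\lVert A_n \rVert^2 \leq (1 - 2\mu \ell_n + L^2 \ell_n^2) \lVert \Delta_{n-1} \rVert^2$, and squaring gives $\lVert A_n \rVert^4 \leq (1 - 4\mu \ell_n + O(\ell_n^2)) \lVert \Delta_{n-1} \rVert^4$ once $n$ exceeds a burn-in index $n_0$. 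The doubling of the leading rate from $2\mu$ to $4\mu$ is essential: at $\alpha = 1$ it produces an integrating factor that decays like $n^{-4\mu\ell_0}$, which is at least $n^{-4}$ under $\ell_0 \mu \geq 1$, exactly what is needed to cancel the inhomogeneous source below.

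Setting $B_n := \ell_n \zeta_n$ and expanding $\lVert \Delta_n \rVert^4 = (\lVert A_n \rVert^2 - 2 \langle A_n, B_n\rangle + \lVert B_n \rVert^2)^2$, the expectations that are linear in $\langle A_n, B_n\rangle$ without an extra $\lVert B_n\rVert^2$ factor vanish by the martingale property of $\zeta_n$, while the remaining mixed term is tamed by Young's inequality,
\begin{equation*}
4 \lvert \langle A_n, B_n\rangle\rvert \cdot \lVert B_n\rVert^2 \leq \ell_n \lVert A_n\rVert^2 \lVert B_n\rVert^2 + (4/\ell_n) \lVert B_n\rVert^4.
\end{equation*}
The Lipschitz estimate $\lVert \eta_n\rVert \leq c_1 \lVert \Delta_{n-1}\rVert$ in \cref{eq:etan} combined with the fourth-moment bound on $\xi_n$ yields $\IE[\lVert \zeta_n\rVert^k \mid \mathcal{F}_{n-1}] \leq C(\tau^k + c_1^k \lVert \Delta_{n-1}\rVert^k)$ for $k \in \{2,4\}$. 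Combining these ingredients, and absorbing the $c_1$-terms into the contraction for $n \geq n_0$, gives the key recursion
\begin{equation*}
u_n \leq (1 - 4\mu \ell_n + O(\ell_n^2)) u_{n-1} + C \ell_n^2 \tau^2 v_{n-1} + C \ell_n^3 \tau^4,
\end{equation*}
where $v_{n-1} := \IE \lVert \Delta_{n-1}\rVert^2$.

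Substituting the bound from \cref{lem:b1} into $v_{n-1}$ makes the inhomogeneous term $O(n^{-3\alpha}(\tau^4 + \tau_0^4))$ for $\alpha \in (1/2,1)$ and the corresponding power/log expression for $\alpha = 1$. The recursion is then resolved by the same integrating-factor technique used for \cref{lem:b1}: let $P_n := \prod_{k=n_0+1}^n (1 - 4\mu \ell_k)^{-1}$ and write $P_n u_n \leq P_{n-1} u_{n-1} + (\text{source})\cdot P_n$; summing and using $P_n \asymp n^{2\alpha}$ for $\alpha \in (1/2,1)$ and $P_n \asymp n^{4\mu\ell_0}$ for $\alpha = 1$ delivers $u_n = O(n^{-2\alpha}(\tau^4 + \tau_0^4))$ in the first regime, $u_n = O(n^{-2})$ at $\alpha = 1$ when $\ell_0\mu > 1$, and a single logarithmic correction $u_n = O(n^{-2} \log n)$ at the critical case $\ell_0 \mu = 1$.

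The principal obstacle is the cross term $\IE[\langle A_n, B_n\rangle \lVert B_n\rVert^2]$, which does not vanish because $\lVert B_n\rVert^2$ is nonlinearly correlated with $B_n$; the Young parameter must be tuned to exactly $\ell_n$ so that the penalty $\ell_n^{-1}\lVert B_n\rVert^4 = \ell_n^3 \lVert \zeta_n\rVert^4$ contributes only $O(\ell_n^3 \tau^4) + O(\ell_n^3 c_1^4 u_{n-1})$, the first part feeding the inhomogeneous term and the second being reabsorbed into the contraction. A secondary technical point is verifying that the leading contraction rate surviving in the final recursion is the doubled $4\mu \ell_n$ rather than $2\mu \ell_n$; only this doubled rate matches the integrating-factor exponent required for the sharp exponents at $\alpha = 1$, so all bookkeeping of the $O(\ell_n^2)$ remainders must be tight enough not to halve it back.
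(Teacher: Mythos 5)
Your one-step recursion is essentially the paper's argument: the paper likewise expands $\lVert \theta_j-\thestar\rVert^4$ along the update, kills the term linear in $\zeta_j$ by the martingale property, uses strong convexity to extract the drift $-4\mu\ell_j\lVert\theta_{j-1}-\thestar\rVert^4$, bounds the conditional moments of $\nabla f(\theta_{j-1})+\zeta_j$ by $C\bigl(L_2^p\lVert\theta_{j-1}-\thestar\rVert^p+\tau^p\bigr)$ using $\lVert\eta_j\rVert\leq c_1\lVert\theta_{j-1}-\thestar\rVert$, and arrives at $u_j\leq(1-4\mu\ell_j+O(\ell_j^2))u_{j-1}+C\ell_j^2\tau^2v_{j-1}+C\ell_j^{3}\tau^4$ before substituting \cref{lem:b1}. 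Your Young-inequality treatment of the cross term (with parameter $\ell_n$) is a harmless variant of the paper's bookkeeping, and your $\alpha=1$ resolution, where indeed $\prod_{k=n_0+1}^{n}(1-4\mu\ell_k)^{-1}\asymp n^{4\mu\ell_0}$, correctly yields $n^{-2}$ and $n^{-2}\log n$ (though the ``doubling to $4\mu$ is essential'' claim is an overstatement: a drift $2\mu\ell_0\geq 2$ would already give the same exponents).

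The gap is in the resolution for $\alpha\in(1/2,1)$: the integrating factor $P_n=\prod_{k=n_0+1}^{n}(1-4\mu\ell_k)^{-1}$ is \emph{not} $\asymp n^{2\alpha}$; since $\sum_{k\leq n}\ell_k\asymp n^{1-\alpha}$, it grows like $\exp(c\,n^{1-\alpha})$, i.e.\ super-polynomially. Moreover, if one literally uses $P_n\asymp n^{2\alpha}$, the evaluation $u_n\lesssim P_n^{-1}\sum_{k\leq n}P_k s_k$ with source $s_k\asymp k^{-3\alpha}(\tau^4+\tau_0^4)$ gives $n^{-2\alpha}\sum_{k\leq n}k^{-\alpha}\asymp n^{1-3\alpha}(\tau^4+\tau_0^4)$, which is strictly weaker than the claimed $n^{-2\alpha}$ for every $\alpha<1$, so the stated bookkeeping does not deliver \cref{eq:lb3-01}. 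The fix is the same argument the paper invokes for \cref{lem:b1} (Bach--Moulines): write $u_n\leq\prod_{k=n_0+1}^{n}(1-\mu\ell_k)\,u_{n_0}+\sum_{k}\prod_{i=k+1}^{n}(1-\mu\ell_i)\,s_k$, observe that the initial term and the part of the sum with $k\leq n/2$ are $O(\exp(-c\,n^{1-\alpha}))$, and for $k>n/2$ bound $s_k\leq Cn^{-3\alpha}(\tau^4+\tau_0^4)$ together with $\sum_{k>n/2}\prod_{i=k+1}^{n}(1-\mu\ell_i)\leq C/(\mu\ell_n)\asymp n^{\alpha}$, which gives $u_n\leq Cn^{-2\alpha}(\tau^4+\tau_0^4)$ as required.
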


\begin{proof}[Proof of \cref{lem:sgd01}]
Recall that we assume that $n \geq 4 \{(2 L \ell_0)^{\alpha} + 1\}$.
Now we consider the following two cases.

\textbf{1. If $\alpha \in (1/2, 1)$.}
First, by \cref{pro:sdg-02}, $  \Sigma_n^{-1} \preccurlyeq C_1 I_d, Q_j \preccurlyeq C_2 I_d$ for each $0
\leq j \leq n-1$ and $n \geq  4 \{(2 L \ell_0)^{\alpha} + 1\}$.
For $\Delta_1$, by \cref{con:c0}, we have
\begin{align*}
	\IE \Delta_1^2 \leq C n^{-1} \IE \lVert \theta_0 - \thestar\rVert^2 \leq C \tau_0^2 n^{-1}.
\end{align*}
By the Cauchy inequality and noting that $\IE \{WW\transpose\} = I_d$, we have
\begin{align*}
	\IE \{\Delta_1 \lVert W\rVert \} \leq C d^{1/2} \tau_0 n^{-1/2}.
\end{align*}
Recall that by \cref{con:c1}, $(\eta_j)_{j \geq 1}$ is
a martingale difference sequence and  $\|\eta_j\| \leq c_1 \lVert \theta_{j - 1} - \thestar
\rVert$, and then by \cref{eq:tibound,eq:lb1-1}, if $\alpha \in (1/2,1)$,
\begin{align*}
	\begin{split}
		\IE \Delta_2^2 & \leq \lambda_2^{-1 }\IE \Bigl\lVert \frac{1}{\sqrt{n}} \sum^{n - 1}_{i=1} \Sigma_{n}^{-1/2}Q_i
		\eta_i\Bigr\rVert^2 \leq Cn^{-1} \sum^{n - 1}_{i=1}  \IE \lVert \eta_i\rVert^2 \\
		& \leq C n^{-1} \sum^{n - 1}_{i=1} \IE \lVert \theta_{i-1} - \thestar\rVert^2  \\
		& \leq C n^{-1} (\tau^2 + \tau_0^2) \sum^{n - 1}_{i=1} i^{-\alpha } \\
		& \leq C n^{-\alpha} (\tau^2 + \tau_0^2).
	\end{split}
\end{align*}
Recall that $\IE WW\transpose = I_d$ and thus \(\IE \lVert W \rVert^2 \leq C d\), then by the Cauchy inequality again,
\begin{align*}
	\IE \{\Delta_2 \lVert W\rVert\} \leq C d^{1/2} (\tau + \tau_0) n^{-\alpha/2}.
\end{align*}

For $\Delta_3$, by \cref{pro:sdg-02,lem:lb3},
\begin{equ}
	\label{eq:sgd-delta3}
	\IE \{\Delta_3 \lVert W\rVert \}
	& \leq C n^{-1/2} \sum_{i = 1}^{n - 1} \IE \{\lVert \theta_{i - 1} - \thestar\rVert^2 \lVert W\rVert 
	\}	\\
	& \leq
	C d^{1/2} n^{-1/2} \sum_{i = 1}^{n - 1} \bigl(\IE \lVert \theta_{i - 1} -
	\thestar\rVert^4\bigr)^{1/2} \\
	& \leq 	C d^{1/2} n^{-1/2} \bigg( \sum_{i = 1}^{n - 2} \bigl(\IE \lVert \theta_{i} -
	\thestar\rVert^4\bigr)^{1/2}  + (\IE \lVert \theta_0 - \thestar\rVert^4)^{1/2} \bigg) \\
	& \leq C d^{1/2} n^{-1/2} (\tau^2 + \tau_0^2) \sum_{i = 1}^{n - 1} i^{-\alpha}  \\
	& \leq  C d^{1/2} n^{-\alpha + 1/2} (\tau^2 + \tau_0^2).
\end{equ}
Now we move to give the bounds of $\IE \{\lvert \Delta_2- \Delta_2^{(i)}\rvert \lVert \zeta_i\rVert\}$ and
$\IE \{\lvert \Delta_2- \Delta_2^{(i)}\rvert \lVert \zeta_i\rVert\}$.
For $\lVert \zeta_i\rVert$, by \cref{con:c1} and  \cref{pro:sdg-02}, we have
\begin{equ}
	\label{eq:sgd-txi}
	\IE \lVert \zeta_i\rVert^4 &= n^{-2}\IE \lVert \Sigma_{n}^{-1/2}Q_i \xi_i\rVert^4 \leq C n^{-2}
	\tau^4.
\end{equ}
For $\Delta_2 - \Delta_2^{(i)}$,
\begin{equ}
	\label{eq:sgd-t2i}
	\lvert \Delta_2 - \Delta_2^{(i)}\rvert
	& \leq n^{-1/2} \biggl\lVert \sum_{j = 1}^{n - 1} \Sigma_n^{-1} Q_j (\eta_j -
	\eta_j^{(i)})\biggr\rVert,
\end{equ}
and
\begin{align*}
	\eta_j - \eta_j^{(i)}
	=
	\begin{cases}
		0, & j < i; \\
		g(\theta_{j - 1} , \xi_j) - g(\theta_{j - 1}, \xi_j') , & j = i; \\
		g(\theta_{j - 1}, \xi_j) - g(\theta_{j - 1}^{(i)}, \xi_j), & j > i.
	\end{cases}
\end{align*}
By the construction of $\eta_j^{(i)}$ and by \cref{eq:etan}, for each $j$, $\eta_j=_d \eta_j^{(i)}$
and $\lVert \eta_j \rVert \leq c_1 \lVert \theta_{j - 1} - \thestar\rVert$; and for $j > i$,  $\lVert \eta_j -
\eta_j^{(i)}\rVert \leq  c_1 \lVert \theta_{j - 1} - \theta_{j - 1}^{(i)}\rVert$. Set
\begin{align*}
	\mathcal{F}_j^{(i)}
	=
	\begin{cases}
		\mathcal{F}_j, & j < i; \\
		\mathcal{F}_j \bigvee \sigma(\xi_i'), & j \geq i.
	\end{cases}
\end{align*}
Then $(\eta_j - \eta_j^{(i)})_{j \geq 1}$ is a martingale difference sequence with respect to
$\mathcal{F}_j^{(i)}$.
Hence, by \cref{eq:sgd-t2i,eq:tibound},
\begin{equ}
	\label{eq:sgd-delta2i}
	\IE \bigl\lvert \Delta_2 - \Delta_2^{(i)}\bigr\rvert^2
	& \leq 2 n^{-1}  \IE \lVert \eta_i - \eta_i^{(i)}\rVert^2 + 2 \IE \Bigl\lVert
	\frac{1}{\sqrt{n}} \sum_{j = i + 1}^{n - 1} Q_j (\eta_j - \eta_j^{(i)})\Bigr\rVert^2 \\
	& \leq 4 n^{-1} \IE \lVert \eta_i\rVert^2 + 2 n^{-1} \sum_{j = i + 1}^{n - 1}
	p_i^2 \IE \lVert \eta_j - \eta_{j}^{(i)}\rVert^2 \\
	  & \leq 4 c_1^2 n^{-1} \IE \lVert \theta_{i - 1} - \thestar\rVert^2   + 2 c_1^2 n^{-1} \sum^{n
	  - 1}_{j = i + 1} p_j^2 \IE \lVert \theta_{j-1} -
	  \theta_{j-1}^{(i)}\rVert^2  \\
	& \leq C (\tau^2 + \tau_0^2)n^{-1} i^{-\alpha} + C (\tau^2 + \tau_0^2)n^{-1} \bigl(\phi_{1 - 2\alpha} (n - 1) - \phi_{1 - 2\alpha}
	(i)\bigr) \\
	&\leq C (\tau^2 + \tau_0^2) n^{-1}  i^{-2\alpha + 1},
\end{equ}
where we used \cref{eq:lb1-1,eq:lb2-02} in the last second line.
By \cref{eq:sgd-txi,eq:sgd-delta2i} and the Cauchy inequality, we have
\begin{align*}
	\sum_{i = 1}^{n - 1}\IE \bigl\{ \lvert \Delta_2 - \Delta_2^{(i)}\rvert \lVert \zeta_i\rVert\bigr\}
	\leq C  (\tau^2 + \tau_0^2) n^{-\alpha + 1/2}.
\end{align*}

For $\Delta_3 - \Delta_3^{(i)}$,
by the H\"older inequality, and noting that $\theta_j \stackrel{d}{=} \theta_j^{(i)}$,
\begin{align*}
	\MoveEqLeft  \sum_{i = 1}^{n - 1} \sum_{j = 1}^{n} \IE \Bigl\{\lVert \xi_i\rVert \Bigl(\bigl\lvert \lVert
				\theta_{j - 1} - \thestar \rVert^2 - \lVert \hat\theta^{(i)}_{j - 1} - \thestar
	\rVert^2\bigr\rvert\Bigr) \Bigr\}\\
	& \leq \sum_{i = 1}^{n - 1} \sum_{j = 0}^{n-1} \IE \bigl\{\lVert \xi_i\rVert \bigl( \lVert \theta_j -
	\thestar\rVert \cdot  \lVert \theta_j - \hat\theta^{(i)}_j \rVert\bigr)\bigr\} \\
	& \quad  + \sum_{i = 1}^{n - 1} \sum_{j = 0}^{n-1} \IE \bigl\{\lVert \xi_i\rVert \bigl( \lVert
			\theta_j^{(i)} -
	\thestar\rVert \cdot  \lVert \theta_j - \hat\theta^{(i)}_j \rVert\bigr)\bigr\} \\
	& \leq 2 \sum_{i = 1}^{n - 1} \sum_{j = 0}^{n-1} \bigl(\IE \|\xi_i\|^4 \bigr)^{1/4} \bigl(\IE
	\lVert \theta_j - \thestar\rVert^4\bigr)^{1/4} \bigl(\IE \lVert \theta_j -
\hat\theta_j^{(i)}\rVert^2\bigr)^{1/2}\\
	& \leq  C (\tau^3 + \tau_0^3) \sum_{i = 1}^{n - 1} \sum_{j = i}^{n} j^{-\alpha/2} i^{-\alpha} \exp \bigl\{ - C
	\bigl(j^{1 - \alpha} - i^{1 - \alpha}\bigr)\bigr\},
\end{align*}
where we used \cref{eq:lb2-01,eq:lb3-01,eq:sgd-txi} in the last line.
Observe that
\begin{align*}
	 \sum_{j = i}^{n} j^{-\alpha/2}  \exp \bigl\{ - C
	 \bigl(j^{1 - \alpha} - i^{1 - \alpha}\bigr)\bigr\} \leq C n^{\alpha/2},
\end{align*}
and it follows that
\begin{equ}
	\label{eq:sgd-delta3i}
	\sum_{i = 1}^{n - 1} \IE \{ \lvert \Delta_3 - \Delta_3^{(i)}\rvert \lVert \zeta_i\rVert \}
	& \leq C n^{-1}\sum_{i = 1}^{n - 1} \sum_{j = 0}^{n-1} \IE \bigl\{\lVert \xi_i\rVert \bigl\lvert \lVert
	\theta_j - \thestar \rVert^2 - \lVert \hat\theta^{(i)}_j - \thestar \rVert^2\bigr\rvert\bigr\}\\
	&
	\leq C (\tau^3 + \tau_0^3)n^{-\alpha/2 }.
\end{equ}

\textbf{2. If $\alpha = 1$.}
Since the bound of $\IE \{\Delta_1 \lVert W\rVert \}$ does not depend on $\alpha$, it suffices to give the bounds
of  $\IE \{\Delta_j \lVert W\rVert\}$ and $\sum_i \IE \{ \lvert \Delta_j - \Delta_j^{(i)}\rvert
\lVert \zeta_i\rVert\}$ for $j = 2, 3$.

By \cref{pro:sdg-02}, we have
\begin{math}
	\Sigma_n^{-1/2} \preccurlyeq C_2 I_d,
\end{math}
and  for $0 \leq i \leq n - 1$,
\begin{align*}
	p_i \leq
	\begin{cases}
		C , & \ell_0 \mu > 1; \\
		C (\log n) , & \ell_0 \mu = 1.
	\end{cases}
\end{align*}

For \(\Delta_2\), noting that \(\|\eta_i\| \leq c_1 \lVert \theta_i - \thestar\rVert \), by \cref{pro:sdg-02} with \(\alpha = 1\), and by \cref{eq:lb1-2}, we have
\begin{align*}
	\IE \Delta_2^2 &  \leq \lambda_2^{-1 }\IE \Bigl\lVert \frac{1}{\sqrt{n}} \sum^{n - 1}_{i=1} \Sigma_{n}^{-1/2}Q_i
	\eta_i\Bigr\rVert^2 \\
				   & \leq C n^{-1} \sum^{n - 1}_{i=1} p_i^2  \IE \lVert \eta_i\rVert^2 \\
				   & \leq
				   \begin{dcases}
					   C (\tau^2 +\tau_0^2) n^{-1} \sum_{i = 1}^{n - 1}  i^{-1} , & \text{if } \ell_0 \mu > 1, \\
					   C  (\tau^2 +\tau_0^2) n^{-1} \sum_{i = 1}^{n - 1} (\log n)^2 i^{-1} \log i , & \text{if } \ell_0 \mu = 1
				   \end{dcases}\\
				   & \leq
				   \begin{dcases}
					   C (\tau^2 + \tau_0^2 ) n^{-1} \log n , & \text{if } \ell_0 \mu >1, \\
					   C (\tau^2 + \tau_0^2) n^{-1} (\log n)^4, & \text{if } \ell_0 \mu = 1.
				   \end{dcases}
\end{align*}
Recalling that \(\IE W W \transpose = I_d\), we obtain
\begin{align*}
	\IE \{\Delta_2 \lVert W\rVert \} & \leq
	\begin{cases}
		C d^{1/2} (\tau + \tau_0) n^{-1/2} (\log n)^{1/2}, & \ell_0 \mu > 1; \\
		C d^{1/2} (\tau + \tau_0) n^{-1/2} (\log n)^2, & \ell_0\mu = 1,
	\end{cases}
\end{align*}
Similar to \cref{eq:sgd-delta3}, and by \cref{pro:sdg-02} and \cref{lem:lb3}, we have
\begin{align*}
	& \IE \{\Delta_3 \lVert W\rVert \} \\
	& \leq C n^{-1/2} \sum_{i = 1}^{n - 1} p_i \IE \{\lVert \theta_{i - 1} - \thestar\rVert^2 \lVert W\rVert 
	\}	\\
	& \leq C d^{1/2 }n^{-1/2} \sum_{i = 1}^{n - 1} p_i \Bigl(\IE \lVert \theta_{i - 1} - \thestar\rVert^4\Bigr)^{1/2}	\\
	& \leq
	\begin{cases}
		C d^{1/2} n^{-1/2} (\tau^2 + \tau_0^2)  \sum_{i = 1}^{n - 1} i^{-1} , & \ell_0 \mu > 1; \\
		C d^{1/2} n^{-1/2}\log n (\tau^2 + \tau_0^2) \sum_{i = 1}^{n - 1} i^{-1} (\log i)^{1/2}, & \ell_0 \mu = 1,
	\end{cases}\\
	& \leq
	\begin{cases}
		C d^{1/2} (\tau^2 + \tau_0^2) n^{-1/2} (\log n), & \ell_0 \mu > 1; \\
		C d^{1/2} (\tau^2 + \tau_0^2) n^{-1/2} (\log n)^{5/2}, & \ell_0 \mu = 1.
	\end{cases}
\end{align*}
Similar to \cref{eq:sgd-delta2i}, and note that \cref{eq:tibound},  we have
\begin{align*}
	\IE \bigl\lvert \Delta_2 - \Delta_2^{(i)}\bigr\rvert^2
	& \leq C (\tau^2 + \tau_0^2) \times
	\begin{cases}
		n^{-1} i^{-1}, & \ell_0 \mu > 1; \\
		n^{-1} (\log n)^2i^{-1} \log i, & \ell_0 \mu = 1.
	\end{cases}
\end{align*}
By \cref{con:c1} and \cref{pro:sdg-02},
\begin{align*}
	\IE \lVert \zeta_i\rVert^2 \leq C n^{-1}p_i^2 \IE \lVert \xi_i\rVert^2
	\leq
	\begin{dcases}
		C n^{-1} \tau^2 , & \ell_0 \mu > 1,\\
		C n^{-1} \tau^2 (\log n)^2, & \ell_0 \mu = 1.
	\end{dcases}
\end{align*}
By the Cauchy inequality,
\begin{align*}
	& \sum_{i = 1}^{n - 1} \IE \bigl\{\bigl\lvert \Delta_2 - \Delta_2^{(i)}\bigr\rvert \lVert \zeta_i\rVert\bigr\} \\
	& \leq C (\tau^2 + \tau_0^2 ) n^{-1}  \times
	\begin{dcases}
		\sum_{i = 1}^{n - 1}i^{-1/2 } , & \ell_0 \mu >1, \\
		\sum_{i = 1}^{n - 1}i^{-1/2 } (\log i)^{1/2} (\log n)^2, & \ell_0 \mu = 1,
	\end{dcases}
	\\
	& \leq C (\tau^2 + \tau_0^2) \times
	\begin{cases}
		n^{-1/2}, & \ell_0 \mu > 1; \\
		n^{-1/2} (\log n)^{5/2}, & \ell_0 \mu = 1.
	\end{cases}
\end{align*}
Similar to \cref{eq:sgd-delta3i}, and by \cref{eq:tibound}, we have
\begin{align*}
	\sum_{i = 1}^{n - 1} \IE \bigl\{\bigl\lvert \Delta_3 - \Delta_3^{(i)}\bigr\rvert \lVert
		\zeta_i\rVert\bigr\} & \leq C n^{-1/2} \sum_{i = 1}^{n - 1} \sum_{j = i}^{n} p_i^2\IE \bigl\{\lVert \xi_i\rVert \bigl\lvert \lVert
	\theta_j - \thestar \rVert^2 - \lVert \hat\theta^{(i)}_j - \thestar \rVert^2\bigr\rvert\bigr\} \\
	& \leq C (\tau^3 + \tau_0^3) \times
	\begin{cases}
		n^{-1/2}, & \mu \ell_0 > 1; \\
		n^{-1/2} (\log n)^{9/4}, & \mu \ell_0 = 1.
	\end{cases}
\end{align*}
This completes the proof.
\end{proof}

\begin{proof}
[Proof of \cref{pro:sdg-02}]
Note that $\ell_k = \ell_0 k^{-\alpha}$ is  decreasing in $k$,
and recall that $G = \nabla^2 f(\thestar)$.
By \cref{eq:str_con}, we have
\begin{math}
	\mu I_d \preccurlyeq G \preccurlyeq L I_d \text{ and } L^{-1} I_d \preccurlyeq G^{-1} \preccurlyeq
	\mu^{-1} I_d.
\end{math}
Let
$i_0 =  \lfloor (2 L \ell_0)^\alpha + 1 \rfloor.$
For $i > i_0$,
we have $\ell_0 i ^{-\alpha} \mu \leq \ell_0 i ^{-\alpha} L \leq 1/2$. Then,
\begin{equ}
	\label{eq:mat-01}
	Q_i & \succcurlyeq \ell_i \sum_{j = i}^{n - 1} \prod_{k = i + 1}^{j} (I - \ell_iG) \\
	& = \ell_i \bigl\{ I + (I - \ell_i G) + \dots + (I - \ell_i G)^{n - i - 1} \bigr\} \\
	&= G^{-1} - (I - \ell_iG)^{n - i} G^{-1} \\
	& \succcurlyeq L^{-1}\bigl\{ 1 - \bigl( 1 - \ell_i \mu\bigr)^{n - i} \bigr\}
	I_d
	\succcurlyeq 0,
\end{equ}
and for $i_0+1 \leq i \leq n/2,$
\begin{equ}
	\label{eq:mat-02}
	1 - \bigl( 1 - \ell_i \mu\bigr)^{n - i}
		& \geq  1 - \bigl( 1 - \ell_i \mu\bigr)^{n/2}
			 \geq 1 - \exp \Bigl\{- {1/2}\mu n \ell_i \Bigr\}\\
			& \geq 1 - \exp \Bigl\{- \frac{\ell_0}{2} \mu \Bigr\}\coloneqq c_G,
\end{equ}
where we used the fact that $n \ell_i \geq \ell_0$ in the last inequality.

Recall that for each $1 \leq i \leq n - 1$,  $\lambda_{\min}(\Sigma_i)\geq \lambda_1$, for any $i_0 + 1 \leq
i \leq n/2$, by \cref{con:c1}, \cref{con:c2}, \cref{eq:mat-01,eq:mat-02},
\begin{align*}
	Q_i \Sigma_i Q_i \succcurlyeq c_G^2 L^{-2} \lambda_1 I_d.
\end{align*}

By the assumption that $n \geq 4 \{(2 L \ell_0)^{\alpha} + 1\}$, it follows that $n$ is large enough such that  $i_0 \leq n/4$.
Therefore,
\begin{align*}
	\frac{1}{n} \sum_{i = 1}^{n - 1} Q_i \Sigma_i Q_i
	&= \frac{1}{n} \sum_{i = 1}^{i_0} Q_i \Sigma_i T_{i} + \frac{1}{n} \sum^{n}_{i = i_0 + 1} Q_i
	\Sigma_i Q_i \\
	& \succcurlyeq \frac{1}{n} \sum^{n}_{i = i_0 + 1} Q_i
	\Sigma_i Q_i \\
	  & \succcurlyeq
	\frac{c_G^2  \lambda_1}{4 L^2} I_d,
\end{align*}
because $Q_i \Sigma_i Q_i \succcurlyeq 0$ for each  $0 \leq i \leq i_0$.
Therefore,
\begin{align*}
	\Sigma_n^{-1} = \biggl( \frac{1}{n} \sum^{n - 1}_{i=1} Q_i \Sigma_i Q_i \biggr)^{-1} \preccurlyeq \frac{4
	 L^2}{c_G^2 \lambda_1} I_d.
\end{align*}
This proves the first inequality of \cref{eq:tibound}.

Now we move to prove the second inequality of \cref{eq:tibound}.
The following proof uses the idea of \cite[][pp. 845--846]{Pol92a}.
Write
\begin{align*}
	V_i^m = \prod_{k = i}^{m-1} \bigl(I - \ell_k G\bigr), \quad  U_i^m = (V_i^{m})\transpose
	G^{-1} V_i^{m},
\end{align*}
and it follows that
\begin{align*}
	U_i^{m + 1} &=  U_i^m - 2 \ell_m (V_i^m)\transpose (V_i^m) + \ell_m^2 U_i^m.
\end{align*}
Recall that by \cref{con:c2},
\begin{math}
	\mu I_d \preccurlyeq G \preccurlyeq L I_d,
\end{math}
and then
\begin{equ}
	\label{eq:sgd-t01}
	U_i^m \preccurlyeq \mu^{-1} (V_i^m )\transpose (V_i^m)
\\
U_i^m \succcurlyeq L^{-1} (V_i^m )\transpose (V_i^m)
\end{equ}
and
\begin{align*}
	U_i^{m + 1}  \preccurlyeq \bigl(1 - 2 \ell_m \mu + \ell_m^2\bigr) U_i^m.
\end{align*}
Therefore, for $j \geq i$, 
\begin{equ}
	\label{eq:sgd-t02}
	 U_i^j
	& \preccurlyeq \exp \biggl\{ - 2 \mu \sum_{k = i}^{j - 1} \ell_k\biggr\}
	\exp \biggl\{ \sum_{k = i}^{j - 1} \ell_k^2\biggr\}  U_i^i
	\preccurlyeq C L \exp \biggl\{ - 2 \mu \sum_{k = i}^{j - 1} \ell_k\biggr\}
	 I_d.
\end{equ}
If $\alpha \in (1/2, 1)$,
\begin{align*}
	 U_i^j
	& \preccurlyeq C \exp \biggl\{ - \mu \bclr{\phi_{1 - \alpha}(j - 1) - \phi_{1 -
	\alpha}(i - 1)}\biggr\} I_d \\
	&= C \exp \biggl\{ - \frac{\mu}{ (1 - \alpha)} \bclr{(j - 1)^{1 - \alpha} -
			(i - 1)^{1 -
	\alpha} }\biggr\} I_d.
\end{align*}
By \cref{eq:sgd-t01}, we have
\begin{equ}
	\label{eq:sgd-t3}
	V_i^j \preccurlyeq L^{1/2}   \bigl(  U_i^j\bigr)^{1/2} \preccurlyeq  C \exp \biggl\{ - \frac{\mu}{2 (1 - \alpha)} \bclr{(j - 1)^{1 - \alpha} -
			(i - 1)^{1 -
	\alpha} }\biggr\} I_d .
\end{equ}
For $\alpha \in (0,1)$, by a simple calculation,
\begin{align*}
	\sum_{j = i}^{n} \exp \biggl\{ - \frac{\mu}{2 (1 - \alpha)} \bclr{j ^{1 - \alpha} -
			i ^{1 -
	\alpha} }\biggr\} \leq C i^{\alpha},
\end{align*}
and we have
\begin{equ}
	\label{eq:sgd-t4}
	 Q_i  &\preccurlyeq  \ell_i  \sum_{j = i}^{n - 1} V_{i + 1}^{j + 1} \\
	& \preccurlyeq C \ell_i \sum_{j = i}^{n - 1} \exp \biggl\{ - \frac{\mu}{2 (1 - \alpha)} \bclr{j ^{1 - \alpha} -
			i ^{1 -
	\alpha} }\biggr\} I_d\\
	& \preccurlyeq C \ell_i i^{-\alpha } I_d \\
	& \preccurlyeq C I_d.
\end{equ}
Similarly,
\begin{math}
	Q_i \succcurlyeq -C I_{d}.
\end{math} This proves the second inequality of \cref{eq:tibound} for $\alpha \in (1/2, 1)$.

For $\alpha = 1$, by \cref{eq:sgd-t02}, and note that
\begin{math}
	\sum_{k = 1}^{\infty} \ell_k^2 \leq 2 \ell_0^2,
\end{math}
we have
\begin{align*}
	 U_i^j \preccurlyeq C \exp \left\{ - 2 \mu\bigl(\log (j - 1) - \log
	(i - 1)\bigr) \right\}I_d.
\end{align*}
Similar to \cref{eq:sgd-t3,eq:sgd-t4}, if $1 \leq i \leq n-1$,  we have
\begin{align*}
	Q_i  & \preccurlyeq C (i + 1)^{\ell_0 \mu - 1} \{ \phi_{1 - \ell_0
	\mu} (n - 1) - \phi_{1 - \ell_0
	\mu} (i + 1) \} I_d
	\preccurlyeq
	\begin{cases}
		C I_d, & \ell_0 \mu > 1,\\
		C (\log n) I_d, & \ell_0 \mu = 1.
	\end{cases}
\end{align*}
If $i = 0$, the result \cref{eq:tibound} follows from the observation that
\begin{math}
	Q_0 = \ell_0 I_d + \ell_0 (1 - \ell_1 G) Q_1.
\end{math}
This completes the proof of the upper bound. The lower bound  can be shown
similarly.
\end{proof}


\appendix 

\section{Proofs of some lemmas in Section 5}
\label{sec:appendix}
In the appendix, we give the proofs of some lemmas in Section 5. 
\subsection{Preliminary lemmas}%
To begin with, we introduce some preliminary lemmas. 
The first lemma provides a moment inequality for sums of independent random vectors.
\begin{lemma}
	\label{lem:m-W}
	Let $\zeta_1, \dots, \zeta_n \in \IR^d$ be mean-zero independent random vectors and $S_n =
	\sum_{i = 1}^n  \zeta_i / \sqrt{n}$.
	Assume that $\max_{1 \leq i \leq n}\lVert \zeta_i\rVert_p \leq a_1$ for some $p \geq 2$ and  $a_1 > 0$. Then,
	\begin{equ}
		\label{eq:w.p.norm}
		\lVert S_n \rVert_p \leq C  a_1, 	
	\end{equ}
	where $C > 0$ is a constant depending only on $p$.
	Let \(\lVert \cdot \rVert_{\psi }\) be the Orlicz norm defined as in \cref{eq:psi-norm-1} and \(\psi_1 = e^{x } - 1\).
	Assume that $\max_{1 \leq i \leq n}\lVert \zeta_i\rVert_{\psi_1} \leq a_2$ for some constant $a_2
	> 0$. Then,
	\begin{equ}
		\label{eq:w.phi.norm}
		\lVert S_n \rVert_{\psi_1} \leq C a_2.
	\end{equ}
\end{lemma}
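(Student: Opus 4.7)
The plan is to prove the two bounds separately, each by reducing to a standard maximal inequality for sums of independent random vectors in a Hilbert space.

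For \cref{eq:w.p.norm} I would first dispose of the case $p = 2$ by direct computation: by independence and the mean-zero hypothesis,
\[
	\IE \lVert S_n\rVert^2 = \frac{1}{n} \sum_{i=1}^n \IE \lVert \zeta_i\rVert^2 \leq a_1^2.
\]
For $p > 2$, I would invoke a Rosenthal-type inequality for Hilbert-space valued independent mean-zero random variables (as given by Pinelis), namely
\[
	\IE \Bigl\lVert \sum_{i=1}^n \zeta_i \Bigr\rVert^p \leq C_p \biggl[ \Bigl( \sum_{i=1}^n \IE \lVert \zeta_i\rVert^2 \Bigr)^{p/2} + \sum_{i=1}^n \IE \lVert \zeta_i\rVert^p \biggr].
\]
Substituting $\IE \lVert \zeta_i\rVert^2 \leq \lVert \zeta_i\rVert_p^2 \leq a_1^2$ and $\IE \lVert \zeta_i\rVert^p \leq a_1^p$ and dividing by $n^{p/2}$ would yield $\lVert S_n\rVert_p^p \leq C_p a_1^p(1 + n^{1-p/2}) \leq 2 C_p a_1^p$ for $p \geq 2$, which is the claimed bound.

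For \cref{eq:w.phi.norm} I would apply a Bernstein-type inequality for Hilbert-space valued random vectors with sub-exponential norms. The hypothesis $\lVert \zeta_i\rVert_{\psi_1} \leq a_2$ translates, via the equivalence recalled in the remark preceding \cref{thm:z-2}, into the tail bound $\IP(\lVert \zeta_i\rVert > t) \leq e^{1 - t/a_2}$. A standard truncation at level proportional to $a_2 \log(1/\delta)$ combined with the bounded-case Pinelis inequality in a Hilbert space produces
\[
	\IP \Bigl( \Bigl\lVert \sum_{i=1}^n \zeta_i \Bigr\rVert > s \Bigr) \leq 2 \exp \Bigl( -c \min \bigl( s^2 / (n a_2^2), \, s/a_2 \bigr) \Bigr).
\]
Setting $s = \sqrt{n}\, t$, the resulting tail of $\lVert S_n\rVert$ is sub-Gaussian for $t \leq \sqrt{n}\, a_2$ and sub-exponential for larger $t$; in either regime it is dominated by $2 \exp(-c t/a_2)$ once $t \geq a_2$. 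Converting this back into an Orlicz-norm estimate, again by the equivalence in the remark, yields $\lVert S_n\rVert_{\psi_1} \leq C a_2$.

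The main obstacle is justifying the vector-valued Bernstein step with a constant independent of the ambient dimension $d$; this follows from Pinelis's dimension-free inequality in the bounded case together with the truncation argument above. The Hilbert-space Rosenthal inequality used for \cref{eq:w.p.norm} is classical and also enjoys a dimension-free constant, so that part is essentially routine.
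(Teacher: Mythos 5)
Your proposal is correct, but it takes a genuinely different route from the paper. The paper handles both \cref{eq:w.p.norm} and \cref{eq:w.phi.norm} with a single tool, the Hoffmann--J{\o}rgensen inequality in Talagrand's $L_p$ and $\psi_1$ forms: it first gets $\lVert S_n\rVert_1 \leq \lVert S_n\rVert_2 \leq a_1$ by independence, then bounds the maximum term via the crude union bound $\bigl\lVert \max_i \lVert \zeta_i\rVert\bigr\rVert_p^p \leq n \max_i \lVert \zeta_i\rVert_p^p$, so that the $n^{-1/2}$ prefactor absorbs the $n^{1/p}$ (and, in the $\psi_1$ case, the $\log n$ coming from the maximum of $n$ sub-exponential variables). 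You instead invoke the Hilbert-space Rosenthal inequality for \cref{eq:w.p.norm} --- your computation $\lVert S_n\rVert_p^p \leq C_p a_1^p(1+n^{1-p/2})$ is right and the constant is dimension-free --- and a dimension-free vector-valued Bernstein inequality for \cref{eq:w.phi.norm}, which yields a tail bound actually stronger than what is needed before converting back through the equivalence in the remark. Both approaches are sound; yours buys explicit two-term moment/tail control, while the paper's is more economical in that one lemma of Talagrand covers both norms with essentially no computation. The one place where your sketch is thinner than it should be is the truncation step: after truncating at level $M$ the summands are no longer centered and must be recentred, and the union bound over the $n$ events $\{\lVert \zeta_i\rVert > M\}$ forces $M$ of order $a_2\log n$ (or a level depending on the target tail probability), so a naive single-truncation argument picks up a logarithmic loss in the sub-exponential regime. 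The clean fix is to cite directly the Bernstein inequality for sums of independent Hilbert-space-valued vectors under the Bernstein moment condition $\IE \lVert \zeta_i\rVert^m \leq (m!/2)\sigma^2 B^{m-2}$ (Pinelis--Sakhanenko; Pinelis 1994), which the hypothesis $\lVert \zeta_i\rVert_{\psi_1} \leq a_2$ implies with $\sigma, B$ proportional to $a_2$, or simply to use Hoffmann--J{\o}rgensen for the $\psi_1$ norm as the paper does; with that citation in place your argument closes completely.
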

\begin{proof}
	Noting that \(p \geq 2\), by the H\"older inequality,
	\begin{align*}
		\lVert \zeta_i\rVert_2 \leq \lVert \zeta_i\rVert_p \leq a_1,
	\end{align*}
	and then
	\begin{align*}
		\lVert S_n \rVert_1 \leq \lVert S_n \rVert_2 = \biggl( \frac{1}{n} \sum_{i = 1}^n
		\lVert\zeta_i\rVert_2^2 \biggr)^{1/2} \leq
		 a_1.
	\end{align*}
	By the Hoffmann-Jørgensen inequality (see \cite[Theorem 1]{Tal89}), we have there exists a constant
	$C_1 > 0$ depending only on  $p$ such that
	\begin{align}
		\lVert S_n\rVert_p & \leq C_1 \Bigl( \lVert S_n \rVert_1 + n^{-1/2} \Bigl\lVert \max_{1 \leq i \leq n}\lVert \zeta_i\rVert\Bigr\rVert_p\Bigr) .
	    \label{eq:l5.1-aa}
	\end{align}
	By the formula of integration by part,
	\begin{equ}
		\Bigl\lVert \max_{1 \leq i \leq n}\lVert \zeta_i\rVert\Bigr\rVert_p^p
		& = \int_0^{\infty} \mathbb{P} \Bigl( \max_{1 \leq i \leq n} \lVert \zeta_i \rVert^p \geq t \Bigr) dt \\
		& \leq \sum_{i = 1}^n \int_0^{\infty} \mathbb{P} \Bigl(  \lVert \zeta_i \rVert^p \geq t \Bigr) dt \\
		& = \sum_{i = 1}^n \lVert \zeta_i \rVert_p^p \\
		& \leq n \max_{1 \leq i \leq n} \lVert \zeta_i \rVert_p^p.
	    \label{eq:l5.1-bb}
	\end{equ}
	Substituting \cref{eq:l5.1-bb} to \cref{eq:l5.1-aa}, we have there exist two constants $C_2$ and $C_3$ depending only on $p$ such that
	\begin{align*}	
		 \lVert S_n\rVert_p
		 & \leq C_2 \Bigl(  a_1 +  n^{-1/2 + 1/p} \max_{1 \leq i \leq n} \lVert \zeta_i\rVert_p \Bigr) \leq C_3 a_1.
	\end{align*}
	This proves \cref{eq:w.p.norm}.
	Note that $\lVert \zeta_i\rVert_2 \leq \lVert \zeta_i\rVert_{\psi_1}$, and then
	by a similar argument and the Hoffmann-Jørgensen inequality for the $\lVert \cdot\rVert_{\psi_1}$
	norm (see \cite[Theorem 3]{Tal89}), the inequality \cref{eq:w.phi.norm} holds.	
\end{proof}

\indent 
We next introduce some notations of empirical process theory, following \cite{van96}.
For any function class $\mathcal{F}$, write
\begin{equ}
	\label{eq:m-GG}
	\lVert \mathbb{G}_n\rVert_{\mathcal{F}} &= \sup_{f \in \mathcal{F}} \bigl\lvert \mathbb{G}_n f
	\bigr\rvert, \quad  \lVert \mathbb{P}_n - P \rVert_{\mathcal{F}} = \sup_{f \in \mathcal{F}}
	\bigl\lvert \mathbb{P}_n f - P f\bigr\rvert.
\end{equ}
By \cref{eq:m-p}, it is easy to see $\lVert \mathbb{G}_n\rVert_{\mathcal{F}} = \sqrt{n} \lVert
\mathbb{P}_n - P\rVert_{\mathcal{F}}$.
Let
\begin{align*}
	\mathcal{M}_{\delta} \coloneqq \bigl\{ m_{\theta} - m_{\thestar} \, : \, \lVert \theta -
	\thestar\rVert \leq \delta\bigr\}.
\end{align*}
Then,
\begin{align*}
	\lVert \mathbb{P}_n - P\rVert_{\mathcal{M}_{\delta}} = \sup_{\theta : \lVert \theta -
		\thestar\rVert \leq \delta} \bigl\lvert (\mathbb{M}_n - M)(\theta) - (\mathbb{M}_n -
	M)(\thestar)\bigr\rvert, \\
	\lVert \mathbb{G}_n \rVert_{\mathcal{M}_{\delta}} = \sup_{\theta : \lVert \theta -
		\thestar\rVert \leq \delta} \sqrt{n}\bigl\lvert (\mathbb{M}_n - M)(\theta) - (\mathbb{M}_n -
	M)(\thestar)\bigr\rvert.
\end{align*}
Note that $ \lVert \mathbb{G}_n\rVert_{\mathcal{M}_{\delta}} $ may not be measurable, and we need to
consider its {\it outer expectation}, see \cite[Section 1.2]{van96} for a thorough reference. Let $\IE^*$ be
the outer expectation operator
and for any map $Y$,
\begin{align*}
	\lVert Y \rVert_p^* = \bigl(\IE^* \{|Y|^p\}\bigr)^{1/p}, \quad
	\lVert Y \rVert_{\psi}^* = \inf \Bigl\{C > 0: \IE^* \Bigl\{ \psi \Bigl( \frac{|Y|}{C}\Bigr)
	\Bigr\} \leq 1\Bigr\}.
\end{align*}
Let $\mathbb{P}^*$ be the outer probability operator. 

The next lemma provides a bound on the bounds of  $\lVert \lVert
\mathbb{G}_n\rVert_{\mathcal{M}_\delta}\rVert_q^*$ and $\lVert \lVert
\mathbb{G}_n\rVert_{\mathcal{M}_\delta}\rVert_{\psi_1}^*$, the
proof is based on the empirical process theory.
\begin{lemma}
	\label{lem:G}
	For $q \geq 2$, assume that {\cref{eq:m-A12}} is satisfied with  $\lVert m_1(X)\rVert_q \leq
	a_3$ for a positive constant $a_3$.
	Then, we have
	\begin{equ}
		\label{eq:G.p.norm}
		 \Bigl\lVert \lVert \mathbb{G}_n\rVert_{\mathcal{M}_\delta}\Bigr\rVert_{q}^*
		\leq C a_3\sqrt{d} \delta,
	\end{equ}
	where $C > 0$ is a constant depending only on  $q$.
	Assume further that there exists a constant $a_4 > 0$ such that
	\begin{math}
		\lVert m_1(X) \rVert_{\psi_1} \leq a_4,
	\end{math}
	then we have
	\begin{equ}
		\label{eq:G.psi.norm}
		 \Bigl\lVert \lVert \mathbb{G}_n\rVert_{\mathcal{M}_\delta}\Bigr\rVert_{\psi_1}^*
		\leq C a_4\sqrt{d} \delta,
	\end{equ}
	where $C > 0$ is an absolute constant.
\end{lemma}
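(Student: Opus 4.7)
The plan is to apply the standard maximal inequalities for empirical processes over a Lipschitz-parametrized function class. First, I would identify a measurable envelope of $\mathcal{M}_\delta$: by \cref{eq:m-A12} we have $|m_\theta(x) - m_{\thestar}(x)| \leq m_1(x) \lVert \theta - \thestar\rVert \leq \delta m_1(x)$ for every $\theta \in B(\thestar,\delta)$ and every $x$, so $F(x) \coloneqq \delta m_1(x)$ is an envelope of $\mathcal{M}_\delta$. Consequently $\lVert F\rVert_q \leq a_3 \delta$ in the first setting and $\lVert F\rVert_{\psi_1} \leq a_4 \delta$ in the second.

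Next, I would bound the uniform $L_2(Q)$-covering numbers of $\mathcal{M}_\delta$. By the twice-differentiability in \cref{con:a1}, differentiating \cref{eq:m-A12} at $\thestar$ gives $\lVert \dot m_{\thestar}(x)\rVert \leq m_1(x)$, and combining this with \cref{eq:m-A13} and \cref{eq:m-A14} yields a pointwise Lipschitz estimate of the form $|m_{\theta_1}(x) - m_{\theta_2}(x)| \leq L(x) \lVert \theta_1 - \theta_2\rVert$ for all $\theta_1, \theta_2 \in B(\thestar,\delta)$, with $L(x)$ dominated by a constant multiple of $m_1(x) + \delta\bigl(m_2(x) + m_3(x)\bigr)$. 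Using an $\epsilon$-net of $B(\thestar,\delta) \subset \IR^d$ of cardinality at most $(3\delta/\epsilon)^d$, this translates into $\sup_Q N\bigl(\epsilon \lVert F\rVert_{2,Q}, \mathcal{M}_\delta, L_2(Q)\bigr) \leq (C/\epsilon)^d$ for all $\epsilon \in (0,1]$ and all finitely supported probability measures $Q$.

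Then I would bound Dudley's uniform entropy integral $J \coloneqq \int_0^1 \sqrt{1 + \log \sup_Q N\bigl(\epsilon \lVert F\rVert_{2,Q}, \mathcal{M}_\delta, L_2(Q)\bigr)}\, d\epsilon$ by $C\sqrt{d}$ and apply the $L_q$ empirical-process maximal inequality (e.g.\ \cite[Theorem~2.14.1]{van96}) to conclude $\bigl\lVert \lVert \mathbb{G}_n\rVert_{\mathcal{M}_\delta}\bigr\rVert_q^* \leq C J \lVert F\rVert_q \leq C a_3 \sqrt{d}\,\delta$, which is \cref{eq:G.p.norm}. The Orlicz-norm bound \cref{eq:G.psi.norm} follows in exactly the same way by applying the $\psi_1$-version of the maximal inequality (cf.\ \cite[Theorem~2.14.5]{van96}) to the same covering estimate, giving $\bigl\lVert \lVert \mathbb{G}_n\rVert_{\mathcal{M}_\delta}\bigr\rVert_{\psi_1}^* \leq C J \lVert F\rVert_{\psi_1} \leq C a_4 \sqrt{d}\,\delta$.

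The main obstacle is the second step: upgrading \cref{eq:m-A12}, which is stated only relative to $\thestar$, to a uniform pointwise Lipschitz property of $\theta \mapsto m_\theta(x)$ on $B(\thestar,\delta)$. This needs the differentiability of $m_\theta$ in \cref{con:a1} together with \cref{eq:m-A13} and \cref{eq:m-A14} to control $\lVert \dot m_\theta(x)\rVert$ uniformly over $\theta$ near $\thestar$. Once this Lipschitz constant is in hand, the remainder of the argument is a routine application of empirical-process machinery, and notably produces the same bound up to constants in both the $L_q$ and the $\psi_1$ settings.
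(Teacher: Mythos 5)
Your overall route---the envelope $F=\delta m_1$, a $(C/\epsilon)^d$ uniform covering bound for a class that is Lipschitz in a $d$-dimensional parameter, an entropy integral of order $\sqrt d$, and then a maximal inequality---is the standard machinery and is essentially what underlies the paper's proof; the difference is organizational. The paper does not recompute any entropy: it quotes the first-moment bound $\IE^{*}\lVert\mathbb{G}_n\rVert_{\mathcal{M}_\delta}\leq C a_3\sqrt d\,\delta$ for Lipschitz-in-parameter classes from \cite[Chapters 5 and 19]{Vaa98} and \cite[Corollary 3.1]{Wel05}, and then upgrades to the $L_q$ and $\psi_1$ norms via the Hoffmann--J{\o}rgensen inequality \cite[Theorem 2.14.5]{van96}, using only $\lVert F\rVert_q\leq a_3\delta$ (resp.\ $\lVert F\rVert_{\psi_1}\leq a_4\delta$). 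You instead invoke an $L_q$/$\psi_1$ entropy-integral maximal inequality directly; note that \cite[Theorem 2.14.5]{van96} is itself the Hoffmann--J{\o}rgensen-type bound, not an entropy bound, so even on your route the cleanest finish is exactly the paper's two-step argument (first moment via entropy, then moment/Orlicz upgrade via the envelope). Both give the same $C\sqrt d\,\delta$ bound.

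The one step where you genuinely diverge, and where your plan has a flaw, is the Lipschitz upgrade. The bound $\lVert\dot m_{\theta}(x)\rVert\lesssim m_1(x)+\delta\bigl(m_2(x)+m_3(x)\bigr)$ does not follow from \cref{eq:m-A14}: that condition is only the one-sided semidefinite bound $\ddot m_{\thestar}(x)\preccurlyeq m_3(x)I_d$ and does not control $\lVert\ddot m_{\thestar}(x)\rVert$. More importantly, \cref{lem:G} as stated assumes only \cref{eq:m-A12} together with a moment bound on $m_1$; it does not assume \cref{eq:m-A13}, \cref{eq:m-A14}, or even differentiability, and it is later applied (through \cref{lem:z2-Psi}) to the classes $\{h_{\theta,j}-h_{\thestar,j}\}$ of the Z-estimation problem, where second derivatives need not exist and the two-point Lipschitz bound is supplied directly by \cref{eq:z-B21}. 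So rather than manufacturing the two-point property from the curvature conditions of \cref{con:a1}, you should treat the two-point Lipschitz bound (with $m_1$ itself as the Lipschitz function) as the working hypothesis of the covering-number step---this is precisely what the results cited by the paper require, and it is available in every application of the lemma; with that adjustment the rest of your argument goes through.
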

\begin{proof}
	Recall that by \cref{eq:m-A12},
	and $\lVert m_1(X)\rVert_2 \leq \lVert m_1(X)\rVert_q \leq  a_3$,
	and we have $\mathcal{M}_{\theta}$ has an envelope  $F = \delta m_1$ such that $\lVert F(X)\rVert_q \leq a_3\delta $.
	It has been shown (see, e.g., \cite[Chapters 5 and 19]{Vaa98} and \cite[Corollary 3.1]{Wel05}) that
	\begin{align*}
		\IE^* \lVert \mathbb{G}_n\rVert_{\mathcal{M}_{\delta}} \leq C a_3d^{{1/2}} \delta,
	\end{align*}
	where $C > 0$ is an absolute constant. By the Hoffmann-Jørgensen’s inequality
	(see \cite[Theorem 2.14.5]{van96}), we have
	\begin{align*}
		\IE^* \lVert \mathbb{G}_n\rVert_{\mathcal{M}_{\delta}}^q
		& \leq C \Bigl( \bigl(\IE^*\lVert \mathbb{G}_n\rVert_{\mathcal{M}_{\delta}} \bigr)^q + \IE |F|^q
		\Bigr) \leq C a_3^q d^{\frac{q}{2}} \delta^q,
	\end{align*}
	where $C > 0$ is a constant depending only on  $q$.
	This proves \cref{eq:G.p.norm}.
	Inequality \cref{eq:G.psi.norm} follows from a similar argument.
\end{proof}

\begin{lemma}
	\label{lem:z2-Psi}
	Let $\mathbb{G}_n $ be as in \cref{eq:m-GG} and let
	\begin{math}
		\mathcal{F}_{\delta,j} = \bigl\{ h_{\theta,j} - h_{\thestar,j} \colon \lVert \theta -
		\thestar\rVert \leq \delta \bigr\}
	\end{math}
	for $1 \leq j \leq n$.
	\begin{enumerate}
		\item [\textup{(i)}]
		Assume that there exists a constant $a_5 > 0$ such that \cref{eq:z-B21} is satisfied with
		$$\lVert h_0(X) \rVert_p\leq a_5,$$ then
		\begin{equ}
			\label{eq:z2-psi01}
			\Bigl\lVert  \lVert \mathbb{G}\rVert_{\mathcal{F}_{\delta,j}}\Bigr\rVert_p^* \leq C
			a_5
			d^{{1/2}} \delta,
		\end{equ}
		and
		\begin{equ}
			\label{eq:z2-psi02}
			\biggl\lVert \sup_{ \lVert \theta - \thestar\rVert \leq \delta} \lVert \Psi_n(\theta) -
			\Psi(\theta) - \Psi_n(\thestar) + \Psi(\thestar) \rVert \biggr\rVert_p^* \leq C n^{-1/2}
			a_5 d \delta,
		\end{equ}	
		where $C > 0$ is a constant depending only on $p$.
	\item [\textup{(ii)}] Assume that \cref{eq:z-B21} is satisfied with $ \lVert h_0(X)\rVert_{\psi_1} \leq
		a_5$, then \cref{eq:z2-psi01,eq:z2-psi02} hold for
		all $p \geq 1$, and
		\begin{equ}
			\label{eq:z2-psi.psinorm}
			\biggl\lVert \sup_{ \lVert \theta - \thestar\rVert \leq \delta} \lVert \Psi_n(\theta) -
			\Psi(\theta) - \Psi_n(\thestar) + \Psi(\thestar) \rVert \biggr\rVert_{\psi_1}^*
			\leq C n^{-1/2}a_5 d^{{3/2}} \delta,
		\end{equ}
		where $C > 0$ is an absolute constant.
\end{enumerate}
\end{lemma}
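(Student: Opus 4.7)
The plan is to first prove the bounds on $\lVert \lVert \mathbb{G}_n \rVert_{\mathcal{F}_{\delta,j}}\rVert_p^*$ and $\lVert \lVert \mathbb{G}_n \rVert_{\mathcal{F}_{\delta,j}}\rVert_{\psi_1}^*$ by a direct application of the empirical process machinery already used in \cref{lem:G}, and then combine the component-wise bounds to control the supremum of the vector-valued process.

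\medskip

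\noindent\textbf{Step 1 (Coordinate-wise bounds).} Fix $1 \leq j \leq d$. By \cref{eq:z-B21}, the class $\mathcal{F}_{\delta,j} = \{h_{\theta,j} - h_{\thestar,j} : \lVert \theta - \thestar\rVert \leq \delta\}$ is Lipschitz in $\theta$ with modulus $h_0(X)$ and admits the envelope $F = \delta h_0$. This is structurally identical to the class $\mathcal{M}_\delta$ treated in \cref{lem:G}: only the envelope function changes from $\delta m_1$ to $\delta h_0$. Repeating the chaining bound of \cite{Wel05} and \cite[Ch. 19]{Vaa98} for the expectation of the supremum, together with the Hoffmann-J\o{}rgensen inequality (\cite[Theorem 2.14.5]{van96}) to pass to the $L_p$ (resp.\ $\psi_1$) Orlicz norm, yields \cref{eq:z2-psi01} under $\lVert h_0(X)\rVert_p \leq a_5$, and its Orlicz analogue $\lVert \lVert \mathbb{G}_n\rVert_{\mathcal{F}_{\delta,j}}\rVert_{\psi_1}^* \leq C a_5 d^{1/2} \delta$ under $\lVert h_0(X)\rVert_{\psi_1} \leq a_5$.

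\medskip

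\noindent\textbf{Step 2 (Passing from coordinates to the $L_p$ norm of the vector).} Observing that the $j$-th coordinate of $\sqrt{n}(\Psi_n(\theta) - \Psi(\theta) - \Psi_n(\thestar) + \Psi(\thestar))$ equals $\mathbb{G}_n(h_{\theta,j} - h_{\thestar,j})$, we get
\[
\sup_{\lVert \theta - \thestar\rVert \leq \delta} \bigl\lVert \Psi_n(\theta) - \Psi(\theta) - \Psi_n(\thestar) + \Psi(\thestar)\bigr\rVert
\leq \frac{1}{\sqrt{n}} \Bigl(\sum_{j=1}^d \lVert \mathbb{G}_n\rVert_{\mathcal{F}_{\delta,j}}^2\Bigr)^{1/2}.
\]
For $p \geq 2$, applying Minkowski's inequality in $L_{p/2}$ to the sum inside gives
\[
\Bigl\lVert \sum_{j=1}^d \lVert \mathbb{G}_n\rVert_{\mathcal{F}_{\delta,j}}^2 \Bigr\rVert_{p/2}^* \leq d \cdot \bigl(C a_5 d^{1/2}\delta\bigr)^2 = C^2 a_5^2 d^2 \delta^2,
\]
so taking square roots and dividing by $\sqrt{n}$ produces \cref{eq:z2-psi02}. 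The same argument yields \cref{eq:z2-psi01} and \cref{eq:z2-psi02} in the setting of (ii) since in that case $\lVert h_0(X)\rVert_p \leq C p \, a_5$ for every $p \geq 1$ and Step 1 applies with $p$ free.

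\medskip

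\noindent\textbf{Step 3 (Orlicz norm of the supremum).} For \cref{eq:z2-psi.psinorm} one cannot afford a $\log d$ factor from $\lVert \max_j\cdot\rVert_{\psi_1}$, so instead use the crude bound $\lVert v\rVert \leq \sum_{j=1}^d |v_j|$, which gives
\[
\sup_{\lVert \theta - \thestar\rVert \leq \delta} \bigl\lVert \Psi_n(\theta) - \Psi(\theta) - \Psi_n(\thestar) + \Psi(\thestar)\bigr\rVert \leq \frac{1}{\sqrt{n}}\sum_{j=1}^d \lVert \mathbb{G}_n\rVert_{\mathcal{F}_{\delta,j}}.
\]
Since the Orlicz norm $\lVert \cdot\rVert_{\psi_1}$ is subadditive, summing the bounds from Step 1 yields $C a_5 d \cdot d^{1/2}\delta = C a_5 d^{3/2}\delta$, and division by $\sqrt{n}$ completes \cref{eq:z2-psi.psinorm}.

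\medskip

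The only delicate step is Step 1: importing the empirical-process covering-number / bracketing estimate to obtain the sharp $\sqrt{d}$ dependence (rather than $d$) with both an $L_p$ and a $\psi_1$ envelope. This is exactly the content of the arguments already invoked in \cref{lem:G}, so the ``plan'' is essentially to reduce to \cref{lem:G} by noting that only the envelope changes, and then perform the elementary coordinate-to-vector passage in Steps 2 and 3.
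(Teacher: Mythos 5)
Your proposal is correct and follows essentially the same route as the paper: coordinate-wise bounds on $\lVert \mathbb{G}_n\rVert_{\mathcal{F}_{\delta,j}}$ obtained by reusing the argument of \cref{lem:G} (chaining plus Hoffmann--J\o{}rgensen, only the envelope changing to $\delta h_0$), then an elementary coordinate-to-vector passage giving the factor $d$ for the $L_p$ bound and, via $\ell_2\leq\ell_1$ and subadditivity of the $\psi_1$-norm, the factor $d^{3/2}$ for \cref{eq:z2-psi.psinorm}. The only cosmetic difference is that the paper combines coordinates through $\lVert v\rVert^p\leq d^{p/2-1}\sum_j\lvert v_j\rvert^p$ rather than Minkowski in $L_{p/2}$ (so it is not tied to $p\geq 2$), but the two computations are equivalent.
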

\begin{proof}	
	For (i), note that \cref{eq:z2-psi01} follows directly from \cref{lem:G}, \cref{eq:z-B21,eq:z-B22}.
	For \cref{eq:z2-psi02},  by \cref{eq:z2-psi01} and the definitions of $h_{\theta}$ and  $h_{\theta,j}$,  $j = 1, 2, \dots, n$,
	\begin{align*}
		\MoveEqLeft
		\IE^*\biggl\lVert \sup_{ \lVert \theta - \thestar\rVert \leq \delta} \lVert \Psi_n(\theta) - \Psi(\theta) - \Psi_n(\thestar) + \Psi(\thestar) \rVert \biggr\rVert^p \\
		& \leq d^{p/2 - 1} n^{-p/2} \sum_{j = 1}^d \IE^*\Bigl\lVert  \lVert
		\mathbb{G}\rVert_{\mathcal{F}_{\delta,j}}\Bigr\rVert^p \\
		& \leq C n^{-p/2} a_5^p d^{p} \delta^p ,
	\end{align*}
	which proves \cref{eq:z2-psi02}.
	
	Observe that for any  $Y \in \IR^d$ and $p \geq 1$,
	\begin{align*}
		\lVert Y \rVert_p \leq C \lVert Y\rVert_{\psi_1}.
	\end{align*}
	Hence, under the conditions \cref{con:b1}, \cref{con:b4} and \cref{con:b5}, inequalities
	\cref{eq:z2-psi01,eq:z2-psi02} follow by a similar arguments.
	
	For \cref{eq:z2-psi.psinorm}, by \cite[Theorem 2.14.5]{van96}, it follows from
	\cref{eq:z2-psi01,eq:z-B42,eq:z-B21} that
		\begin{align*}
			\Bigl\lVert \lVert \mathbb{G}\rVert_{\mathcal{F}_{\delta, j}}\Bigr\rVert^*_{\psi_1}
			& \leq C \Bigl( \Bigl\lVert  \lVert \mathbb{G}\rVert_{\mathcal{F}_{\delta,j}}\Bigr\rVert^*_1
					+ \delta \lVert h_0(X)\rVert_{\psi_1}\Bigr) \\
			& \leq C a_5 d^{{1/2}} \delta.
		\end{align*}
		By the triangle inequality,
		\begin{align*}
			\MoveEqLeft
			\biggl\lVert \sup_{ \lVert \theta - \thestar\rVert \leq \delta} \lVert \Psi_n(\theta) -
				\Psi(\theta) - \Psi_n(\thestar) + \Psi(\thestar) \rVert \biggr\rVert_{\psi_1}^* \leq
				n^{-1/2} \sum_{j = 1}^d \Bigl\lVert \lVert \mathbb{G}\rVert_{\mathcal{F}_{\delta,
				j}}\Bigr\rVert^*_{\psi_1} \leq C n^{-1/2}a_5 d^{3/2} \delta,
		\end{align*}
		and hence \cref{eq:z2-psi.psinorm} holds.
\end{proof}
\subsection{Proof of  \tpstring{\cref{lem:m-t4}}{Lemma mt4}}%
\label{sub:proof_of_lemma_5_2}

For each $n$, let
\begin{align*}
	A_{j,n} = \{\theta: 2^{j-1} < \sqrt{n} \lVert \theta - \thestar\rVert \leq 2^{j}\}, \quad  j
	\geq 1.
\end{align*}
Recall that $\hat\theta_n$ truly minimizes  $\mathbb{M}_n(\theta)$, and thus,
\begin{equ}
	\label{eq:m-th-1}
	\IE^* \bigl\{\sqrt{n} \lVert \hat\theta_n - \thestar\rVert\bigr\}^p
	& \leq \sum_{j \geq 1} 2^{jp} \IP^* \bigl(\hat\theta_n \in A_{j, n}\bigr)\\
	& \leq \sum_{j \geq 1} 2^{jp} \IP^* \Bigl( \inf_{\theta \in A_{j,n}} \bigl(\mathbb{M}_n(\theta) - \mathbb{M}_{n}(\thestar) \bigr) \leq 0 \Bigr).
\end{equ}
By \cref{eq:m-A11},  we have
\begin{align*}
	\inf_{\theta \in A_{j,n}} \bigl(M(\theta) - M(\thestar)\bigr) \geq \mu \inf_{\theta \in
	A_{j,n}}\lVert \theta - \thestar\rVert^2 \geq \mu n^{-1} 2^{2j - 2}.
\end{align*}
Set  $\delta_j = 2^j n^{-1/2}$. By \cref{lem:G} and the
Chebyshev inequality, we have
\begin{align*}
	\text{RHS of }\cref{eq:m-th-1}
	& \leq \sum_{j \geq 1} 2^{jp} \IP^* \biggl( \lVert \mathbb{G}_n\rVert_{\mathcal{M}_{\delta_j}} \geq \frac{\mu 2^{2j - 2}}{\sqrt{n}} \biggr) \\
	  & \leq \Bigl(\frac{4}{\mu}\Bigr)^{p + 1} n^{\frac{p + 1}{2}} \sum_{j \geq 1} 2^{-j(p + 2)} \IE^* \Bigl\{ \lVert
	\mathbb{G}_n\rVert_{M_{\delta_j}}^{p + 1}\Bigr\} \\
	  & \leq C \Bigl(\frac{a_6 d^{1/2 }}{\mu}\Bigr)^{p + 1} n^{\frac{p + 1}{2}} \sum_{j \geq 1} 2^{-j(p + 2)} \delta_j^{p + 1 } \\
	  & \leq C \Bigl( \frac{a_6}{\mu} \Bigr)^{p + 1}d^{\frac{p + 1}{2}} ,
\end{align*}
where $C > 0$ depends only on   $p$.
This completes the proof.

\subsection{Proof of  \tpstring{\cref{lem:m-h12}}{Lemma mh12}}%
\label{sub:proof_of_lemma_5_3}

Write
\begin{math}
	Y_i = \ddot{m}_{\thestar} (X_i) - \IE \{\ddot{m}_{\thestar} (X_i) \}.
\end{math}
By the Rosenthal inequality for random
matrices (see, e.g., \cite[Theorem A.1]{Che12a}), and
noting that $Y_i$'s are symmetric  $(d\times d)$-matrices satisfying \cref{eq:m-A14},  
\begin{align*}
	\IE \biggl\lVert \sum_{i = 1}^n Y_i \biggr\rVert^4
	& \leq C \biggl\lVert \biggl(\sum_{i = 1}^n \IE
	Y_i^2 \biggr)^{1/2}\biggr\rVert^{4} + C \IE \bigl\{\max_{1 \leq i \leq n} \lVert Y_i\rVert^4\bigr\} \\
	& \leq C n^{-2} \Bigl\lVert \bigl(\IE \bigl\{\ddot{m}_{\thestar}(X)^2\bigr\} \bigr)^{1/2} \Bigr\rVert^{4} + C n^{-3} \IE \lVert
	\ddot{m}_{\thestar}(X)\rVert^4 \\
	& \leq C n^{-2 } \lVert m_3 (X)\rVert_2^4  \times \lVert I_d \rVert^4 + C n^{-3 } \lVert m_3(X)\rVert_4^4 \times \lVert I_d \rVert^4 \\
	& \leq C n^{-2}c_3^4,
\end{align*}
where $C > 0$ is an absolute constant and we use the fact that \( \lVert I_d\rVert = 1\) in the last inequality. This proves \cref{eq:m-h18}.
For $H_2$, by \cref{eq:m-A13}, we have
\begin{align*}
	\IE \bigl\{H_2^4\bigr\} \leq \max_{1 \leq i \leq n} \IE \{m_2(X_i)^4\} \leq c_2^4.
\end{align*}
This completes the proof of \cref{eq:m-h28} and hence the lemma.
\subsection{Proof of  \tpstring{\cref{lem:m-ti}}{Lemma 54}}%
\label{sub:proof_of_lemma_5_4}

By \cref{eq:m-T,eq:m-D} and the construction of $\hat\theta_n^{(i)}$, we have
\begin{align*}
	\lVert V (\hat\theta_n - \hat\theta_n^{(i)})\rVert
	& \leq  \frac{1}{n} \bigl\lVert \xi_i - \xi_i'\bigr\rVert + \bigl(H_1 \lVert \hat\theta_n -
	\thestar\rVert + H_1^{(i)}\lVert \hat\theta_n^{(i)} - \thestar\rVert \bigr) \\
	& \quad + \bigl(H_2 \lVert
\hat\theta_n - \thestar\rVert^2 + H_2^{(i)} \lVert \hat\theta_n^{(i)} - \thestar\rVert^2\bigr),
\end{align*}
where $\xi_i' = \dot{m}_{\thestar}(X_i')$ is an independent copy of $\xi_i$.

Note that $(\xi_i, \hat\theta_n, H_1, H_2)$ has the same distribution as $(\xi_i',
\hat\theta_n^{(i)}, H_1^{(i)}, H_2^{(i)})$.
By \cref{lem:m-t4} with $p = 8$ and \cref{lem:m-h12} and the H\"older
inequality, we have
\begin{align*}
	\IE \lVert V (\hat\theta_n - \hat\theta_n^{(i)})\rVert^2
	& \leq 4 \biggl( n^{-2} \IE \lVert
	\xi_i\rVert^2 + \IE \{H_1^2 \lVert \hat\theta_n - \thestar\rVert^2 \} + \IE \{H_2^2 \lVert
\hat\theta_n - \thestar\rVert^4\}\biggr)\\
	& \leq 4 \biggl( n^{-2} \lVert \xi_i \rVert_2^2 + \lVert H_1 \rVert_4^2 \lVert \hat\theta_n - \thestar \rVert_8^2 + \lVert H_2 \rVert_4^2 \lVert \hat\theta_n - \thestar \rVert_8^4  \biggr) \\
	  & \leq C  n^{-2} \Bigl( c_4^2d + \mu^{-9/4}c_1^{9/4} c_3^2 d^{{9/8}} + \mu^{-9/2}c_1^{9/2} c_2^2 d^{{9/4}} \Bigr),
\end{align*}
where $C > 0$ is an absolute constant. The result \cref{eq:m-thi} immediately follows from the
condition that $ \lambda_{\min} (V) \geq
\lambda_2$.

\subsection{Proof of  \tpstring{\cref{lem:z-thp}}{Lemma 56}}%

The inequality \cref{eq:m-w23bound} follows from \cref{lem:m-W} and \cref{eq:z-B32},
Note that by \cref{eq:z2-psi02}, we have
\begin{align*}
	\IE \Delta_1^2 \leq \lambda_2^{-1} n \IE^* \biggl\lVert \sup_{\theta: \lVert \theta - \thestar \rVert \leq
	\delta_n } \lVert \Psi_n(\theta) - \Psi(\theta) - \Psi_n(\thestar) + \Psi(\thestar)
\rVert\biggr\rVert^2  \leq C \lambda_2^{-1} c_2^2 d^2 \delta_n^2.
\end{align*}
This proves \cref{eq:Ed12}.

By \cref{eq:z-B11}, \cref{eq:z-theta} and the Cauchy inequality, we have
\begin{align*}
	\mu \lVert \hat\theta_n - \thestar\rVert^2
	& \leq \bigl\langle \hat\theta_n - \thestar, \Psi(\hat\theta_n) - \Psi(\thestar)\bigr\rangle \\
	&= - \bigl\langle \hat\theta_n - \thestar , \Psi_n(\thestar) - \Psi(\thestar) \bigr\rangle \\
	& \quad  - \bigl\langle \hat\theta_n - \thestar,  \bigl(\Psi_n (\hat\theta_n) - \Psi(\hat\theta_n)\bigr) - \bigl(\Psi_n(\thestar) -
	\Psi(\thestar)\bigr)\bigr\rangle\\
	& \leq \lVert \hat\theta_n - \thestar \rVert \lVert \Psi_n (\thestar) - \Psi (\thestar)\rVert \\
	& \quad  + \lVert \hat\theta_n - \thestar\rVert \bigl\lVert \bigl(\Psi_n (\hat\theta_n) - \Psi(\hat\theta_n)\bigr) - \bigl(\Psi_n(\thestar) -
	\Psi(\thestar)\bigr) \bigr\rVert,
\end{align*}
which implies
\begin{equ}
	\label{eq:z-th-01}
	\lVert \hat\theta_n - \thestar \rVert \leq \frac{1}{\mu} \lVert \Psi_n (\thestar ) - \Psi
	(\thestar)\rVert + \frac{1}{\mu} \bigl\lVert \bigl(\Psi_n (\hat\theta_n) - \Psi(\hat\theta_n)\bigr) - \bigl(\Psi_n(\thestar) -
	\Psi(\thestar)\bigr) \bigr\rVert.
\end{equ}
By \cref{eq:z-B32} and applying \cref{lem:m-W} to $\Psi_n(\thestar) - \Psi(\thestar)$, we have
\begin{equ}
	\label{eq:phi0.bound}
	 \lVert \Psi_n(\thestar) - \Psi (\thestar) \rVert_p \leq C c_3 d^{{1/2}} n
	^{-1/2}	.
\end{equ}
Taking expectations on both sides of \cref{eq:z-th-01}, by \cref{eq:phi0.bound} and \cref{lem:z2-Psi},
we obtain
\begin{align*}
	\lVert \hat\theta_n - \thestar \rVert_p
	& \leq \frac{1}{\mu} \lVert \Psi_n (\thestar) - \Psi(\thestar)\rVert_p \\
	& \quad  + \frac{1}{\mu}
	\Bigl\lVert \sup_{\theta \in \Theta}\bigl\lVert \bigl(\Psi_n (\theta) - \Psi(\theta)\bigr) - \bigl(\Psi_n(\thestar) -
	\Psi(\thestar)\bigr) \bigr\rVert \Bigr\rVert_p^* \\
	& \leq C \mu^{-1} ( \ccc d^{{1/2}} n^{-1/2} + c_2 d n^{-1/2} D_{\Theta} ) \\
	& \leq C  (D_{\Theta} +1) d n^{-1/2}.
\end{align*}
This proves \cref{eq:z-that}.

Now we move to prove  \cref{eq:z-thati}. By \cref{eq:z-theta}, we
have
\begin{align*}
	\Psi(\hat\theta_n) - \Psi(\thestar) & = - \bigl( \Psi_n (\thestar) - \Psi(\thestar) \bigr) -
	\bigl(\Psi_n (\hat\theta_n) - \Psi(\hat\theta_n)\bigr) + \bigl(\Psi_n(\thestar) -
	\Psi(\thestar)\bigr),
	\intertext{and}
	\Psi(\hat\theta_n^{(i)}) - \Psi(\thestar) & = - \bigl( \Psi_n^{(i)} (\thestar) - \Psi(\thestar) \bigr)
	- \bigl(\Psi_n^{(i)} (\hat\theta_n^{(i)}) - \Psi(\hat\theta_n^{(i)})\bigr) +
	\bigl(\Psi_n^{(i)}(\thestar) -
	\Psi(\thestar)\bigr).
\end{align*}
On the event that $ \lVert \hat\theta_n - \thestar\rVert \leq \delta$ and $\lVert
\hat\theta_n^{(i)} - \thestar\rVert \leq \delta$, taking difference of the foregoing terms, and by
\cref{eq:z-B11} again, we have
\begin{align*}
	\MoveEqLeft \mu \lVert \hat\theta_n - \hat\theta_n^{(i)}\rVert \mathds{1} \bigl( \lVert \hat\theta_n -
		\thestar\rVert \leq \delta , \lVert \hat\theta_n^{(i)} - \thestar \rVert \leq \delta \bigr) \\
	& \leq
	\bigl\lVert \Psi (\hat\theta_n) - \Psi (\hat\theta_n^{(i)})\bigr\rVert \mathds{1} \bigl( \lVert \hat\theta_n -
		\thestar\rVert \leq \delta , \lVert \hat\theta_n^{(i)} - \thestar \rVert \leq \delta \bigr) \\
	& \leq \frac{1}{n} \lVert
	\xi_i - \xi_i'\rVert + 2 \sup_{\theta: \lVert \theta - \thestar\rVert \leq \delta} \bigl\lVert \bigl(\Psi_n (\theta) - \Psi(\theta)\bigr) - \bigl(\Psi_n(\thestar) -
	\Psi(\thestar)\bigr) \bigr\rVert.
\end{align*}
By \cref{eq:z-B32} and \cref{lem:z2-Psi},
\begin{align*}
	\MoveEqLeft \mu \IE \{\lVert \hat\theta_n - \hat\theta_n^{(i)}\rVert^p \mathds{1} \bigl( \lVert \hat\theta_n -
		\thestar\rVert \leq \delta , \lVert \hat\theta_n^{(i)} - \thestar \rVert \leq \delta \bigr)  \} \\
		& \leq C d^{{p/2}} n^{-p  } + C d^{{p}} n^{-p/2  } \delta^p ,
\end{align*}
and then we complete the proof of \cref{eq:z-thati}.

\subsection{Proof of  \tpstring{\cref{lem:z-th2}}{Lemma 57}}%

For \cref{eq:z-t2-02}, note that $\xi_i$ and  $\hat\theta_n^{(i)}$ are independent, and $\hat\theta_n$ has the same
distribution as  $\hat\theta_n^{(i)}$,
\begin{align*}
	\MoveEqLeft \IE \bigl\{\lVert \xi_i \rVert \lVert \hat\theta_n - \thestar\rVert^2 \mathds{1}
	\bigl(\hat\theta_n \in B_{\delta}, \hat\theta_n^{(i)} \in B_\delta^c \bigr)\bigr\}  \\
	& \leq \delta^2 \IE \bigl\{ \lVert \xi_i \rVert \mathds{1} \bigl(\hat\theta_n^{(i)} \in
	B_{\delta}^c\bigr)\bigr\}\\
	& \leq \delta^2 \IE \lVert \xi_i\rVert \IP \bigl( \lVert \hat\theta_n^{(i)} - \thestar \rVert > \delta \bigr) \\
	& \leq C d^{1/2 }\delta^2  \IP \bigl( \lVert \hat\theta_n - \thestar \rVert > \delta \bigr) \\
	& \leq C  d^{{1/2}} \delta^{-p + 2} \IE \bigl\{ \lVert \hat\theta_n - \thestar\rVert^p \bigr\} \\
	& \leq C (\D +1)^p
		  d^{p + {1/2}}
		\delta^{-p + 2} n^{-p/2} ,
\end{align*}
where we used \cref{eq:z-that} in the last inequality.

For \cref{eq:z-t2-03}, by the H\"older inequality, we have
\begin{align*}
	\begin{split}
		& \IE \bigl\{\lVert \xi_i \rVert \lVert \hat\theta_n^{(i)} - \thestar\rVert^2 \mathds{1}
		\bigl(\hat\theta_n \in B_{\delta}^c, \hat\theta_n^{(i)} \in B_\delta\bigr)\bigr\} \\
		& \leq \Bigl( \IE \bigl\{ \lVert \xi_i\rVert^{p/2} \lVert \hat\theta_n^{(i)} - \thestar\rVert^p
				\bigr\}\Bigr)^{2/p} \Bigl( \IP \bigl( \lVert \hat\theta_n - \thestar \rVert > \delta\bigr)\Bigr)^{(p-2)/p}\\
		& \leq \delta^{-p + 2}\Bigl( \IE \bigl\{ \lVert \xi_i\rVert^{p/2}\bigr\} \IE \{\lVert \hat\theta_n^{(i)} - \thestar\rVert^p\}
				\Bigr)^{2/p} \Bigl( \IE \bigl\{ \lVert \hat\theta_n - \thestar\rVert^p\bigr\} \Bigr)^{(p-2)/p}\\
		& \leq C (\D +1)^p
		  d^{p + {1/2}}
		\delta^{-p + 2} n^{-p/2}.
	\end{split}
\end{align*}
As for \cref{eq:z-t2-04}, recalling that $p \geq 3$, and by  \cref{eq:z-B32,eq:z-that,eq:z-thati} and the H\"older
inequality, we have
\begin{align*}
	\MoveEqLeft
	\IE \Bigl\{\lVert \xi_i\rVert (\lVert \hat\theta_n - \thestar\rVert + \lVert
			\hat\theta_n^{(i)} - \thestar\rVert ) \lVert \hat\theta_n - \hat\theta_n^{(i)}\rVert
		\mathds{1} \bigl( \hat\theta_n \in B_{\delta}, \hat\theta_n^{(i)} \in B_\delta\bigr)\Bigr\}\\
	& \leq \lVert \xi_i \rVert_p \lVert \hat\theta_n - \thestar \rVert_p \Bigl\lVert \lVert \hat\theta_n - \hat\theta_n^{(i)} \rVert \mathds{1} \bigl( \hat\theta_n \in B_{\delta}, \hat\theta_n^{(i)} \in B_\delta\bigr)\Bigr\} \Bigr\rVert_p \\
	& \quad  + \lVert \xi_i \rVert_p \lVert \hat\theta_n^{(i)} - \thestar \rVert_p \Bigl\lVert \lVert \hat\theta_n - \hat\theta_n^{(i)} \rVert \mathds{1} \bigl( \hat\theta_n \in B_{\delta}, \hat\theta_n^{(i)} \in B_\delta\bigr)\Bigr\} \Bigr\rVert_p \\
	& =  2 \lVert \xi_i \rVert_p \lVert \hat\theta_n - \thestar \rVert_p \Bigl\lVert \lVert \hat\theta_n - \hat\theta_n^{(i)} \rVert \mathds{1} \bigl( \hat\theta_n \in B_{\delta}, \hat\theta_n^{(i)} \in B_\delta\bigr)\Bigr\} \Bigr\rVert_p \\
	& \leq C ( \D + 1 ) \Bigl( d^2 n^{-3/2} + d^{5/2 } n^{-1  } \delta \Bigr) .
\end{align*}
This completes the proof.

\subsection{Proof of  \tpstring{\cref{lem:z2-theta}}{Lemma 59}}%

In this proof, we denote by $C$ a positive constant that depends only on  $c_1$, $c_4$, $c_5$, $\mu$, $\lambda_1$
and $\lambda_2$ and $C_p$ a constant that also depends on $p$, which might take different values in different places.
By (ii) of \cref{lem:z2-Psi} and following the proof of \cref{lem:z-thp}, we have \cref{eq:z-that,eq:z-thati} also hold for a positive
constant $C_p$.
This proves the first argument of this lemma.
Note that
\begin{math}
	\Psi_n(\thestar) - \Psi(\thestar) = n^{-1} \sum_{i = 1}^n \xi_i,
\end{math}
and by \cref{lem:m-W} and \cref{eq:z-B51}, 
\begin{equ}
	\label{eq:z2-psi.0}
	\lVert \Psi_n(\thestar) -
	\Psi(\thestar)\rVert_{\psi_1} \leq C  \cce d^{{1/2}} n^{-1/2},
\end{equ}
and for any $p \geq 1$,
\begin{align*}
	\lVert \Psi_n(\thestar) - \Psi(\thestar)\rVert_p \leq C_p \cce d^{{1/2}} n^{-1/2}.
\end{align*}
For \cref{eq:z2-tail.prob}, by the fact that
\begin{math}
	\IP ( \lVert Y \rVert > t) \leq 2 e^{-t/\lVert Y\rVert_{\psi_1}},
\end{math}
it follows from \cref{eq:z2-psi.0,lem:z2-Psi} that
\begin{align*}
	\IP \bigl( \lVert \Psi_n(\thestar) - \Psi(\thestar)\rVert > \mu t / 2 \bigr)
	& \leq 2 \exp \Bigl( - C'' \frac{ \sqrt{n}\mu t}{ \cce}\Bigr),
\end{align*}
and
\begin{align*}
	\IP^* \biggl( \sup_{\theta \in \Theta} \lVert \Psi_n(\theta) - \Psi(\theta) - \Psi_n(\thestar) + \Psi(\thestar) \rVert > \mu t / 2 \biggr)
		& \leq 2 \exp \Bigl( -  \frac{C'' \sqrt{n} t}{ \ccd d^{{3/2}} (D_{\Theta} +1 )}\Bigr),
\end{align*}
where \(C'' >0\) is an absolute constant.
By \cref{eq:z-th-01},  for any $t > 0$,
\begin{align*}
	\begin{split}
		\IP \bigl( \lVert \hat\theta_n - \thestar\rVert > t \bigr)
		& \leq \IP \bigl( \lVert \Psi_n(\thestar) - \Psi(\thestar)\rVert > \mu t / 2 \bigr) \\
		& \quad  + \IP^* \biggl( \sup_{\theta \in \Theta} \lVert \Psi_n(\theta) - \Psi(\theta) - \Psi_n(\thestar) + \Psi(\thestar) \rVert > \mu t / 2 \biggr) \\
		& \leq 2 \exp \Bigl( - \frac{C'' \sqrt{n} \mu t}{ \ccd d^{{3/2}} (D_{\Theta} +1 ) + \cce d^{{1/2}}} \Bigr).
	\end{split}
\end{align*}
Taking $C' = { C'' \mu} / ( c_4 + c_5)$, we complete the proof
of \cref{eq:z2-tail.prob}.
By \cref{lem:z2-theta} and the H\"older inequality, we have
\begin{align*}
	\IE \Bigl\{\lVert \xi_i \rVert \lVert \hat\theta_n - \thestar\rVert^2 \mathds{1}
		\bigl(\hat\theta_n \in B_{\delta}, \hat\theta_n^{(i)} \in B_\delta^c\bigr)\Bigr\}
	& \leq \bigl( \IE \lVert \xi_i\rVert^4\bigr)^{1/4} \Bigl(\IE \lVert \hat\theta_n -
	\thestar\rVert^4\Bigr)^{1/2} \Bigl(\IP \bigl(\hat\theta_n^{(i)} \in B_{\delta}^c \bigr)\Bigr)^{1/4} \\
	& \leq C
	(D_{\Theta} +1 )^2 d^{{5/2}}
	n^{-1} \exp \Bigl( - \frac{C' n^{1/2} \delta}{4 (D_{\Theta} +1 ) d^{3/2}} \Bigr),
\end{align*}
	where we used \cref{eq:z-B51,eq:z-that,eq:z2-tail.prob} in the last line. This proves
\cref{eq:z2-t2-02}. The inequality \cref{eq:z2-t2-03} can be derived using a similar argument.

\subsection{Proof of  \tpstring{\cref{lem:b1}}{Lemma b1}}%

We use a recursion inequality to prove the bound. By \cref{eq:saa}, we
have
\begin{equ}
	\label{eq:sgd-5.15-a}
	\lVert \theta_n - \thestar \rVert^2 &= \lVert \theta_{n-1} - \thestar \rVert^2 - 2 \ell_n
	\bigl\langle \theta_{n - 1} - \thestar , \nabla f(\theta_{n - 1}) + \zeta_n \bigr\rangle
	\\
	& \quad  + \ell_n^2 \lVert \nabla f(\theta_{n - 1}) + \zeta_n \rVert^2 .
\end{equ}
Recall that by \cref{con:c1}, $\zeta_n = \xi_n + \eta_n$ where  $\xi_n$ is independent of
$\theta_{n - 1}$,
$\eta_n$ is  $\mathcal{F}_n$ measurable, $\lVert \eta_n \rVert \leq c_1 \lVert
\theta_{n - 1} - \thestar\rVert$, $\IE \{ \eta_n \vert
	\mathcal{F}_{n - 1} \} = 0$ and \(\theta_{n - 1} \in \mathcal{F}_{n - 1 }\). Therefore, $\IE \left\{ \bigl\langle \nabla
f(\theta_{n - 1}), \zeta_n\bigr\rangle \bigm\vert \mathcal{F}_{n - 1}\right\} = 0$. Moreover,
with $L_2 := c_1 + L,$
\begin{equ}
	\label{eq:sgd-5.15-b}
	& \IE \bigl\{ \bigl\lVert \nabla f(\theta_{n - 1}) + \zeta_n\bigr\rVert^2 \bigm\vert
	\mathcal{F}_{n - 1} \bigr\} \\
	&= \IE \bigl\{ \bigl\lVert \nabla f(\theta_{n - 1}) \bigr\rVert^2 \bigm\vert
	\mathcal{F}_{n - 1} \bigr\} + \IE \bigl\{ \bigl\lVert\zeta_n\bigr\rVert^2 \bigm\vert
	\mathcal{F}_{n - 1} \bigr\} \\
	&= \IE \bigl\{ \bigl\lVert \nabla f(\theta_{n - 1}) - \nabla f(\thestar)\bigr\rVert^2 \bigm\vert
	\mathcal{F}_{n - 1} \bigr\} + \IE \bigl\{ \bigl\lVert\xi_n + \eta_n\bigr\rVert^2 \bigm\vert
	\mathcal{F}_{n - 1} \bigr\}\\
	& \leq 2 L_2^2 \|\theta_{n - 1} - \thestar\|^2 + 2 \IE \|\xi_n\|^2.
\end{equ}
For the intersection term of \cref{eq:sgd-5.15-a}, under the strong convexity assumption
\cref{eq:str_con},
\begin{align*}
	\bigl\langle \nabla f (\theta_1 ) - \nabla f(\theta_2), \theta_1 - \theta_2\bigr\rangle
	\geq \mu \lVert \theta_1 - \theta_2\rVert^2,
\end{align*}
and noting that $\IE \clc{ \inprod{\theta_{n - 1} - \thestar, \zeta_n} \vert
\mathcal{F}_{n-1} } = 0$,
\begin{equ}
	\label{eq:sgd-5.15-c}
	\MoveEqLeft \IE \bigl\{ \bigl\langle \theta_{n - 1} - \thestar , \nabla f(\theta_{n -1}) + \zeta_n
	\bigr\rangle \bigm\vert \mathcal{F}_{n -1} \bigr\}\\
	&= \IE \bigl\langle \theta_{n - 1} - \thestar , \nabla f(\theta_{n -1}) - \nabla
	f(\thestar )\bigr\rangle  \\
	& \geq \mu \|\theta_{n - 1} - \thestar \|^2.
\end{equ}
Combining \cref{eq:sgd-5.15-a,eq:sgd-5.15-b,eq:sgd-5.15-c},
\begin{align*}
	\IE \bigl\{ \lVert \theta_n - \thestar \rVert^2 \bigm\vert \mathcal{F}_{n - 1}\bigr\}
	& \leq \bigl(1 - 2 \mu \ell_n + 2L_2^2\ell_n^2 \bigr)
	\lVert \theta_{n-1} - \thestar \rVert^2  + 2
	\ell_n^2 \IE \|\xi_n\|^2.
\end{align*}
Taking expectations on both sides yields
\begin{equ}
	\label{eq:sgd-5.15-d}
	\delta_{n} \leq \bigl(1 - 2 \mu \ell_n + 2L_2\ell_n^2 \bigr)
	\delta_{n-1}  + 2
	\ell_n^2 \IE \|\xi_n\|^2.
\end{equ}
Observe that   $\mu \leq L_2$ and thus  $2 \mu \ell_n - 2L_2^2
\ell_n^2 \leq 2 L_2 \ell_n - 2 L_2^2 \ell_n^2 \leq 1/2$. This ensures that the
coefficient in front of $\delta_{n-1}$ is always positive.
By \cref{con:c0}, we have
\begin{equation}
	\delta_0 = \IE \lVert \theta_0 - \thestar\rVert^2 \leq \tau_0^2.
	\label{eq:d0}
\end{equation}
By \cref{eq:sgd-5.15-d,eq:d0} and applying the recursion  $n$
times, recalling that $\max_{1 \leq i \leq n } \IE \|\xi_i\|^2 \leq \tau^2$,  we have
\begin{align*}
	\delta_{n}
	& \leq \prod_{k = 1}^{n} \bigl(1 - 2 \mu \ell_k + 2L_2^2
	\ell_k^2 \bigr) \delta_0  + 2 \tau^2 \sum_{k = 1}^{n} \prod_{i = k + 1}^n \bigl(1 - 2 \mu
	\ell_i + 2L_2^2
	\ell_i^2 \bigr) \ell_k^2\\
	& \leq \prod_{k = 1}^{n} \bigl(1 - 2 \mu \ell_k + 2L_2^2
	\ell_k^2 \bigr) \tau_0^2  + 2 \tau^2 \sum_{k = 1}^{n} \prod_{i = k + 1}^n \bigl(1 - 2 \mu
	\ell_i + 2L_2^2
	\ell_i^2 \bigr) \ell_k^2.
\end{align*}
Following the proof of Bach and Moulines (2011, Eqs. (18), (23) and (24)), we have
\begin{align}
	\delta_n & \leq \, \label{eq:sgd-d1}
		\Bigl( \tau_0^2 + \frac{\tau^2}{L_2^2} \Bigr) \exp \biggl(-  \mu \sum_{k = 1}^n
			\ell_k + 4L_2^2
		\sum_{k = 1}^n \ell_k^2 \biggr)
		+ 2 \tau^2 \sum^{n}_{k = 1} \prod_{i = k + 1}^n (1 - \mu \ell_i)
		\ell_k^2  \\
	& \leq \label{eq:sgd-d2}
	\begin{multlined}[t]
		\Bigl( \tau_0^2 + \frac{\tau^2}{L_2^2} \Bigr) \exp \biggl(-  \mu \sum_{k = 1}^n
			\ell_k + 4 L_2^2
		\sum_{k = 1}^n \ell_k^2 \biggr) \\
		+ 2 \tau^2 \biggl\{ \exp \biggl(-\mu \sum_{i = n/2 + 1}^n \ell_i \biggr)
		\sum_{k = 1}^n \ell_k^2 + \frac{\ell_{n/2}}{\mu}\biggr\}.
	\end{multlined}
\end{align}
We next consider the cases where $\alpha \in (1/2, 1)$ and $\alpha = 1$, separately.
If $\alpha \in (1/2,1)$, by \cref{eq:d0,eq:sgd-d2}, we have
\begin{align*}
	\sum_{k = 1}^{n} \ell_k^2 \leq \ell_0^2 \sum_{k = 1}^{\infty} k^{-2 \alpha} \leq C,
\end{align*}
and
\begin{align*}
	\delta_n & \leq C \exp \bigl( - \frac{\mu
			\ell_0}{4} n^{1 - \alpha}\bigr) \Bigl(\tau_0^2 + \frac{\tau^2}{L_2^2}\Bigr) + \frac{4 \ell_0 \tau^2}{\mu n^{\alpha}}
	 \leq C n^{-\alpha} (\tau^2 + \tau_0^2).
\end{align*}
This proves \cref{eq:lb1-1}.
Now we move to prove \cref{eq:lb1-2}.
For $\alpha = 1$, i.e.,  $\ell_i = \ell_0 i^{-1}$, we use the following basic
inequalities:
\begin{align*}
	\log n  \leq \sum_{k = 1}^{n} k^{-1} \leq \log n  + 1,
	\quad
	\sum_{k = 1}^{\infty} k^{-2} \leq 2.
\end{align*}
For the first term of \cref{eq:sgd-d1}, we have
\begin{align*}
	\exp \biggl(-  \mu \sum_{k = 1}^n \ell_k + 4 L_2^2
		\sum_{k = 1}^n \ell_k^2 \biggr)
	& \leq \exp \bigl(8 \ell_0^2 L_2^2 \bigr) \exp \bigl(-\mu \ell_0 \log n
	\bigr) \leq C n^{-\mu \ell_0},
\end{align*}
and for the second term of \cref{eq:sgd-d1}, we obtain
\begin{align*}
	 \sum^{n}_{k = 1} \prod_{i = k + 1}^n (1 - \mu \ell_i) \ell_k^2
	& \leq \ell_0^2 \sum_{k = 1}^n k^{-2} \exp \Bigl\{ - \mu \ell_0 \sum_{i = k + 1}^n
	i^{-1}\Bigr\} \\
	& \leq \ell_0^2 e^{\mu \ell_0} \sum_{k = 1}^n k^{-2} \exp \bigl\{ - \mu \ell_0 \log n + \mu
	\ell_0 \log k\bigr\} \\
	& \leq \ell_0^2 e^{\mu \ell_0} n^{-\mu \ell_0} \sum_{k = 1}^n k^{-2 + \mu \ell_0} \\
	& \leq
	\begin{cases}
		C n^{-1}, & \mu \ell_0 > 1, \\
		C n^{-1}\log n, & \mu \ell_0 = 1, \\
		C n^{-\mu \ell_0}, & 0 < \mu \ell_0 < 1.
	\end{cases}
\end{align*}
Therefore,
for $\alpha = 1$,
\begin{align*}
	\delta_n \leq
	\begin{cases}
		C n^{-1} (\tau^2 + \tau_0^2), & \mu \ell_0 > 1, \\
		C n^{-1}(\log n )(\tau^2 + \tau_0^2), & \mu \ell_0 = 1, \\
		C n^{-\mu \ell_0}(\tau^2 + \tau_0^2), & 0 < \mu \ell_0 < 1.
	\end{cases}
\end{align*}
This proves \cref{eq:lb1-2}.

\subsection{Proof of  \tpstring{\cref{lem:lb2}}{Lemma lb2}}%

By the construction of $(\thetaihat_j)_{1 \leq j \leq n}$,
\begin{align*}
	\theta_j - \thetaihat_j =
	\begin{cases}
		0, & j < i; \\
		-\ell_j (\xi_j - \xi_j' + \eta_j - \eta^{(i)}_j ), & j = i; \\
		\bigl(\theta_{j - 1} - \thetaihat_{j - 1}\bigr) - \ell_j \bigl(\nabla f(\theta_{j - 1}) - \nabla
		f(\thetaihat_{j - 1}) + \eta_j - \eta^{(i)}_j\bigr), & j > i.
	\end{cases}
\end{align*}
Since $\xi_i \stackrel{d}{=} \xi_i'$, $\eta_i \stackrel{d}{=} \eta_i^{(i)}$ and $\lVert \eta_i
\rVert \leq c_2 \lVert \theta_{i - 1} - \thestar\rVert$, it follows from
\cref{con:c1} and \cref{lem:b1} that
\begin{equ}
	\label{eq:thetaii}
	\IE \lVert \theta_i - \thetaihat_i \rVert^2 & \leq C \ell_i^2 (\IE \lVert \xi_i \rVert^2 + \IE
	\lVert \eta_i\rVert^2 ) \\
												& \leq C i^{-2\alpha} (\tau^2 + c_2 \IE \lVert \theta_{i - 1} -
	\thestar\rVert^2  ) \\
												& \leq C i^{-2 \alpha} (\tau^2 + \tau_0^2).
\end{equ}
For $j > i$, using a similar argument leading to \cref{eq:sgd-5.15-d},
\begin{align*}
	\begin{split}
		& \IE \lVert \theta_j - \thetaihat_j \rVert^2 \\
		& = \IE \lVert \theta_{j - 1} - \thetaihat_{j - 1} \rVert^2 - 2 \ell_j \IE \Bigl\{ \theta_{j - 1} - \thetaihat_{j - 1}, \nabla f(\theta_{j - 1}) - \nabla f(\thetaihat_{j - 1}) \Bigr\} \\
		& \quad  + \ell_j^2 \IE \lVert  \nabla f(\theta_{j - 1}) - \nabla f(\thetaihat_{j - 1}) +\eta_j - \eta_j^{(i) } \rVert^2 \\
		& \leq \bigl(1 - 2 \mu \ell_j + 2L_2^2 \ell_j^2  \bigr)\IE \lVert \theta_{j -
		1} - \thetaihat_{j - 1}\rVert^2 ,
	\end{split}
\end{align*}
Solving the recursive system, and by \cref{eq:thetaii}, we have
\begin{align*}
	\IE \lVert \theta_j - \thetaihat_j\rVert^2
	& \leq \exp \biggl\{ - 2 \mu \sum_{k = i + 1}^j \ell_k + 2 L_2^2 \sum_{k =
	1}^{j} \ell_k^2 \biggr\} \IE \lVert \theta_i - \thetaihat_{i}\rVert^2 \\
	& \leq C (\tau^2 + \tau_0^2) i^{-2\alpha} \exp \biggl\{ - 2 \mu \sum_{k = i + 1}^j \ell_k  \biggr\}.
\end{align*}

For $0 < \alpha < 1$, using a similar argument as in the proof of \cref{thm:sgd}, we have
for $j \geq i$,
\begin{align*}
	\IE \lVert \theta_j - \thetaihat_j\rVert^2
	& \leq C (\tau^2 + \tau_0^2)i^{-2\alpha} \exp \Bigl\{  - \mu
	\bigl(\phi_{1 - \alpha}(j) - \phi_{1 - \alpha} (i) \bigr)\Bigr\} \nonumber
	\\
	& \leq
	\begin{cases}
		C (\tau^2 + \tau_0^2) i^{-2 \alpha} ,
		& i \leq j \leq 2 i, \\
		C (\tau^2 + \tau_0^2) i^{-2\alpha} \exp \Bigl\{ - \mu \bigl(\phi_{1 - \alpha} (j) - \phi_{1 - \alpha}
		(\frac{j}{2})\bigr) \Bigr\} , & j > 2 i,
	\end{cases}\nonumber \\
	& \leq
	\begin{cases}
		\mathrlap{C(\tau^2 + \tau_0^2) (j/2)^{-2\alpha},}
		\hphantom{C (\tau^2 + \tau_0^2) i^{-2\alpha} \exp \Bigl\{ - \mu \bigl(\phi_{1 - \alpha} (j) - \phi_{1 - \alpha}
		(\frac{j}{2})\bigr) \Bigr\} ,}
		&  i \leq j \leq 2 i, \\
		C(\tau^2 + \tau_0^2) i^{-2\alpha} \exp \Bigl\{- \frac{\mu}{2} \phi_{1 - \alpha}(j)\Bigr\}, & j > 2 i
	\end{cases}\nonumber \\
   & \leq
   C(\tau^2 + \tau_0^2) j^{-2\alpha}.
\end{align*}
If $\alpha = 1$, \cref{eq:lb2-03} can be shown similarly.
\subsection{Proof of  \tpstring{\cref{lem:lb3}}{Lemma lb3}}%

Recall that for any $j \geq 1$,
\begin{align*}
	\theta_j - \thestar = (\theta_{j - 1} - \thestar) - \ell_j \bigl(\nabla f(\theta_{j - 1}) +
	\zeta_j\bigr),
\end{align*}
where $\zeta_j$ is a martingale difference such that
\begin{math}
	\IE \clc{ \zeta_j \vert \mathcal{F}_{j - 1}} = 0,
\end{math}
and  $\theta_{j - 1}$ is  $\mathcal{F}_{j - 1}$-measurable.
Hence,
\begin{align*}
	\lVert \theta_{j} - \thestar\rVert^4
	&= \lVert \theta_{j - 1} - \thestar\rVert^4 + 4 \ell_j^2 \bangle{ \theta_{j - 1} - \thestar ,
	\nabla f(\theta_{j - 1}) + \zeta_j }^2 + \ell_j^4 \lVert \nabla f(\theta_{j - 1}) + \zeta_{j} \rVert^4
	\\
	& \quad  - 4 \ell_{j} \lVert \theta_{j - 1} - \thestar \rVert^2 \bangle{ \theta_{j - 1} -
	\thestar , \nabla f(\theta_{j - 1}) + \zeta_j} \\
	& \quad  + 2 \ell_j^2 \lVert \theta_{j - 1} - \thestar \rVert^2
	\lVert \nabla f(\theta_{j - 1}) + \zeta_j  \rVert^2 \\
	& \quad  - 4 \ell_j^3 \bangle{ \theta_{j - 1} - \thestar , \nabla f(\theta_{j - 1}) + \zeta_j} \lVert
	\nabla f(\theta_{j - 1}) + \zeta_j  \rVert^2 \\
	& \leq \lVert \theta_{j - 1} - \thestar\rVert + 6 \ell_j^2 \lVert \theta_{j - 1} - \thestar \rVert^2
	\lVert \nabla f(\theta_{j - 1}) + \zeta_j  \rVert^2 \\
	& \quad  + \ell_j^4 \lVert \nabla f(\theta_{j - 1})
	+ \zeta_j\rVert^4 + 4 \ell_j^{3} \lVert \theta_{j - 1}
	 - \thestar \rVert \lVert \nabla f(\theta_{j -
	1}) + \zeta_j  \rVert^3 \\
	& \quad  - 4 \ell_j \lVert \theta_{j - 1} - \thestar \rVert^2 \bangle{
\theta_{j - 1} - \thestar , \nabla f(\theta_{j - 1}) + \zeta_j } ,
\end{align*}
and then
\begin{equ}
	\label{eq:4-01}
	& \IE \bclc{ \lVert \theta_{j} - \thestar \rVert^4 \vert \mathcal{F}_{j - 1} } \\
	& \leq \lVert \theta_{j - 1} - \thestar \rVert^4 + 6 \ell_j^2 \lVert \theta_{j - 1} - \thestar
	\rVert^2 \IE \bclc{ \lVert \nabla f(\theta_{j - 1}) + \zeta_j  \rVert^2 \bigm\vert
	\mathcal{F}_{j - 1}} \\
	& \quad  + \ell_j^4 \IE \bclc{ \lVert \nabla f(\theta_{j - 1}) + \zeta_j  \rVert^4 \bigm\vert
	\mathcal{F}_{j - 1}} + 4 \ell_j^3 \lVert \theta_{j - 1} - \thestar \rVert \IE\bclc{ \|\nabla
f(\theta_{j - 1}) + \zeta_j \|^3 \bigm\vert \mathcal{F}_{j - 1}  }\\
	& \quad  - 4 \ell_j \lVert \theta_{j - 1} - \thestar \rVert^2 \bangle{ \theta_{j - 1} -
	\thestar , \nabla f(\theta_{j - 1}) - \nabla f(\thestar) }  .
\end{equ}
Note that by \cref{eq:str_con},
\begin{equ}
	\label{eq:4-02}
	\bangle{ \theta_{j - 1} - \thestar , \nabla f(\theta_{j - 1}) - \nabla f(\thestar) } \geq \mu
	\lVert \theta_{j - 1} - \thestar \rVert^2.
\end{equ}
For $1 \leq p \leq 4$, recall that by \cref{con:c1}, $\IE \clc{ \lVert \xi_j\rVert^p } \leq \tau^p$ and \( \lVert \eta_j \rVert \leq c_1 \lVert \theta_{j - 1} - \thestar\rVert\), and it follows that
\begin{equ}
	\label{eq:4-03}
	\IE \bclc{ \lVert \nabla f(\theta_{j - 1}) + \zeta_j  \rVert^p \bigm\vert \mathcal{F}_{j - 1}}
	& \leq \IE \bclc{ \lVert \nabla f(\theta_{j - 1}) - \nabla f(\thestar)  + \xi_j + \eta_j  \rVert^p \bigm\vert \mathcal{F}_{j - 1}} \\
	& \leq 2^{p - 1} \Bigl(L_2^p \lVert \theta_{j - 1} - \thestar \rVert^p + \tau^p
	\Bigr).
\end{equ}
By \cref{eq:4-01,eq:4-02,eq:4-03}, we have
\begin{align*}
	\IE \bclc{ \lVert \theta_{j} - \thestar \rVert^4 \bigm\vert \mathcal{F}_{j - 1} }
	& \leq \lVert \theta_{j - 1} - \thestar \rVert^4 \bigl(1 - 4 \mu \ell_j + 12 \ell_j^2
	L_2^2 + 16 \ell_j^3 L_2^3 + 8 \ell_j^4 L_2^4 \bigr)
	\\
	& \quad  + 20 \lVert \theta_{j -1} - \thestar \rVert^2 \ell_j^2 \tau^2 + 16 \ell_j^4 \tau^4
	\\
	& \leq \lVert \theta_{j - 1} - \thestar \rVert^4 \bigl(1 - 4 \mu \ell_j + 16 \ell_j^2
	L_2^2 + 24 \ell_j^4 L_2^4 \bigr)
	\\
	& \quad  + 20 \lVert \theta_{j -1} - \thestar \rVert^2 \ell_j^2 \tau^2 + 16 \ell_j^4 \tau^4.
\end{align*}
Taking expectations on both sides, we have
\begin{align*}
	\IE \lVert \theta_{j} - \thestar \rVert^4 &  \leq  \IE \lVert \theta_{j - 1} - \thestar \rVert^4 \bigl(1 - 4 \mu \ell_j + 16 \ell_j^2
	L_2^2 + 24 \ell_j^4 L_2^4 \bigr) \\
	& \quad  + C (\tau^2 + \tau_0^2)\tau^2 j^{-3\alpha } + 16 \tau^4 j^{-4\alpha},
\end{align*}
where we used \cref{lem:b1} in the last inequality.
Using the similar arguments leading to \cref{lem:b1} (see also Bach and Moulines (2011, pp.
16--19)),  we have for $\alpha \in (0,1)$,
\begin{align*}
	\IE \lVert \theta_{j } - \thestar \rVert^4 \leq C j^{-2 \alpha} (\tau^4 + \tau_0^4).
\end{align*}
If $\alpha = 1$, we have
\begin{align*}
	\IE \lVert \theta_j - \thestar\rVert^4
	& \leq C j^{-2\mu \ell_0} \bigl( \phi_{2 \mu \ell_0 - 2}(j) + 1 \bigr) (\tau^4 + \tau_0^4),
\end{align*}
where $\phi$  is as defined in \cref{eq:phi.function}.  This proves \cref{eq:lb3-02}, and hence the lemma.

\section*{Acknowledgments}
Part of this work was based on the second 
author's Ph.D. thesis at the Chinese University of Hong Kong, and part was done while the second author was  visiting 
the Southern University of Science and Technology in 2019.

\setlength{\bibsep}{0.5ex}
\def\bibfont{\small}

\end{document}